\def\Xint#1{\mathchoice
    {\XXint\displaystyle\textstyle{#1}}%
    {\XXint\textstyle\scriptstyle{#1}}%
    {\XXint\scriptstyle\scriptscriptstyle{#1}}%
    {\XXint\scriptscriptstyle\scriptscriptstyle{#1}}%
      \!\int}
\def\XXint#1#2#3{{\setbox0=\hbox{$#1{#2#3}{\int}$}
    \vcenter{\hbox{$#2#3$}}\kern-.5\wd0}}
\def\dashint{\Xint-}
\def\YYint#1#2#3{{\setbox0=\hbox{$#1{#2#3}{\int}$}
    \lower1ex\hbox{$#2#3$}\kern-.46\wd0}}
\def\YYYint#1#2#3{{\setbox0=\hbox{$#1{#2#3}{\int}$}
    \lower0.35ex\hbox{$#2#3$}\kern-.48\wd0}}
\def\ZZint#1#2#3{{\setbox0=\hbox{$#1{#2#3}{\int}$}
    \raise1.15ex\hbox{$#2#3$}\kern-.57\wd0}}
\def\ZZZint#1#2#3{{\setbox0=\hbox{$#1{#2#3}{\int}$}
    \raise0.85ex\hbox{$#2#3$}\kern-.53\wd0}}
\def\Xiint#1{\mathchoice
    {\XXiint\displaystyle\textstyle{#1}}%
    {\XXiint\textstyle\scriptstyle{#1}}%
    {\XXiint\scriptstyle\scriptscriptstyle{#1}}%
    {\XXiint\scriptscriptstyle\scriptscriptstyle{#1}}%
      \!\iint}
\def\XXiint#1#2#3{{\setbox0=\hbox{$#1{#2#3}{\iint}$}
    \vcenter{\hbox{$#2#3$}}\kern-.5\wd0}}
\def\dashiint{\Xiint-}
\def\XXiiint#1#2#3{{\setbox0=\hbox{$#1{#2#3}{\iiint}$}
    \vcenter{\hbox{$#2#3$}}\kern-.5\wd0}}
\newcommand{\Title}{Calder\'on-Zygmund theory of nonlocal parabolic equations\\
with discontinuous coefficients}
\newcommand{\Shorttitle}{Calder\'on-Zygmund theory of nonlocal parabolic equations with discontinuous coefficients}
\newcommand{\Author}{Sun-Sig Byun, Kyeongbae Kim \and Deepak Kumar}
\theoremstyle{plain}
\newtheorem{thm}{Theorem}[section]
\newtheorem{prop}{Proposition}[section]
\newtheorem{lem}[prop]{Lemma}
\theoremstyle{definition}
\newtheorem{defn}[prop]{Definition}
\newtheorem*{def*}{Definition}
\theoremstyle{remark}
\newtheorem{rmk}{Remark}
\numberwithin{equation}{section}
\subjclass[2010]{35A01, 35B65, 35D30, 35R05, 35R09.}
\keywords{Nonlocal parabolic equations,  Sobolev Regularity, Calder\'on-Zygmund type estimates, Nonlinear equations}
\newcommand{\dx}{\,dx}
\newcommand{\dt}{\,dt}
\newcommand{\dy}{\,dy}
\newcommand{\dz}{\,dz}%Add
\newcommand{\dmut}{\,d\mu_{\tau}}
\newcommand{\dmutt}{\,d\mu_{\tau,t}}
\newcommand{\ddiv}{\mathrm{div\,}}
\DeclareMathOperator*{\dist}{dist}
\DeclareMathOperator{\Tail}{Tail}
\title[\Shorttitle]{\Title}
\author[Byun, Kim \and Kumar]{\Author}
\address{Sun-Sig Byun: Department of Mathematical Sciences and Research Institute of Mathematics, Seoul National University, Seoul 08826, Korea}
\email{byun@snu.ac.kr}
\address{Kyeongbae Kim: Department of Mathematical Sciences, Seoul National University, Seoul 08826, Korea}
\email{kkba6611@snu.ac.kr}
\address{Deepak Kumar: Research Institute of Mathematics, Seoul National University, Seoul 08826, Korea}
\email{deepak.kr0894@gmail.com}
\thanks{S. Byun was supported by NRF-2022R1A2C1009312, K. Kim and D. Kumar were supported by NRF-2021R1A4A1027378.}
\begin{document}
\maketitle
\begin{abstract}
We prove Calder\'on-Zygmund type estimates of weak solutions to non-homogeneous nonlocal parabolic equations under a minimal regularity requirement on kernel coefficients. In particular, the right-hand side is presented by a sum of fractional Laplacian type data and a non-divergence type data. Interestingly, even though the kernel coefficients are discontinuous, we obtain a significant increment of fractional differentiability for the solutions, which is not observed in the corresponding local parabolic equations.
\end{abstract}
\section{Introduction}
\noindent  
\subsection{Overview}
In this paper, we study higher regularity properties  for weak solutions to the following non-homogeneous nonlocal parabolic equation:
\begin{equation}
\label{cz non pa : eq1}
u_{t}+\mathcal{L}^{\Phi}_{A}u=(-\Delta)^{\frac{s}{2}}f+g \quad\text{ in }\Omega_{T}\equiv\Omega\times(0,T),
\end{equation}
where  $s\in(0,1)$, $T>0$ and $\Omega$ is an open and bounded set in $\mathbb{R}^{n}$ with $n\geq2$. 
The nonlocal operators appearing in problem \eqref{cz non pa : eq1} are defined by
\begin{equation*}
    \mathcal{L}^{\Phi}_{A}u(x,t)=\mathrm{p.v.}\int_{\mathbb{R}^{n}}\Phi\left(\frac{u(x,t)-u(y,t)}{|x-y|^{s}}\right)\frac{A(x,y,t)}{|x-y|^{n+s}}\dy
\end{equation*}
and
\begin{equation*}
    (-\Delta)^{\frac{s}{2}}f(x,t)=\mathrm{p.v.}\int_{\mathbb{R}^{n}}(f(x,t)-f(y,t))\frac{1}{|x-y|^{n+s}}\dy.
\end{equation*}
Here, $f:\mathbb{R}^{n}\times(0,T)\to\mathbb{R}$ and $g:\Omega_{T}\to\mathbb{R}$ are given measurable functions, $A:\mathbb{R}^{n}\times \mathbb{R}^{n}\times (0,T)\to\mathbb{R}$ is a given kernel coefficient satisfying 
\begin{equation}
\label{cz non pa : ker a defn}
L^{-1}\leq A(x,y,t)\leq L \quad\text{and}\quad A(x,y,t)=A(y,x,t)\quad\text{for }(x,y,t)\in \mathbb{R}^{n}\times \mathbb{R}^{n}\times (0,T)
\end{equation}
and for some constant $L\geq1$, and $u:\mathbb{R}^{n}\times(0,T)\to\mathbb{R}$ is the unknown. In addition, $\Phi:\mathbb{R}\to\mathbb{R}$ is a measurable function satisfying $\Phi(0)=0$ and
\begin{equation}
\label{cz non pa : Phicond}
\begin{cases}
(\Phi(\xi)-\Phi(\xi'))(\xi-\xi')\geq L^{-1}|\xi-\xi'|^{2},\\
|\Phi(\xi)-\Phi(\xi')|\leq L|\xi-\xi'| \quad\text{for any }\xi,\xi'\in\mathbb{R}.
\end{cases}
\end{equation}
Nonlocal parabolic problems appear naturally in the physical world, e.g., as in anomalous diffusion processes from the areas of physics, finance, biology, ecology, geophysics, and many others. 
Particularly, the nonlocal nonlinear operators of the above types find their application in image processing \cite{gilb} and phase transition models \cite{GiaLePr}. \par 
{It is known that when the leading operator appearing in \eqref{cz non  pa : eq1} is linear and the right-hand side is regular enough, then solutions enjoy higher H\"older regularity, which in turn, yields improved Sobolev regularity. These assertions can be justified by the functional analysis tools along with a precise integral representation of the solution through suitable heat kernel type estimates. Unfortunately, when the operator is nonlinear, the aforementioned techniques fail to apply. To this end, our objective is to obtain a fine fractional Sobolev regularity for weak solutions to \eqref{cz non pa : eq1} by using purely analytic and geometric techniques. In particular, we introduce a unified approach of covering arguments to obtain some uniform measure density estimates for some level sets involving the solution, which we will explain in the sequel.
}
{More precisely, our aim is} to establish Calder\'on-Zygmund type estimates for weak solutions to \eqref{cz non pa : eq1} under a minimal regularity requirement on the kernel coefficient $A=A(x,y,t)$. More precisely, we want to find an extra condition on $A$ besides \eqref{cz non pa : ker a defn} under which the following implication holds
{\small
\begin{align}\label{cz non pa : cal esti int}
    &f(x,t)-f(y,t)\in L^{q}_{\mathrm{loc}}\left(\Omega\times\Omega_{T}\,;\,\frac{\dx\dy\dt}{|x-y|^{n+\sigma q}}\right)\text{ with } f\in L^{q}_{\mathrm{loc}}(0,T;L^{1}_{s}(\mathbb{R}^{n}))\quad\text{and} \quad g\in L^{\frac{q\left(n+2s+\frac{2\sigma q}{q-2}\right)}{n+4s}}_{\mathrm{loc}}(\Omega_{T}) \nonumber\\
&\Longrightarrow \frac{u(x,t)-u(y,t)}{|x-y|^{s}}\in L^{q}_{\mathrm{loc}}\left(\Omega\times\Omega_T\,;\,\frac{\dx\dy\dt}{|x-y|^{n+\sigma q}}\right)
\end{align}
}\\
for any $q\in(2,\infty)$ and $\sigma\in \left(0,\left(1-\frac{2}{q}\right)\min\left\{s-\frac{2s}{q},1-s\right\}\right)$ with the desired Calder\'on-Zygmund type estimates like \eqref{cz non pa : main est}.
In particular, using the notion of fractional gradients introduced in \cite{MSf,BKc}; that is,
\begin{equation*}
    d_{\beta}u(x,y,t):=\frac{u(x,t)-u(y,t)}{|x-y|^{\beta}}\quad\text{for any }\beta\in[0,1),
\end{equation*}
the implication \eqref{cz non pa : cal esti int} can be rewritten as
{\small\begin{equation*}
\begin{aligned}
    &d_{0}f\in L^{q}_{\mathrm{loc}}\left(\Omega\times\Omega_{T}\,;\,\frac{\dx\dy\dt}{|x-y|^{n+\sigma q}}\right)\text{ with }f\in L^{q}_{\mathrm{loc}}(0,T;L^{1}_{s}(\mathbb{R}^{n}))\quad\text{and} \quad g\in L^{\frac{q\left(n+2s+\frac{2\sigma q}{q-2}\right)}{n+4s}}_{\mathrm{loc}}(\Omega_{T}) \\
    &\Longrightarrow d_{s}u\in L^{q}_{\mathrm{loc}}\left(\Omega\times\Omega_{T}\,;\,\frac{\dx\dy\dt}{|x-y|^{n+\sigma q}}\right).
\end{aligned}
\end{equation*}}
% \\
% \noindent
% where we define fractional gradients $d_{\beta}u$ by 
% \begin{equation*}
%     d_{\beta}u(x,y,t)=\frac{u(x,t)-u(y,t)}{|x-y|^{\beta}}\quad\text{for any }\beta\in[0,1).
% \end{equation*}

%%%%%%%%%%%%%%%History
\subsection{Some known results}
For the elliptic problems, a self-improving property of a weak solution to the problem:
\begin{equation}
\label{cz non pa : elleq}
    \mathcal{L}^{\Phi}_{A}u=(-\Delta)^{\frac{s}{2}}f+g
\end{equation}
is obtained by Kuusi, Mingione and Sire \cite{KMS1} by introducing the notion of dual pairs. When $\Phi(\xi)=\xi$ and $A=A(x,y)$ is H\"older continuous, Calder\'on-Zygmund type estimate for \eqref{cz non pa : elleq} is established by Mengesha, Schikorra and Yeepo \cite{MSYc} via commutator estimates. In addition, the aforementioned articles deal with more general equations as source terms involve   $\tilde{s}$-fractional Laplacian with $\tilde{s}\neq \frac{s}{2}$. For operators with possibly discontinuous coefficients, such as VMO coefficients, Nowak \cite{Nv,Ni} obtain Calder\'on-Zygmund type estimates when $f=0$  by using the maximal function and the notion of dual pairs. We refer to \cite{AFLYg} for the global Calder\'on-Zygmund type estimate of \eqref{cz non pa : elleq} with $A=1$, $\Phi(\xi)=\xi$, $f=0$ and the zero Dirichlet condition on the exterior of the domain. The main tool employed in this work is the Green function representation of the solution.

We now mention some related results for the case of nonlinear nonlocal operators. When   $\Phi(\xi)=|\xi|^{p-2}\xi$ with $p>2$ and $f=0$, Nowak and Diening \cite{LNc} obtain sharp regularity results containing borderline cases by establishing precise pointwise bounds in terms of fractional sharp maximal functions. On the other hand, Calder\'on-Zygmund type estimates of solutions to the problem:
\begin{equation*}
    \mathcal{L}_{A}^{\Phi}u=(-\Delta_{p})^{\frac{s}{p}}f,
\end{equation*}
are established in \cite{BKc} via a maximal function free technique which was first introduced in \cite{AMp}. 
We mention the work \cite{DKo} as well for $L^{p}$-theory of a strong solution to nonlocal elliptic equations. For additional regularity results related to nonlocal elliptic equations, we refer to \cite{KMSnm,FMSYc,SMs,BLShh,BLhs,CKPl,LPPSb,Ci,Snc,Olh,BOSdh,BLDg,BKOh,MCm,BWZle,KAAYg,Nhsp,KNSg} and references therein.

For the parabolic problems, Auscher, Bortz, Egert and Saari \cite{ABES} prove a self-improving property of solutions to \eqref{cz non pa : eq1} with $f=0$ by using functional analysis techniques. When $\Phi(\xi)=\xi$, $A(x,y,t)\equiv 1$ and $f=0$, Biccari, Warma and Zuazua \cite{BWZlp} provide optimal regularity results of a weak solution by using a cut off argument. For the $L^{p}$-theory of strong solutions to nonlocal parabolic equations, we refer to \cite{DJKb,Zlp}. We further mention \cite{CCVr,BLS,Sh,Kmh,Lh,APTh,Vaf,KWh,Tah,GKSdh,Gr} and references therein for various regularity results of nonlocal parabolic equations. 
% {\color{magenta}On the other hand, to the best of our knowledge, there has not been any work on establishing Calder\'on-Zygmund type estimates for nonlinear nonlocal parabolic equations, even in the case $A\equiv 1$.} 

\subsection{Main results}
To explain the desired Carder\'on-Zygmund type estimate \eqref{cz non pa : cal esti int},
  we first introduce the notion of dual pairs. For a measurable function $F:\mathbb{R}^{n}\times \mathbb{R}^{n}\times (0,T)$ and $\tau\in(0,1)$, we define
\begin{equation*}
    D^{\tau}F(x,y,t)\coloneqq\frac{F(x,y,t)}{|x-y|^{\tau}}\quad\text{and}\quad
    \mu_{\tau}(\mathcal{A})\coloneqq\int_{\mathcal{A}}\frac{\dx\dy}{|x-y|^{n-2\tau}},\quad\text{for any measurable set } \mathcal{A}\subset\mathbb{R}^{n}\times\mathbb{R}^{n}.
\end{equation*}
Furthermore, we write $\dmutt=\dmut\dt$. Then we observe that 
\begin{equation}
\label{cz non pa : observ}
    d_{s}u\in L^{q}_{\mathrm{loc}}\left(\Omega\times\Omega_{T}\,;\,\frac{\dx\dy\dt}{|x-y|^{n+\sigma q}}\right)\Longleftrightarrow D^{\tau}d_{s}u\in L^{q}_{\mathrm{loc}}\left(\Omega\times\Omega_{T}\,;\,\dmutt\right),\,\mbox{where $\tau=\frac{q\sigma }{q-2}$.}
\end{equation}
From this observation, we deduce that the solution $u$ improves its integrability order as well as the differentiability order by achieving the same integrability as that of the associated non-homogeneous term in the Sobolev scale and a substantial gain in the differentiability order.

%%%%%%%%%%%%%%%%Definition of weak sol
We now introduce a nonlocal tail space. We say that $u\in L^{p}(0,T;L^{1}_{2s}(\mathbb{R}^{n}))$ if 
\begin{equation*}
    \left\|\int_{\mathbb{R}^{n}}\frac{|u(x,\cdot)|}{(1+|x|)^{n+2s}}\dx\right\|_{L^{p}(0,T)}<\infty
\end{equation*}
for any $p\in [1,\infty]$. In particular, we write
\begin{equation*}
    \Tail_{p,2s}\left(u;B_{r}(x_{0})\times I\right)=\left(\dashint_{I}\left(r^{2s}\int_{\mathbb{R}^{n}\setminus B_{r}(x_{0})}\frac{|u(y,t)|}{|y-x_{0}|^{n+2s}}\dy\right)^{p}\dt\right)^{\frac{1}{p}} \quad\text{if }p\in[1,\infty)
\end{equation*}
and
\begin{equation*}
    \Tail_{\infty,2s}\left(u;B_{r}(x_{0})\times I\right)=\sup_{t\in I}\left(r^{2s}\int_{\mathbb{R}^{n}\setminus B_{r}(x_{0})}\frac{|u(y,t)|}{|y-x_{0}|^{n+2s}}\dy\right),
\end{equation*}
where $I\subset\mathbb{R}$ is a bounded time interval.
We note from H\"older's inequality that for any $1\leq p_{1}\leq p_{2}\leq \infty$,
\begin{equation}
\label{cz non pa : tailho}
    \Tail_{p_{1},2s}\left(u;B_{r}(x_{0})\times I\right)\leq \Tail_{p_{2},2s}\left(u;B_{r}(x_{0})\times I\right).
\end{equation}
As usual, a solution to \eqref{cz non pa : eq1} is defined in the weak sense as below.
\begin{defn}
Let $f\in L^{2}_{\mathrm{loc}}\left(0,T;L^{1}_{s}(\mathbb{R}^{n})\right)$ with $d_{0}f\in L^{2}_{\mathrm{loc}}\left(\Omega\times\Omega_T\,;\,\frac{\dx\dy\dt}{|x-y|^{n}}\right)$ and $g\in L_{\mathrm{loc}}^{\frac{2(n+2s)}{n+4s}}(\Omega_{T})$. We say that 
\begin{equation*}
    u\in L_{\mathrm{loc}}^{2}\left(0,T;W_{\mathrm{loc}}^{s,2}(\Omega)\right)\cap 
C_{\mathrm{loc}}\left(0,T;L_{\mathrm{loc}}^{2}(\Omega)\right)\cap L_{\mathrm{loc}}^{\infty}\left(0,T;L^{1}_{2s}(\mathbb{R}^{n})\right)
\end{equation*}is a weak solution to \eqref{cz non pa : eq1} if 
\begin{flalign*}
    &\int_{t_{1}}^{t_{2}}\int_{\Omega}-u\phi_{t}\dx\dt+\int_{t_{1}}^{t_{2}}\int_{\mathbb{R}^{n}}\int_{\mathbb{R}^{n}}\Phi\left(\frac{u(x,t)-u(y,t)}{|x-y|^{s}}\right)(\phi(x,t)-\phi(y,t))\frac{A(x,y,t)}{|x-y|^{n+s}}\dx\dy\dt\\
    &=-\int_{\Omega}(u\phi)(x,t)\dx\Bigg\rvert_{t=t_{1}}^{t=t_{2}}+\int_{t_{1}}^{t_{2}}\int_{\mathbb{R}^{n}}\int_{\mathbb{R}^{n}}(f(x,t)-f(y,t))(\phi(x,t)-\phi(y,t))\frac{1}{|x-y|^{n+s}}\dx\dy\dt\\
    &\quad+\int_{t_{1}}^{t_{2}}\int_{\Omega}g\phi\dx\dt
\end{flalign*}
holds for any $\phi\in L^{2}(t_{1},t_{2};W^{s,2}(\Omega))\cap W^{1,2}\left(t_{1},t_{2};L^{2}(\Omega)\right)$ with compact spatial support contained in $\Omega$ and $(t_{1},t_{2})\Subset (0,T)$.
\end{defn}

We next introduce the notion of $(\delta,R)$-vanishing condition on $A$, for some $\delta$ and $R>0$. We say that $A$ is $(\delta,R)$-vanishing in $\Omega_T$, if 
\begin{align*}
    \sup_{0<r\leq R, \, Q_r(z_0)\subset\Omega_T}\dashint_{\Lambda_{r}(t_0)}\dashint_{B_r(x_0)}\dashint_{B_r(x_0)} |A(x,y,t)-(A)_{r,x_0}(t)|dx\,dy\,dt \leq \delta,
\end{align*}
where $z_0=(x_0,t_0)$ and \begin{equation}
\label{cz non pa : defn ava ker}
(A)_{r,x_{0}}(t)\coloneqq \dashint_{B_{r}(x_{0})}\dashint_{B_{r}(x_{0})}A(x,y,t)\dx\dy.
\end{equation}

% and
% \begin{equation*}
%     \Snail_{\infty}\left(u;B_{r}(x_{0})\times I\right)=\sup_{t\in I}\left(r^{\alpha}\int_{\mathbb{R}^{n}\setminus B_{r}(x_{0})}\frac{|u(y,t)|^{2}}{|y-x_{0}|^{n+2s}}\dy\right)^{\frac{1}{2}}.
% \end{equation*}
% We note from H\"older's inequality that for any $p\in[1,\infty]$, 
% \begin{equation*}
%     \Tail_{p}\left(u;B_{r}(x_{0})\times I\right)\leq c\Snail_{p}\left(u;B_{r}(x_{0})\times I\right),
% \end{equation*}
% where $c=c(n,s,p)$.
We now observe the following scaling invariance property for the problem \eqref{cz non pa : eq1}. 
\begin{lem}
\label{cz non pa : scalinglem}
Let $Q_{r}(z_{0})\Subset\Omega_{T}$. Suppose that $A$ is $(\delta,R)$-vanishing in $\Omega_{T}$. Then  
\begin{equation*}
    \tilde{u}(x,t)=\frac{u(rx+x_{0},r^{2s}t+t_{0})}{r^{s}}
\end{equation*}
is a weak solution to 
\begin{equation*}
   \tilde{u}_{t}+\mathcal{L}^{\Phi}_{\tilde{A}}\tilde{u}=(-\Delta)^{\frac{s}{2}}\tilde{f}+\tilde{g} \quad\text{ in }Q_{1},
\end{equation*}
where 
\[\tilde{f}(x,t)=f(rx+x_{0},r^{2s}t+t_{0}),\quad \tilde{g}(x,t)=r^{s}g(rx+x_{0},r^{2s}t+t_{0})\] and $\tilde{A}(x,y,t)=A(rx+x_{0},ry+x_{0},r^{2s}t+t_{0})$ is $\left(\delta,\frac{R}{r}\right)$-vanishing in $Q_{1}$.
\end{lem}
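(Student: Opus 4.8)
The plan is to derive the assertion from a single parabolic change of variables, applied first to the weak formulation and then to the averaged oscillation of the kernel. Write $z_0=(x_0,t_0)$ and let $\mathcal{T}(x,t):=(rx+x_0,\,r^{2s}t+t_0)$ denote the dilation carrying $Q_1$ onto $Q_r(z_0)\Subset\Omega_T$; with this notation $\tilde u=r^{-s}\,(u\circ\mathcal{T})$, $\tilde f=f\circ\mathcal{T}$, $\tilde g=r^{s}\,(g\circ\mathcal{T})$, and $\tilde A(x,y,t)=A(rx+x_0,ry+x_0,r^{2s}t+t_0)$, all exactly as in the statement. First I would take an arbitrary admissible test function $\phi$ for the rescaled problem on $Q_1$, that is, $\phi\in L^{2}(\tau_1,\tau_2;W^{s,2}(B_1))\cap W^{1,2}(\tau_1,\tau_2;L^{2}(B_1))$ with compact spatial support in $B_1$ and $(\tau_1,\tau_2)$ compactly contained in the time interval of $Q_1$, and set $\psi:=\phi\circ\mathcal{T}^{-1}$. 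A direct check shows that $\psi$ is an admissible test function for $u$ on $\mathcal{T}\big(B_1\times(\tau_1,\tau_2)\big)\Subset\Omega_T$, with endpoints $(t_1,t_2)=(r^{2s}\tau_1+t_0,\,r^{2s}\tau_2+t_0)$, and thus may be inserted into the weak formulation of \eqref{cz non pa : eq1}.

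Next I would substitute $(\bar x,\bar t)=\mathcal{T}(x,t)$ --- and, in the two double-integral terms, also $(\bar y,\bar t)=\mathcal{T}(y,t)$ --- in each of the five terms of that weak formulation, using the elementary identities
\begin{equation*}
u(\mathcal{T}(x,t))=r^{s}\tilde u(x,t),\qquad \psi(\mathcal{T}(x,t))=\phi(x,t),\qquad \partial_{\bar t}\psi(\mathcal{T}(x,t))=r^{-2s}\phi_{t}(x,t),
\end{equation*}
together with $\dfrac{u(\bar x,\bar t)-u(\bar y,\bar t)}{|\bar x-\bar y|^{s}}=\dfrac{\tilde u(x,t)-\tilde u(y,t)}{|x-y|^{s}}$, $\dfrac{A(\bar x,\bar y,\bar t)}{|\bar x-\bar y|^{n+s}}=r^{-(n+s)}\dfrac{\tilde A(x,y,t)}{|x-y|^{n+s}}$, $f(\bar x,\bar t)-f(\bar y,\bar t)=\tilde f(x,t)-\tilde f(y,t)$, $g(\mathcal{T}(x,t))=r^{-s}\tilde g(x,t)$, and the volume elements $d\bar x\,d\bar t=r^{n+2s}\dx\dt$ and $d\bar x\,d\bar y\,d\bar t=r^{2n+2s}\dx\dy\dt$. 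Collecting the powers of $r$, each of the five terms --- the $-u\psi_{t}$ term, the nonlocal bilinear term, the evaluation term at the endpoints, the $(-\Delta)^{\frac{s}{2}}f$ term and the $g$ term --- acquires the same overall factor $r^{n+s}$, so that dividing by it recovers precisely the weak formulation of $\tilde u_{t}+\mathcal{L}^{\Phi}_{\tilde A}\tilde u=(-\Delta)^{\frac{s}{2}}\tilde f+\tilde g$ on $Q_1$ tested against $\phi$. The accompanying function-space requirements --- $\tilde u$ in the appropriate $L^{2}_{\mathrm{loc}}\cap C_{\mathrm{loc}}\cap L^{\infty}_{\mathrm{loc}}$ scale on $Q_1$, $\tilde f\in L^{2}_{\mathrm{loc}}(0,1;L^{1}_{s}(\mathbb{R}^{n}))$ with $d_{0}\tilde f$ locally square-integrable against $|x-y|^{-n}\dx\dy\dt$, and $\tilde g\in L^{\frac{2(n+2s)}{n+4s}}_{\mathrm{loc}}(Q_1)$ --- transfer along $\mathcal{T}$ by the same substitution, and $\tilde A$ plainly still satisfies \eqref{cz non pa : ker a defn} with the same $L$ and is symmetric in its first two slots, being a composition of $A$ with $\mathcal{T}$.

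Finally, for the $(\delta,R/r)$-vanishing of $\tilde A$ in $Q_1$ I would fix $\rho\in(0,R/r]$ and a cylinder $Q_{\rho}(w_0)\subset Q_1$ with $w_0=(a_0,b_0)$, and set $\rho':=r\rho\le R$, $a_0':=ra_0+x_0$, $b_0':=r^{2s}b_0+t_0$; as in the first step, $\mathcal{T}(Q_{\rho}(w_0))=Q_{\rho'}(a_0',b_0')\subset Q_r(z_0)\subset\Omega_T$. Substituting $\bar x=rx+x_0$, $\bar y=ry+x_0$ in \eqref{cz non pa : defn ava ker} gives $(\tilde A)_{\rho,a_0}(t)=(A)_{\rho',a_0'}(r^{2s}t+t_0)$, and then substituting $\bar x=rx+x_0$, $\bar y=ry+x_0$, $\bar t=r^{2s}t+t_0$ in the triple average --- whose normalizing volumes are scale invariant --- yields
\begin{equation*}
\dashint_{\Lambda_{\rho}(b_0)}\dashint_{B_{\rho}(a_0)}\dashint_{B_{\rho}(a_0)}\bigl|\tilde A(x,y,t)-(\tilde A)_{\rho,a_0}(t)\bigr|\dx\dy\dt=\dashint_{\Lambda_{\rho'}(b_0')}\dashint_{B_{\rho'}(a_0')}\dashint_{B_{\rho'}(a_0')}\bigl|A(\bar x,\bar y,\bar t)-(A)_{\rho',a_0'}(\bar t)\bigr|\,d\bar x\,d\bar y\,d\bar t\le\delta,
\end{equation*}
where the inequality uses that $A$ is $(\delta,R)$-vanishing in $\Omega_T$ together with $\rho'\le R$ and $Q_{\rho'}(a_0',b_0')\subset\Omega_T$. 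Taking the supremum over all admissible $\rho$ and $w_0$ then shows that $\tilde A$ is $(\delta,R/r)$-vanishing in $Q_1$. I do not expect a genuine obstacle: the argument is pure bookkeeping, and the only points that deserve attention are tracking the powers of $r$ so that the universal factor $r^{n+s}$ surfaces uniformly in all five terms, checking that admissibility of test functions (compact spatial support, time interval compactly contained) is preserved by $\mathcal{T}$, and confirming the nested inclusion $Q_{r\rho}(\mathcal{T}(w_0))\subset Q_r(z_0)\subset\Omega_T$ used when pulling back the kernel oscillation.
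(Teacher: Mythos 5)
Your proof is correct, and since the paper states this lemma without proof (treating it as a routine change of variables), there is no competing argument to compare against. Your computation verifying that each of the five terms in the weak formulation acquires the common factor $r^{n+s}$, the check that $\tilde{A}$ inherits the ellipticity bounds and symmetry from $A$, and the observation that the normalized triple averages are invariant under the affine pullback (so the oscillation of $\tilde{A}$ over $Q_{\rho}(w_0)$ equals the oscillation of $A$ over $Q_{r\rho}(\mathcal{T}(w_0))$) together constitute the standard proof the authors had in mind.
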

We now introduce our main results.
\begin{thm} 
\label{cz non pa : main thm}
Let $u$ be a weak solution to problem \eqref{cz non pa : eq1}. Let $R>0$ and $q\in(2,\infty)$ be given, and 
fix $\sigma\in \left(0,\left(1-\frac{2}{q}\right)\min\left\{s-\frac{2s}{q},1-s\right\}\right)$. Then there is a constant $\delta=\delta(n,s,L,q,\sigma)\in(0,1)$ such that if $A$ is $(\delta,R)$-vanishing in $\Omega_{T}$,  $f\in L^{q}_{\mathrm{loc}}\left(0,T;L^{1}_{s}\left(\mathbb{R}^{n}\right)\right)$ with $ d_{0}f\in L^{q}_{\mathrm{loc}}\left(\frac{\dx\dy\dt}{|x-y|^{n+\sigma q}}\,;\,\Omega\times\Omega_T\right)$ and $g\in L^{\frac{q\left(n+2s+\frac{2\sigma q}{q-2}\right)}{n+4s}}_{\mathrm{loc}}(\Omega_{T})$, then  $d_{s}u\in L^{q}_{\mathrm{loc}}\left(\frac{\dx\dy\dt}{|x-y|^{n+\sigma q}}\,;\,\Omega\times\Omega_T\right)$. Moreover, there is a constant $c=c(n,s,L,q,\sigma)$ such that 
{\small\begin{equation}
\label{cz non pa : main est}
\begin{aligned}
   &\left(\dashint_{\Lambda_{\frac{r}{2}}(t_{0})}\dashint_{B_{\frac{r}{2}}(x_{0})}\int_{B_{\frac{r}{2}}(x_{0})}\left|d_{s+\sigma}u\right|^{q}\frac{\dx\dy\dt}{|x-y|^{n}}\right)^{\frac{1}{q}}\\
   &\leq c\left(\dashint_{\Lambda_{r}(t_{0})}\dashint_{B_{r}(x_{0})}\int_{B_{r}(x_{0})}\left|\frac{d_{s}u}{r^{\sigma}}\right|^{2}\frac{\dx\dy\dt}{|x-y|^{n}}\right)^{\frac{1}{2}}+c\Tail_{\infty,2s}\left(\frac{u-(u)_{B_{r}(x_{0})}(t)}{r^{s+\sigma}}\,;\, Q_{r}(z_{0})\right)\\
   &\quad+c\sup_{t\in \Lambda_{r}(t_{0})}\left(\dashint_{B_{r}(x_{0})}\frac{|u-(u)_{Q_{r}(z_{0})}|^{2}}{r^{2s+2\sigma}}\dx\right)^{\frac{1}{2}}+c\left(\dashint_{Q_{r}(z_{0})}\left(r^{{s-\sigma}}|g|\right)^{\frac{q\left(n+2s+\frac{2\sigma q}{q-2}\right)}{n+4s}}\dx\dt\right)^{\frac{n+4s}{q\left(n+2s+\frac{2\sigma q}{q-2}\right)}}\\
    &\quad+c\left(\dashint_{\Lambda_{r}(t_{0})}\dashint_{B_{r}(x_{0})}\int_{B_{r}(x_{0})}\left|{d_{\sigma}f}\right|^{q}\frac{\dx\dy\dt}{|x-y|^{n}}\right)^{\frac{1}{q}}
    +c\Tail_{q,s}\left(\frac{f-(f)_{B_{r}(x_{0})}(t)}{r^{\sigma}}\,;\, Q_{r}(z_{0})\right),
\end{aligned}
\end{equation}}
\\
\noindent
whenever $Q_{r}(z_{0})\Subset \Omega_{T}$ and $r\in(0,R]$.
\end{thm}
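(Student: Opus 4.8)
The plan is to first invoke Lemma~\ref{cz non pa : scalinglem} to reduce to $Q_r(z_0)=Q_1$, i.e.\ $r=1$ and $z_0=0$: substituting $\tilde u,\tilde f,\tilde g,\tilde A$ into an estimate on $Q_1$ and undoing the dilation produces \eqref{cz non pa : main est} with exactly the homogeneities displayed there. Set $\tau:=\frac{q\sigma}{q-2}$, so that the hypothesis on $\sigma$ becomes $\tau\in\bigl(0,\min\{s-\frac{2s}{q},\,1-s\}\bigr)$; by \eqref{cz non pa : observ}, proving $d_su\in L^q_{\mathrm{loc}}\bigl(\Omega\times\Omega_T;\frac{\dx\dy\dt}{|x-y|^{n+\sigma q}}\bigr)$ is equivalent to $D^{\tau}d_su\in L^q_{\mathrm{loc}}(\Omega\times\Omega_T;\dmutt)$, and the two associated $L^q$-quantities coincide. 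I would work throughout in the space $(\mathbb R^n\times\mathbb R^n\times\mathbb R,\dmutt)$, using that on the nonlocal-parabolic cylinders $Q_\rho(z)=B_\rho\times B_\rho\times\Lambda_\rho$ (with $|\Lambda_\rho|\simeq\rho^{2s}$) one has $\mu_{\tau,t}(Q_\rho)\simeq\rho^{n+2s+2\tau}$ and the doubling property, so that Vitali-covering and layer-cake arguments are available.

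\textbf{Step 1 (Caccioppoli, self-improvement, comparison).} Next I would record a Caccioppoli inequality for weak solutions of \eqref{cz non pa : eq1} and, by a Gehring-type argument, a small higher-integrability exponent $\varepsilon_0=\varepsilon_0(n,s,L)>0$ for $d_su$. Then, on each $Q_\rho(z)\Subset Q_1$, compare $u$ with the weak solution $w$ of the homogeneous equation whose kernel coefficient is the spatial average $(A)_{\rho,x_0}(\cdot)$ from \eqref{cz non pa : defn ava ker} — which depends on $t$ only — and which agrees with $u$ on the parabolic boundary of $Q_\rho(z)$. Testing the difference of the two equations with $w-u$, using monotonicity and the Lipschitz bound from \eqref{cz non pa : Phicond}, and estimating $\int|A-(A)_{\rho,x_0}|\,|d_su|^2$ by H\"older's inequality together with $\|A-(A)_{\rho,x_0}\|_{L^\infty}\le 2L$, the $(\delta,R)$-vanishing bound, and the higher integrability of $d_su$, one arrives at a comparison estimate
\[
\dashint_{Q_{\rho/2}(z)}|D^{\tau}d_s(u-w)|^2\,\dmutt\;\le\;c\,\delta^{\theta}\,\mathcal E\bigl(Q_\rho(z)\bigr)+c\,\mathcal D\bigl(Q_\rho(z)\bigr),
\]
where $\mathcal E$ is the $L^2$-excess of $D^{\tau}d_su$ plus the $u$-tail and the $L^\infty_tL^2_x$-oscillation of $u$, and $\mathcal D$ collects the $d_\sigma f$, $g$, and $\Tail$ quantities appearing in the right-hand side of \eqref{cz non pa : main est}; the exponent $\frac{q(n+2s+2\tau)}{n+4s}$ on $g$ is precisely the one dictated by the potential/Sobolev embedding in the $\mu_{\tau,t}$-scale needed to absorb the non-divergence datum.

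\textbf{Step 2 (Improved differentiability of the comparison function).} This is the crux and, I expect, the main obstacle. I would prove that the homogeneous solution $w$, whose kernel depends on $t$ alone, gains $\tau$ fractional derivatives:
\[
\sup_{Q_{\rho/4}(z)}|D^{\tau}d_sw|^2\;\le\;c\,\dashint_{Q_{\rho/2}(z)}|d_sw|^2\,\frac{\dx\dy\dt}{|x-y|^n}+c\,\Tail_{\infty,2s}\bigl(w-(w)_{B_\rho}(t);Q_\rho(z)\bigr)^2,
\]
valid exactly in the admissible range $\tau<\min\{s-\frac{2s}{q},1-s\}$ — the constraint $\tau<1-s$ guaranteeing that $(s+\tau)$-H\"older continuity of $w(\cdot,t)$ is meaningful, the remaining one reflecting the parabolic scaling $t\sim x^{2s}$ and the matching of exponents with Step~1. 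The proof would combine local boundedness and H\"older continuity of $w$ (De Giorgi--Nash--Moser theory for nonlocal parabolic equations, available since $\Phi$ satisfies \eqref{cz non pa : Phicond}) with a bootstrap that upgrades the H\"older exponent by exploiting the spatial translation invariance of the averaged equation — taking difference quotients in $x$ and iterating Caccioppoli-type inequalities; this is the place where the genuinely nonlocal increment of differentiability, absent for local parabolic equations, is produced.

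\textbf{Step 3 (Level-set covering and summation).} Finally, let $\lambda_0$ denote the right-hand side of \eqref{cz non pa : main est} with $r=1$. For $\lambda\ge K\lambda_0$ I would cover $\{|D^{\tau}d_su|>\lambda\}\cap Q_{1/2}$ by a Vitali family of cylinders $Q_{\rho_i}(z_i)$ selected by an exit-time argument on $\rho\mapsto\bigl(\dashint_{Q_\rho(z)}|D^{\tau}d_su|^2\dmutt\bigr)^{1/2}$, so that this energy is $\simeq\lambda$ on each $Q_{\rho_i}(z_i)$ while tails and data there are $\le\varepsilon\lambda$. Splitting $D^{\tau}d_su=D^{\tau}d_s(u-w_i)+D^{\tau}d_sw_i$ on $Q_{\rho_i/4}(z_i)$: Step~2 gives $\|D^{\tau}d_sw_i\|_{L^\infty}\le c_\ast\lambda$, so fixing the dilation constant $\Gamma\ge 2c_\ast$ (depending only on $n,s,L$) annihilates the $w_i$-part, while Chebyshev and Step~1 bound the $(u-w_i)$-part by $c\,\Gamma^{-2}(\delta^{\theta}+\varepsilon^2)\mu_{\tau,t}(Q_{\rho_i})$. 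Summing over $i$ via the Vitali disjointness and the exit-time bounds on the energy yields a good-$\lambda$ inequality
\[
\mu_{\tau,t}\bigl(\{|D^{\tau}d_su|>\Gamma\lambda\}\cap Q_{1/2}\bigr)\le C(\delta^{\theta}+\varepsilon^2)\,\mu_{\tau,t}\bigl(\{|D^{\tau}d_su|>\lambda\}\cap Q_{3/4}\bigr)+C\,\mu_{\tau,t}\bigl(\{\mathcal D_\ast>\varepsilon\lambda\}\cap Q_{3/4}\bigr)
\]
for $\lambda\ge K\lambda_0$, where $\mathcal D_\ast$ denotes the pointwise datum. Choosing $\varepsilon$ and then $\delta=\delta(n,s,L,q,\sigma)$ small enough that $C\Gamma^q(\delta^{\theta}+\varepsilon^2)<\frac12$, multiplying by $\lambda^{q-1}$, integrating in $\lambda$ over $(0,\infty)$, and using the layer-cake formula, the self-term is absorbed into $\|D^{\tau}d_su\|_{L^q(\dmutt;Q_{1/2})}^q$ and the data term is controlled by the remaining (data and $u$-tail/oscillation) quantities on the right of \eqref{cz non pa : main est}. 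A preliminary truncation of $D^{\tau}d_su$ at height $k$ (to ensure finiteness before absorbing) followed by $k\to\infty$ makes this rigorous and also yields the qualitative statement $d_su\in L^q_{\mathrm{loc}}$; undoing the normalisation of Step~0 finally gives \eqref{cz non pa : main est}.
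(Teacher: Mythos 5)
Your Step~3 proposes to cover $\{(x,y,t):|D^{\tau}d_su|>\lambda\}\cap\mathcal{Q}_{1/2}$ by a Vitali family of \emph{diagonal} cylinders $\mathcal{Q}_{\rho_i}(z_i)=B_{\rho_i}(x_i)\times B_{\rho_i}(x_i)\times\Lambda_{\rho_i}(t_i)$ produced by an exit-time argument on $\rho\mapsto(\dashint_{\mathcal{Q}_\rho(z)}|D^{\tau}d_su|^2\dmutt)^{1/2}$. This cannot work. The function $D^{\tau}d_su$ lives on $\Omega\times\Omega\times(0,T)$, and a diagonal cylinder $\mathcal{Q}_{\rho_i}(z_i)$ only contains points $(x,y,t)$ with $|x-y|\lesssim\rho_i$. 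At high levels $\lambda$ the exit-time radii $\rho_i$ are small, so the off-diagonal portion of the super-level set (where $|x-y|$ is bounded below) is not touched by any such family. A genuinely new device is needed to handle the off-diagonal region, and it is precisely here that the paper spends most of its technical effort.

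\textbf{What the paper does instead.}
Proposition~\ref{cz non pa : coveringL} covers the level set by \emph{two} families: diagonal cylinders $\mathcal{Q}_{\rho_i}(z_i)$ and off-diagonal cylinders $\mathcal{Q}_{\widetilde r_j}(x_{1,j},x_{2,j},t_{0,j})$ with $B_{\widetilde r_j}(x_{1,j})$ and $B_{\widetilde r_j}(x_{2,j})$ well separated. For the off-diagonal family the exit-time argument uses a functional $\Theta_{OD}$ (\ref{cz non pa : func-C}) that is \emph{not} simply the $L^2$-average of $D^{\tau}d_su$: it contains additional correction terms $E_{p,\tau}(u;Q_r(x_d,t_0))$ attached to the \emph{diagonal} cylinders $Q_r(x_1,t_0)$ and $Q_r(x_2,t_0)$, weighted by a geometric factor $\mathfrak{A}(\mathcal{B}_r)$. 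These corrections are forced by the reverse H\"older inequality on off-diagonal cylinders (Lemma~\ref{cz non pa : off rever lem}), because the $L^{p_\#}$-norm of $D^{\tau}d_su$ on a separated pair $B_r(x_1)\times B_r(x_2)$ is estimated through the oscillation of $u$ on the two diagonal blocks $B_r(x_d)$, and those oscillations do not shrink under the exit-time selection. The correction terms do not come from any exit-time argument and therefore cannot be absorbed by a Chebyshev bound; they are controlled instead by a combinatorial counting argument (Lemma~\ref{cz non pa : dist of ndl} and Steps~5--7 of Proposition~\ref{cz non pa : coveringL}), which further requires the diagonal exit-time cylinders to be inflated by the non-trivial factor $2^{j_0}$ so that rigorous tail estimates (\eqref{cz non pa : findj0u}, \eqref{cz non pa : findj0f}) can absorb the $L^2$-oscillation-by-sup-norm term. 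None of this appears in your sketch, and without it the layer-cake integral $\int_0^\infty\lambda^{q-1}\mu_{\tau,t}(\{|D^{\tau}d_su|>\lambda\})\,d\lambda$ over the off-diagonal region is simply unbounded.

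\textbf{Two further structural mismatches.}
First, you propose a good-$\lambda$ inequality with gain factor $\Gamma$ and self-improvement in one shot to $L^q$, whereas the paper performs a finite bootstrap $p_h\to p_{h+1}=p_h(1+s/n)$, stopping at $p_{h_q}\geq q$ (Lemma~\ref{cz non pa : ind prof}). This is not cosmetic: the off-diagonal reverse H\"older only improves from $p$ to the parabolic Sobolev conjugate $p_\#=p(1+\frac{2s}{n})$, not to an arbitrary exponent, so the gain per step is fixed by the embedding and the iteration is unavoidable. Second, your comparison Step~1 replaces $u$ directly by the solution of the frozen-coefficient equation, estimating the error via $\|A-(A)_{\rho,x_0}\|_{L^\infty}\le 2L$ and higher integrability. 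The paper instead uses a two-step comparison (Lemma~\ref{cz non pa : comp}): first against the solution $w$ of the homogeneous equation with the \emph{same} coefficient $A$ (so the error involves only the data), then against the solution $v$ of the frozen-coefficient equation where the error involves $|A-(A)_2(t)|$ and, crucially, a \emph{self-improved} $L^{2(1+\epsilon_0)}$ bound on $D^{\tau}d_sw$ (via Theorem~\ref{cz non pa : selfprof}) to make H\"older applicable with the small BMO quantity. Your one-step version, as phrased, does not close because the product $|A-(A)_{\rho,x_0}||d_su|^2$ must be handled by H\"older against the higher-integrability exponent for the \emph{homogeneous comparison function}, not for $u$ itself, and $u$'s higher integrability is exactly what is being proved.

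In summary, the skeleton (normalisation, Caccioppoli/higher-integrability, comparison, covering, layer-cake) matches the paper, but Step~3 as written omits the off-diagonal covering, the off-diagonal reverse H\"older, and the combinatorial control of the correction terms, which together constitute the main innovation of the paper; patching these in would amount to reproducing Section~\ref{cz non pa :lqsection} and Proposition~\ref{cz non pa : coveringL} almost in full.
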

% \begin{rmk}
% We first note that if $h\in W^{s,2}(\mathbb{R}^{n})$ is a solution to 
% \begin{equation*}
%     (-\Delta)^{s}h=(-\Delta)^{\frac{s}{2}}f,
% \end{equation*}
% then we have that
% \begin{equation*}
%     f\in H^{\sigma,q}(\mathbb{R}^{n})\Longleftrightarrow h\in H^{s+\sigma,q}(\mathbb{R}^{n})
% \end{equation*}
% for any $q\geq 2$ and $\sigma>0$ via the standard Fourier method.
% Suppose that $\tilde{A}=1$, $\Phi(\xi)=\xi$ and the right-hand side is given by an autonomous function $f=f(x)\in W^{\sigma,q}(\mathbb{R}^{n})$. Then a particular class of solutions is given by stationary solutions to 
% \begin{equation*}
%     (-\Delta)^{s}u=(-\Delta)^{\frac{s}{2}}f.
% \end{equation*}
% By the embedding, we observe that $f\in H^{\simga_{1},q_{1}}(\mathbb{R}^{n})$ for any $\sigma_{1}\in(0,\simga)$ and $q_{1}\in(q,\infty)$. Therefore, $u\in H^{s+\simga_{1},q_{1}}\subset W_{\mathrm{loc}}^{s+\sigma_{2},q}.$ 
% \end{rmk}

\begin{rmk}\label{cz non pa : remk.1}
A few comments are in order for the restricted range of  $\sigma$ in Theorem \ref{cz non pa : main thm}. In the elliptic case, we observe that a similar type of result holds for all $\sigma\in\left(0,\min\left\{s,1-s\right\}\right)$ (see \cite[Theorem 1.2]{BKc}). However, in our case, to handle the nonlocal parabolic tail induced by the non-homogeneous term $f$, we have to impose the condition $\tau\in\left(0,s-\frac{2s}{q}\right)$ (see \eqref{cz non pa : oneretildeqnec} below). Therefore, from the observation \eqref{cz non pa : observ}, we deduce that the Calder\'on-Zygmund type estimate \eqref{cz non pa : main est} holds under $\sigma\in \left(0,\left(1-\frac{2}{q}\right)\min\left\{s-\frac{2s}{q},1-s\right\}\right)$.  In this regard, if $f=0$, then this restriction is removed and the results hold for a broader range of $\sigma$ (see Theorem \ref{cz non pa : main thm wrt g} below).  
\end{rmk}

\begin{rmk}
As we pointed out earlier, in the elliptic case, from regularity results for \eqref{cz non pa : elleq} with $g=0$, we deduce a higher regularity of a weak solution $u$ to \eqref{cz non pa : elleq}. However, in the parabolic case,  it does not hold. More specifically, if $g\in L^{\tilde{q}}_{\mathrm{loc}}(\Omega_{T})$ for some $\tilde{q}>2$, then we find a solution $f\in L^{\tilde{q}}_{\mathrm{loc}}\left(0,T;H^{s,\tilde{q}}_{\mathrm{loc}}(\Omega)\right)\cap L^{\tilde{q}}_{\mathrm{loc}}(0,T;L^{1}_{s}(\mathbb{R}^{n}))$ to
\begin{equation*}
    (-\Delta)^{\frac{s}{2}}f(\cdot,t)=g(\cdot,t)\quad\text{in }\Omega
\end{equation*}
for a.e. $t\in(0,T)$ (see \cite[Subsection 1.2]{BKc}). This implies that $u$ is a weak solution to
\begin{equation*}
    u_{t}+\mathcal{L}_{A}^{\Phi}u=(-\Delta)^{\frac{s}{2}}f.
\end{equation*} By Theorem \ref{cz non pa : main thm}, we deduce that 
\[u\in L^{\tilde{q}}_{\mathrm{loc}}\left(0,T;W_{\mathrm{loc}}^{s+\sigma,\tilde{q}}(\Omega)\right)\text{ for any } \sigma\in \left(0,\left(1-\frac{2}{\tilde{q}}\right)\min\left\{s-\frac{2s}{\tilde{q}},1-s\right\}\right).\] 
We now select $\tilde{q}=\frac{q\left(n+2s+\frac{2\sigma q}{q-2}\right)}{n+4s}$. Since $\tilde{q}<q$, we do not obtain the desired result given in Theorem \ref{cz non pa : main thm wrt g}. Therefore, we consider a more general non-homogeneous term which consists of $(-\Delta)^{\frac{s}{2}}f$ and $g$.
\end{rmk}

On account of Remark \ref{cz non pa : remk.1}, we obtain an improved Sobolev regularity when we consider only non-divergence data $g$.
\begin{thm}
\label{cz non pa : main thm wrt g}
Let $u$ be a weak solution to problem \eqref{cz non pa : eq1} with $f=0$. Let $R>0$ and $q\in(2,\infty)$ be given, and 
fix $\sigma\in \left(0,\left(1-\frac{2}{q}\right)\min\left\{s,1-s\right\}\right)$. Then there is a constant $\delta=\delta(n,s,L,q,\sigma)\in(0,1)$ such that if $A$ is $(\delta,R)$-vanishing in $\Omega_{T}$ and $g\in L^{\frac{q\left(n+2s+\frac{2\sigma q}{q-2}\right)}{n+4s}}_{\mathrm{loc}}(\Omega_{T})$, then  $u\in L^{q}_{\mathrm{loc}}\left(0,T;W^{s+\sigma,q}_{\mathrm{loc}}(\Omega)\right)$ with the estimate \eqref{cz non pa : main est}.
% In particular, there is a constant $c=c(n,s,L,q,\sigma)$ such that 
% {\small\begin{equation*}
% \begin{aligned}
%    &\left(\dashint_{\Lambda_{\frac{r}{2}}(t_{0})}\dashint_{B_{\frac{r}{2}}(x_{0})}\int_{B_{\frac{r}{2}}(x_{0})}\left|d_{s+\sigma}u\right|^{q}\frac{\dx\dy\dt}{|x-y|^{n}}\right)^{\frac{1}{q}}\\
%    &\leq c\left(\dashint_{\Lambda_{r}(t_{0})}\dashint_{B_{r}(x_{0})}\int_{B_{r}(x_{0})}\left|\frac{d_{s}u}{r^{\sigma}}\right|^{2}\frac{\dx\dy\dt}{|x-y|^{n}}\right)^{\frac{1}{2}}+c\Tail_{\infty,2s}\left(\frac{u-(u)_{B_{r}(x_{0})}(t)}{r^{s+\sigma}}\,;\, Q_{r}(z_{0})\right)\\
%    &\quad+c\sup_{t\in \Lambda_{r}(t_{0})}\left(\dashint_{B_{r}(x_{0})}\frac{|u-(u)_{B_{r}(x_{0})}(t)|^{2}}{r^{s+\sigma}}\dx\right)^{\frac{1}{2}}+c\left(\dashint_{Q_{r}(z_{0})}\left(r^{\frac{s-\sigma}{q}}|g|\right)^{\frac{q\left(n+2s+\frac{2\sigma q}{q-2}\right)}{n+4s}}\dx\dt\right)^{\frac{n+4s}{q\left(n+2s+\frac{2\sigma q}{q-2}\right)}},
% \end{aligned}
% \end{equation*}}
% \\
% \noindent
% whenever $Q_{r}(z_{0})\Subset \Omega_{T}$ and $r\in(0,R]$.
\end{thm}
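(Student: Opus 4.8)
\textbf{Proof strategy for Theorem \ref{cz non pa : main thm wrt g}.}
The plan is to obtain Theorem \ref{cz non pa : main thm wrt g} as a corollary of Theorem \ref{cz non pa : main thm} by specializing to $f=0$, and to exploit the fact that with no fractional-Laplacian datum present the nonlocal parabolic tail of $f$ disappears entirely from the argument. Concretely, I would first revisit the proof of Theorem \ref{cz non pa : main thm} and isolate exactly where the restriction $\tau\in\left(0,s-\frac{2s}{q}\right)$ (equivalently $\sigma\in\left(0,\left(1-\frac{2}{q}\right)\left(s-\frac{2s}{q}\right)\right)$) is forced: by Remark \ref{cz non pa : remk.1} this comes solely from estimating $\Tail_{q,s}\!\left(\tfrac{f-(f)_{B_r(x_0)}(t)}{r^\sigma};Q_r(z_0)\right)$ and the inequality \eqref{cz non pa : oneretildeqnec}. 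When $f\equiv0$ this tail term is identically zero, the term $\dashint\!\!\int |d_\sigma f|^q$ vanishes, and $d_0 f\in L^q_{\mathrm{loc}}(\cdots)$ is automatic, so the hypotheses on $f$ in Theorem \ref{cz non pa : main thm} are vacuously satisfied and the only surviving constraint on $\sigma$ is the one coming from the homogeneous and $g$-parts of the argument, which is $\sigma\in\left(0,\left(1-\frac{2}{q}\right)\min\{s,1-s\}\right)$.

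Next I would feed this into Theorem \ref{cz non pa : main thm}: for any such $\sigma$, choosing $\delta=\delta(n,s,L,q,\sigma)$ as in Theorem \ref{cz non pa : main thm} and assuming $A$ is $(\delta,R)$-vanishing and $g\in L^{\frac{q(n+2s+\frac{2\sigma q}{q-2})}{n+4s}}_{\mathrm{loc}}(\Omega_T)$, we directly obtain $d_su\in L^q_{\mathrm{loc}}\!\left(\frac{\dx\dy\dt}{|x-y|^{n+\sigma q}};\Omega\times\Omega_T\right)$ together with estimate \eqref{cz non pa : main est}, in which the last two terms (the $d_\sigma f$ integral and the $f$-tail) are zero and hence drop. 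It then remains only to translate the pair-space conclusion into the stated fractional Sobolev membership $u\in L^q_{\mathrm{loc}}(0,T;W^{s+\sigma,q}_{\mathrm{loc}}(\Omega))$; this is exactly the content of the observation \eqref{cz non pa : observ} together with the definition of the Gagliardo seminorm, since $d_{s+\sigma}u\in L^q_{\mathrm{loc}}\!\left(\frac{\dx\dy\dt}{|x-y|^{n}};\Omega\times\Omega_T\right)$ on compactly contained cylinders is precisely local finiteness of $\int\!\int |u(x,t)-u(y,t)|^q|x-y|^{-n-(s+\sigma)q}\dx\dy$ for a.e.\ slice, integrated in $t$.

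The only genuine work, as opposed to bookkeeping, is verifying that nothing in the proof of Theorem \ref{cz non pa : main thm} secretly uses the constraint $s-\frac{2s}{q}\le 1-s$ beyond the $f$-tail estimate — i.e.\ that the covering/measure-density machinery, the comparison estimates with the homogeneous frozen-coefficient problem, and the treatment of the $g$ term are all valid on the enlarged range $\sigma\in\left(0,\left(1-\frac{2}{q}\right)\min\{s,1-s\}\right)$. I expect this to go through unchanged: the exponent $\frac{q(n+2s+\frac{2\sigma q}{q-2})}{n+4s}$ on $g$ is tailored precisely so the Sobolev/parabolic-scaling balance holds for any admissible $(\sigma,q)$, and the homogeneous-equation ingredients (higher integrability and fractional differentiability of $d_su$ for $\mathcal L^\Phi_{A}$-type equations, and the $(\delta,R)$-vanishing perturbation argument) never appeal to the upper bound $s-2s/q$. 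So the main obstacle is simply to state this reduction cleanly and to cite the correct intermediate lemmas from the proof of Theorem \ref{cz non pa : main thm} whose hypotheses on $\sigma$ are inherited; once that is pinned down, the proof is a short specialization. $\hfill\qed$
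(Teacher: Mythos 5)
Your approach coincides with the paper's: verify that the restriction $\tau\in\left(0,s-\frac{2s}{q}\right)$ in the proof of Theorem \ref{cz non pa : main thm} is used only to handle $f$ (the paper invokes Remark \ref{cz non pa : taurmk} and points to the Fubini step \eqref{cz non pa : tildeqfirstcond}, which requires $\tilde q<q$ from \eqref{cz non pa : tildeqprop2}; your pointer to the $f$-tail bound \eqref{cz non pa : oneretildeqnec} is the other manifestation of the same constraint, since both hinge on properties of $\tilde q$ that trace back to \eqref{cz non pa : tautail}), and then run the argument with $f\equiv 0$ on the enlarged range $\sigma\in\left(0,\left(1-\frac{2}{q}\right)\min\{s,1-s\}\right)$. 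One point of phrasing deserves care: this must genuinely be a re-run of the \emph{proof} of Theorem \ref{cz non pa : main thm} (that is, of Lemma \ref{cz non pa : ind prof} together with the bootstrap, now with $\tau\in(0,\min\{s,1-s\})$) rather than an application of Theorem \ref{cz non pa : main thm} as a black box, since that theorem's hypotheses exclude $\sigma$ in the enlarged range and hence do not supply the estimate there; your middle paragraph reads like the latter, while your opening and closing paragraphs correctly describe the former, which is exactly what the paper does.
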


\begin{rmk}
Let us compare the local Calder\'on-Zygmund theory and the nonlocal ones. 
It is known that if $v$ is a weak solution to 
\begin{equation*}
    v_{t}-\ddiv(BDv)=g,
\end{equation*}
then we obtain the following implication
\begin{equation*}
    g\in L^{\frac{q(n+2)}{n+4}}_{\mathrm{loc}}(\Omega_{T})\Longrightarrow v\in L^{q}_{\mathrm{loc}}(0,T;W^{1,q}_{\mathrm{loc}}(\Omega)),
\end{equation*}
whenever the coefficient $B$ is $\delta$-vanishing for sufficiently small $\delta$ depending only on the given data (see, for instance, \cite{Bg}). We observe that in the limiting case when $\sigma\to 0$ and $s\to 1$, the result in Theorem \ref{cz non pa : main thm wrt g} is the same as the local one.  However, in the nonlocal case, although the kernel coefficient is discontinuous, Theorem \ref{cz non pa : main thm wrt g} implies that the solution $u$ obtains not only higher integrability but also higher differentiability along the Sobolev-scale. This is in some sense a purely nonlocal phenomenon, since in order to observe such results in the local case, the coefficient $B$ is assumed to have some fractional regularity in the literature (see \cite{KMu}). A similar phenomenon for the nonlocal equations is observed in \cite{Nv,Ni,KMS1,ABES} and references therein. 
\end{rmk}

\subsection{Working methods and novelties}
We now briefly explain our approach to obtain the desired estimates \eqref{cz non pa : selfest} and \eqref{cz non pa : main est}.  As usual, keeping the relation \eqref{cz non pa : observ} in mind, to prove that the fractional gradient term $d_{s}u$ is in $ L^{q}_{\mathrm{loc}}\left(\Omega\times\Omega_{T}\,;\,\frac{\dx\dy\dt}{|x-y|^{n+\sigma q}}\right)$, it suffices to show that 
\begin{equation*}
    \int_{0}^{\infty}\lambda^{q-1}\mu_{\tau,t}\left(\left\{(x,y,t)\in\mathcal{Q}\,:\,|D^{\tau}d_{s}u|(x,y,t)>\lambda\right\}\right)\,d\lambda<\infty
\end{equation*}
holds for any  $\mathcal{Q}=B\times B\times\Lambda$, where $B\Subset \Omega$ is a ball and $\Lambda\Subset (0,T)$ is a time interval.  To do this, we construct coverings for upper level sets of $|D^{\tau}d_{s}u|$  inspired by the maximal function free technique as introduced in \cite{AMp}. 
Using an exit-time argument, we are able to construct coverings for the diagonal part of the upper level sets. For the off-diagonal part, we use a reverse H\"older-type inequality (see \eqref{cz non pa : reverseestimate}, below) which is obtained regardless of the information that $u$ solves \eqref{cz non pa : eq1}. As in \cite[Lemma 5.3]{KMS1}, this inequality contains additional correction terms involving diagonal cylinders which induce some serious difficulties, as such cylinders do not come from any exit-time argument. We would like to mention that in the elliptic case, Calder\'on-Zygmund cube decomposition and an involved combinatorial argument are used to overcome these difficulties (see \cite{KMS1}). However, in the parabolic case, there are additional difficulties, since the correction terms contain $L^{2}$-oscillation integrals by sup norm term (see the second term of the right-hand side in \eqref{cz non pa : peneftnlofu} and Lemma \ref{cz non pa : off rever lem} below). To this end, we employ Vitali's covering lemma along with an exit-time argument instead of Calder\'on-Zygmund cube decomposition in order to construct coverings for upper-level sets of $|D^{\tau}d_{s}u|$. We would like to mention that this argument is new even in the elliptic case, and we believe that this argument can be applied to degenerate or singular nonlocal parabolic equations. We also point out that due to the appearance of the additional $L^{2}$-oscillation integrals by sup norm term,  functionals used to apply an exit-time argument also contain a term of a similar kind which is usually not observed in the local parabolic problems (see \eqref{cz non pa : defnTheta}). We will elaborate on how to take care of this term while obtaining a good bound on the measure of exit-time cylinders in Remarks \ref{cz non pa : imprmk1} and \ref{cz non pa : imprmk2} below.
Moreover, we use a non-trivial exit time radius in the covering arguments in light of the rigorous tail estimates as in \eqref{cz non pa : findj0u} and \eqref{cz non pa : findj0f}, since the additional $L^2$-oscillation terms by sup norm are estimated by the sum of $L^{2}$-integral of $d_{s}u$ and $d_{0}f$, $L^{\frac{2(n+2s)}{n+4s}}$-integral of $g$  and tail terms of $u$ and $f$ (see Lemma \ref{cz non pa : RHI}).
Consequently, by constructing suitable coverings, we are able to make use of comparison estimates, which further require some higher H\"older continuity estimates along with a self-improving property for limiting equations, and a boot strap argument to finally obtain the desired result (see Section \ref{cz non pa :lqsection}).

We would like to remark that a similar covering argument along with Gerhing's Lemma (in the spirit of \cite{KMS1}) can be used to obtain a self-improving property of weak solutions to \eqref{cz non pa : eq1} without imposing any regularity assumption on the kernel coefficient $A$. Indeed, for the sake of completion, we prove a self-improving property of weak solutions to \eqref{cz non pa : eq1} with $f=g=0$ (see Appendix \ref{cz non pa : appen a}).
In addition, this result generalizes the ones given in \cite{ABES} by allowing nonlinear structure assumptions on the nonlocal operator.

\subsection{Plan of the paper}
This paper is organized as follows. In Section 2, we introduce some notations, embedding inequalities, properties of the measure $\mu_{\tau,t}$, tail estimates. In Section 3, we derive some energy estimates. Section 4 is devoted to establishing some comparison estimates. In Section 5, we construct coverings of upper level sets of fractional gradients for weak solutions. Section 6 contains the proof of the main theorem. We end the paper with two appendices. In the first appendix, we give the proof of a self-improving property for weak solutions to \eqref{cz non pa : eq1} with $f=g=0$, whereas the second appendix deals with the existence of a weak solution to the corresponding boundary value problem of \eqref{cz non pa : eq1}.

\section{Preliminaries and Notations}
As usual, we write $c$ to mean a general constant equal to or bigger than 1 and it possibly changes from line to line. In addition, we employ parentheses to denote the relevant dependencies on parameters such as $c=c(n,s)$, and we denote
\begin{equation*}
    \mathsf{data}=\mathsf{data}(n,s,L,q,\sigma).
\end{equation*}
For $a,b\in\mathbb{R}^+$, by the notation $a\approx_{n,s} b$, we mean  that there is a constant $c=c(n,s)$ such that $\frac{b}{c}\leq a\leq cb$.
A generic point $z\in\mathbb{R}^{n+1}$ will be denoted by 
\[z=(x,t)\in \mathbb{R}^n\times\mathbb{R}.\]
We write parameters $i,j,l,k$ and $m$ to mean nonnegative integers.
We denote a time interval as 
\[\Lambda_{r}(t_{0})\coloneqq (t_{0}-r^{2s},t_{0}+r^{2s}).\]
The parabolic cylinder is defined by
\[Q_{r}(z_{0})=B_{r}(x_{0})\times \Lambda_{r}(t_{0}),\]
where $B_{r}(x_{0})$ denotes the ball in $\mathbb{R}^{n}$ centered at $x_{0}$ with radius $r$. We write 
\begin{equation*}
    \mathcal{B}_{r}(x_{0},y_{0})=B_{r}(x_{0})\times B_{r}(y_{0}),\quad \mathcal{B}_{r}(x_{0})=B_{r}(x_{0})\times B_{r}(x_{0})
\end{equation*}
and
\begin{equation*}
    \mathcal{Q}_{r}(x_{0},y_{0},t_{0})=\mathcal{B}_{r}(x_{0},y_{0})\times \Lambda_{r}(t_{0}),\quad\mathcal{Q}_{r}(x_{0},t_{0})=\mathcal{B}_{r}(x_{0})\times \Lambda_{r}(t_{0})
\end{equation*}
for any $x_{0},y_{0}\in\mathbb{R}^{n}$, $t_{0}\in \mathbb{R}$ and $r>0$.
For a given measurable function $h:\Omega_{T}\to\mathbb{R}$, we write for any $Q_{r}(z_{0})\subset\Omega_{T}$,
\begin{equation*}
    (h)_{B_{r}(x_{0})}(t)=\dashint_{B_{r}(x_{0})}h(x,t)\dx\quad\text{and}\quad (h)_{Q_{r}(x_{0})}=\dashint_{Q_{r}(z_{0})}h(z)\dz.
\end{equation*}
We denote the parabolic Sobolev conjugate of $p\in[1,\infty)$ by 
\begin{equation}
\label{cz non pa : paraconj}
p_{\#}=p\left(1+\frac{2s}{n}\right).
\end{equation}

We are going to mention some lemmas starting with the following embedding result.
\begin{lem}(see \cite[Lemma 2.3]{DZZ})
\label{cz non pa : embed lem}
Let $p\in[1,\infty)$ and $h\in L^{p}(0,T;W^{s,p}(B_{r}))\cap L^{\infty}(0,T;L^{2}(B_{r}))$. Then there is a constant $c=c(n,s,p)$ such that
{\small\begin{equation*}
\begin{aligned}
    \int_{0}^{T}\dashint_{B_{r}}|h|^{p_{\#}}\dz&\leq c\left(r^{sp} \int_{0}^{T}\int_{B_{r}}\dashint_{B_{r}}\frac{|h(x,t)-h(y,t)|^{p}}{|x-y|^{n+sp}}\dx\dy\dt+ \int_{0}^{T}\dashint_{B_{r}}|h|^{p}\dz\right)\times \left(\sup_{t\in(0,T)}\dashint_{B_{r}}|h|^{2}\dx\right)^{\frac{sp}{n}}.
\end{aligned}
\end{equation*}}
\noindent
In particular, we have that
{\small\begin{equation*}
\begin{aligned}
    \int_{0}^{T}\dashint_{B_{r}}|h-(h)_{B_{r}}(t)|^{p_{\#}}\dz&\leq c\left(r^{sp} \int_{0}^{T}\int_{B_{r}}\dashint_{B_{r}}\frac{|h(x,t)-h(y,t)|^{p}}{|x-y|^{n+sp}}\dx\dy\dt\right)\\
    &\quad\times \left(\sup_{t\in(0,T)}\dashint_{B_{r}}|h-(h)_{B_{r}}(t)|^{2}\dx\right)^{\frac{sp}{n}}.
\end{aligned}
\end{equation*}}
\end{lem}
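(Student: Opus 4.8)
The statement to prove is the parabolic Sobolev-type embedding (Lemma \ref{cz non pa : embed lem}), cited from \cite[Lemma 2.3]{DZZ}. Let me sketch how I would prove it.

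\medskip

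The plan is to prove the fractional parabolic Gagliardo--Nirenberg--Sobolev inequality by interpolating a spatial fractional Sobolev embedding against the $L^\infty_t L^2_x$ bound, then integrating in time. First I would recall the spatial fractional Sobolev--Poincar\'e inequality: for $h(\cdot,t)\in W^{s,p}(B_r)$ one has, with $p^{*}=\frac{np}{n-sp}$ when $sp<n$ (and an arbitrary large exponent when $sp\geq n$),
\[
\left(\dashint_{B_r}|h(x,t)|^{p^{*}}\dx\right)^{\frac{1}{p^{*}}}\leq c\,r^{s}\left(\int_{B_r}\dashint_{B_r}\frac{|h(x,t)-h(y,t)|^{p}}{|x-y|^{n+sp}}\dx\dy\right)^{\frac1p}+c\left(\dashint_{B_r}|h(x,t)|^{p}\dx\right)^{\frac1p}.
\]
Then I would interpolate $L^{p_{\#}}$ between $L^{p^{*}}$ and $L^{2}$ in the spatial variable: writing $p_{\#}=p(1+\tfrac{2s}{n})$, one checks the exponent identity $\frac{1}{p_{\#}}=\frac{\theta}{p^{*}}+\frac{1-\theta}{2}$ holds with the choice of $\theta$ making $(1-\theta)p_{\#}=\tfrac{2sp}{n}$, i.e. $\theta$ chosen so that the $L^2$-factor carries exactly the power $\frac{sp}{n}$ appearing in the statement. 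Hence
\[
\dashint_{B_r}|h(x,t)|^{p_{\#}}\dx\leq c\left(\dashint_{B_r}|h(x,t)|^{p^{*}}\dx\right)^{\frac{\theta p_{\#}}{p^{*}}}\left(\dashint_{B_r}|h(x,t)|^{2}\dx\right)^{\frac{sp}{n}}.
\]

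\medskip

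Next I would combine these: bound the $L^{p^{*}}$ term using the Sobolev--Poincar\'e inequality, bound the $L^2$ factor pointwise in $t$ by $\sup_{t}\dashint_{B_r}|h|^2\dx$, pull this supremum out of the time integral, and integrate the remaining $t$-dependent quantity over $(0,T)$. The exponent bookkeeping must be arranged so that $\frac{\theta p_{\#}}{p^{*}}\cdot\frac{1}{p}\cdot p = 1$, i.e. after raising the Sobolev--Poincar\'e bound to the relevant power the Gagliardo seminorm and the $L^p$ norm appear to the first power inside the time integral; this is exactly the scaling that produces the displayed right-hand side. For the mean-zero version, replace $h$ by $h-(h)_{B_r}(t)$; the $L^p$ correction term in Sobolev--Poincar\'e is then absorbed into the Gagliardo seminorm by the fractional Poincar\'e inequality $\dashint_{B_r}|h-(h)_{B_r}(t)|^p\dx\leq c\,r^{sp}\int_{B_r}\dashint_{B_r}\frac{|h(x,t)-h(y,t)|^p}{|x-y|^{n+sp}}\dx\dy$, which removes the extra $L^p$ volume term and yields the cleaner second inequality.

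\medskip

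I expect the main obstacle to be handling the non-coercive regime $sp\geq n$, where the honest Sobolev conjugate $p^{*}$ does not exist: there one must instead use the embedding $W^{s,p}(B_r)\hookrightarrow L^{m}(B_r)$ for a sufficiently large finite $m$ (any $m<\infty$ if $sp=n$, and $m=\infty$ if $sp>n$), chosen large enough that $p_{\#}<m$ so the interpolation between $L^m$ and $L^2$ still goes through with the same power $\frac{sp}{n}$ on the $L^2$ factor; one then has to verify that the scaling of the constants in $r$ is unaffected by this substitution. A secondary technical point is tracking the correct power of $r$: this is pinned down by the homogeneity of each term under $h\mapsto h(r\,\cdot\,)$, so a scaling/dimensional-analysis check confirms that the factor $r^{sp}$ in front of the Gagliardo seminorm is the only $r$-weight that can appear, which is a useful consistency check on the exponent computation rather than a source of real difficulty.
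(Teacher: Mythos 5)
The paper cites Lemma \ref{cz non pa : embed lem} from \cite{DZZ} without reproducing a proof, so there is no in-text argument to compare against. Your strategy --- spatial Sobolev--Poincar\'e for $W^{s,p}(B_r)$, interpolate $L^{p_\#}$ between $L^{p^*}$ and $L^2$, pull the $L^\infty_t L^2_x$ supremum out, integrate in time, and use fractional Poincar\'e for the mean-zero version --- is the standard route, and your exponent bookkeeping is correct: with $\theta=\frac{n}{n+2s}$ one has $\frac{1}{p_\#}=\frac{\theta}{p^*}+\frac{1-\theta}{2}$, $\theta p_\#=p$, and $(1-\theta)\frac{p_\#}{2}=\frac{sp}{n}$, so the Sobolev--Poincar\'e bound enters to the first power and the $L^2$ average to the power $\frac{sp}{n}$, as stated.

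Your contingency for the regime $sp\geq n$ does not work as written, and this regime is not vacuous here: the lemma is invoked in Lemma \ref{cz non pa : off rever lem} with a generic exponent $p\in[2,q]$, so $sp\geq n$ can occur once $q$ is large. The difficulty is that if you replace $L^{p^*}$ by $L^m$ and still insist that the $L^2$ factor carry the power $\frac{sp}{n}$, the two constraints $\frac{1}{p_\#}=\frac{\theta}{m}+\frac{1-\theta}{2}$ and $(1-\theta)\frac{p_\#}{2}=\frac{sp}{n}$ jointly force $m=\frac{np}{n-sp}=p^*$; $m$ is not a free parameter, and with any larger finite $m$ the $L^2$ factor acquires a strictly smaller exponent than $\frac{sp}{n}$, which is not the claimed estimate. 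The clean repair is to bypass the intermediate $L^{p^*}$ exponent and invoke the fractional Gagliardo--Nirenberg interpolation inequality directly on $B_1$,
\begin{equation*}
\|h\|_{L^{p_\#}(B_1)}\leq c\left([h]_{W^{s,p}(B_1)}+\|h\|_{L^{p}(B_1)}\right)^{\frac{n}{n+2s}}\|h\|_{L^{2}(B_1)}^{\frac{2s}{n+2s}},
\end{equation*}
which is valid for all $s\in(0,1)$ and $p\in[1,\infty)$ on a bounded domain, with no sign restriction on $\frac{1}{p}-\frac{s}{n}$, and which specializes to your two-step argument when $sp<n$. Raising this to the $p_\#$-th power (using $p_\#\cdot\frac{n}{n+2s}=p$ and $p_\#\cdot\frac{2s}{n+2s}=\frac{2sp}{n}$), scaling from $B_1$ to $B_r$ exactly as you describe, bounding the $L^2$ factor by its supremum in $t$, and then integrating in time gives the lemma in all cases.
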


Next, we list some properties of the measure $\mu_{\tau,t}$.

%%%%%%%%%%%%%%%%%%%measure
\begin{lem}There exists a constant $C_{n}$ depending only on $n$ such that
\begin{enumerate}
\item For any $x_{0}\in\mathbb{R}^{n}$, $t_0\in\mathbb{R}$  and $R>0$, the following holds
\begin{equation}
\label{cz non pa : size of b mut}
\mu_{\tau,t}\left(\mathcal{Q}_{R}(x_{0},t_0)\right)=C_{n}\frac{R^{n+2s+2\tau}}{\tau}.
\end{equation}
\item Let $\rho$ and $R$ be any positive numbers, and let $x_{0}\in\mathbb{R}^{n}$. Then 
\begin{equation}
\label{cz non pa : doubling}
\frac{\mu_{\tau,t}\left(\mathcal{Q}_{R}(x_{0},t_0)\right)}{\mu_{\tau,t}\left(\mathcal{Q}_{\rho}(x_{0},t_0)\right)}=\left(\frac{R}{\rho}\right)^{n+2s+2\tau}.
\end{equation}
\item Let $\mathcal{K}_{r}(x_{0},y_{0})$ be any cube in $\mathcal{B}_{R}$ for $r,R>0$ and $x_{0},y_{0}\in\mathbb{R}^{n}$. Then 
\begin{equation}
\label{cz non pa : inclusion measure}
    \frac{\mu_{\tau,t}\left(\mathcal{Q}_{R}\right)}{\mu_{\tau,t}\big(\mathcal{K}_{r}(x_{0},y_{0})\times \Lambda_{r}\big)}\leq 2^{n}\frac{C_{n}}{\tau}\left(\frac{R}{r}\right)^{2n+2s}.
\end{equation}
\item Let $a\geq1$. Then we have 
\begin{align}\label{cz non pa : meascom}
    \mu_{\tau,t}\left(\mathcal{Q}_{aR}(x_0,y_0,t_0)\right)\leq {ca^{2n+2s}}{\tau}^{-1}\mu_{\tau,t}\left(\mathcal{Q}_{R}(x_0,y_0,t_0)\right)
\end{align}
for some constant $c=c(n)$.
\end{enumerate}
\end{lem}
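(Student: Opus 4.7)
All four claims reduce to explicit evaluation of the measure $\mu_{\tau,t}$ via Fubini, scaling, and elementary distance estimates, so nothing substantial is needed beyond bookkeeping of constants.

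For (1), I would factor $\mu_{\tau,t}(\mathcal{Q}_R(x_0,t_0)) = |\Lambda_R|\cdot \mu_\tau(\mathcal{B}_R(x_0))$, observing that $|\Lambda_R|=2R^{2s}$. The spatial piece is translation invariant, so we may assume $x_0=0$; then the change of variables $x=Rx'$, $y=Ry'$ gives $\mu_\tau(\mathcal{B}_R) = R^{n+2\tau}\,\mu_\tau(\mathcal{B}_1)$, since the Jacobian contributes $R^{2n}$ and the kernel scales by $R^{-(n-2\tau)}$. It remains to observe that
\[
\mu_\tau(\mathcal{B}_1)=\int_{B_1}\!\int_{B_1}\frac{dx\,dy}{|x-y|^{n-2\tau}}
\]
is finite and has $1/\tau$ as its leading behavior: for each fixed $x \in B_1$ one has $B_1 \subset B_2(x)$, and $\int_{B_2(x)}|x-y|^{-(n-2\tau)}\,dy=\omega_{n-1} 2^{2\tau}/(2\tau)$, while taking $x=0$ gives the matching lower bound $\omega_{n-1}/(2\tau)$. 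Absorbing the universal constants into $C_n$ yields the claimed identity \eqref{cz non pa : size of b mut}.

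Claim (2) is immediate from (1): the ratio eliminates the $C_n/\tau$ factor and leaves $(R/\rho)^{n+2s+2\tau}$. For (4) I would apply (2) with $\rho=R$ to obtain $\mu_{\tau,t}(\mathcal{Q}_{aR})= a^{n+2s+2\tau}\,\mu_{\tau,t}(\mathcal{Q}_R)$, and then compare the exponent $n+2s+2\tau$ against $2n+2s$. Since $n\geq 2$ and $\tau\in(0,1)$ give $2\tau\leq n$, this upper bound $a^{2n+2s}$ is valid because $a\geq 1$; the extra $\tau^{-1}$ factor in the statement is absorbed for free since $\tau\in(0,1)$ forces $\tau^{-1}\geq 1$.

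Claim (3) is the least automatic piece: the numerator is given by (1), but for the denominator one needs a \emph{lower} bound on $\mu_\tau(\mathcal{K}_r(x_0,y_0))$ that does not degenerate when $x_0,y_0$ are far apart. The key observation is the hypothesis $\mathcal{K}_r(x_0,y_0)\subset \mathcal{B}_R$: it forces $|x-y|\leq 2R$ for every $(x,y)\in \mathcal{K}_r(x_0,y_0)$, so $|x-y|^{-(n-2\tau)}\geq (2R)^{-(n-2\tau)}$ pointwise on that cube. Integrating the constant lower bound gives
\[
\mu_{\tau,t}\bigl(\mathcal{K}_r(x_0,y_0)\times\Lambda_r\bigr)\geq 2r^{2s}\cdot |B_r|^{2}\cdot (2R)^{-(n-2\tau)}= c_n\,2^{-n}\,r^{2n+2s}R^{-(n-2\tau)},
\]
and dividing $\mu_{\tau,t}(\mathcal{Q}_R)=C_nR^{n+2s+2\tau}/\tau$ by this quantity yields exactly $2^nC_n\tau^{-1}(R/r)^{2n+2s}$, after the exponents $n+2s+2\tau$ and $n-2\tau$ add to $2n+2s$. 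The only point requiring any care is the constant tracking; everything else is direct computation, so I do not anticipate a real obstacle.
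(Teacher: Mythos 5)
Your plans for (1) and (3) are mostly fine, with two small remarks on (1): the estimate you propose yields only $\mu_\tau(\mathcal{B}_1)\approx_n \tau^{-1}$, not literally the identity $C_n/\tau$ in the statement, and the lower-bound step is not quite right as written --- evaluating the inner $y$-integral at $x=0$ gives its value at one point, not a lower bound on the double integral; instead one should restrict to $x\in B_{1/2}$, note $B_{1/2}(x)\subset B_1$, compute $\int_{B_{1/2}(x)}|x-y|^{-(n-2\tau)}dy\geq c_n/\tau$, and integrate over $x\in B_{1/2}$. Also note that (2) follows directly from the scaling $(x,y,t)\mapsto\bigl(x_0+\tfrac{\rho}{R}(x-x_0),\,x_0+\tfrac{\rho}{R}(y-x_0),\,t_0+(\tfrac{\rho}{R})^{2s}(t-t_0)\bigr)$ without invoking (1), which matters for what comes next.

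The serious gap is in (4). You invoke (2) to write $\mu_{\tau,t}(\mathcal{Q}_{aR}(x_0,y_0,t_0)) = a^{n+2s+2\tau}\mu_{\tau,t}(\mathcal{Q}_R(x_0,y_0,t_0))$, but (2) is a statement only about the diagonal cylinders $\mathcal{Q}_R(x_0,t_0)=B_R(x_0)\times B_R(x_0)\times\Lambda_R(t_0)$, while (4) concerns off-diagonal ones with $x_0\neq y_0$. The natural map $(x,y,t)\mapsto\bigl(x_0+a(x-x_0),\,y_0+a(y-y_0),\,t_0+a^{2s}(t-t_0)\bigr)$ taking $\mathcal{Q}_R(x_0,y_0,t_0)$ to $\mathcal{Q}_{aR}(x_0,y_0,t_0)$ sends $x-y$ to $a(x-y)+(1-a)(x_0-y_0)$, so the kernel $|x-y|^{2\tau-n}$ does not pull out a clean power of $a$ once $x_0\neq y_0$. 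Indeed your asserted equality fails: when $|x_0-y_0|\gg aR$, one has $|x-y|\approx|x_0-y_0|$ uniformly on both cylinders, and a direct volume comparison gives a ratio $\approx a^{2n+2s}$, not $a^{n+2s+2\tau}$. The paper's proof of (4) is organized around precisely this dichotomy: when $B_{aR}(x_0)$ and $B_{aR}(y_0)$ are close, it encloses $\mathcal{Q}_{aR}(x_0,y_0,t_0)$ in a diagonal cylinder $\mathcal{Q}_{10aR}(x_0,t_0)$ and applies (3), which is where the $\tau^{-1}a^{2n+2s}$ factor comes from; when they are far apart, the kernel is comparable to a constant on both cylinders and one gets the factor $ca^{2n+2s}$ directly. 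Your plan never encounters the need for this case split, which is the actual content of (4).
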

\begin{proof}
    Since the proofs of (1)-(3) follow from \cite[Lemma 2.1]{BKc}, we only give the proof of (4). If $x_0=y_0$, (4) follows from (2). We assume $x_0\neq y_0$. Let 
    \begin{align*}
        D=\mathrm{diam}(B_{aR}(x_0),B_{aR}(y_0)).
    \end{align*}
    If $D\leq 4a$, then $\mathcal{Q}_{aR}(x_0,y_0,t_0)\subset \mathcal{Q}_{10aR}(x_0,t_0)$. Thus by (3), we get 
    \begin{align*}
        \mu_{\tau,t}\left(\mathcal{Q}_{aR}(x_0,y_0,t_0)\right)\leq \mu_{\tau,t}\left(\mathcal{Q}_{10aR}(x_0,t_0)\right)\leq \frac{c a^{2n+2s}}{\tau}\mu_{\tau,t}\left(\mathcal{Q}_{R}(x_0,y_0,t_0)\right)
    \end{align*}
    for some constant $c=c(n)$. If $D>4a$, we observe 
    \begin{align*}
        D\leq|x-y|\leq 3D\quad\text{for any }x\in B_{aR}(x_0)\text{ and }y\in B_{aR}(y_0).
    \end{align*}
    Thus we have
    \begin{align*}
        \mu_{\tau,t}\left(\mathcal{Q}_{aR}(x_0,y_0,t_0)\right)\leq \frac{(aR)^{2n+2s}}{D^{n-2\tau}}\leq ca^{2n+2s}\mu_{\tau,t}\left(\mathcal{Q}_{R}(x_0,y_0,t_0)\right)
    \end{align*}
    for some constant $c=c(n)$. This completes the proof.
\end{proof}

%%%%%%%%%%%%%%%%%%Tail estimate

We now give some useful estimates to control the parabolic tail.
\begin{lem}
\label{cz non pa : taillem}
Let $h\in L^{p}\left(\Lambda_{2}; L^{1}_{2\beta}(\mathbb{R}^{n})\right)$ and $D^{\tau}d_{\tilde{s}}h\in L^{p}\left(\mathcal{Q}_{2};\dmutt\right)$ where $\beta\in(0,1)$, $\tilde{s}\in[0,\beta]$ and $p\in[1,\infty)$. Let $Q_{\rho}(z_{0})\Subset Q_{2}$, where $z_{0}\in Q_{r_1}$ with $0<r_1<2$. Suppose that there is a natural number $l\geq 1$ such that
\begin{equation*}
    Q_{2^{l}\rho}(z_{0})\Subset Q_{2}.
\end{equation*} 
Then for any integer $k\in[0,l]$, there are constants $c_{p}=c(n,p)$ and $\tilde{c}=\tilde{c}(n)$ independent of $k$ and $l$ such that
{\small\begin{equation}
\label{cz non pa : tailestimate}
\begin{aligned}
    &\Tail_{p,2\beta}\left(\frac{h-(h)_{B_{\rho}(x_{0})}(t)}{\rho^{\tilde{s}+\tau}};Q_{\rho}(z_{0})\right)\\
    &\leq c_{p}\mathbb{A}_{k}\sum_{i=1}^{k}2^{i\left(-2\beta+\tilde{s}+\tau+\frac{2s}{p}\right)}\left(\frac{1}{\tau}\dashint_{\mathcal{Q}_{2^{i}\rho}(z_{0})}|D^{\tau}d_{\tilde{s}}h|^{p}\dmutt\right)^{\frac{1}{p}}+c_{p}\mathbb{A}_{k+1-l}\frac{2^{-2\beta l+\tilde{s}+\tau+\frac{2s}{p}}}{\rho^{\tilde{s}+\tau+\frac{2s}{p}}}\left(\frac{1}{\tau}\dashint_{\mathcal{Q}_{2}}|D^{\tau}d_{\tilde{s}}h|^{p}\dmutt\right)^{\frac{1}{p}}\\
    &\quad+\tilde{c}\mathbb{A}_{l-k}\sum_{j=k+1}^{l}2^{j(-2\beta+\tilde{s}+\tau)}\sup_{t\in \Lambda_{2^{j}\rho}(t_{0})}\dashint_{B_{2^{j}\rho}(x_{0})}\frac{|h-(h)_{B_{2^{j}\rho}(x_{0})}(t)|}{(2^{j}\rho)^{\tilde{s}+\tau}}\dx\\
    &\quad+\tilde{c}\mathbb{A}_{l-k}\frac{2^{-2\beta l+\tilde{s}+\tau}}{\rho^{\tilde{s}+\tau}}\sup_{t\in \Lambda_{2}}\dashint_{B_{2}}\frac{|h-(h)_{B_{2}}(t)|}{2^{s+\tau}}\dx+\frac{\tilde{c}}{(2-r_{1})^{n+2s}}\left(\frac{2}{\rho}\right)^{-2\beta+\tilde{s}+\tau-\frac{2s}{p}}\Tail_{p,2\beta}\left(\frac{h-(h)_{B_{2}}(t)}{2^{\tilde{s}+\tau}};Q_{2}\right),
\end{aligned}
\end{equation}}
where  
\begin{equation*}
    \mathbb{A}_{m}=\begin{cases}
        1&\text{if }m=1,2,\ldots,\\
        0&\text{if }m=0,-1,\ldots.
    \end{cases}
\end{equation*}

% \noindent
% (2) Suppose that $Q_{R}^{(a)}(z_{0})\Subset\Omega_{T}$ and $\rho\in(0,R)$. Then we have
% \begin{equation*}
% \begin{aligned}
%     \Tailg\left(u-(u)_{B_{\rho}(x_{0})}(t);Q_{\rho}^{(a)}(z_{0})\right)&\leq 
% \end{aligned}
% \end{equation*}
\end{lem}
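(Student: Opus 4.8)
The plan is to decompose the nonlocal tail integral over $\mathbb{R}^n\setminus B_\rho(x_0)$ into dyadic annuli, estimate the contribution of each annulus by the oscillation of $h$ on the corresponding ball $B_{2^i\rho}(x_0)$, and then control that oscillation either by the dual-pair fractional-gradient norm of $h$ on the cylinder $\mathcal{Q}_{2^i\rho}(z_0)$ (for the ``small'' annuli where Poincaré-type estimates are available) or by the supremum-of-average oscillation quantity (which is the term that naturally arises from the $L^2$-oscillation-by-sup-norm correction terms discussed in the introduction), with everything beyond level $l$ collapsed into the far-away $\Tail_{p,2\beta}(\cdot;Q_2)$ term. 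This is the parabolic, dual-pair analogue of the standard tail-iteration lemma, e.g.\ the one in \cite{BKc}, so I expect the structure to follow closely with the time variable simply carried along.

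Concretely, first I would write, using $z_0=(x_0,t_0)\in Q_{r_1}$,
\[
\mathbb{R}^n\setminus B_\rho(x_0)=\Big(\bigcup_{j=1}^{l}\big(B_{2^j\rho}(x_0)\setminus B_{2^{j-1}\rho}(x_0)\big)\Big)\cup\big(\mathbb{R}^n\setminus B_{2^l\rho}(x_0)\big),
\]
and split the defining integral of $\Tail_{p,2\beta}$ accordingly. On the $j$-th annulus one has $|y-x_0|\approx 2^j\rho$, so after inserting and subtracting the averages $(h)_{B_{2^i\rho}(x_0)}(t)$ in a telescoping fashion from scale $\rho$ up to scale $2^j\rho$, the annular piece is bounded by a sum over $i\le j$ of $(2^j\rho)^{-2\beta}(2^j\rho)^{n}\cdot \dashint_{B_{2^i\rho}(x_0)}|h-(h)_{B_{2^i\rho}(x_0)}(t)|\dx$-type terms, times the normalization $\rho^{\tilde s+\tau}$ pulled out front; the geometric factor is $\rho^{2s}$ (from the $r^{2s}$ in the tail) times $(2^j\rho)^{-n-2\beta}$ times the annulus volume $\approx (2^j\rho)^n$, giving the powers $2^{j(-2\beta+\tilde s+\tau)}$ and, after taking the $L^p(\Lambda_\rho)$ norm in $t$ and enlarging $\Lambda_\rho$ to $\Lambda_{2^j\rho}$, the $\sup_t\dashint$ quantities. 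Summing over $j$ and reorganizing the double sum $\sum_j\sum_{i\le j}$ as $\sum_i\sum_{j\ge i}$, the inner geometric sum converges precisely because the relevant exponent is negative (which is where $\tilde s\le\beta$ together with $\tau$ small enough is used implicitly through the hypotheses under which this lemma is invoked), and this produces the two kinds of upper-level terms: the ones with $(\frac{1}{\tau}\dashint_{\mathcal{Q}_{2^i\rho}}|D^\tau d_{\tilde s}h|^p\dmutt)^{1/p}$, obtained from a fractional Poincaré inequality relating $\dashint_{B_{2^i\rho}}|h-(h)_{B_{2^i\rho}}(t)|$ to the Gagliardo seminorm and hence to the dual-pair measure $\mu_{\tau,t}$ via \eqref{cz non pa : size of b mut}, and the ones with the $\sup_t\dashint$ oscillations, which must be kept as is because no such Poincaré reduction is available for them. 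The indicator $\mathbb{A}_m$ bookkeeps the cases $k=0$, $k=l$, etc., so that empty sums genuinely vanish.

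For the far tail $\mathbb{R}^n\setminus B_{2^l\rho}(x_0)$: since $Q_{2^l\rho}(z_0)\Subset Q_2$ and $z_0\in Q_{r_1}$, the ball $B_{2^l\rho}(x_0)$ is comparable to $B_2$ up to the factor $(2-r_1)$, so $\mathbb{R}^n\setminus B_{2^l\rho}(x_0)\subset\mathbb{R}^n\setminus B_{c(2-r_1)}(0)$ roughly speaking, and one compares $\int_{\mathbb{R}^n\setminus B_{2^l\rho}}|h-(h)_{B_{2^l\rho}}(t)||y-x_0|^{-n-2\beta}\dy$ with $\int_{\mathbb{R}^n\setminus B_2}|h-(h)_{B_2}(t)||y|^{-n-2\beta}\dy$ plus a term from $|(h)_{B_{2^l\rho}}(t)-(h)_{B_2}(t)|$ times $\int_{\mathbb{R}^n\setminus B_{2^l\rho}}|y-x_0|^{-n-2\beta}\dy\approx (2^l\rho)^{-2\beta}$. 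The first gives, after rescaling the $r^{2s}$ and $r^{\tilde s+\tau}$ weights from scale $\rho$ to scale $2$, the $\frac{1}{(2-r_1)^{n+2s}}(2/\rho)^{-2\beta+\tilde s+\tau-2s/p}\Tail_{p,2\beta}(\cdot;Q_2)$ term; the second, after telescoping $(h)_{B_{2^l\rho}}(t)-(h)_{B_2}(t)$ across dyadic scales between $2^l\rho$ and $2$, feeds into the remaining ``boundary'' terms $c_p\mathbb{A}_{k+1-l}\rho^{-\tilde s-\tau-2s/p}2^{-2\beta l+\cdots}(\frac1\tau\dashint_{\mathcal{Q}_2}|D^\tau d_{\tilde s}h|^p)^{1/p}$ and $\tilde c\mathbb{A}_{l-k}\rho^{-\tilde s-\tau}2^{-2\beta l+\cdots}\sup_t\dashint_{B_2}\frac{|h-(h)_{B_2}(t)|}{2^{s+\tau}}\dx$, using the Poincaré reduction only where the scale reaches the outer cylinder $\mathcal{Q}_2$. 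The main obstacle, and the only genuinely delicate point, is getting the bookkeeping of the two telescoping directions consistent so that the split at level $k$ is clean and the constants $c_p,\tilde c$ are verified to be independent of both $k$ and $l$ — in particular making sure that the convergence of the geometric series does not secretly cost a factor growing in $l$, which forces one to track the exact exponents $-2\beta+\tilde s+\tau+\tfrac{2s}{p}$ versus $-2\beta+\tilde s+\tau$ throughout and to use $\beta>0$ together with the admissible range of $\tau$ to guarantee negativity.
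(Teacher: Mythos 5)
Your proposal follows the same route as the paper: dyadic annular decomposition of the tail, a telescoping of averages across scales to produce a double sum, Fubini to reorder it, a Poincar\'e-type reduction to the dual-pair quantity $\left(\tfrac{1}{\tau}\dashint_{\mathcal{Q}_{2^j\rho}}|D^\tau d_{\tilde s}h|^p\dmutt\right)^{1/p}$ for scales $j\le k$, retention of $\sup_t\dashint$ oscillations for $k<j\le l$, and the $(2-r_1)$-factor comparison $|y-x_0|\ge |y|(2-r_1)/2$ for the far tail beyond $B_2$; this is exactly how the paper proceeds (it simply cites \cite[Lemma 2.6]{BKc} for the telescoping step you reconstruct explicitly). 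One small imprecision: the convergence of the reordered inner sum $\sum_{i\ge j}2^{-2\beta i}$ needs only $\beta>0$, while the exponents $-2\beta+\tilde s+\tau+\tfrac{2s}{p}$ and $-2\beta+\tilde s+\tau$ are simply carried into the final estimate and need not be negative for this lemma to hold — the restrictions $\tilde s\le\beta$ and $\tau$ small are used downstream, not here.
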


\begin{proof}
Using Minkowski's inequality, we get that
\begin{equation*}
\begin{aligned}
    &\Tail_{p,2\beta}\left(\frac{h-(h)_{B_{\rho}(x_{0})}(t)}{\rho^{\tilde{s}+\tau}};Q_{\rho}(z_{0})\right)\\
    &\leq \sum_{i=1}^{l}\left(\dashint_{\Lambda_{\rho}(t_{0})}\left(\int_{B_{2^{i}\rho}(x_{0})\setminus B_{2^{i-1}\rho}(x_{0})}\rho^{2\beta-\tilde{s}-\tau}\frac{|h-(h)_{B_{\rho}(x_{0})}(t)|}{|y-x_{0}|^{n+2\beta}}\dy\right)^{p}\dt\right)^{\frac{1}{p}}\\
    &\quad+\left(\dashint_{\Lambda_{\rho}(t_{0})}\left(\int_{B_{2}\setminus B_{2^{l}\rho}(x_{0})}\rho^{2\beta-\tilde{s}-\tau}\frac{|h-(h)_{B_{\rho}(x_{0})}(t)|}{|y-x_{0}|^{n+2\beta}}\dy\right)^{p}\dt\right)^{\frac{1}{p}}\\
    &\quad+\left(\dashint_{\Lambda_{\rho}(t_{0})}\left(\int_{\mathbb{R}^{n}\setminus B_{2}}\rho^{2\beta-\tilde{s}-\tau}\frac{|h-(h)_{B_{\rho}(x_{0})}(t)|}{|y-x_{0}|^{n+2\beta}}\dy\right)^{p}\dt\right)^{\frac{1}{p}}\eqqcolon\sum_{i=1}^{l}I_{i}+\mathcal{T}_{1}+\mathcal{T}_{2}.
\end{aligned}
\end{equation*}
From the estimate of $T_{k}^{\frac{1}{p-1}}$ in \cite[Lemma 2.6]{BKc}, we have
\begin{equation*}
\begin{aligned}
    I_{i}\leq \frac{2^{-2\beta i}}{\rho^{\tilde{s}+\tau}}\sum_{j=1}^{i}\left(\dashint_{\Lambda_{\rho}(t_{0})}\left(\dashint_{B_{2^{j}\rho}(x_{0})}|h-(h)_{B_{2^{j}\rho}(x_{0})}(t)|\dy\right)^{p}\dt\right)^{\frac{1}{p}}\eqqcolon 2^{-2\beta i}\sum_{j=1}^{i}I_{i,j}.
\end{aligned} 
\end{equation*}
Using H\"older's inequality and then following the same line as for (2.12) in \cite[Lemma 2.6]{BKc}, we obtain
\begin{equation*}
\begin{aligned}
\sum_{j=1}^{i}I_{i,j}&\leq c_{p}\sum_{j=1}^{k}\mathbb{A}_{k}2^{j\left(\tilde{s}+\tau+\frac{2s}{p}\right)}\left(\frac{1}{\tau}\dashint_{\mathcal{Q}_{2^{j}\rho}(z_{0})}|D^{\tau}d_{\tilde{s}}h|^{p}\dmutt\right)^{\frac{1}{p}}\\
&\quad+\sum_{j=k+1}^{i}\mathbb{A}_{i-k}2^{j(\tilde{s}+\tau)}\sup_{t\in \Lambda_{\rho}(t_{0})}\dashint_{B_{2^{j}\rho}(x_{0})}\frac{|h-(h)_{B_{2^{j}\rho}(x_{0})}(t)|}{(2^{j}\rho)^{\tilde{s}+\tau}}\dx,
\end{aligned}
\end{equation*}
for some constant $c_{p}=2^{n+2p}$, where we have taken supremum for the time variable if $j\geq k+1$.
Similarly, we estimate $\mathcal{T}_{1}$ as
\begin{equation*}
\begin{aligned}
    \mathcal{T}_{1}&\leq 2^{-2\beta l}\sum_{i=1}^{l}I_{l,i}+c_{p}\mathbb{A}_{k+1-l}2^{-2\beta l}\left(\frac{2}{\rho}\right)^{\tilde{s}+\tau+\frac{2s}{p}}\left(\frac{1}{\tau}\dashint_{\mathcal{Q}_{2}}|D^{\tau}d_{\tilde{s}}h|^{p}\dmutt\right)^{\frac{1}{p}}\\
    &\quad+c\mathbb{A}_{l-k}2^{-2\beta l}\left(\frac{2}{\rho}\right)^{\tilde{s}+\tau}\sup_{t\in \Lambda_{2}}\dashint_{B_{2}}\frac{|h-(h)_{B_{2}}(t)|}{2^{\tilde{s}+\tau}}\dx\eqqcolon \mathcal{T}_{1,1},
\end{aligned}
\end{equation*}
where $c=c(n)$.
Lastly, we estimate $\mathcal{T}_{2}$ as
\begin{equation}
\label{cz non pa : taiilestimt2}
\begin{aligned}
    \mathcal{T}_{2}&\leq c\mathcal{T}_{1,1}+c\left(\dashint_{\Lambda_{\rho}}\left(\int_{\mathbb{R}^{n}\setminus B_{2}}\left(\frac{2}{\rho}\right)^{-2\beta+\tilde{s}+\tau}\left(\frac{2}{2-r_{1}}\right)^{n+2\beta}2^{-2\beta+\tilde{s}+\tau}\frac{|h-(h)_{B_{2}}(t)|}{|y|^{n+2\beta}}\dy\right)^{p}\dt\right)^{\frac{1}{p
    }},
\end{aligned}
\end{equation}
where for the last term we have used the fact that
\begin{equation*}
    |y-x_{0}|\geq |y|-|x_{0}|\geq \frac{|y|(2-r_{1})}{2} \quad\text{for any }y\in \mathbb{R}^{n}\setminus B_{2}.
\end{equation*}
We combine all the estimates $I_{i}$, $\mathcal{T}_{1}$ and $\mathcal{T}_{2}$ and use Fubini's theorem as in \cite[Lemma 2.6]{BKc} to get the desired result \eqref{cz non pa : tailestimate}.
\end{proof}
% \begin{rmk}
% \label{cz non pa: rmktailesti}
%     We point out that if we consider $Q_{2^{l}}(z_{0})$ instead of $Q_{2}$, $\frac{2}{\rho}$ and $\frac{1}{(2-r_{1})^{n+2s}}$ are replaced by $2^{l}$ and $1$, respectively. 
% \end{rmk}
\begin{rmk}
\label{cz non pa : cp}
By tracking the choice of the constant $c_{p}$ appearing in Lemma \ref{cz non pa : taillem}, we find that $c_{p}\leq c_{q}$ if $p\leq q$.
\end{rmk}
We end this section with the following iteration lemma.
\begin{lem}(See \cite[Lemma 6.1]{G})
\label{cz non pa : technicallemma}
Let $\phi:[1,2]\to \mathbb{R}$ be a nonnegative bounded function. For $1\leq r_{1}<r_{2}\leq 2$, we assume that
\begin{equation*}
    \phi(r_{1})\leq \frac{1}{2}\phi(r_{2})+\frac{\lambda_{0}}{\left(r_{2}-r_{1}\right)^{\frac{5n}{s}}},
\end{equation*}
where $\lambda_{0}>0$. Then, 
\begin{equation*}
    \phi(1)\leq c\lambda_{0}
\end{equation*}for some constant $c=c(n,s)$.
\end{lem}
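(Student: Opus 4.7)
The plan is to apply the standard iteration/hole-filling argument: I will construct an increasing sequence $\{r_k\}\subset [1,2]$ converging to $2$ and iterate the hypothesis along consecutive pairs, exploiting the contraction factor $1/2$ to absorb the blow-up coming from $(r_{k+1}-r_k)^{-5n/s}$. Concretely, I fix $\theta\in(0,1)$ to be chosen, set $r_0=1$ and define $r_{k+1}=r_k+(1-\theta)\theta^k$, so that $r_k\uparrow 2$ and $r_{k+1}-r_k=(1-\theta)\theta^k$.

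Applying the assumption to the pair $(r_k,r_{k+1})$ yields
\begin{equation*}
\phi(r_k)\leq \tfrac{1}{2}\phi(r_{k+1})+\frac{\lambda_0}{(1-\theta)^{5n/s}\theta^{5nk/s}}.
\end{equation*}
Iterating this inequality $m$ times and summing the geometric contribution gives
\begin{equation*}
\phi(1)=\phi(r_0)\leq \frac{\phi(r_m)}{2^m}+\frac{\lambda_0}{(1-\theta)^{5n/s}}\sum_{k=0}^{m-1}\frac{1}{\bigl(2\theta^{5n/s}\bigr)^{k}}.
\end{equation*}
I then choose $\theta$ so that $2\theta^{5n/s}>1$; for instance $\theta=2^{-\frac{s}{10n}}$ gives $2\theta^{5n/s}=\sqrt{2}$, which renders the geometric series convergent with a sum controlled by a constant depending only on $n$ and $s$. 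Since $\phi$ is assumed to be bounded on $[1,2]$, the residual term $\phi(r_m)/2^m$ vanishes as $m\to\infty$, and passing to the limit delivers $\phi(1)\leq c(n,s)\lambda_0$.

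This argument presents no real obstacle; the only subtlety is the quantitative choice of $\theta$, which must simultaneously (i) satisfy $2\theta^{5n/s}>1$ to obtain a summable geometric series, (ii) lie in $(0,1)$ so that $r_k\in[1,2]$ and the hypothesis is applicable at every step, and (iii) guarantee that $\sum(1-\theta)\theta^k=1$ so $r_k\uparrow 2$. The explicit choice above satisfies all three requirements and produces the final constant as an explicit function of $n$ and $s$.
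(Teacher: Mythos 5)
Your proof is correct and reproduces the standard iteration argument of Giusti's Lemma 6.1, which is exactly what the paper cites without giving its own proof. The choice $\theta=2^{-s/(10n)}$ makes $2\theta^{5n/s}=\sqrt{2}>1$, so the geometric series converges and the constant $c(n,s)=(1-2^{-s/(10n)})^{-5n/s}(1-1/\sqrt{2})^{-1}$ has precisely the claimed dependence on $n$ and $s$.
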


\section{Energy estimates and the Sobolev-Poincar\'e inequalities}
In this section, we give energy estimates and derive Sobolev-Poincar\'e type inequalities from the energy estimates.
We first give an energy inequality of a weak solution $u$ to \eqref{cz non pa : eq1}.
\begin{lem} 
Let $u$ be a local weak solution to \eqref{cz non pa : eq1}. Let $0<\rho<r\leq2\rho$ with $Q_{2\rho}(z_{0})\Subset \Omega_{T}$. Then, there is a constant $c=c(n,s,L)$ such that
\begin{equation}
\label{cz non pa : CE} 
\begin{aligned}
    &\left[\dashint_{\Lambda_{\rho}(t_{0})}\dashint_{B_{\rho}(x_{0})}\int_{B_{\rho}(x_{0})}\frac{|u(x,t)-u(y,t)|^{2}}{|x-y|^{n+2s}}\dx\dy\dt+
    \sup_{t\in\Lambda_{\rho}(t_{0})}\dashint_{B_{\rho}(x_{0})}\frac{|u(x,t)-k|^{2}}{\rho^{2s}}\dx\right]\\
    &\leq c\frac{r^{n+2-2s}}{(r-\rho)^{n+2}}\dashiint_{Q_{r}(z_{0})}|u-k|^{2}\dz+c\left(\frac{r}{r-\rho}\right)^{2(n+2s)}\dashint_{\Lambda_{r}(t_{0})}\dashint_{B_{r}(x_{0})}\int_{B_{r}(x_{0})}\frac{|f(x,t)-f(y,t)|^{2}}{|x-y|^{n}}\dx\dy\dt\\
    &\quad+c\left(\dashint_{Q_{r}(z_{0})}\left(r^{s}|g|\right)^{\gamma}\dz\right)^{\frac{2}{\gamma}}+c\left(\frac{r}{r-\rho}\right)^{2(n+2s)}\Tail_{\gamma,2s}\left(\frac{u-(u)_{B_{r}(x_{0})}(t)}{r^{s}};Q_{r}(z_{0})\right)^{2}\\
    &\quad+c\left(\frac{r}{r-\rho}\right)^{2(n+2s)}\Tail_{2,s}\left(f-(f)_{B_{r}(x_{0})}(t);Q_{r}(z_{0})\right)^{2},
\end{aligned}
\end{equation}
where $k\in\mathbb{R}$ and
\begin{equation}
\label{cz non pa : gamma}
\gamma=\frac{2(n+2s)}{n+4s}.
\end{equation}
\end{lem}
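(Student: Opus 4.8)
The plan is to test the weak formulation with a suitable cutoff multiple of $u-k$ and run the standard Caccioppoli-type argument, paying careful attention to the nonlocal tail contributions from both the operator $\mathcal{L}^{\Phi}_{A}$ and the datum $f$. First I would fix cutoff functions: a spatial $\eta\in C_c^\infty(B_{(r+\rho)/2}(x_0))$ with $\eta\equiv 1$ on $B_\rho(x_0)$, $|D\eta|\lesssim (r-\rho)^{-1}$, and a temporal Lipschitz $\zeta$ supported in $\Lambda_r(t_0)$ with $\zeta\equiv 1$ on $\Lambda_\rho(t_0)$ and $|\zeta'|\lesssim (r-\rho)^{-2s}$. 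Using $\phi=(u-k)\eta^2\zeta^2$ (justified after a Steklov-averaging in time, and passing to a limit $t_2\uparrow$ a Lebesgue point so that the boundary term produces $\sup_t \int_{B_\rho}|u-k|^2$) gives the familiar identity: the parabolic term yields $-\tfrac12\int\int (u-k)^2\eta^2 (\zeta^2)_t$ plus the sup-term, and the nonlocal bilinear form must be bounded below.

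The core is the lower bound for the nonlocal term
\[
\mathcal{I}=\int_{\Lambda}\int_{\mathbb{R}^n}\int_{\mathbb{R}^n}\Phi\!\left(\frac{u(x,t)-u(y,t)}{|x-y|^s}\right)\big((u-k)\eta^2\zeta^2(x)-(u-k)\eta^2\zeta^2(y)\big)\frac{A(x,y,t)}{|x-y|^{n+s}}\,dx\,dy\,dt.
\]
Here I would use the monotonicity and Lipschitz bounds \eqref{cz non pa : Phicond} together with \eqref{cz non pa : ker a defn}: writing $w=u-k$, the standard algebraic inequality $\Phi\big(\tfrac{w(x)-w(y)}{|x-y|^s}\big)(w(x)\eta^2(x)-w(y)\eta^2(y))\geq L^{-1}\tfrac{(w(x)-w(y))^2}{|x-y|^s}\max\{\eta^2(x),\eta^2(y)\} - cL\tfrac{|w(x)-w(y)|}{|x-y|^s}(|w(x)|+|w(y)|)|\eta(x)-\eta(y)|\cdot\max\{\eta(x),\eta(y)\}$ (this is the parabolic analogue of the elliptic Caccioppoli splitting, with $\Phi$ replacing the identity). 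Absorbing the cross term with Young's inequality yields the leading gradient term on the left of \eqref{cz non pa : CE} (restricted to $B_\rho$ via $\eta\equiv 1$) plus an error $cL^3\int_\Lambda\int\int \tfrac{(|w(x)|^2+|w(y)|^2)|\eta(x)-\eta(y)|^2}{|x-y|^{n+s}}$, which by $|\eta(x)-\eta(y)|\lesssim (r-\rho)^{-1}\min\{|x-y|,r\}$ and a routine integration in $y$ over $\mathbb{R}^n$ splits into the local piece $c\tfrac{r^{n+2-2s}}{(r-\rho)^{n+2}}\dashiint_{Q_r}|u-k|^2$ plus a tail piece controlled by $\big(\tfrac{r}{r-\rho}\big)^{2(n+2s)}\Tail_{2,2s}\big(\tfrac{u-(u)_{B_r}(t)}{r^s};Q_r\big)^2$ — here one splits $|x-y|^{-n-s}$ over $y\in B_r$ versus $y\notin B_r$, uses $k$ replaced by $(u)_{B_r}(t)$ modulo the first term, and invokes \eqref{cz non pa : tailho} to upgrade $\Tail_{2,2s}$ to $\Tail_{\gamma,2s}$ as stated.

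For the right-hand side data, the term $\int_\Lambda\int\int (f(x,t)-f(y,t))(\phi(x,t)-\phi(y,t))|x-y|^{-n-s}$ is handled by Young's inequality: bound it by $\epsilon\int\int\tfrac{|w(x)-w(y)|^2}{|x-y|^{n+2s}}\eta^2\zeta^2$ (absorbed left) plus $c_\epsilon\int\int\tfrac{|f(x,t)-f(y,t)|^2}{|x-y|^{n}}$ for $y\in B_r$ — giving the $d_0f$ term with the factor $\big(\tfrac{r}{r-\rho}\big)^{2(n+2s)}$ — plus, for $y\notin B_r$, the term $\int_{B_{(r+\rho)/2}}|w(x)|\eta^2\int_{\mathbb{R}^n\setminus B_r}\tfrac{|f(x,t)-f(y,t)|}{|x-y|^{n+s}}$, which after replacing $f$ by $f-(f)_{B_r}(t)$ and using $|x-y|\approx|y-x_0|$ produces the $\Tail_{2,s}(f-(f)_{B_r}(t);Q_r)^2$ term (again via Young, absorbing the $|w|$ factor into the local $|u-k|^2$ term). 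The term $\int_\Lambda\int_\Omega g\,w\eta^2\zeta^2$ is estimated by Hölder with the parabolic Sobolev exponent: $\int g w\eta^2\zeta^2 \le \|g\|_{L^\gamma(Q_r)}\|w\eta^2\zeta^2\|_{L^{\gamma'}}$ where $\gamma'=\gamma/(\gamma-1)=\tfrac{2(n+2s)}{2s}=\tfrac{n+2s}{s}$; noting $\gamma'$ is the parabolic Sobolev conjugate pair for $p=2$ (i.e. $2_{\#}=2(1+2s/n)$ and $\gamma'$ interpolates with $\sup_t L^2$), apply Lemma \ref{cz non pa : embed lem} to $h=w\eta\zeta$ to get $\|w\eta\zeta\|_{L^{\gamma'}}\lesssim r^{s}\big(\text{gradient term}\big)^{\theta}\big(\sup_t\dashint_{B_r}|w|^2\big)^{1-\theta}$, then Young's inequality splits this so the gradient and sup pieces are absorbed into the left side and the remainder is $c(\dashint_{Q_r}(r^s|g|)^\gamma)^{2/\gamma}$. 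Finally, I would absorb all $\epsilon$-terms into the left-hand side and conclude with the iteration Lemma \ref{cz non pa : technicallemma} (with $\phi(\theta)=$ left-hand side on $Q_{\theta\rho}$, $r_2-r_1\sim (r-\rho)/\rho$) to remove the absorbed quantities and reach the clean form \eqref{cz non pa : CE}.

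The main obstacle I anticipate is the nonlinear lower bound step: establishing the pointwise algebraic inequality for $\Phi$ with the $\eta^2$-weighted test function so that it reproduces the clean Caccioppoli splitting — the subtlety is that $\Phi$ is only assumed monotone and Lipschitz (not $C^1$, not odd), so one cannot simply expand; one must argue by cases on the sign of $w(x)-w(y)$ and use $\Phi(0)=0$ together with the one-sided monotonicity $(\Phi(\xi)-\Phi(\xi'))(\xi-\xi')\geq L^{-1}|\xi-\xi'|^2$ applied at $\xi'=0$, combined with the Lipschitz bound for the cross term. The second delicate point is keeping the powers of $r/(r-\rho)$ exactly as claimed while splitting the tail integrals — this requires being careful that replacing the constant $k$ by $(u)_{B_r}(t)$ in the tail only costs the already-present local term.
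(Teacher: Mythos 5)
Your overall strategy matches the paper's in spirit (test with $(u-k)\eta^2\zeta^2$, Caccioppoli splitting for the nonlocal form via the two conditions in \eqref{cz non pa : Phicond}, Young/H\"older for the data terms, parabolic embedding for $g$), and the paper in fact delegates the Caccioppoli step to a cited result rather than re-deriving the $\Phi$-algebraic inequality; your worry about $\Phi$ not being $C^1$ is legitimate but not an obstacle, exactly for the reason you sketch (apply monotonicity at $\xi'=0$, Lipschitz for the cross term). However, there is a genuine gap in how you produce the $u$-tail term. You generate $\Tail_{2,2s}$ from a Cauchy--Schwarz-in-time argument and then propose to ``invoke \eqref{cz non pa : tailho} to upgrade $\Tail_{2,2s}$ to $\Tail_{\gamma,2s}$.'' This goes the wrong way: since $\gamma=\tfrac{2(n+2s)}{n+4s}<2$, \eqref{cz non pa : tailho} gives $\Tail_{\gamma,2s}\leq\Tail_{2,2s}$, so you cannot replace the $\Tail_{2,2s}$ bound by $\Tail_{\gamma,2s}$; that would strengthen the estimate without justification. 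Getting the sharper $\Tail_{\gamma,2s}$ (which is essential downstream, e.g.\ in Lemma \ref{cz non pa : RHI} and the $L^\gamma$ integrability of $g$) requires using H\"older in the time variable with the exponent pair $(\gamma,2_\#)$, noting that $\tfrac1\gamma+\tfrac1{2_\#}=1$, so the dual factor is the $L^{2_\#}$-in-time average of $\dashint_{B_r}|u\psi\eta|$, which one then controls by Lemma \ref{cz non pa : embed lem} and absorbs via Young exactly as you do for $g$. You do carry out this conjugate-exponent trick for $g$ (your $\gamma'$ arithmetic has a slip --- $\gamma'=\tfrac{2(n+2s)}{n}=2_\#$, not $\tfrac{n+2s}{s}$ --- but your parenthetical identifies the right exponent), so the fix is simply to apply the same time-exponent bookkeeping to the $u$-tail. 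A second, minor, point: the iteration Lemma \ref{cz non pa : technicallemma} is not needed here; after absorbing the $\tfrac18$-fractions of the energy the conclusion follows directly, since the test function is supported in the intermediate cylinder.
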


%%%%
% Let $u$ be the local weak solution to . Let $Q_{2r}(z_{0})\Subset \Omega_{T}$ with $a\in\left[\frac{1}{2},2\right]$. Then, there is a constant $c=c(n,s,L)$ such that
% \begin{equation}
% \label{a}
% \begin{aligned}
%     &\left[\dashint_{ \Lambda_{r}(t_{0})}\dashint_{B_{r}(x_{0})}\int_{B_{r}(x_{0})}\frac{|u(x,t)-u(y,t)|^{2}}{|x-y|^{n+2s}}\dx\dy\dt+
%     \sup_{t\in \Lambda_{r}(t_{0})}\dashint_{B_{r}(x_{0})}\frac{|u(x,t)-k|^{2}}{r^{2s}}\dx\right]\\
%     &\leq \frac{c}{r^{2s}}\dashiint_{Q_{2r}(z_{0})}|u-k|^{2}\dx\dt+\dashint_{ \Lambda_{r}(t_{0})}\dashint_{B_{r}(x_{0})}\int_{B_{r}(x_{0})}\frac{|f(x,t)-f(y,t)|^{2}}{|x-y|^{n}}\dx\dy\dt\\
%     &\quad+c\Tail_{\infty}\left(\frac{u-(u)_{B_{2r}(x_{0})}(t)}{(2r)^{s}};Q_{2r}(z_{0})\right)^{2}+c\Tail_{2}\left(f-(f)_{B_{2r}(x_{0})}(t);Q_{2r}(z_{0})\right)^{2},
% \end{aligned}
% \end{equation}
% where $k\in\mathbb{R}$. 
\begin{proof}
Since $u-k$ is also a weak solution to \eqref{cz non pa : eq1}, we may assume that $k=0$. Let us take a cutoff function $\psi\in C_{c}^{\infty}\left(B_{\frac{\rho+r}{2}}\right)$ such that
\begin{equation*}
    \psi\equiv 1 \text{ on }B_{\rho}(x_{0})\quad\text{and}\quad |D\psi|\leq \frac{16}{r-\rho},
\end{equation*}
and a smooth function $\eta\in C^{\infty}(\mathbb{R})$ such that
\begin{equation*}
    \eta\equiv 1 \text{ on } \left(t_{0}-\left(\frac{r^{2s}+\rho^{2s}}{2}\right),\infty\right),\quad \eta\equiv 0 \text{ on } \left(-\infty,t_{0}-\left(\frac{3r^{2s}+\rho^{2s}}{4}\right)\right]\quad\text{and}\quad |\eta'|\leq \frac{16}{r^{2s}-\rho^{2s}}.
\end{equation*}    
With the aid of \cite[Lemma 3.2]{BKKh}, we deduce that
\begin{equation}
\label{energyb}
\begin{aligned}
    I:&=\dashint_{\Lambda_{r}(t_{0})}\dashint_{B_{r}(x_{0})}\int_{B_{r}(x_{0})}\frac{|(u\psi\eta)(x,t)-(u\psi\eta)(y,t)|^{2}}{|x-y|^{n+2s}}\dx\dy\dt+\sup_{t\in \Lambda_{r}}\dashint_{B_{r}(x_{0})}\frac{|(u\psi\eta)(x,t)|^{2}}{r^{2s}}\dx\\
    &\leq c\frac{r^{2(1-s)}}{(r-\rho)^{2}}\dashiint_{Q_{r}(z_{0})}|u|^{2}\dz+c\frac{1}{r^{2s}-\rho^{2s}}\dashiint_{Q_{r}(z_{0})}|u|^{2}\dz+c\dashiint_{Q_{r}(z_{0})}|g||u\psi\eta|\dz\\
    &\quad+c\left(\frac{r}{r-\rho}\right)^{n+2s}\dashint_{\Lambda_{r}(t_{0})}\int_{\mathbb{R}^{n}\setminus B_{r}(x_{0})}\frac{|u(y,t)|}{|y-x_{0}|^{n+2s}}\dy\dashint_{B_{r}(x_{0})}|(u\psi\eta)(x,t)|\dx\dt\\
    &\quad+ c\dashint_{\Lambda_{r}(t_{0})}\dashint_{B_{r}(x_{0})}\int_{B_{r}(x_{0})}\frac{(f(x,t)-f(y,t))\left((u\psi\eta)(x,t)-(u\psi\eta)(y,t)\right)}{|x-y|^{n+s}}\dx\dy\dt\\
    &\quad +c\left(\frac{r}{r-\rho}\right)^{n+s}\dashint_{\Lambda_{r}(t_{0})}\int_{\mathbb{R}^{n}\setminus B_{r}(x_{0})}\frac{|f(x,t)-f(y,t)|}{|y-x_{0}|^{n+s}}\dy\dashint_{B_{r}(x_{0})}|(u\psi\eta)(x,t)|\dx\dt\eqqcolon\sum_{i=1}^{6}I_{i}.
\end{aligned}
\end{equation}
After a few simple algebraic computations along with the fact that $\rho<r\leq2\rho$, we observe that $I_{2}\leq c I_{1}$. Using H\"older's inequality, Lemma \ref{cz non pa : embed lem} and Young's inequality, we have
{\small\begin{equation*}
\begin{aligned}
    I_{3}&\leq \left(\dashiint_{Q_{r}(z_{0})}(r^{s}|g|)^{\gamma}\right)^{\frac{1}{\gamma}}\left(\dashiint_{Q_{r}(z_{0})}\left|\frac{u\psi\eta}{r^{s}}\right|^{2_{\#}}\right)^{\frac{1}{2_{\#}}}\leq c\left(\dashiint_{Q_{r}(z_{0})}(r^{s}|g|)^{\gamma}\right)^{\frac{2}{\gamma}}+\frac{I}{8}+ r^{-2s}\dashiint_{Q_{r}(z_{0})}|u|^{2}\dx\dt
\end{aligned}
\end{equation*}}\\
\noindent
and
{\small\begin{equation*}
\begin{aligned}
    I_{4}&\leq c\left(\frac{r}{r-\rho}\right)^{(n+2s)}\Tail_{\gamma,2s}\left(\frac{u-(u)_{B_{r}(x_{0})}(t)}{r^{s}};Q_{r}(z_{0})\right)\left(\dashiint_{Q_{r}(z_{0})}\left|\frac{u\psi\eta}{r^{s}}\right|^{2_{\#}}\right)^{\frac{1}{2_{\#}}}+c\left(\frac{r}{r-\rho}\right)^{n}I_{1}\\
    &\leq c\left(\frac{r}{r-\rho}\right)^{2(n+2s)}\Tail_{\gamma,2s}\left(
    \frac{u-(u)_{B_{r}(x_{0})}(t)}{r^{s}};Q_{r}(z_{0})\right)^{2}+\frac{I}{8}+ r^{-2s}\dashiint_{Q_{r}(z_{0})}|u|^{2}\dx\dt+c\left(\frac{r}{r-\rho}\right)^{n}I_{1},
\end{aligned}
\end{equation*}}\\
\noindent
where the constant $2_{\#}$ is defined in \eqref{cz non pa : paraconj}.
On the other hand, we obtain 
\begin{equation*}
    I_{5}\leq c\dashint_{\Lambda_{r}(t_{0})}\dashint_{B_{r}(x_{0})}\int_{B_{r}(x_{0})}\frac{|f(x,t)-f(y,t)|^{2}}{|x-y|^{n}}\dx\dy\dt+\frac{I}{8}
\end{equation*}
by H\"older's inequality and Young's inequality. For the last two terms, we first observe that
\begin{equation*}
\begin{aligned}
    I_{5}+I_{6}&\leq c\left(\frac{r}{r-\rho}\right)^{n+s}\dashint_{\Lambda_{r}(t_{0})}\int_{\mathbb{R}^{n}\setminus B_{r}(x_{0})}\dashint_{B_{r}(x_{0})}\frac{|f(x,t)-(f)_{B_{r}(x_{0})}(t)|}{|y-x_{0}|^{n+s}}|(u\psi\eta)(x,t)|\dx\dy\dt\\
    &\quad +c\left(\frac{r}{r-\rho}\right)^{n+s}\dashint_{\Lambda_{r}(t_{0})}\int_{\mathbb{R}^{n}\setminus B_{r}(x_{0})}\frac{|f(y,t)-(f)_{B_{r}(x_{0})}(t)|}{|y-x_{0}|^{n+s}}\dy\dashint_{B_{r}(x_{0})}|(u\psi\eta)(x,t)|\dx\dt.
\end{aligned}
\end{equation*}
By following the same line as in the estimate of $I_{2,2}$ in \cite[Lemma 3.3]{BKc}, we estimate $I_{5}$ as
\begin{equation*}
\begin{aligned}
    I_{5}+I_{6}&\leq \frac{I}{8}+c\left(\frac{r}{r-\rho}\right)^{2(n+2s)}\dashint_{\Lambda_{r}(t_{0})}\dashint_{B_{r}(x_{0})}\int_{B_{r}(x_{0})}\frac{|f(x,t)-f(y,t)|^{2}}{|x-y|^{n}}\dx\dy\dt\\
    &\quad +c\Tail_{2,s}\left(f-(f)_{B_{r}(x_{0})}(t);Q_{r}(z_{0})\right)^{2}.
\end{aligned}
\end{equation*}
Using the definitions of $\psi$ and $\eta$, we estimate $I$ as
\begin{equation*}
\begin{aligned}
    I\geq \left[\dashint_{\Lambda_{\rho}(t_{0})}\dashint_{B_{\rho}(x_{0})}\int_{B_{\rho}(x_{0})}\frac{|u(x,t)-u(y,t)|^{2}}{|x-y|^{n+2s}}\dx\dy\dt+
    \sup_{t\in \Lambda_{\rho}(t_{0})}\dashint_{B_{\rho}(x_{0})}\frac{|u(x,t)|^{2}}{\rho^{2s}}\dx\right].
\end{aligned}
\end{equation*}
We combine the estimates $I$ and $I_{i}$ for each $i=1,2,\ldots,6$ along with \eqref{energyb} to obtain \eqref{cz non pa : CE}.
\end{proof}

%%%%%%%%%%%%%%gluing lemma

Next we give a gluing lemma.
\begin{lem}(See \cite[Lemma 4.5]{BKKh}.)
\label{cz non pa : GL}
Let $Q_{\rho}(z_{0})\Subset\Omega_{T}$. Take $\psi\in C_{c}^{\infty}\left(B_{\frac{3\rho}{4}}(x_{0})\right)$ with $\psi\equiv 1$ in $B_{\frac{\rho}{2}}(x_{0})$. Then we have that
\begin{align*}
    &\sup_{t_{1},t_{2}\in \Lambda_{\rho}(t_{0})} \left|(u)_{B_{\rho}}^{\psi}(t_{1})-(u)_{B_{\rho}}^{\psi}(t_{2})\right|   \\
    &\leq c\rho^{2s-1}\dashint_{Q_{\rho}(z_{0})}\int_{B_{\rho}(x_{0})}\frac{|u(x,t)-u(y,t)|}{|x-y|^{n+2s-1}}\dy\dz+c\rho^{2s}\dashint_{Q_{\rho}(z_{0})}\int_{\mathbb{R}^{n}\setminus B_{\rho}(x_{0})}\frac{|u(x,t)-u(y,t)|}{|y-x_{0}|^{n+2s}}\dy\dz\\
    &\quad+c\rho^{2s-1}\dashint_{Q_{\rho}(z_{0})}\int_{B_{\rho}(x_{0})}\frac{|f(x,t)-f(y,t)|}{|x-y|^{n+s-1}}\dy\dz+c\rho^{2s}\dashint_{Q_{\rho}(z_{0})}\int_{\mathbb{R}^{n}\setminus B_{\rho}(x_{0})}\frac{|f(x,t)-f(y,t)|}{|y-x_{0}|^{n+s}}\dy\dz\\
    &\quad +c\rho^{2s}\dashint_{Q_{\rho}(z_{0})}|g|\dz
\end{align*}
for some constant $c=c(n,s)$, where $(u)_{B_{\rho}}^{\psi}(t)=\frac{1}{\|\psi\|_{L^1}}\int_{B_{\rho}}u(x,t)\psi(x)\dx$.
\end{lem}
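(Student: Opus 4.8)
The plan is to run the standard testing (``gluing'') computation: test the notion of weak solution for \eqref{cz non pa : eq1} against a function of product form $\psi(x)\zeta(t)$, read off an exact identity for the increment $(u)_{B_{\rho}}^{\psi}(t_2)-(u)_{B_{\rho}}^{\psi}(t_1)$, and then estimate the three resulting integrals term by term. Throughout I may assume $0\le\psi\le1$.

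First I would fix $t_1,t_2\in\Lambda_{\rho}(t_0)$, choose an outer interval $(t_1',t_2')\Subset(0,T)$ with $[t_1,t_2]\Subset(t_1',t_2')$, and for small $\varepsilon>0$ take $\zeta_{\varepsilon}\in W^{1,\infty}(\mathbb{R})$ piecewise affine with $\zeta_{\varepsilon}\equiv1$ on $[t_1+\varepsilon,t_2-\varepsilon]$, $\zeta_{\varepsilon}\equiv0$ outside $(t_1,t_2)$ and $|\zeta_{\varepsilon}'|\le2/\varepsilon$. The choice $\phi(x,t)=\psi(x)\zeta_{\varepsilon}(t)$ is an admissible test function since $\supp\psi\subset B_{3\rho/4}(x_0)\Subset\Omega$ and $\zeta_{\varepsilon}$ vanishes near $t_1',t_2'$ (so the boundary term drops), and $\phi_t=\psi\zeta_{\varepsilon}'$. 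Because $-\int\!\int u\psi\zeta_{\varepsilon}'\dx\dt=-\int\zeta_{\varepsilon}'(t)\,\|\psi\|_{L^1}(u)_{B_{\rho}}^{\psi}(t)\dt\to\|\psi\|_{L^1}\big((u)_{B_{\rho}}^{\psi}(t_2)-(u)_{B_{\rho}}^{\psi}(t_1)\big)$ by the continuity $u\in C_{\mathrm{loc}}(0,T;L^2_{\mathrm{loc}}(\Omega))$, while the remaining terms converge by dominated convergence (their integrability on $(t_1',t_2')$ coming from the regularity of $u,f,g$ built into the definition of weak solution), letting $\varepsilon\to0$ gives
\begin{align*}
\|\psi\|_{L^1}\big[(u)_{B_{\rho}}^{\psi}(t_2)-(u)_{B_{\rho}}^{\psi}(t_1)\big]
&=-\int_{t_1}^{t_2}\int_{\mathbb{R}^n}\int_{\mathbb{R}^n}\Phi\!\left(\frac{u(x,t)-u(y,t)}{|x-y|^s}\right)(\psi(x)-\psi(y))\frac{A(x,y,t)}{|x-y|^{n+s}}\dx\dy\dt\\
&\quad+\int_{t_1}^{t_2}\int_{\mathbb{R}^n}\int_{\mathbb{R}^n}(f(x,t)-f(y,t))(\psi(x)-\psi(y))\frac{1}{|x-y|^{n+s}}\dx\dy\dt\\
&\quad+\int_{t_1}^{t_2}\int_{\mathbb{R}^n}g\psi\dx\dt.
\end{align*}

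Then I would estimate the right-hand side after dividing by $\|\psi\|_{L^1}\approx_n\rho^n$, using $|\Phi(\xi)|\le L|\xi|$ (from $\Phi(0)=0$ and \eqref{cz non pa : Phicond}), $|A|\le L$, $|D\psi|\le c/\rho$, and $|\Lambda_{\rho}(t_0)|\approx\rho^{2s}$, splitting each double integral into the diagonal region $\mathcal{B}_{\rho}(x_0)$ and its complement. On the diagonal, $|\psi(x)-\psi(y)|\le\frac{c}{\rho}|x-y|$ turns $|x-y|^{-(n+s)}$ into $\frac{c}{\rho}|x-y|^{-(n+s-1)}$ and, via $|\Phi(\frac{u(x,t)-u(y,t)}{|x-y|^s})|\le L\frac{|u(x,t)-u(y,t)|}{|x-y|^s}$, produces exactly the first and third terms of the asserted inequality. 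On the complement at least one point lies outside $B_{\rho}(x_0)$, hence outside $\supp\psi$, so $|\psi(x)-\psi(y)|\le1$ and, by the symmetry of the bounds, one may take $x\in B_{3\rho/4}(x_0)$, $y\in\mathbb{R}^n\setminus B_{\rho}(x_0)$; then $|x-y|\ge|y-x_0|-\frac{3\rho}{4}\ge\frac14|y-x_0|$ replaces $|x-y|^{-(n+2s)}$ by $c|y-x_0|^{-(n+2s)}$ and $|x-y|^{-(n+s)}$ by $c|y-x_0|^{-(n+s)}$, giving the second and fourth terms, while $\frac{1}{\|\psi\|_{L^1}}\int_{t_1}^{t_2}\int g\psi\le\frac{c}{\rho^n}\int_{\Lambda_{\rho}(t_0)}\int_{B_{\rho}(x_0)}|g|$ gives the last term. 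Taking the supremum over $t_1,t_2\in\Lambda_{\rho}(t_0)$ finishes the proof.

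The only non-routine point is the passage $\varepsilon\to0$: the parabolic term uses the time-continuity above, and dominated convergence for the nonlocal term requires $\int_{t_1'}^{t_2'}\int\!\int|\Phi(\cdots)|\,|\psi(x)-\psi(y)|\,\frac{A}{|x-y|^{n+s}}<\infty$. On the diagonal this follows from Cauchy--Schwarz, bounding $\frac{|u(x,t)-u(y,t)|}{|x-y|^{n+2s-1}}\le\big(\frac{|u(x,t)-u(y,t)|^2}{|x-y|^{n+2s}}\big)^{1/2}|x-y|^{-(n-2)/2}$ and using $\int_{B_{\rho}(x_0)}\int_{B_{\rho}(x_0)}|x-y|^{-(n-2)}\dx\dy<\infty$ together with $u\in L^2_{\mathrm{loc}}(0,T;W^{s,2}_{\mathrm{loc}}(\Omega))$; off the diagonal it follows from $u(\cdot,t)\in L^1_{2s}(\mathbb{R}^n)$ for a.e.\ $t$. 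All remaining steps are bookkeeping of powers of $\rho$.
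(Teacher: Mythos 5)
The paper does not prove this lemma; it cites Lemma 4.5 of \cite{BKKh}, so there is no in-paper argument to compare against. Your proof via testing the weak formulation with $\psi(x)\zeta_\varepsilon(t)$ and sending $\varepsilon\to 0$ is correct, and the term-by-term estimates (the Lipschitz bound for $\psi$ on the diagonal, the comparison $|x-y|\ge\tfrac14|y-x_0|$ when $x\in B_{3\rho/4}(x_0)$ and $y\notin B_\rho(x_0)$, and the normalizations $\|\psi\|_{L^1}\approx_n\rho^n$, $|\Lambda_\rho(t_0)|\approx\rho^{2s}$) reproduce all five right-hand-side terms with the correct powers of $\rho$. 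Two small remarks. First, the $\zeta_\varepsilon$ detour is unnecessary: the weak formulation as stated in this paper already carries the boundary term evaluating $-\int_\Omega (u\phi)(x,t)\dx$ between $t=t_1$ and $t=t_2$, so you may simply take the time-independent test function $\phi(x,t)=\psi(x)$ (then $\phi_t\equiv 0$) and read off the identity without any limiting procedure. Second, in the Cauchy--Schwarz justification of integrability the exponent is off: factoring $\frac{|u(x,t)-u(y,t)|}{|x-y|^{n+2s-1}}=\left(\frac{|u(x,t)-u(y,t)|^2}{|x-y|^{n+2s}}\right)^{1/2}|x-y|^{-(n+2s-2)/2}$ leads to $\int_{B_\rho(x_0)}\int_{B_\rho(x_0)}|x-y|^{-(n+2s-2)}\dx\dy<\infty$ (finite because $s<1$), not the $|x-y|^{-(n-2)}$ you wrote. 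This is only a slip of the pen and does not affect the conclusion.
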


\noindent
We now show that $L^{2}$-oscillation integral by the sup-norm is estimated by the sum of $L^{2}$-integral of $d_{s}u$ and $d_{0}f$, $L^{\gamma}$-integral of $g$, and tail terms of $u$ and $f$. Before giving the estimate, we define a function $G:\Omega\times\Omega_{T}\to\mathbb{R}$ by
\begin{equation}
\label{cz non pa :defnG}
G(x,y,t)=g(x,t).
\end{equation}
We directly deduce that
\begin{equation}
\label{cz non pa : equbwGg}
\int_{Q_{r}}|g|^{p}\dz \approx_{n,s,p} \frac{1}{r^{2\tau}}  \int_{\mathcal{Q}_{r}}|G|^{p}\dmutt.
\end{equation}
%where $a\approx_{n,s,p} b$ means that there is a constant $c=c(n,s,p)$ such that $\frac{b}{c}\leq a\leq cb$. 

\begin{lem}
\label{cz non pa : RHI}
Let $u$ be a weak solution to \eqref{cz non pa : eq1} and let $Q_{2\rho}(z_{0})\Subset\Omega_{T}$. Then we have
{\small\begin{equation}
\label{cz non pa : reversehig}
\begin{aligned}
\sup_{t\in \Lambda_{\rho}(t_{0})}\dashint_{B_{\rho}(x_{0})}\frac{|u-(u)_{Q_{\rho}(z_{0})}|^{2}}{\rho^{2s+2\tau}}\dx&\leq   \frac{c_{0}}{\tau}\dashint_{\mathcal{Q}_{2\rho}(z_{0})}|D^{\tau}d_{s}u|^{2}\dmutt+c_{0}\Tail_{2,2s}\left(\frac{u-(u)_{B_{2\rho}(x_{0})}(t)}{(2\rho)^{s+\tau}};Q_{2\rho}(z_{0})\right)^{2}\\
&\quad+\frac{c_{0}}{\tau}\dashint_{\mathcal{Q}_{2\rho}(z_{0})}|D^{\tau}d_{0}f|^{2}\dmutt+c_{0}\Tail_{2,s}\left(\frac{f-(f)_{B_{2\rho}(x_{0})}(t)}{(2\rho)^{\tau}};Q_{2\rho}(z_{0})\right)^{2}\\
&\quad +c_{0}\left(\frac{1}{\tau}\dashint_{\mathcal{Q}_{2\rho}(z_{0})}\left((2\rho)^{s-\tau}|G|\right)^{\gamma}\dmutt\right)^{\frac{2}{\gamma}},
\end{aligned}
\end{equation}}\\
\noindent
where $c_{0}=c_{0}(n,s,L)$ is a constant.
\end{lem}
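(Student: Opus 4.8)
The plan is to deduce \eqref{cz non pa : reversehig} directly from the energy estimate \eqref{cz non pa : CE}, applied on $Q_{2\rho}(z_0)\Subset\Omega_T$ with the choice $r=2\rho$ and $k=(u)_{Q_{\rho}(z_{0})}$, together with the gluing Lemma~\ref{cz non pa : GL}. With this choice the left-hand side of \eqref{cz non pa : CE} dominates $\sup_{t\in\Lambda_{\rho}(t_0)}\dashint_{B_{\rho}(x_0)}\rho^{-2s}|u-(u)_{Q_{\rho}(z_0)}|^{2}\dx$, and since $r=2\rho$ all the prefactors $\tfrac{r^{n+2-2s}}{(r-\rho)^{n+2}}\approx\rho^{-2s}$ and $\big(\tfrac{r}{r-\rho}\big)^{2(n+2s)}\approx1$ are harmless. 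Dividing the resulting inequality by $\rho^{2\tau}$, it then suffices to bound each of the five terms on the right-hand side of \eqref{cz non pa : CE}, multiplied by $\rho^{-2\tau}$, by the right-hand side of \eqref{cz non pa : reversehig}.

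The four ``algebraic'' terms of \eqref{cz non pa : CE} (everything but the $\dashiint_{Q_{2\rho}}|u-k|^{2}$ term) are handled by bookkeeping only: the identity $\int_{\mathcal{Q}_{r}}|D^{\tau}d_{\tilde s}h|^{2}\dmutt=\int_{\mathcal{Q}_{r}}\tfrac{|h(x,t)-h(y,t)|^{2}}{|x-y|^{n+2\tilde s}}\dx\dy\dt$ with $\tilde s=0$, combined with the volume formula \eqref{cz non pa : size of b mut}, converts $\dashint_{\Lambda_r}\dashint_{B_r}\int_{B_r}$ of the $f$-seminorm into $\tfrac{r^{2\tau}}{\tau}\dashint_{\mathcal{Q}_{r}}|D^{\tau}d_{0}f|^{2}\dmutt$; the two tail terms of \eqref{cz non pa : CE} are converted by pulling $(2\rho)^{\tau}$ (resp.\ $(2\rho)^{s+\tau}$) out of the quotient and then raising the exponent from $\gamma=\tfrac{2(n+2s)}{n+4s}$ (resp.\ $2$) to $2$ via \eqref{cz non pa : tailho}; and the $g$-term is converted into $\big(\tfrac1\tau\dashint_{\mathcal{Q}_{2\rho}}((2\rho)^{s-\tau}|G|)^{\gamma}\dmutt\big)^{2/\gamma}$ by \eqref{cz non pa : equbwGg} and \eqref{cz non pa : size of b mut}. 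After multiplying by $\rho^{-2\tau}$ (and using $r=2\rho\approx\rho$) each of these is exactly one summand of \eqref{cz non pa : reversehig}.

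The substantive step is the remaining term $\rho^{-2s-2\tau}\dashiint_{Q_{2\rho}}|u-(u)_{Q_{\rho}(z_0)}|^{2}\dz$, for which I would prove a parabolic Poincar\'e inequality from Lemma~\ref{cz non pa : GL}. Take the cutoff $\psi\in C^{\infty}_{c}(B_{3\rho/2}(x_{0}))$, $\psi\equiv1$ on $B_{\rho}(x_{0})$, and split $u-(u)_{Q_{\rho}(z_0)}$ through the time-dependent weighted mean $(u)^{\psi}_{B_{2\rho}}(t)$; the discrepancies $(u)_{Q_{\rho}(z_0)}-(u)^{\psi}_{B_{2\rho}}(t_{0})$ and $(u)^{\psi}_{B_{2\rho}}(t)-(u)_{B_{2\rho}}(t)$ are absorbed, respectively, into the time oscillation $\sup_{t_1,t_2\in\Lambda_{2\rho}}|(u)^{\psi}_{B_{2\rho}}(t_1)-(u)^{\psi}_{B_{2\rho}}(t_2)|$ and, via the fractional Poincar\'e inequality on $B_{2\rho}(x_0)$, into the spatial Gagliardo seminorm of $u$. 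This reduces matters to (a) $\dashiint_{Q_{2\rho}}|u(x,t)-(u)_{B_{2\rho}}(t)|^{2}\dx\dt$, estimated by the fractional Poincar\'e inequality for a.e.\ $t$ and then integration, giving $\approx\tfrac{\rho^{2s+2\tau}}{\tau}\dashint_{\mathcal{Q}_{2\rho}}|D^{\tau}d_{s}u|^{2}\dmutt$; and (b) the square of the time oscillation, which by Lemma~\ref{cz non pa : GL} at radius $2\rho$ is the sum of five contributions: the near-diagonal $u$- and $f$-integrals, bounded by the Cauchy--Schwarz inequality in the kernel (using $\int_{B_{2\rho}}|x-y|^{2-2s-n}\dy\approx\rho^{2-2s}$) to produce $\tfrac1\tau\dashint_{\mathcal{Q}_{2\rho}}|D^{\tau}d_{s}u|^{2}\dmutt$ and $\tfrac1\tau\dashint_{\mathcal{Q}_{2\rho}}|D^{\tau}d_{0}f|^{2}\dmutt$; the two exterior tail integrals, handled by splitting the numerator around $(u)_{B_{2\rho}}(t)$ (resp.\ $(f)_{B_{2\rho}}(t)$), the interior part again by fractional Poincar\'e and the exterior part as $\Tail_{1,2s}\le\Tail_{2,2s}$, $\Tail_{1,s}\le\Tail_{2,s}$ via \eqref{cz non pa : tailho}; and the $g$-integral, by H\"older's inequality and \eqref{cz non pa : equbwGg}. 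Squaring and multiplying by $\rho^{-2s-2\tau}$, each piece lands in one of the summands of \eqref{cz non pa : reversehig}.

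The main obstacle is not a single inequality but the exponent-and-scale bookkeeping in the last step: one must track the powers of $2\rho$ hidden in the normalising factors $(2\rho)^{-(s+\tau)}$ and $(2\rho)^{-\tau}$ inside the $\Tail$-terms of \eqref{cz non pa : reversehig}, the factors $r^{2\tau}$ and $\tfrac1\tau$ generated by \eqref{cz non pa : size of b mut} and \eqref{cz non pa : equbwGg}, and the exponent drop $\gamma\le2$, and verify that everything combines into \eqref{cz non pa : reversehig} with a constant $c_0=c_0(n,s,L)$ that is independent of $\rho$ and of $\tau$ apart from the displayed $\tfrac1\tau$ factors. The only genuinely new input beyond this routine manipulation is the parabolic Poincar\'e inequality drawn from the gluing lemma, which is precisely where the nonlocal tails of $u$ and $f$ first appear and must be absorbed into the $\Tail$-terms on the right-hand side.
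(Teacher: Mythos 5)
Your argument is correct and is essentially the paper's own proof: apply the energy estimate \eqref{cz non pa : CE} with $r=2\rho$ and $k=(u)_{Q_\rho(z_0)}$, then control the remaining $L^2$-oscillation term by a parabolic Poincar\'e inequality built from the gluing Lemma~\ref{cz non pa : GL}. The only difference is that the paper cites this parabolic Poincar\'e inequality as the prepared estimate \eqref{cz non pa : paraSPI2} (proved in Appendix~\ref{cz non pa : appen a} via exactly the splitting you describe), whereas you inline its derivation; everything else, including the $\gamma\to 2$ upgrade via \eqref{cz non pa : tailho} and the $G$-rewriting via \eqref{cz non pa : equbwGg} and \eqref{cz non pa : size of b mut}, matches.
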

\begin{proof}
We may assume that $z_{0}=0$.
Using \eqref{cz non pa : CE} with $r=2\rho$ and $k=(u)_{Q_{\rho}}$, we have
{\small\begin{equation*}
\begin{aligned}
    \sup_{t\in\Lambda_{\rho}}\dashint_{B_{\rho}}\frac{|u(x,t)-(u)_{Q_{\rho}}|^{2}}{\rho^{2s+2\tau}}\dx&\leq c\dashiint_{Q_{2\rho}}\frac{|u-(u)_{Q_{2\rho}}|^{2}}{(2\rho)^{2s+2\tau}}\dz+\frac{c}{\tau}\dashint_{\mathcal{Q}_{2\rho}}|D^{\tau}d_{0}f|^{2}\dmutt+c\left(\dashint_{Q_{2\rho}}\left((2\rho)^{s-\tau}|g|\right)^{\gamma}\dz\right)^{\frac{2}{\gamma}}\\
    &\quad+c\left[\Tail_{\gamma,2s}\left(\frac{u-(u)_{B_{2\rho}}(t)}{(2\rho)^{s+\tau}};Q_{2\rho}\right)^{2}+\Tail_{2,s}\left(\frac{f-(f)_{B_{2\rho}}(t)}{(2\rho)^{\tau}};Q_{2\rho}\right)^{2}\right].
\end{aligned}
\end{equation*}}\\
\noindent
Applying \eqref{cz non pa : paraSPI2} and \eqref{cz non pa : size of b mut} to the first term and the third term in the right-hand side of the above inequality, respectively, we obtain the desired estimate \eqref{cz non pa : reversehig}. 
\end{proof}

%%%%%%%%%%%%%%%%%new section

\section{Comparison estimates}
This section is devoted to establishing comparison estimates. We now assume 
\begin{equation}
\label{cz non pa :taufirstcond}
\tau\in\left(0,\min\{s,1-s\}\right).
\end{equation}
Before proving comparison estimates, we first give two lemmas. The first one is a self-improving property for weak solutions to the corresponding homogeneous problem of \eqref{cz non pa : eq1}.
\begin{lem}
\label{cz non pa : self impro} Let $w\in L^{2}\left(\Lambda_{3};W^{s,2}(B_{3})\right)\cap L^{\infty}\left(\Lambda_{3};L^{1}_{2s}\left(\mathbb{R}^{n}\right)\right)$ be a  weak solution to
\begin{equation*}
%\label{cz non pa: pdeselfimp}
    w_t+\mathcal{L}^{\Phi}_{A}w=0\quad\text{in }Q_{3}.
\end{equation*}
Then there are constants $\epsilon_{0}=\epsilon_{0}(n,s,L)\in(0,1)$ and $c=c(n,s,L)$ such that 
{\small\begin{equation*}
\begin{aligned}
    \left(\frac{1}{\tau}\dashint_{\mathcal{Q}_{2}}|D^{\tau}d_{s}w|^{2(1+\epsilon_{0})}\dmutt\right)^{\frac{1}{2(1+\epsilon_{0})}}&\leq c\left(\frac{1}{\tau}\dashint_{\mathcal{Q}_{3}}|D^{\tau}d_{s}w|^{2}\dmutt\right)^{\frac{1}{2}}+\Tail_{\infty,2s}\left(\frac{w-(w)_{B_{3}}(t)}{3^{s+\tau}};Q_{3}\right)\\
    &\quad+c\left(\sup_{t\in \Lambda_{3}}\dashint_{B_{3}}\frac{|w-(w)_{B_{3}}(t)|^{2}}{3^{2(s+\tau)}}\dx\right)^{\frac{1}{2}}.
\end{aligned}
\end{equation*}}
\end{lem}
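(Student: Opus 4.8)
The plan is to deduce the claimed higher integrability from a reverse H\"older inequality for the fractional gradient $D^{\tau}d_{s}w$ with respect to the measure $\mu_{\tau,t}$, and then to run a Gehring-type lemma in the space of homogeneous type determined by $\mu_{\tau,t}$ and the intrinsically scaled cylinders $\mathcal{Q}_{\rho}=\mathcal{B}_{\rho}(x_{0},y_{0})\times\Lambda_{\rho}(t_{0})$. Concretely, I would first establish that there exist $\chi=\chi(n,s)\in(0,1)$ and $c=c(n,s,L)$ such that for every cylinder $\mathcal{Q}_{2\rho}\Subset\mathcal{Q}_{3}$,
\begin{equation*}
\frac{1}{\tau}\dashint_{\mathcal{Q}_{\rho}}|D^{\tau}d_{s}w|^{2}\dmutt\leq c\left(\frac{1}{\tau}\dashint_{\mathcal{Q}_{2\rho}}|D^{\tau}d_{s}w|^{2\chi}\dmutt\right)^{\frac{1}{\chi}}+c\,\mathcal{E},
\end{equation*}
where $\mathcal{E}:=\Tail_{\infty,2s}\left(\tfrac{w-(w)_{B_{3}}(t)}{3^{s+\tau}};Q_{3}\right)^{2}+\sup_{t\in\Lambda_{3}}\dashint_{B_{3}}\tfrac{|w-(w)_{B_{3}}(t)|^{2}}{3^{2(s+\tau)}}\dx$ is the sum of squares of the last two quantities on the right-hand side of the lemma. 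Granting this, $\mu_{\tau,t}$ is doubling with respect to the family $\{\mathcal{Q}_{\rho}\}$ with a constant depending only on $n$ and $s$ (this is \eqref{cz non pa : doubling} and \eqref{cz non pa : meascom}), so a Gehring lemma on spaces of homogeneous type upgrades the exponent $2$ to $2(1+\epsilon_{0})$ over $\mathcal{Q}_{2}$ for some $\epsilon_{0}=\epsilon_{0}(n,s,L)\in(0,1)$, starting the iteration from the seed $\tfrac{1}{\tau}\dashint_{\mathcal{Q}_{3}}|D^{\tau}d_{s}w|^{2}\dmutt$; the perturbation $\mathcal{E}$ carries through, and since $\mathcal{E}^{1/2}$ is dominated by the last two quantities on the right-hand side of the lemma, taking a square root yields exactly the asserted estimate.

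The reverse H\"older inequality on \emph{diagonal} cylinders (i.e.\ with $x_{0}=y_{0}$) would come from three ingredients. First, the energy inequality \eqref{cz non pa : CE} with $f=g=0$ and $k=(w)_{Q_{2\rho}}$ plays the role of a Caccioppoli estimate: after normalising by $\rho^{2\tau}$, using \eqref{cz non pa : size of b mut} to pass between Gagliardo seminorms and $\mu_{\tau,t}$-averages, and \eqref{cz non pa : tailho} to replace $\Tail_{\gamma}$ by $\Tail_{\infty}$, it bounds $\tfrac{1}{\tau}\dashint_{\mathcal{Q}_{\rho}}|D^{\tau}d_{s}w|^{2}\dmutt$ together with the sup-oscillation $\sup_{t\in\Lambda_{\rho}(t_{0})}\dashint_{B_{\rho}(x_{0})}\tfrac{|w-(w)_{Q_{2\rho}}|^{2}}{\rho^{2s+2\tau}}\dx$ by $c\dashiint_{Q_{2\rho}}\tfrac{|w-(w)_{Q_{2\rho}}|^{2}}{\rho^{2s+2\tau}}\dz+c\,\mathcal{E}$; in particular \eqref{cz non pa : CE} also gives $w\in L^{\infty}_{t}L^{2}_{x}$ locally in $Q_{3}$. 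Second, a fractional parabolic Sobolev--Poincar\'e inequality with sub-quadratic exponent $2\chi$ (of the type used in Section 3; cf.\ \eqref{cz non pa : paraSPI2} and Lemma \ref{cz non pa : embed lem})---available precisely because the standing assumption $\tau<s$ from \eqref{cz non pa :taufirstcond} keeps the effective differentiability index $s+\tau-\tau/\chi$ positive---estimates $\dashiint_{Q_{2\rho}}\tfrac{|w-(w)_{Q_{2\rho}}|^{2}}{\rho^{2s+2\tau}}\dz$ by $c\big(\tfrac{1}{\tau}\dashint_{\mathcal{Q}_{2\rho}}|D^{\tau}d_{s}w|^{2\chi}\dmutt\big)^{1/\chi}$ times a power strictly smaller than one of $\sup_{t}\dashint_{B_{2\rho}}\tfrac{|w-(w)_{B_{2\rho}}(t)|^{2}}{\rho^{2(s+\tau)}}\dx$, plus a tail term; here the gluing Lemma \ref{cz non pa : GL} (with $f=g=0$) lets one trade time-slice means $(w)_{B}(t)$ for the space-time mean at the cost of another Gagliardo seminorm and tail. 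Third, Young's inequality absorbs the sub-unit power of the sup-oscillation into the sup-term already controlled by \eqref{cz non pa : CE} and into the $L^{2\chi}$-term, a tail iteration of the type of Lemma \ref{cz non pa : taillem} reduces the intermediate-scale tails to the $\Tail_{\infty,2s}$-quantity over $Q_{3}$, and the technical iteration Lemma \ref{cz non pa : technicallemma} removes the dependence of the constants on the intermediate radii. (Also $s+\tau<1$ by \eqref{cz non pa :taufirstcond}, so that $d_{s+\tau}w$ is a well-defined fractional gradient of order below $1$.)

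The main obstacle is to make a reverse H\"older inequality available also on \emph{off-diagonal} cylinders $\mathcal{Q}_{\rho}$ with $|x_{0}-y_{0}|\gg\rho$, which are nonetheless needed for the covering argument underlying Gehring's lemma over $\mathcal{Q}_{2}$. On such a cylinder $|x-y|\approx|x_{0}-y_{0}|$, so $\mu_{\tau,t}$ restricted there is comparable to Lebesgue measure and the Gagliardo kernel is essentially a constant; one then decomposes $w(x,t)-w(y,t)$ into the fluctuations of $w$ about its means over $B_{\rho}(x_{0})$ and over $B_{\rho}(y_{0})$ plus the difference of those two means, estimates the first two fluctuations via the diagonal reverse H\"older at $x_{0}$ and at $y_{0}$, and controls the difference of means by a telescoping argument bounded by $\mathcal{E}$. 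This is exactly the mechanism of \cite[Lemma 5.3]{KMS1} (see also \cite{BKc} for the elliptic version): the correction terms so produced are supported on \emph{diagonal} cylinders that do not themselves belong to the family being covered, so passing from the reverse H\"older inequality to the Gehring estimate requires the Calder\'on--Zygmund-type combinatorial/covering argument of \cite{KMS1}; this last step---which is where the real work lies---is precisely the computation carried out, for the homogeneous equation, in Appendix \ref{cz non pa : appen a}. Since $f=g=0$ here, no non-divergence or fractional-Laplacian data enter, which makes the whole argument noticeably cleaner than the analogous one for the full equation \eqref{cz non pa : eq1}.
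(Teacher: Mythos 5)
Your blueprint is essentially the paper's strategy, but the paper's actual proof of Lemma~\ref{cz non pa : self impro} is much shorter: it simply cites Theorem~\ref{cz non pa : selfprof} from Appendix~\ref{cz non pa : appen a} (the self-improving estimate, stated in the $d_{s+\epsilon_1}w$ language on diagonal cylinders), converts exponents via $\epsilon_0=\epsilon_1/2$ and \eqref{cz non pa : size of b mut} to pass from the Gagliardo form to the $D^{\tau}d_{s}w$--$\mu_{\tau,t}$ form, and then finishes with a standard covering together with the off-diagonal Lemma~\ref{cz non pa : off rever lem}. You instead propose to reprove Theorem~\ref{cz non pa : selfprof} inline, which is the same content done directly in the $\mu_{\tau,t}$ language rather than passing through the conversion; the ingredients you list (energy estimate \eqref{cz non pa : CE}, the sub-quadratic Sobolev--Poincar\'e/gluing step as in Lemma~\ref{cz non pa : sobolevpoincare}, the tail iteration Lemma~\ref{cz non pa : taillem}, the iteration Lemma~\ref{cz non pa : technicallemma}) are indeed exactly what Lemmas~\ref{cz non pa : sobolevpoincare}--\ref{cz non pa : RHIs} and Theorem~\ref{cz non pa : selfprof} use, so this is a legitimate but less modular route.

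One substantive inaccuracy: you claim the level-set/covering step "requires the Calder\'on--Zygmund-type combinatorial/covering argument of~\cite{KMS1}." The paper explicitly does \emph{not} use the Calder\'on--Zygmund cube decomposition of~\cite{KMS1}; Appendix~\ref{cz non pa : appen a} (Lemma~\ref{cz non pa : coveringLs}) and Section~\ref{cz non pa :lqsection} both rely on a Vitali covering combined with an exit-time argument built on the functional $\Theta_{D}$ of \eqref{cz non pa : defnTheta}, which the authors emphasize is new even in the elliptic setting and which is crucial because the correction terms in the off-diagonal reverse H\"older inequality (Lemma~\ref{cz non pa : off rever lem}) contain a sup-norm $L^2$-oscillation that the dyadic cube machinery of~\cite{KMS1} does not handle. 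Relatedly, you should be cautious about describing the endgame as "run a Gehring lemma on a space of homogeneous type": the actual proof of Theorem~\ref{cz non pa : selfprof} is a bespoke level-set estimate using \eqref{cz non pa : diagonal estimates}, \eqref{cz non pa : offdiagonal estimates}, and \eqref{cz non pa : pstra norms} together with the weak $(1,1)$ estimate, not an off-the-shelf Gehring. These are mischaracterizations of method rather than mathematical gaps, and they are harmless in your writeup precisely because you ultimately defer the hard step to Appendix~\ref{cz non pa : appen a}, but they would matter if you tried to actually execute the covering step.
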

\begin{proof}
By Theorem \ref{cz non pa : selfprof} below, we have
{\small\begin{equation*}
\begin{aligned}
    \left(\dashint_{Q_{r}}\int_{B_{r}}\left|r^{\epsilon_{1}}{d_{s+\epsilon_{1}}w}\right|^{2+\epsilon_{1}}\frac{\dx\dz}{|x-y|^{n}}\right)^{\frac{1}{2+\epsilon_{1}}}&\leq c\left(\dashint_{Q_{2r}}\int_{B_{2r}}|d_{s}w|^{2}\frac{\dx\dz}{|x-y|^{n}}\right)+c\Tail_{\infty,2s}\left(\frac{w-(w)_{B_{2r}}(t)}{(2r)^{s}};Q_{2r}\right)\\
    &\quad +c\left(\sup_{t\in \Lambda_{2r}}\dashint_{B_{2r}}\frac{|w-(w)_{B_{2r}}(t)|^{2}}{(2r)^{2s}}\dx\right)^{\frac{1}{2}}
\end{aligned}
\end{equation*}}\\
\noindent
for some constant $\epsilon_{1}=\epsilon_{1}(n,s,L)\in(0,1)$, where $Q_{2r}\subset Q_{3}$. By taking $\epsilon_{0}=\frac{\epsilon_{1}}{2}$ and using \eqref{cz non pa : size of b mut}, we get
{\small\begin{equation*}
\begin{aligned}
    \left(\frac{1}{\tau}\dashint_{\mathcal{Q}_{r}}|D^{\tau}d_{s}w|^{2(1+\epsilon_{0})}\dmutt\right)^{\frac{1}{2(1+\epsilon_{0})}}&\leq c\left(\frac{1}{\tau}\dashint_{\mathcal{Q}_{2r}}|D^{\tau}d_{s}w|^{2}\dmutt\right)^{\frac{1}{2}}+\Tail_{\infty,2s}\left(\frac{w-(w)_{B_{2r}}(t)}{(2r)^{s+\tau}};Q_{2r}\right)\\
    &\quad+c\left(\sup_{t\in \Lambda_{2r}}\dashint_{B_{2r}}\frac{|w-(w)_{B_{2r}}(t)|^{2}}{(2r)^{2(s+\tau)}}\dx\right)^{\frac{1}{2}}.
\end{aligned}
\end{equation*}}\\
\noindent
The standard covering argument along with Lemma \ref{cz non pa : off rever lem} gives the desired result (see \cite[Lemma 3.1]{BKc} for more details).
\end{proof}

%%%%%%%%%%%%%%%%%%%%Higher Holder regularity
The second one is a higher H\"older regularity of weak solutions to fractional parabolic equations with locally constant coefficients with respect to the spatial variables.
\begin{lem}(See \cite[Theorem 1.2]{BKKh})
\label{cz non pa : higher hol} Let $v\in L^{2}\left(\Lambda_{2};W^{s,2}(B_{2})\right)\cap L^{\infty}\left(\Lambda_{2};L^{1}_{2s}\left(\mathbb{R}^{n}\right)\right)$ be a weak solution to
\begin{equation*}
    v_t+\mathcal{L}^{\Phi}_{A_{2}(t)}v=0\quad\text{in }Q_{2},
\end{equation*}
where we denote $A_{2}(t)=A_{2,0}(t)$ which is defined in \eqref{cz non pa : defn ava ker}.
Then for any $\alpha\in (0,\min\left\{2s,1\right\})$, there is a constant $c=c(n,s,L,\alpha)$ such that
\begin{equation*}
    [v]_{C^{\alpha,\frac{\alpha}{2s}}(Q_{1})}\leq c\left(\frac{1}{\tau}\dashint_{\mathcal{Q}_{2}}|D^{\tau}d_{s}v|^{2}\dmutt\right)^{\frac{1}{2}}+c\Tail_{\infty,2s}\left(v-(v)_{B_{2}}(t);Q_{2}\right).
\end{equation*}
\end{lem}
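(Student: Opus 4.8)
The plan is to exploit the one decisive feature that distinguishes this equation from the general problem \eqref{cz non pa : eq1}: besides being homogeneous, the kernel coefficient $A_{2}(t)$ is \emph{independent of the spatial variables}, so the equation is translation invariant in $x$. I would first record the consequences of this. Since $A_{2}(t)$ is a scalar, $\mathcal{L}^{\Phi}_{A_{2}(t)}v=A_{2}(t)\,\mathcal{L}^{\Phi}_{1}v$, and because $L^{-1}\leq A_{2}(t)\leq L$ by \eqref{cz non pa : ker a defn}, the bi-Lipschitz time reparametrization $\tilde t=\int_{0}^{t}A_{2}(\sigma)\ds$ converts the equation into $\partial_{\tilde t}v+\mathcal{L}^{\Phi}_{1}v=0$, with all parabolic cylinders and tail quantities comparable up to constants depending only on $L$; so we may assume $A_{2}\equiv 1$. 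I would then run the regularity theory for this reduced equation in three stages. \emph{Stage one} is a local boundedness estimate $\|v\|_{L^{\infty}(Q_{3/2})}\lesssim \big(\dashint_{Q_{2}}|v|^{2}\dz\big)^{1/2}+\Tail_{\infty,2s}(v;Q_{2})$, obtained by De Giorgi iteration from the Caccioppoli inequality \eqref{cz non pa : CE} (taken with $f=g=0$) together with the parabolic Sobolev embedding of Lemma \ref{cz non pa : embed lem}. \emph{Stage two} is an initial oscillation decay giving $v\in C^{\alpha_{0},\alpha_{0}/(2s)}_{\mathrm{loc}}$ for some $\alpha_{0}=\alpha_{0}(n,s,L)\in(0,1)$, proved by a measure-to-pointwise (growth) lemma iterated over dyadic parabolic cylinders; the nonlocal tail at each scale is absorbed by subtracting spatial averages and summing along the dyadic chain precisely in the spirit of Lemma \ref{cz non pa : taillem}.

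\emph{Stage three}, the bootstrap, is where spatial translation invariance does the real work. For small $h\in\mathbb{R}^{n}$, the translate $v_{h}:=v(\cdot+h,\cdot)$ solves the same equation on a slightly smaller cylinder, so the increment $w:=v_{h}-v$ is a weak solution of the \emph{linear} nonlocal parabolic equation
\begin{equation*}
    \partial_{t}w+\mathrm{p.v.}\int_{\mathbb{R}^{n}}d_{s}w(x,y,t)\,\Psi(x,y,t)\,\frac{\dy}{|x-y|^{n+s}}=0,\qquad \Psi:=\frac{\Phi(d_{s}v_{h})-\Phi(d_{s}v)}{d_{s}v_{h}-d_{s}v},
\end{equation*}
where $d_{s}v_{h}-d_{s}v=d_{s}w$ and, by \eqref{cz non pa : Phicond}, the coefficient $\Psi$ (set equal to $0$ where its denominator vanishes) is bounded, $L^{-1}\leq\Psi\leq L$, measurable and symmetric in $(x,y)$. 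Hence the oscillation estimate of Stage two applies to $w$ with the \emph{same} exponent $\alpha_{0}$. Feeding in $\sup_{Q}|w|\lesssim [v]_{C^{\beta}}\,|h|^{\beta}$ whenever $v\in C^{\beta,\beta/(2s)}_{\mathrm{loc}}$ has already been established, a Campanato-type excess-decay for $w$ over dyadic cylinders then extracts a factor $|h|^{\alpha_{0}}$ at each scale, which upgrades $v$ from $C^{\beta,\beta/(2s)}_{\mathrm{loc}}$ to $C^{\beta',\beta'/(2s)}_{\mathrm{loc}}$ with $\beta'=\min\{\beta+\alpha_{0},\,\min\{2s,1\}-\varepsilon\}$. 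Iterating finitely many times reaches any prescribed $\alpha<\min\{2s,1\}$, and the parabolic scaling $\Lambda_{r}=(t_{0}-r^{2s},t_{0}+r^{2s})$ forces the companion time exponent $\alpha/(2s)$. The ceiling $\min\{2s,1\}$ matches the natural range of the differentiability gain in the corresponding regularity theory and is not expected to be improvable for nonlinear operators of this type.

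I expect the main obstacle to be the uniform control of the nonlocal tail of the increment $w$ throughout the iteration. On the near region the tail of $w=v_{h}-v$ inherits the gain $|h|^{\beta}$ from the interior regularity of $v$ already obtained in the previous step, but on the far region one only knows $v\in L^{\infty}(\Lambda_{2};L^{1}_{2s}(\mathbb{R}^{n}))$, so there the $|h|$-gain has to be recovered instead from the smoothness and decay of the tail kernel $|x-y|^{-n-2s}$ under the shift $y\mapsto y+h$ (where $\big||y+h|^{-n-2s}-|y|^{-n-2s}\big|\lesssim|h|\,|y|^{-n-2s-1}$ for $|y|\geq 2|h|$). Quantifying this, splitting the tail accordingly, and threading the estimate through the dyadic summation so that the accumulated tail terms never swallow the per-step gain $|h|^{\alpha_{0}}$ is the delicate part of the argument; it is carried out in full in \cite{BKKh}, to which I would ultimately defer for the complete details.
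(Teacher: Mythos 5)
The paper gives no proof of this lemma — it is taken verbatim from \cite[Theorem~1.2]{BKKh} — so the only meaningful comparison is with that reference, and your strategy (reduce $A_{2}(t)$ to $1$ by a bi-Lipschitz time change using $\mathcal{L}^{\Phi}_{A_{2}(t)}=A_{2}(t)\,\mathcal{L}^{\Phi}_{1}$; De~Giorgi iteration for local boundedness and an initial H\"older exponent; bootstrap by spatial difference quotients through the linearization $\Psi=\frac{\Phi(d_{s}v_{h})-\Phi(d_{s}v)}{d_{s}v_{h}-d_{s}v}$, which obeys $L^{-1}\le\Psi\le L$ by \eqref{cz non pa : Phicond}) is indeed the standard translation-invariance route used there and in \cite{BLShh,BLS}. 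The linearized coefficient and the second-difference/Zygmund mechanism that produce the ceiling $\min\{2s,1\}$ are correctly identified, and the constants from De~Giorgi are uniform in $h$ since $\Psi$ is uniformly elliptic.

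Two technical steps in your sketch, however, would not go through as written. First, the bootstrap does not gain a fixed $\alpha_{0}$ per iteration. Once $v\in C^{\beta,\beta/(2s)}_{\mathrm{loc}}$ is known, the oscillation decay for $w=v_{h}-v$ can only be initiated at scales where the far tail $r^{2s}\Tail_{\infty,2s}(v;\cdot)$ is dominated by $\|w\|_{\infty}\sim|h|^{\beta}$, i.e.\ $r\lesssim|h|^{\beta/(2s)}$; iterating from that entry scale down to $r\sim|h|$ yields the factor $|h|^{\alpha_{0}(1-\beta/(2s))}$, so the progression is the concave rule $\beta'=\beta+\alpha_{0}(1-\beta/(2s))$, equivalently $2s-\beta'=(1-\alpha_{0}/(2s))(2s-\beta)$, not your additive $\beta'=\min\{\beta+\alpha_{0},\min\{2s,1\}-\varepsilon\}$. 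It still reaches any $\alpha<\min\{2s,1\}$ in finitely many steps, but the rate is geometric, not arithmetic. Second, the $|h|$-gain you propose to recover in the far field from $\big||y+h|^{-n-2s}-|y|^{-n-2s}\big|\lesssim|h||y|^{-n-2s-1}$ is not available: the tail in the Caccioppoli inequality for the linearized equation appears with absolute values and carries the merely measurable factor $\Psi(x,y,t)$, which blocks transferring the shift from $v$ to the kernel by cancellation. Fortunately that gain is unnecessary — the far tail enters at the limiting order $r^{2s}$ and is absorbed by the entry-scale restriction, not by kernel smoothing. With those corrections the outline is sound, but, as you say, the quantitative tail bookkeeping is carried out in \cite{BKKh}, which is all this paper relies on.
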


We are now in position to prove the following comparison lemma.
\begin{lem}
\label{cz non pa : comp}
Let $Q_{4}\Subset\Omega_{T}$. For any $\epsilon>0$, there is a constant $\delta=\delta(n,s,L,\epsilon)$ such that for any weak solution $u$ to \eqref{cz non pa : eq1} satisfying 
\begin{equation}
\label{cz non pa : compu}
    \frac{1}{\tau}\dashint_{\mathcal{Q}_{4}}|D^{\tau}d_{s}u|^{2}\dmutt+\Tail_{\infty,2s}\left(\frac{u-(u)_{B_{4}}(t)}{4^{s+\tau}}\,;\,Q_{4}\right)^{2}\leq 1
\end{equation}
and
{\small\begin{equation}\label{cz non pa : comp.9}
\begin{aligned}
  &\left(\frac{1}{\tau}\dashint_{\mathcal{Q}_{4}}\left(4^{s-\tau}|G|\right)^{\gamma}\dmutt\right)^{\frac{2}{\gamma}}+\frac{1}{\tau}\dashint_{\mathcal{Q}_{4}}|D^{\tau}d_{0}f|^{2}\dmutt +\Tail_{2,s}\left(\frac{f-(f)_{B_{4}}(t)}{4^{\tau}}\,;\,Q_{4}\right)^{2} \\
  &\quad+\left(\dashint_{\mathcal{Q}_{2}}|A-(A)_{2}(t)|\,dx\,dy\,dt\right)^{2}\leq \delta^{2},
  \end{aligned}
\end{equation}}\\
\noindent
there is a solution $v$ to 
\begin{equation*}
   v_t+\mathcal{L}^{\Phi}_{A_{{2}}(t)}v=0\quad\text{in }Q_{2}
\end{equation*}
such that
\begin{equation}
\label{cz non pa : compres}
    \frac{1}{\tau}\dashint_{\mathcal{Q}_{1}}|D^{\tau}d_{s}(u-v)|^{2}\dmutt\leq \epsilon^{2}\quad\text{and}\quad \left\|D^{\tau}d_{s}v\right\|_{L^{\infty}\left(\mathcal{Q}_{1}\right)}\leq c,
\end{equation}
where $c=c(n,s,L,\tau)$.
\end{lem}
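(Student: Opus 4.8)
The plan is to follow the standard two‑step comparison scheme adapted to the dual‑pair setting: first I would kill the inhomogeneity $(-\Delta)^{\frac{s}{2}}f+g$ by passing to a solution of the \emph{homogeneous} equation with the same kernel $A$, and then freeze the kernel by replacing $A$ with its spatial average $A_{2}(t)$; the two assertions in \eqref{cz non pa : compres} then follow from a Caccioppoli‑type energy estimate for the difference together with the regularity lemmas already at hand. For the construction I rely on the well‑posedness of the exterior/initial value problem (Appendix B): on $Q_{3}$ let $u_{1}$ be the weak solution of $\partial_{t}u_{1}+\mathcal{L}^{\Phi}_{A}u_{1}=0$ with $u_{1}=u$ on $(\mathbb{R}^{n}\setminus B_{3})\times\Lambda_{3}$ and the same initial trace, and on $Q_{2}$ let $v$ be the weak solution of $\partial_{t}v+\mathcal{L}^{\Phi}_{A_{2}(t)}v=0$ with $v=u_{1}$ outside $B_{2}$ and the same initial trace. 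Testing the equation for $u-u_{1}$ (in the Steklov‑averaged sense, with a time cut‑off) with $u-u_{1}$ itself, the monotonicity in \eqref{cz non pa : Phicond} and the ellipticity in \eqref{cz non pa : ker a defn} yield, on the left, control of $\int_{\Lambda_{2}}[u-u_{1}(\cdot,t)]_{W^{s,2}(B_{2})}^{2}\,dt+\sup_{t\in\Lambda_{2}}\int_{B_{2}}|u-u_{1}|^{2}\,dx$; since $u=u_{1}$ outside $B_{3}$, all tail and cross interactions are either coercive (and moved to the left) or vanish, so only the contributions of $g$ and of $(-\Delta)^{\frac{s}{2}}f$ remain on the right, and these are absorbed via Lemma \ref{cz non pa : embed lem}, H\"older's and Young's inequalities by $\|g\|_{L^{\gamma}(Q_{4})}^{2}$, $\|d_{0}f\|_{L^{2}}^{2}$ and $\Tail_{2,s}(f-(f)_{B_{4}}(t);Q_{4})^{2}$, all $\lesssim\delta^{2}$ by \eqref{cz non pa : comp.9}, \eqref{cz non pa : equbwGg} and \eqref{cz non pa : gamma}. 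Hence $\tfrac1\tau\dashint_{\mathcal{Q}_{2}}|D^{\tau}d_{s}(u-u_{1})|^{2}\dmutt\lesssim\delta^{2}$.

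For the second step I test the equation for $u_{1}-v$ with $u_{1}-v$. Since $u_{1}$ and $v$ both solve homogeneous equations with the same exterior datum, the only surviving right‑hand term is the kernel error $\int\int\Phi(d_{s}u_{1})\,(A-A_{2}(t))\,d_{0}(u_{1}-v)\,|x-y|^{-n-s}$. Over the diagonal block $B_{2}\times B_{2}$ I would bound this by H\"older's inequality with the triple of exponents $\bigl(2(1+\epsilon_{0}),\,\tfrac{2(1+\epsilon_{0})}{\epsilon_{0}},\,2\bigr)$, where $\epsilon_{0}$ is the self‑improvement exponent of Lemma \ref{cz non pa : self impro}: the factor $|x-y|^{-n-s}$ is split symmetrically, the $d_{s}u_{1}$‑factor is estimated by the reverse H\"older inequality of Lemma \ref{cz non pa : self impro} (available precisely because $u_{1}$ solves the homogeneous equation), and the crucial point is that the weak $L^{1}$‑smallness of $A-A_{2}(t)$ in \eqref{cz non pa : comp.9} upgrades, via the pointwise bound $|A-A_{2}(t)|\le 2L$ from \eqref{cz non pa : ker a defn}, to $\|A-A_{2}(t)\|_{L^{2(1+\epsilon_{0})/\epsilon_{0}}(\mathcal{Q}_{2})}\lesssim\delta^{\epsilon_{0}/(1+\epsilon_{0})}$; all remaining factors are $\lesssim 1$ once Lemma \ref{cz non pa : RHI} reduces the $L^{2}$‑oscillation‑by‑sup‑norm terms in the right‑hand side of Lemma \ref{cz non pa : self impro} to the normalized quantities in \eqref{cz non pa : compu}. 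For the off‑diagonal (tail) interactions over $B_{2}\times(\mathbb{R}^{n}\setminus B_{2})$ I would use the splitting $A\,[\Phi(d_{s}u_{1})-\Phi(d_{s}v)]+(A-A_{2}(t))\,\Phi(d_{s}v)$: the first piece is nonnegative by monotonicity and goes to the left, while the second is controlled by a dyadic decomposition exploiting the tail normalization $\Tail_{\infty,2s}(u-(u)_{B_{4}}(t);Q_{4})\le1$ and a Gronwall absorption over the bounded interval $\Lambda_{2}$. I expect this off‑diagonal term to be the main obstacle, since — in contrast to the diagonal one — it is not made small merely by $\delta$ and forces one to use the fine structure of the dual‑pair measure $\mu_{\tau,t}$ together with the extra $L^{2}$‑oscillation‑by‑sup‑norm bookkeeping that Lemma \ref{cz non pa : RHI} is designed for. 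Choosing $\delta=\delta(n,s,L,\epsilon)$ small and combining the two steps gives the first estimate in \eqref{cz non pa : compres}.

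Finally, for the $L^{\infty}$‑bound I apply Lemma \ref{cz non pa : higher hol} to $v$ with a H\"older exponent $\alpha\in(s+\tau,\min\{2s,1\})$; this interval is nonempty exactly because of the standing restriction $\tau\in(0,\min\{s,1-s\})$ in \eqref{cz non pa :taufirstcond}. Since $v=u_{1}$ outside $B_{2}$, the two quantities on the right of Lemma \ref{cz non pa : higher hol} are bounded by a constant through the Step‑2 energy comparison together with \eqref{cz non pa : compu} (again using Lemma \ref{cz non pa : RHI} for the oscillation term), so $[v]_{C^{\alpha,\frac{\alpha}{2s}}(Q_{1})}\le c$, whence $|D^{\tau}d_{s}v(x,y,t)|=|v(x,t)-v(y,t)|\,|x-y|^{-s-\tau}\le c\,|x-y|^{\alpha-s-\tau}\le c$ on $\mathcal{Q}_{1}$, which is the second estimate in \eqref{cz non pa : compres}. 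The only genuinely delicate point throughout is the off‑diagonal kernel error in the second step, where the parabolic problem departs from its elliptic counterpart.
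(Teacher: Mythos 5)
Your proposal follows the paper's two-step comparison almost verbatim: first pass to $w$ (your $u_{1}$) solving the homogeneous $A$-equation on $Q_{3}$ with exterior datum $u$ to remove $(-\Delta)^{\frac{s}{2}}f+g$, then pass to $v$ solving the $A_{2}(t)$-equation on $Q_{2}$ with exterior datum $w$; the smallness of $\dashint_{\mathcal{Q}_{2}}|A-(A)_{2}(t)|$ is upgraded through the uniform bound $|A-(A)_{2}(t)|\leq c(L)$ to $L^{2(1+\epsilon_{0})/\epsilon_{0}}$-smallness and paired, via H\"older with exponents $\bigl(2(1+\epsilon_{0}),\tfrac{2(1+\epsilon_{0})}{\epsilon_{0}},2\bigr)$, against the self-improved bound of Lemma~\ref{cz non pa : self impro}; finally the $L^{\infty}$-bound follows from Lemma~\ref{cz non pa : higher hol} because $\tau<\min\{s,1-s\}$ allows a H\"older exponent $\alpha>s+\tau$. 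This is precisely the paper's Steps~1--4.

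The one place you depart from the paper is the off-diagonal kernel error over $B_{2}\times(\mathbb{R}^{n}\setminus B_{2})$ in the second comparison. The paper writes the error integral over $\mathcal{B}_{2}\times\Lambda_{2}$ only and never discusses the off-diagonal region. Your observation that, for a spatially constant frozen kernel, this piece does not vanish is right, and it is indeed the delicate point, since the VMO modulus supplies no smallness there; but the remedy you sketch (dyadic decomposition exploiting the tail normalization and a Gronwall absorption) would at best give a \emph{bounded}, not $\delta$-small, contribution, and therefore would not yield the first inequality in \eqref{cz non pa : compres}. The resolution consistent with the paper's displayed estimate is to freeze the kernel only on $\mathcal{B}_{2}$, i.e.\ to take the comparison coefficient equal to $(A)_{2}(t)$ for $(x,y)\in\mathcal{B}_{2}$ and equal to $A(x,y,t)$ elsewhere; then the kernel error is supported exactly where the VMO smallness applies and the off-diagonal term vanishes identically, while the local H\"older estimate of Lemma~\ref{cz non pa : higher hol} still applies to $v$. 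One small arithmetical slip: with $p=2(1+\epsilon_{0})/\epsilon_{0}$ one gets $\|A-(A)_{2}(t)\|_{L^{p}(\mathcal{Q}_{2})}\leq c\,\delta^{\epsilon_{0}/(2(1+\epsilon_{0}))}$, not $c\,\delta^{\epsilon_{0}/(1+\epsilon_{0})}$; this does not affect the conclusion.
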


\begin{proof}
The proof is divided into several steps for the ease of readability.\\
\textbf{Step 1}: \textit{First comparison estimates}.  For a fixed weak solution $u$ to problem \eqref{cz non pa : eq1}, we consider the following problem:
\begin{equation}\label{eq:1st.comp}
\left\{\begin{array}{rlll}
     w_t + \mathcal{L}^{\Phi}_A w &=0 \quad\mbox{in }Q_3, \\
       w&=u \quad\mbox{in }(\mathbb{R}^n\setminus B_3)\times \Lambda_3 \cup B_3\times\{-3^{2s}\}.
 \end{array}
 \right.
 \end{equation}
We intend to apply Lemma \ref{cz non pa : exst-unq} for the existence and uniqueness of $w$. To this end, it remains to show that $u_t \in L^2(\Lambda_3; W^{s,2}(B_4))^*$. Indeed, using the fact that $u$ is a solution to problem \eqref{cz non pa : eq1}, we find that for all $\phi\in L^2(\Lambda_3; W^{s,2}(B_4))\cap C^1_0(\Lambda_3;L^2(B_4))$, there holds
\begin{align*}
  \bigg|\int_{\Lambda_3}\langle u,\phi_t\rangle dt\bigg|\leq c \int_{\Lambda_3}\|\phi(\cdot,t)\|_{W^{s,2}(B_4)}dt,  
\end{align*}
for some $c$ depending only on $n,s,L,u,f$ and $g$.
Thus, we have the existence of  $w\in L^{2}\left(\Lambda_3;W^{s,2}(B_4)\right)\cap 
C\left(\Lambda_3;L^{2}(B_3)\right)\cap L^{\infty}\left(\Lambda_3;L^{1}_{2s}(\mathbb{R}^{n})\right)$ satisfying \eqref{eq:1st.comp}. Then $\varphi:=u-w$ solves 
\begin{equation*}
    \varphi_{t}+\mathcal{L}_{A}^{\Phi}u-\mathcal{L}_{A}^{\Phi}w=(-\Delta)^{\frac{s}{2}}f+g\quad\text{in }Q_{3}.
\end{equation*}
With the help of an approximation argument, we take $\varphi$ as a test function to the above equation to see that for every $\tilde{T}\in (-3^{2s},3^{2s}]$, setting $\tilde\Lambda_3:=(-3^{2s},\tilde{T}]$, there holds
\begin{align*}
    \frac{1}{2}&\int_{B_3}(\varphi(x,\tilde{T}))^2\dx+\frac{1}{L}\int_{\tilde\Lambda_3}\int_{\mathbb{R}^{n}}\int_{\mathbb{R}^{n}}|\varphi(x,t)-\varphi(y,t)|^2\frac{A(x,y,t)}{|x-y|^{n+2s}}\dx\dy\dt\\
    &\leq\int_{\tilde\Lambda_3}\int_{\mathbb{R}^{n}}\int_{\mathbb{R}^{n}}|(f(x,t)-f(y,t))(\varphi(x,t)-\varphi(y,t))|\frac{\dx\dy\dt}{|x-y|^{n+s}}+\int_{\tilde\Lambda_3}\int_{B_3}|g\varphi|\dz,
\end{align*}
where we have used the fact that $\varphi(\cdot,-3^{2s})=0$ in $B_3$ and the first condition in \eqref{cz non pa : Phicond}.
Noting the bounds on $A$, the above expression yields
\begin{align}\label{cz non pa : comp.2}
\frac{1}{2}&\int_{B_3}(\varphi(x,\tilde{T}))^2\dx+\int_{\tilde\Lambda_3}\int_{\mathbb{R}^{n}}\int_{\mathbb{R}^{n}}\frac{|\varphi(x,t)-\varphi(y,t)|^2}{|x-y|^{n+2s}}\dx\dy\dt  \nonumber\\
&\leq L^{2}\underbrace{\int_{\tilde\Lambda_3}\int_{B_4}\int_{B_4}|f(x,t)-f(y,t)|\, |\varphi(x,t)-\varphi(y,t)|\frac{\dx\dy\dt}{|x-y|^{n+s}}}_{=:J_1} \nonumber\\
&\quad+ 2L^{2}\underbrace{\int_{\tilde\Lambda_3}\int_{B_3}\int_{\mathbb{R}^{n}\setminus B_4}|f(x,t)-f(y,t)|\,|\varphi(x,t)|\frac{\dx\dy\dt}{|x-y|^{n+s}}}_{=:J_2}+ L^{2}\underbrace{\int_{\tilde\Lambda_3}\int_{B_3}|g\varphi|\dz}_{=:J_3}.
\end{align}
 For $J_1$, applying H\"older's inequality and Young's inequality, we observe that
\begin{align*}
 J_1&\leq \left(\int_{\Lambda_3}\int_{B_4}\int_{B_4}|f(x,t)-f(y,t)|^2 \frac{\dx\dy\dt}{|x-y|^{n}}\right)^\frac{1}{2} \left(\int_{\tilde\Lambda_3}\int_{B_4}\int_{B_4} |\varphi(x,t)-\varphi(y,t)|^2\frac{\dx\dy\dt}{|x-y|^{n+2s}}\right)^\frac{1}{2} \nonumber\\
 &\leq \frac{c}{\tau}\dashint_{\mathcal{Q}_4}|D^\tau d_0f|^2 d\mu_{\tau,t}  + \frac{1}{4L^{2}} \int_{\tilde\Lambda_3}\int_{B_4}\int_{B_4}|\varphi(x,t)-\varphi(y,t)|^2 \frac{\dx\dy\dt}{|x-y|^{n+2s}},
\end{align*}
where $c=c(n,s,L)$. On a similar account, we deduce that
\begin{align*}
J_2 &\leq \int_{\tilde\Lambda_3}\int_{B_3}\int_{\mathbb{R}^{n}\setminus B_4}\big(|f(x,t)-(f)_{B_4}(t)|+|f(y,t)-(f)_{B_4}(t)|\big)|\varphi(x,t)|\frac{\dx\dy\dt}{|x-y|^{n+s}} \nonumber\\
&\leq c \left(\int_{\Lambda_4}\int_{B_4} |f(x,t)-(f)_{B_4}(t)|^2 dx\,dt\right)^\frac{1}{2} \left(\int_{\tilde\Lambda_3}\int_{B_3} \varphi(x,t)^2 dx\,dt\right)^\frac{1}{2} \nonumber\\
&\quad+ c \left(\int_{\tilde\Lambda_3}\int_{B_3}|\varphi|^2 \dz\right)^\frac{1}{2} \left(\int_{\Lambda_3}\left(\int_{\mathbb{R}^n\setminus B_4}\frac{|f(y,t)-(f)_{B_4}(t)|}{|y|^{n+s}}dy\right)^2dt\right)^\frac{1}{2}.
\end{align*} 
Noticing the fact that $\varphi(\cdot,t)=0$ in $\mathbb{R}^n\setminus B_3$ for a.e. $t\in\Lambda_3$, 
 we employ the Sobolev-Poincar\'e inequality to get 
{\small\begin{align*}
 J_2\leq \frac{1}{8L^{2}} \int_{\tilde\Lambda_3}\int_{B_4}\int_{B_4}|\varphi(x,t)-\varphi(y,t)|^2 \frac{\dx\dy\dt}{|x-y|^{n+2s}} + \frac{c}{\tau} \dashint_{\mathcal{Q}_4}|D^\tau d_0f|^2 d\mu_{\tau,t}+ c\, {\rm Tail}_{2,s}\left(\frac{f-(f)_{B_4}(t)}{4^{\tau}}; Q_4\right)^2,
\end{align*}}\\
\noindent
where $c=c(n,s,L)$. Now for $J_3$, employing H\"older's inequality, Lemma \ref{cz non pa : embed lem} and Young's inequality, we get
\begin{align*}
    J_3\leq c \left(\int_{Q_3}|g(x,t)|^\gamma dx\,dt\right)^\frac{2}{\gamma}+ \frac{1}{4L^{2}}\int_{\tilde\Lambda_3}\int_{B_3}\int_{B_3}\frac{|\varphi(x,t)-\varphi(y,t)|^2}{|x-y|^{n+2s}}\dx\dy\dt +\frac{1}{4L^{2}} \sup_{t\in \Lambda_3} \int_{B_3} |\varphi(x,t)|^2 dx.  
\end{align*}
Therefore, using the estimates of $J_1$, $J_2$ and $J_3$ in \eqref{cz non pa : comp.2}, taking supremum over $\tilde{T}\in (-3^{2s},3^{2s}]$ and recalling the definition of $G$ from \eqref{cz non pa :defnG} along with \eqref{cz non pa : equbwGg}, we obtain 
\begin{align}\label{cz non pa : comp.3}
\sup_{t\in\Lambda_3}&\dashint_{B_3}(u-w)^2(x,t)\dx+\frac{1}{\tau}\dashint_{\mathcal{Q}_3}|D^\tau d_s(u-w)|^2\dmutt \nonumber\\
&\leq \left(\frac{1}{\tau}\dashint_{\mathcal{Q}_{4}}\left(4^{s-\tau}|G|\right)^{\gamma}\dmutt\right)^{\frac{2}{\gamma}}+\frac{c}{\tau} \dashint_{\mathcal{Q}_4}|D^\tau d_0f|^2 d\mu_{\tau,t}+ c\, {\rm Tail}_{2,s}\Big(\frac{f-(f)_{B_4}(t)}{4^\tau}; Q_4\Big)^2 \leq c \,\delta^2,
\end{align}
where we have also used \eqref{cz non pa : size of b mut} and \eqref{cz non pa : comp.9}. \\
\textbf{Step 2}:  \textit{Uniform self-improving inequality for $w$}.
We first observe from Lemma \ref{cz non pa : self impro} that 
{\small\begin{equation*}
\begin{aligned}
    \left(\frac{1}{\tau}\dashint_{\mathcal{Q}_{2}}|D^{\tau}d_{s}w|^{2(1+\epsilon_{0})}\dmutt\right)^{\frac{1}{2(1+\epsilon_{0})}}&\leq c\left(\frac{1}{\tau}\dashint_{\mathcal{Q}_{3}}|D^{\tau}d_{s}w|^{2}\dmutt\right)^{\frac{1}{2}}+c\Tail_{\infty,2s}\left(\frac{w-(w)_{B_{3}}(t)}{3^{s+\tau}};Q_{3}\right)\\
    &\quad+c\left(\sup_{t\in \Lambda_{3}}\dashint_{B_{3}}\frac{|w-(w)_{B_{3}}(t)|^{2}}{3^{2(s+\tau)}}\dx\right)^{\frac{1}{2}},
\end{aligned}
\end{equation*}}\\
\noindent
where $\epsilon_{0}=\epsilon_{0}(n,s,L)\in(0,1)$ and $c=c(n,s,L)$. Furthermore, by using \eqref{cz non pa : comp.3}, we have 
\begin{align*}
 \left(\frac{1}{\tau}\dashint_{\mathcal{Q}_{3}}|D^{\tau}d_{s}w|^{2}\dmutt\right)^{\frac{1}{2}}\leq \left(\frac{1}{\tau}\dashint_{\mathcal{Q}_{3}}|D^{\tau}d_{s}(u-w)|^{2}\dmutt\right)^{\frac{1}{2}}+ \left(\frac{1}{\tau}\dashint_{\mathcal{Q}_{3}}|D^{\tau}d_{s}u|^{2}\dmutt\right)^{\frac{1}{2}} \leq c.
\end{align*}
Using \eqref{cz non pa : reversehig} with a slight modification, \eqref{cz non pa : comp.3} and \eqref{cz non pa : compu}, we next have
\begin{equation*}
\begin{aligned}
    \left(\sup_{t\in \Lambda_{3}}\dashint_{B_{3}}\frac{|w-(w)_{B_{3}}(t)|^{2}}{3^{2(s+\tau)}}\dx\right)^{\frac{1}{2}}\leq c\left(\sup_{t\in \Lambda_{3}}\dashint_{B_{3}}\frac{|u-(u)_{B_{3}}(t)|^{2}}{3^{2(s+\tau)}}\dx\right)^{\frac{1}{2}}+c\left(\sup_{t\in \Lambda_{3}}\dashint_{B_{3}}\frac{|u-w|^{2}}{3^{2(s+\tau)}}\dx\right)^{\frac{1}{2}}\leq c.
\end{aligned}
\end{equation*}
For the tail term, a simple computation together with \eqref{cz non pa : comp.3} and \eqref{cz non pa : compu} yields
\begin{align*}
 \Tail_{\infty,2s}\left(\frac{w-(w)_{B_{3}}(t)}{3^{s+\tau}};Q_{3}\right) \leq \sup_{t\in\Lambda_3}\dashint_{B_3}\frac{|w-u|^2}{3^{2(\tau+s)}} \dx+ \Tail_{\infty,2s}\left(\frac{u-(u)_{B_{3}}(t)}{3^{s+\tau}};Q_{3}\right)\leq c.
\end{align*}
Consequently, 
\begin{align}\label{cz non pa : comp.4}
    \left(\frac{1}{\tau}\dashint_{\mathcal{Q}_{2}}|D^{\tau}d_{s}w|^{2(1+\epsilon_{0})}\dmutt\right)^{\frac{1}{2(1+\epsilon_{0})}}\leq c.
\end{align}

\noindent
\textbf{Step 3}: \textit{Second comparison estimates}. For $w$ as in Step 1, we consider the following problem:
\begin{equation}\label{eq:2nd.comp}
\left\{\begin{array}{rlll}
     v_t + \mathcal{L}^{\Phi}_{A_{2}(t)} v &=0 \quad\mbox{in }Q_2, \\
       v&=w \quad\mbox{in }(\mathbb{R}^n\setminus B_2)\times \Lambda_2 \cup B_2\times\{-2^{2s}\}.
 \end{array}
 \right.
 \end{equation}
 Similar to Step 1, we have the existence of a unique solution $v\in L^{2}\left(\Lambda_2;W^{s,2}(B_3)\right)\cap 
C\left(\Lambda_2;L^{2}(B_2)\right)\cap L^{\infty}\left(\Lambda_2;L^{1}_{2s}(\mathbb{R}^{n})\right)$ to the problem \eqref{eq:2nd.comp}. Taking $\tilde{\varphi}:=v-w$ as a test function (upon approximation) to 
\begin{equation*}
    \tilde{\varphi}_{t}+\mathcal{L}^{\Phi}_{A_{2}(t)} v-\mathcal{L}^{\Phi}_{A_{2}(t)} w=\mathcal{L}^{\Phi}_{A}w-\mathcal{L}^{\Phi}_{A_{2}(t)} w\quad\text{in }Q_{2}
\end{equation*} and then using \eqref{cz non pa : Phicond} and H\"older's inequality, we obtain
\begin{align*}
&\frac{1}{\tau}\int_{\mathcal{Q}_{2}}|D^{\tau}d_{s}\tilde{\varphi}|^{2}\dmutt +\sup_{t\in\Lambda_2}\int_{B_2}|\tilde{\varphi}(x,t)|^2 dx \nonumber\\
&\leq c \int_{\Lambda_2}\int_{B_2}\int_{B_2} |A(x,t)-(A)_2(t)|\frac{|w(x,t)-w(y,t)||\tilde{\varphi}(x,t)-\tilde{\varphi}(y,t)|}{|x-y|^{n+2s}}dx\,dy\,dt \nonumber\\
&\leq c \left(\frac{1}{\tau}\int_{\mathcal{Q}_{2}}|D^{\tau}d_{s}\tilde{\varphi}|^{2}\dmutt \right)^\frac{1}{2} \left(\frac{1}{\tau}\int_{\mathcal{Q}_{2}}|D^{\tau}d_{s}w|^{2(1+\epsilon_0)}\dmutt \right)^\frac{1}{2(1+\epsilon_0)} \nonumber \\
&\qquad\times \left(\int_{\Lambda_{2}}\int_{B_2}\int_{B_2} |A-(A)_2(t)|^{2\frac{1+\epsilon_0}{\epsilon_0}}dx\,dy\,dt \right)^\frac{\epsilon_0}{2(1+\epsilon_0)}.
\end{align*}
Finally, up on using the vanishing condition on $A$ and \eqref{cz non pa : comp.4}, the above expression yields
\begin{align}\label{cz non pa : comp.5}
   \frac{1}{\tau}\int_{\mathcal{Q}_{2}}|D^{\tau}d_{s}(v-w)|^{2}\dmutt &\leq c \,\delta^\frac{\epsilon_0}{2(1+\epsilon_0)},
\end{align}
for some $c=c(n,s,L)$. Coupling \eqref{cz non pa : comp.3} with \eqref{cz non pa : comp.5} and using triangle inequality, we get the first part of \eqref{cz non pa : compres} by taking $\delta$ sufficiently small depending on $n,s,L$ and $\epsilon$.\\
\textbf{Step 4}: \textit{Uniform bound on $|D^\tau d_s v|$}. From Lemma \ref{cz non pa : higher hol} along with \eqref{cz non pa :taufirstcond}, we observe that 
{\small\begin{align*}
 \|D^\tau d_s v\|_{L^\infty(\mathcal{Q}_1)} = \sup_{(x,y,t)\in\mathcal{Q}_1}\frac{|v(x,t)-v(y,t)|}{|x-y|^{s+\tau}} \leq c\left(\frac{1}{\tau}\dashint_{\mathcal{Q}_{2}}|D^{\tau}d_{s}v|^{2}\dmutt\right)^{\frac{1}{2}}+c\Tail_{\infty,2s}\left(v-(v)_{B_{2}}(t);Q_{2}\right),  
\end{align*}}\\
\noindent
where $c=c(n,s,L,\tau)$. Proceeding as in Step 2 and  \cite[Lemma 3.3]{BKc}, it can be shown that the right-hand side quantity of the above expression is bounded by a uniform constant $c=c(n,s,L,\tau)$. This completes the proof of the lemma.
\end{proof}
We finish this section by giving a non-scaled version of the above lemma and this directly follows from Lemma \ref{cz non pa : comp} along with a scaling argument (see Lemma \ref{cz non pa : scalinglem}).
\begin{lem}
\label{cz non pa : nsccomp}
Let $Q_{20\rho_{i}}(z_{i})\Subset\Omega_{T}$. For any $\epsilon>0$, there is a constant $\delta=\delta(n,s,L,\epsilon)$ such that for any weak solution $u$ to \eqref{cz non pa : eq1} satisfying 
\begin{equation*}
    \frac{1}{\tau}\dashint_{\mathcal{Q}_{20\rho_{i}}(z_i)}|D^{\tau}d_{s}u|^{2}\dmutt+\Tail_{\infty,2s}\left(\frac{u-(u)_{B_{20\rho_{i}(x_{i})}}(t)}{(20\rho_{i})^{s+\tau}}\,;\,Q_{20\rho_{i}}(z_i)\right)^{2}\leq 1
\end{equation*}
and
{\small\begin{equation*}
\begin{aligned}
  &\left(\frac{1}{\tau}\dashint_{\mathcal{Q}_{20\rho_{i}}(z_{i})}\left((20\rho_{i})^{s-\tau}|G|\right)^{\gamma}\dmutt\right)^{\frac{2}{\gamma}}+\frac{1}{\tau}\dashint_{\mathcal{Q}_{20\rho_{i}}(z_{i})}|D^{\tau}d_{0}f|^{2}\dmutt +\Tail_{2,s}\left(\frac{f-(f)_{B_{20\rho_{i}}(x_{i})}(t)}{(20\rho_{i})^{\tau}}\,;\,Q_{20\rho_{i}}(z_{i})\right)^{2} \\
  &\quad +\left(\dashint_{\mathcal{Q}_{10\rho_{i}}(z_{i})}|A-(A)_{10\rho_{i},x_{i}}(t)|\,dx\,dy\,dt\right)^{2}\leq \delta^{2},
  \end{aligned}
\end{equation*}}\\
\noindent
then there exists  a solution $v$ to 
\begin{equation*}
   v_t+\mathcal{L}^{\Phi}_{A_{{10\rho_{i},x_{i}}}(t)}v=0\quad\text{in }Q_{10\rho_{i}}(z_{i})
\end{equation*}
such that
\begin{equation}
\label{cz non pa : nsccompres}
    \frac{1}{\tau}\dashint_{\mathcal{Q}_{5\rho_{i}}(z_{i})}|D^{\tau}d_{s}(u-v)|^{2}\dmutt\leq \epsilon^{2}\quad\text{and}\quad \left\|D^{\tau}d_{s}v\right\|_{L^{\infty}\left(\mathcal{Q}_{5\rho_{i}}(z_{i})\right)}\leq c,
\end{equation}
where $c=c(n,s,L,\tau)$.
\end{lem}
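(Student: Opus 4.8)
The plan is to reduce the statement to Lemma~\ref{cz non pa : comp} by the change of variables of Lemma~\ref{cz non pa : scalinglem}, applied with scaling radius $r:=5\rho_i$ about the point $z_i$; with this choice the rescaled cylinders $Q_4,Q_2,Q_1$ correspond, respectively, to $Q_{20\rho_i}(z_i),Q_{10\rho_i}(z_i),Q_{5\rho_i}(z_i)$. Concretely, given $u$ as in the hypothesis I would set
\[
\bar u(x,t):=\frac{u(rx+x_i,\,r^{2s}t+t_i)}{r^{s}},\qquad r=5\rho_i,
\]
together with the rescaled data $\bar f,\bar g$ and kernel $\bar A$ as in Lemma~\ref{cz non pa : scalinglem}. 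Since $Q_{20\rho_i}(z_i)\Subset\Omega_T$, the cylinder $Q_4$ is compactly contained in the rescaled domain, and $\bar u$ is a weak solution of $\bar u_t+\mathcal{L}^{\Phi}_{\bar A}\bar u=(-\Delta)^{s/2}\bar f+\bar g$ in $Q_4$.

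Next I would check that the substitution $(x,y,t)\mapsto(rx+x_i,ry+x_i,r^{2s}t+t_i)$ transforms every quantity in \eqref{cz non pa : compu}--\eqref{cz non pa : comp.9} (read for $\bar u,\bar f,\bar g,\bar A$ on $\mathcal{Q}_4,\mathcal{Q}_2$) into the corresponding quantity in the hypothesis of the present lemma (read for $u,f,g,A$ on $\mathcal{Q}_{20\rho_i}(z_i),\mathcal{Q}_{10\rho_i}(z_i)$). The relevant identities are $D^{\tau}d_s\bar u(x,y,t)=r^{\tau}(D^{\tau}d_su)(rx+x_i,ry+x_i,r^{2s}t+t_i)$ and the analogous ones for $D^{\tau}d_0\bar f$ and $\bar G$; combining these with the normalization formula \eqref{cz non pa : size of b mut} for the $\mu_{\tau,t}$-averages, one sees that all the left-hand side terms scale homogeneously in $r$, and together with the explicit powers $(20\rho_i)^{s+\tau},(20\rho_i)^{\tau},(20\rho_i)^{s-\tau}$ appearing in the tail and $G$ terms this is precisely what makes the two sets of hypotheses correspond. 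For the kernel one uses that the Lebesgue average of $|A-(A)_{10\rho_i,x_i}(t)|$ over $\mathcal{Q}_{10\rho_i}(z_i)$ coincides with that of $|\bar A-(\bar A)_2(t)|$ over $\mathcal{Q}_2$, where $(\bar A)_2(t)=(A)_{10\rho_i,x_i}(r^{2s}t+t_i)$. Hence, with the $\delta=\delta(n,s,L,\epsilon)$ furnished by Lemma~\ref{cz non pa : comp}, conditions \eqref{cz non pa : compu} and \eqref{cz non pa : comp.9} are verified for $\bar u$.

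Applying Lemma~\ref{cz non pa : comp} then produces a solution $\bar v$ of $\bar v_t+\mathcal{L}^{\Phi}_{(\bar A)_2(t)}\bar v=0$ in $Q_2$ with $\tfrac1\tau\dashint_{\mathcal{Q}_1}|D^{\tau}d_s(\bar u-\bar v)|^2\dmutt\le\epsilon^2$ and $\|D^{\tau}d_s\bar v\|_{L^\infty(\mathcal{Q}_1)}\le c(n,s,L,\tau)$. Since $(\bar A)_2(t)=(A)_{10\rho_i,x_i}(r^{2s}t+t_i)$, the inverse scaling shows that $v(x,t):=r^{s}\bar v\!\big(\tfrac{x-x_i}{r},\tfrac{t-t_i}{r^{2s}}\big)$ solves $v_t+\mathcal{L}^{\Phi}_{A_{10\rho_i,x_i}(t)}v=0$ in $Q_{10\rho_i}(z_i)$, and transporting the two estimates on $\mathcal{Q}_1$ back to $\mathcal{Q}_{5\rho_i}(z_i)$ via the same change of variables gives \eqref{cz non pa : nsccompres}. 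No new analysis enters here: the entire content is Lemma~\ref{cz non pa : comp}, and the only point demanding attention is the bookkeeping of scaling exponents --- confirming that the $\mu_{\tau,t}$-normalizations together with the radius powers in the tail and $G$ terms are arranged so that the thresholds $1$ and $\delta$ are preserved, and that the limiting coefficient of the comparison equation is precisely $A_{10\rho_i,x_i}(t)$ after unscaling.
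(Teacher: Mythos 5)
The strategy you chose -- reduce to Lemma~\ref{cz non pa : comp} by the scaling from Lemma~\ref{cz non pa : scalinglem} -- is exactly the argument the paper has in mind, but the normalization you picked does not close. With $\bar u=u(r\cdot+x_i,r^{2s}\cdot+t_i)/r^{s}$ (and, as in Lemma~\ref{cz non pa : scalinglem}, $\bar f=f(\cdots)$, $\bar g=r^{s}g(\cdots)$), you correctly record that $D^{\tau}d_{s}\bar u(x,y,t)=r^{\tau}\,D^{\tau}d_{s}u(rx+x_i,ry+x_i,r^{2s}t+t_i)$, and the same $r^{\tau}$ factor appears in the tail, in $D^{\tau}d_{0}\bar f$, and in $4^{s-\tau}|\bar G|$ versus $(4r)^{s-\tau}|G|$. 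But this means the hypotheses do \emph{not} ``correspond precisely'': the left sides of \eqref{cz non pa : compu}--\eqref{cz non pa : comp.9}, evaluated for $\bar u,\bar f,\bar g$, equal $r^{2\tau}$ times the corresponding quantities for $u,f,g$ on $\mathcal{Q}_{20\rho_i}(z_i),\mathcal{Q}_{10\rho_i}(z_i)$. Since $r=5\rho_i\le 1$ in the range where the lemma is applied this loss is harmless for \emph{verifying} the hypotheses of Lemma~\ref{cz non pa : comp}, but it runs the wrong way when you transport the conclusion back: from $\tfrac{1}{\tau}\dashint_{\mathcal{Q}_{1}}|D^{\tau}d_{s}(\bar u-\bar v)|^{2}\dmutt\le \epsilon^{2}$ and $\|D^{\tau}d_{s}\bar v\|_{L^{\infty}(\mathcal{Q}_{1})}\le c$ you only get $\tfrac{1}{\tau}\dashint_{\mathcal{Q}_{5\rho_{i}}(z_i)}|D^{\tau}d_{s}(u-v)|^{2}\dmutt\le \epsilon^{2}(5\rho_i)^{-2\tau}$ and $\|D^{\tau}d_{s}v\|_{L^{\infty}(\mathcal{Q}_{5\rho_{i}}(z_i))}\le c(5\rho_i)^{-\tau}$, which blow up as $\rho_i\to 0$ and hence do not give \eqref{cz non pa : nsccompres} with constants depending only on $n,s,L,\tau$.

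The fix is to use the amplitude-normalized scaling that the paper itself employs in the proof of Theorem~\ref{cz non pa : main thm}: with $r=5\rho_i$, set
\[
\tilde u(x,t)=\frac{u(rx+x_i,r^{2s}t+t_i)}{r^{s+\tau}},\quad
\tilde f=\frac{f(\cdots)}{r^{\tau}},\quad
\tilde g=r^{s-\tau}g(\cdots),\quad
\tilde\Phi(\xi)=\frac{\Phi(r^{\tau}\xi)}{r^{\tau}},\quad
\tilde A=A(\cdots).
\]
Then $\tilde\Phi$ still satisfies \eqref{cz non pa : Phicond} with the same $L$, $\tilde u$ solves the rescaled equation on $Q_4$, and the extra $r^{-\tau}$ cancels the $r^{\tau}$ above, so that $D^{\tau}d_{s}\tilde u$, $D^{\tau}d_{0}\tilde f$, $4^{s-\tau}|\tilde G|$, the two tails, and the kernel oscillation transfer \emph{exactly} onto the hypotheses of Lemma~\ref{cz non pa : comp}; likewise both estimates in \eqref{cz non pa : compres} transfer exactly, unscaled via $v(x,t)=r^{s+\tau}\,\bar v\bigl(\tfrac{x-x_i}{r},\tfrac{t-t_i}{r^{2s}}\bigr)$, to \eqref{cz non pa : nsccompres}. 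The rest of your bookkeeping (matching of cylinders $Q_4,Q_2,Q_1\leftrightarrow Q_{20\rho_i},Q_{10\rho_i},Q_{5\rho_i}$, scale-invariance of the Lebesgue average of $|A-(A)_{10\rho_i,x_i}(t)|$, identification of $(\bar A)_2(t)$ with $A_{10\rho_i,x_i}(r^{2s}t+t_i)$) is correct once the $r^{s+\tau}$ normalization is in place.
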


%%%%%%%%%%%%%%%%new section

\section{Coverings of upper level sets}
In this section, we construct parabolic cylinders covering the upper level set of $d_{s}u$, where
\[u\in L^{2}\left( \Lambda_{2}\,;\, W^{s,2}(B_{2})\right)\cap C\left( \Lambda_{2};L^{2}(B_{2})\right) \cap L^{\infty}\left( \Lambda_{2}\,;\, L^{1}_{2s}(\mathbb{R}^{n})\right)\]
is a weak solution to the localized problem: 
\begin{equation}
\label{cz non pa : localized pb} 
    u_{t}+\mathcal{L}_{A}^{\Phi}u=(-\Delta)^{\frac{s}{2}}f+g \quad\text{in }Q_{2}.
\end{equation}
In addition, we assume that $f\in L^{q}\left( \Lambda_{2}; L^{1}_{s}\left(\mathbb{R}^{n}\right)\right)$ and
\begin{equation*}
    \int_{\mathcal{Q}_{2}}|D^{\tau}d_{s}u|^{p}+|D^{\tau}d_{0}f|^{q}+|G|^{\gamma}\dmutt<\infty,
\end{equation*}
where $p\in[2,q]$,
\begin{equation}
\label{cz non pa : tautail}
\tau\in\left(0,s-\frac{2s}{q}\right)
\end{equation}and
the constant $\gamma$ is defined in \eqref{cz non pa : gamma}.
Let us denote
\begin{equation}
\label{cz non pa : tildeq}
\tilde{q}=\frac{1}{2}\left(q+\frac{2s}{s-\tau}\right)
\end{equation}
to see that
\begin{equation}
\label{cz non pa : tildeqprop}
     s>\tau+\frac{2s}{\tilde{q}}
\end{equation}
and 
\begin{equation}
\label{cz non pa : tildeqprop2}
\tilde{q}<q,
\end{equation}
which follow from the choice of $\tau$ given in \eqref{cz non pa : tautail}.
We point out that \eqref{cz non pa : tildeqprop} and \eqref{cz non pa : tildeqprop2} are needed to handle the tail induced by the right-hand side $f$ and to employ Fubini's theorem, respectively (see \eqref{cz non pa : oneretildeqnec} and \eqref{cz non pa : tildeqfirstcond}).
We now present the main proposition of this section. 
\begin{prop}
\label{cz non pa : coveringL}
Let $1\leq r_{1}<r_{2}\leq 2$, $\delta>0$ and $u$ be a weak solution to \eqref{cz non pa : localized pb}. Then, there are two families of countable disjoint cylinders $\left\{\mathcal{Q}_{\rho_{{i}}}(z_{i})\right\}_{i\geq0}$ and $\left\{\mathcal{Q}_{\widetilde{r}_{{j}}}\left(x_{1,j},x_{2,j},t_{0,j}\right)\right\}_{j\geq0}$ such that
\begin{equation}
\label{cz non pa : levelset}
U_{\lambda}:=\left\{(x,y,t)\in \mathcal{Q}_{r_{1}}\,:\, |D^{\tau}d_{s}u(x,y,t)|\geq\lambda \right\}\subset\left(\bigcup\limits_{i\geq0}\mathcal{Q}_{5^{\frac{2}{s}}\rho_{{i}}}\left(z_{i}\right)\right)\bigcup \left(\bigcup\limits_{j\geq0}\mathcal{Q}_{5^{\frac{1}{s}}\widetilde{r}_{{j}}}\left(x_{1,j},x_{2,j},t_{0,j}\right)\right)
\end{equation}
whenever $\lambda\geq\lambda_{0}$, where
{\small\begin{equation}
\label{cz non pa : lambda0}
\begin{aligned}
    \lambda_{0}&\coloneqq \frac{c\tau^{\frac{1}{q}-\frac{1}{\gamma}}}{(r_{2}-r_{1})^{\frac{5n}{s}}}\left(\left(\frac{1}{\tau}\dashint_{\mathcal{Q}_{2}}|D^{\tau}d_{s}u|^{p}\dmutt\right)^{\frac{1}{p}}+\Tail_{\infty,2s}\left(\frac{u-(u)_{B_{2}}(t)}{2^{s+\tau}};Q_{2}\right)+\left(\sup_{t\in \Lambda_{2}}\dashint_{B_{2}}\frac{|u-(u)_{Q_{2}}|^{2}}{2^{2s+2\tau}}\dx\right)^{\frac{1}{2}}\right)\\
    &\quad+\frac{c\tau^{\frac{1}{q}-\frac{1}{\gamma}}}{(r_{2}-r_{1})^{\frac{5n}{s}}}\frac{1}{\delta}\left(\left(\frac{1}{\tau}\dashint_{\mathcal{Q}_{2}}|D^{\tau}d_{0}f|^{q}\dmutt\right)^{\frac{1}{q}}+\Tail_{q,s}\left(\frac{f-(f)_{B_{2}}(t)}{2^{\tau}};Q_{2}\right)+\left(\frac{1}{\tau}\dashint_{\mathcal{Q}_{2}}(2^{s-\tau}|G|)^{\gamma}\dmutt\right)^{\frac{1}{\gamma}}\right)
\end{aligned}
\end{equation}} \\
\noindent
for some constant $c=c(n,s,L,q,\tau)$. In particular, there exist constants $a_{u}=a_{u}(n,s,L,q,\tau)\in(0,1]$, $a_{f}=a_{f}(n,s,L,q,\tau)\in(0,1]$ and $a_{g}=a_{g}(n,s,L,q,\tau)\in(0,1]$ such that
\begin{equation}
\label{cz non pa : diagonal estimate}
\begin{aligned}
&\sum_{i\geq0}\mu_{\tau,t}\left(\mathcal{Q}_{\rho_{{i}}}(z_{i})\right)+\sum_{j\geq0}\mu_{\tau,t}\left(\mathcal{Q}_{\widetilde{r}_{{j}}}\left(x_{1,j},x_{2,j},t_{0,j}\right)\right)\\
&\leq\frac{c}{\lambda^{p}}\int_{\mathcal{Q}_{r_{2}}\cap\{|D^{\tau}d_{s}u|>a_{u}\lambda\}}|D^{\tau}d_{s}u|^{p}\dmutt+\frac{c}{(\delta\lambda)^{\tilde{q}}}\int_{\mathcal{Q}_{r_{2}}\cap\{|D^{\tau}d_{0}f|>a_{f}\delta\lambda\}}|D^{\tau}d_{0}f|^{\tilde{q}}\dmutt\\
&\quad+\frac{c}{(\delta\lambda)^{b_{\tau}}}\int_{\mathcal{Q}_{r_{2}}\cap\{|G|^{\gamma}>(a_{g}\delta\lambda)^{b_{\tau}}G_{0}^{-1}\}}|G|^{\gamma}G_{0}\dmutt,
\end{aligned}
\end{equation}
where  we denote
\begin{equation}
\label{cz non pa : constb}
b_{\tau}=\frac{2n+4s}{n+2s+2\tau}\quad\text{and}\quad
G_{0}=\left(\int_{Q_{2}}|g|^{\gamma}\dz\right)^{\frac{b_{\tau}-\gamma}{\gamma}}.
\end{equation}
In addition, we get that
\begin{equation}
\label{cz non pa : pstra norm}
\left(\dashint_{\mathcal{Q}_{5^{\frac{1}{s}}\widetilde{r}_{{j}}}\left(x_{1,j},x_{2,j},t_{0,j}\right)}|D^{\tau}d_{s}u|^{p_{\#}}\dmutt\right)^{\frac{1}{{p_{\#}}}}\leq c_{od}\lambda\quad\text{for any }j
\end{equation}
and for some constant $c_{od}=c_{od}(n,s,p)$, where the constant $p_{\#}$ is defined in \eqref{cz non pa : paraconj},

\end{prop}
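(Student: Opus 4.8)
The plan is to run a double exit-time/Vitali covering scheme on the upper level set $U_\lambda$, splitting it into a "diagonal" part and an "off-diagonal" part. First I would introduce the functional used to detect concentration of $|D^\tau d_s u|$; following the discussion before \eqref{cz non pa : defnTheta}, it should be the quantity
\[
\Theta(\mathcal Q_\rho(z))=\left(\frac{1}{\tau}\dashint_{\mathcal Q_\rho(z)}|D^\tau d_s u|^p\dmutt\right)^{\frac1p}+\Tail_{\infty,2s}\!\left(\tfrac{u-(u)_{B_\rho}(t)}{\rho^{s+\tau}};Q_\rho(z)\right)+\Big(\sup_{t\in\Lambda_\rho}\dashint_{B_\rho}\tfrac{|u-(u)_{Q_\rho}|^2}{\rho^{2s+2\tau}}\dx\Big)^{1/2},
\]
together with the analogous $f$- and $g$-functionals scaled by $1/\delta$. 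The definition of $\lambda_0$ in \eqref{cz non pa : lambda0} is precisely chosen so that the "global" value of this functional on $\mathcal Q_2$ is $\le (r_2-r_1)^{5n/s}\lambda_0/(c\tau^{1/q-1/\gamma})$, hence $<\lambda$ when $\lambda\ge\lambda_0$; this is what forces every exit-time radius to be strictly interior, so that no cylinder escapes $\mathcal Q_{r_2}$. For each point $z\in U_\lambda$ one defines $\rho_z$ as the largest radius for which the functional, evaluated on the concentric diagonal cylinder, is $\ge \lambda$ (divided by a large structural constant); continuity of the averages in $\rho$ gives equality at $\rho=\rho_z$ and the strict inequality above guarantees $\rho_z$ is well inside. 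A Vitali covering then extracts a countable disjoint subfamily $\{\mathcal Q_{\rho_i}(z_i)\}$ with the $5^{2/s}$-dilates covering the part of $U_\lambda$ reached by diagonal cylinders. For the remaining (off-diagonal) points, one instead uses the reverse-Hölder-type inequality \eqref{cz non pa : reverseestimate} on off-diagonal cylinders $\mathcal K_r(x_1,x_2)\times\Lambda_r$ (which does not use the equation), runs the same exit-time/Vitali argument to produce $\{\mathcal Q_{\widetilde r_j}(x_{1,j},x_{2,j},t_{0,j})\}$, and reads off \eqref{cz non pa : pstra norm} directly from the stopping property at the exit radius combined with the embedding Lemma \ref{cz non pa : embed lem} (this is where $p_\#$ enters and $c_{od}=c_{od}(n,s,p)$).

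Next I would prove the measure bound \eqref{cz non pa : diagonal estimate}. On each selected cylinder the exit-time equality says $\Theta(\mathcal Q_{\rho_i}(z_i))\approx\lambda$; splitting $\Theta$ into its three pieces, at least one of them is $\gtrsim\lambda$, and using Lemma \ref{cz non pa : RHI} the sup-oscillation piece is in turn controlled by the $L^2$-average of $D^\tau d_s u$, the $L^2$-average of $D^\tau d_0 f$, the $L^\gamma$-average of $G$, and tails of $u$ and $f$ on a slightly larger cylinder. The tail terms are absorbed using the rigorous tail estimate Lemma \ref{cz non pa : taillem}, which is why the exit radius must be taken nontrivially (the comments around \eqref{cz non pa : findj0u}, \eqref{cz non pa : findj0f}) — this is the step that requires \eqref{cz non pa : tildeqprop}, i.e. $s>\tau+\frac{2s}{\tilde q}$, so that the geometric series in $i$ in \eqref{cz non pa : tailestimate} converges with exponent $\tilde q$ rather than $q$. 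What is left are genuine super-level-set integrals: on each $\mathcal Q_{\rho_i}(z_i)$ one splits $\{|D^\tau d_s u|\le a_u\lambda\}$ versus its complement (and similarly for $f$, $g$), the "small" part contributing at most a fixed fraction of $\lambda^p\mu_{\tau,t}(\mathcal Q_{\rho_i})$, which is reabsorbed into the left side after summing over the disjoint family; the "large" part gives the right-hand side of \eqref{cz non pa : diagonal estimate}. The doubling property \eqref{cz non pa : doubling}–\eqref{cz non pa : meascom} of $\mu_{\tau,t}$ is used repeatedly to pass between a cylinder, its dilate, and the slightly enlarged cylinder on which the reverse-Hölder and tail estimates live, and it is responsible for the factors $\tau^{-1}$ and the powers of $5^{1/s},5^{2/s}$. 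The constants $b_\tau$ and $G_0$ in \eqref{cz non pa : constb} appear because the $g$-term enters the functional through an $L^\gamma$-average but must be compared against $\lambda$ after the $\mu_{\tau,t}$-normalization \eqref{cz non pa : equbwGg}, forcing a self-improving exponent $b_\tau=\frac{2n+4s}{n+2s+2\tau}$ and a renormalization weight $G_0$.

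Finally the bound on $\lambda_0$ itself, including the explicit dependence $(r_2-r_1)^{-5n/s}$ and $\tau^{1/q-1/\gamma}$, would be obtained by estimating the functional on $\mathcal Q_{r_1}$ in terms of its value on $\mathcal Q_2$: this is exactly the content of the iteration Lemma \ref{cz non pa : technicallemma} (which fixes the exponent $5n/s$), after one uses \eqref{cz non pa : inclusion measure} to compare $\mu_{\tau,t}$ of a cylinder of radius $\sim r_1$ inside $\mathcal Q_2$ with $\mu_{\tau,t}(\mathcal Q_2)$, producing the powers of $\tau$. The main obstacle, as flagged in the introduction, is the presence of the $L^2$-oscillation-by-sup-norm term in $\Theta$: it does not arise from any exit-time argument (off-diagonal cylinders produce it as a correction term), so one cannot naively iterate it, and it must instead be fed back into Lemma \ref{cz non pa : RHI} and then into the tail Lemma \ref{cz non pa : taillem} — getting a clean bound on the measure of the exit-time cylinders in spite of this term is the technical heart of the proof (this is what Remarks \ref{cz non pa : imprmk1} and \ref{cz non pa : imprmk2} address). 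A secondary but real difficulty is bookkeeping the interaction between the two covering families so that the dilated cylinders still sit inside $\mathcal Q_{r_2}$ and the disjointness needed for the final summation is not lost.
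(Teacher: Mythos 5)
Your high-level scaffolding (exit-time/Vitali coverings producing a diagonal and an off-diagonal family, a reverse H\"older inequality on off-diagonal cylinders, controlling the sup-oscillation through Lemma \ref{cz non pa : RHI} and the tails through Lemma \ref{cz non pa : taillem}) matches the paper, but the proposal never addresses the central difficulty: how to bound $\sum_j\mu_{\tau,t}(\mathcal{Q}_{\widetilde{r}_j}(x_{1,j},x_{2,j},t_{0,j}))$. The exit-time inequality at an off-diagonal cylinder forces $\Theta_{OD}\geq\lambda$, but $\Theta_{OD}$ as defined in \eqref{cz non pa : func-C} contains the correction term $\mathfrak{A}(\mathcal{B}_{\overline{r}_j})\sum_d E_{p,\tau}(u;Q_{\overline{r}_j}(x_{d,j},t_{0,j}))$ involving two \emph{diagonal} cylinders that are not members of the selected disjoint family and carry no super-level-set information. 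When $E_{p,\tau}$ is large ($ND_\lambda$), one has to re-route each such diagonal cylinder into the diagonal exit-time cylinder $Q_{\rho_i}(z_i)$ that contains it, and then sum in $j$ knowing that arbitrarily many off-diagonal cylinders can point into the same $Q_{\rho_i}(z_i)$. Making this sum converge is the content of the combinatorial Lemma \ref{cz non pa : dist of ndl}: the $\mathcal{A}^d_{i,j,l,k}$ decomposition of \eqref{cz non pa : lem.cov.step4.aijld}--\eqref{cz non pa : lem.cov.step4.aijlkd} gives a multiplicity bound $c\,2^{n(l+k)}$, which is absorbed only because the factor $\mathfrak{A}(\mathcal{B}_{\overline{r}_j})^{p(s+\tau)}$ decays like $(2^{-(l+k)})^{ps}$ in both scale $l$ and distance $k$. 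You flag this as "bookkeeping the interaction between the two covering families," but without this argument the off-diagonal part of \eqref{cz non pa : diagonal estimate} does not close, and the paper explicitly identifies it as the technical heart that replaces the Calder\'on--Zygmund cube combinatorics of \cite{KMS1}.

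Two further points. Your exit-time functional $\Theta$ includes the term $\Tail_{\infty,2s}$; the paper's $\Theta_D$ in \eqref{cz non pa : defnTheta} deliberately does \emph{not}, because the tail at a small exit cylinder cannot be localized by stopping on concentric cylinders the way space-time averages can. Instead the tail is controlled a posteriori: the sup-oscillation term in $\Theta_D$ is fed into Lemma \ref{cz non pa : RHI}, producing the tail term $T_1$ in \eqref{cz non pa : timel2osc} on the doubled cylinder, which is then decomposed along dyadic annuli via Lemma \ref{cz non pa : taillem} so that every dyadic average up to scale $\mathcal{R}_{1,2}$ is controlled by the exit-time bound \eqref{cz non pa : exitradius}, leaving only global tails on $Q_2$ that are absorbed into $\lambda_0$. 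Also, the factor $(r_2-r_1)^{-5n/s}$ and the $\tau$-powers in $\lambda_0$ do \emph{not} come from the iteration Lemma \ref{cz non pa : technicallemma} — that lemma is used only in the proof of Lemma \ref{cz non pa : ind prof}. They come directly from the choice $\mathcal{R}_{1,2}\sim (r_2-r_1)^{1/s}$ in \eqref{cz non pa : r1r2defn} together with the powers of $2/\mathcal{R}_{1,2}$ that appear in the tail bounds \eqref{cz non pa : findj0u}--\eqref{cz non pa : findj0f} and in the measure comparison \eqref{cz non pa : inclusion measure}.
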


\begin{rmk}
\label{cz non pa : taurmk}
As we pointed out earlier, \eqref{cz non pa : tautail} is only employed to control the tail term of $f$. Thus if $f=0$, we can remove the condition \eqref{cz non pa : tautail}.
\end{rmk}

\begin{proof}
We first define the functional
\begin{equation}
\label{cz non pa : defnTheta}
\begin{aligned}
    \Theta_{D}\left(z_{0},r\right)&=\left(\dashint_{\mathcal{Q}_{r}(z_{0})}|D^{\tau}d_{s}u|^{p}\dmutt\right)^{\frac{1}{p}}+\tau^{\frac{1}{\gamma}}\left(\sup_{t\in \Lambda_{r}(t_{0})}\dashint_{B_{r}(x_{0})}\frac{|u-(u)_{Q_{r}(z_{0})}|^{2}}{r^{2s+2\tau}}\dx\right)^{\frac{1}{2}}\\
    &\quad+\frac{1}{\delta}\left(\dashint_{\mathcal{Q}_{r}(z_{0})}|D^{\tau}d_{0}f|^{\tilde{q}}\dmutt\right)^{\frac{1}{\tilde{q}}}+\frac{1}{\delta}\left(\dashint_{\mathcal{Q}_{r}(z_{0})}(r^{s-\tau}|G|)^{\gamma}\dmutt\right)^{\frac{1}{\gamma}}
\end{aligned}
\end{equation}
for any $z_{0}\in Q_{r_{1}}$ and $r>0$ with $Q_{r}(z_{0})\subset Q_{2}$. 
The rest of the proof is divided into 8 steps.\\
\noindent
\textbf{Step 1. Coverings for the diagonal part.}
Let us take 
{\small\begin{equation}
\label{cz non pa : lambdacond}
\begin{aligned}
    \lambda_{0}&=  \frac{M\kappa^{-1}\tau^{\frac{1}{q}}}{(r_{2}-r_{1})^{\frac{5n}{s}}}\left(\left(\frac{1}{\tau}\dashint_{\mathcal{Q}_{2}}|D^{\tau}d_{s}u|^{p}\dmutt\right)^{\frac{1}{p}}+\Tail_{\infty,2s}\left(\frac{u-(u)_{B_{2}}(t)}{2^{s+\tau}};Q_{2}\right)+\left(\sup_{t\in \Lambda_{2}}\dashint_{B_{2}}\frac{|u-(u)_{Q_{2}}|^{2}}{2^{2s+2\tau}}\dx\right)^{\frac{1}{2}}\right)\\
    &\quad+\frac{M\kappa^{-1}\tau^{\frac{1}{q}}}{(r_{2}-r_{1})^{\frac{5n}{s}}}\frac{1}{\delta}\left(\left(\frac{1}{\tau}\dashint_{\mathcal{Q}_{2}}|D^{\tau}d_{0}f|^{q}\dmutt\right)^{\frac{1}{q}}+\Tail_{q,s}\left(\frac{f-(f)_{B_{2}}(t)}{2^{\tau}};Q_{2}\right)+\left(\frac{1}{\tau}\dashint_{\mathcal{Q}_{2}}(2^{s-\tau}|G|)^{\gamma}\dmutt\right)^{\frac{1}{\gamma}}\right),
\end{aligned}
\end{equation}}\\
\noindent
where $M\geq1$ and $\kappa\in(0,1]$ are free parameters which we will determine later (see \eqref{cz non pa : condM1} and \eqref{cz non pa : rangeofkappa1}). More precisely, the parameter $M$ will be used to handle the diagonal part and the parameter $\kappa$ will be used to handle the non-diagonal part. We next take a positive integer $j_{0}\geq 5$ such that
\begin{equation}
\label{cz non pa : j0cond}
\frac{16(c_{0}+\tilde{c}+2c_{q})^{2}}{1-2^{-s+\tau+\frac{2s}{\tilde{q}}}}\leq 2^{j_{0}\left(s-\tau-\frac{2s}{\tilde{q}}\right)},
\end{equation}
where $c_{0}$ is the constant determined in Lemma \ref{cz non pa : RHI}, and $\tilde{c}$ and $c_{q}$ are the constants determined in \eqref{cz non pa : tailestimate}. Using \eqref{cz non pa : tildeq}, we observe that the number $j_{0}$ depends only on $n,s,L,q$ and $\tilde{q}$.
We then note that for any $z_{0}\in Q_{r_{1}}$,
\begin{equation*}
    Q_{5^{\frac{2}{s}}\times 2^{j_{0}+3}\mathcal{R}_{1,2}}(z_{0})\subset Q_{r_{2}},
\end{equation*}
where we denote
\begin{equation}
\label{cz non pa : r1r2defn}
\mathcal{R}_{1,2}=2^{-j_{0}-3}\times 5^{-\frac{2}{s}}\times (s(r_{2}-r_{1}))^{\frac{1}{s}}.
\end{equation}
Let us now define for $\lambda\geq\lambda_{0}$,
\begin{equation*}
    D_{\kappa\lambda}=\left\{z_{0}\in Q_{r_{1}}\,:\,\sup_{0<\rho\leq \mathcal{R}_{1,2}}\Theta_{D}\left(z_{0},\rho\right)>\kappa\lambda\right\}.
\end{equation*}
Since $\tau<1$, we observe that for any $z_{0}\in Q_{r_{1}}$ and $r\in \left[\mathcal{R}_{1,2},5^{\frac{2}{s}}\times 2^{j_{0}+3}\mathcal{R}_{1,2}\right]$,
\begin{equation*}
\begin{aligned}
\left(\dashint_{\mathcal{Q}_{r}(z_{0})}|D^{\tau}d_{s}u|^{p}\dmutt\right)^{\frac{1}{p}}+\frac{1}{\delta}\left(\dashint_{\mathcal{Q}_{r}(z_{0})}|D^{\tau}d_{0}f|^{\tilde{q}}\dmutt\right)^{\frac{1}{\tilde{q}}}+\frac{1}{\delta}\left(\dashint_{\mathcal{Q}_{r}(z_{0})}(r^{s-\tau}|G|)^{\gamma}\dmutt\right)^{\frac{1}{\gamma}}\leq \frac{\kappa\lambda}{4}
\end{aligned}
\end{equation*}
holds by assuming
\begin{equation}
\label{cz non pa : condM0}
M\geq 2^{10n(j_{0}+4+5s^{-1})}s^{-\frac{5n}{s}}.
\end{equation}
In addition, using Lemma \ref{cz non pa : RHI} and the fact that $\tau<1$ with $\gamma<2$, we have
{\small\begin{equation}
\label{cz non pa : timel2osc}
\begin{aligned}
    &\tau^{\frac{1}{\gamma}}\left(\sup_{t\in \Lambda_{r}(t_{0})}\dashint_{B_{r}(x_{0})}\frac{|u-(u)_{Q_{r}(z_{0})}|^{2}}{r^{2s+2\tau}}\dx\right)^{\frac{1}{2}}\\
    &\leq c_{0}\left(\dashint_{\mathcal{Q}_{2r}(z_{0})}|D^{\tau}d_{s}u|^{2}\dmutt\right)^{\frac{1}{2}}+c_{0}\left(\dashint_{\mathcal{Q}_{2r}(z_{0})}|D^{\tau}d_{0}f|^{2}\dmutt\right)^{\frac{1}{2}}+c_{0}\left(\dashint_{\mathcal{Q}_{2r}(z_{0})}\left((2r)^{s-\tau}|G|\right)^{\gamma}\dmutt\right)^{\frac{1}{\gamma}}\\
&\quad+\underbrace{c_{0}\tau^{\frac{1}{2}}\Tail_{2,2s}\left(\frac{u-(u)_{B_{2r}(x_{0})}(t)}{(2r)^{s+\tau}};Q_{2r}(z_{0})\right)}_{T_{1}}+\underbrace{c_{0}\tau^{\frac{1}{2}}\Tail_{2,s}\left(\frac{f-(f)_{B_{2r}(x_{0})}(t)}{(2r)^{\tau}};Q_{2r}(z_{0})\right)}_{T_{2}},
\end{aligned}
\end{equation}}\\
\noindent
where the constant $c_{0}$ is determined in Lemma \ref{cz non pa : RHI}.
By H\"older's inequality and \eqref{cz non pa : tailestimate}, we further estimate $T_{1}$ and $T_{2}$ as
\begin{equation}
\label{cz non pa t1t2tail}
\begin{aligned}
    T_{1}+T_{2}&\leq c_{0}\tau^{\frac{1}{2}}\Tail_{p,2s}\left(\frac{|u-(u)_{B_{2r}(x_{0})}(t)|}{(2r)^{s+\tau}};Q_{2r}(z_{0})\right)+c_{0}\tau^{\frac{1}{2}}\Tail_{\tilde{q},s}\left(\frac{|f-(f)_{B_{2r}(x_{0})}(t)|}{(2r)^{\tau}};Q_{2r}(z_{0})\right)\\
    &\leq c_{0}c_{p}\left(\frac{2}{\mathcal{R}_{1,2}}\right)^{s+\tau+\frac{2s}{p}}\left(\dashint_{\mathcal{Q}_{2}}|D^{\tau}d_{s}u|^{p}\right)^{\frac{1}{p}}+c_{0}c_{q}\left(\frac{2}{\mathcal{R}_{1,2}}\right)^{\tau+\frac{2s}{q}}\left(\dashint_{\mathcal{Q}_{2}}|D^{\tau}d_{0}f|^{q}\right)^{\frac{1}{q}}\\
    &\quad +c_{0}\tilde{c}\left(\frac{2}{\mathcal{R}_{1,2}}\right)^{5n}\Tail_{p,2s}\left(\frac{u-(u)_{B_{2}}(t)}{2^{s+\tau}};Q_{4}\right)+c_{0}\tilde{c}\left(\frac{2}{\mathcal{R}_{1,2}}\right)^{5n}\Tail_{q,s}\left(\frac{f-(f)_{B_{2}}(t)}{2^{\tau}};Q_{2}\right),
\end{aligned}
\end{equation}
where the constants $c_{p}, c_{q}$ and $\tilde{c}$ are determined in Lemma \ref{cz non pa : taillem}.
Using Remark \ref{cz non pa : cp} and H\"older's inequality to the third term on the right-hand side of \eqref{cz non pa t1t2tail},
we deduce from \eqref{cz non pa : timel2osc} that
\begin{equation*}
    \tau^{\frac{1}{\gamma}}\left(\sup_{t\in \Lambda_{r}(t_{0})}\dashint_{B_{r}(x_{0})}\frac{|u-(u)_{Q_{r}(z_{0})}|^{2}}{r^{2s+2\tau}}\dx\right)^{\frac{1}{2}}\leq \frac{\kappa\lambda}{4}
\end{equation*}
holds by taking
\begin{equation}
\label{cz non pa : condM1}
M=(c_{0}c_{q}+c_{0}\tilde{c})2^{10n(j_{0}+4+5s^{-1})}s^{-\frac{5n}{s}}
\end{equation}
which clearly satisfies \eqref{cz non pa : condM0}.
As a result, we observe that for any $z_{0}\in Q_{r_{1}}$ and $r\in\left[\mathcal{R}_{1,2},5^{\frac{2}{s}}\times 2^{j_{0}+3}\mathcal{R}_{1,2}\right]$,
\begin{equation}
\label{cz non pa : thetales}
    \Theta_{D}(z_{0},r)\leq \kappa\lambda.
\end{equation}
Therefore, for each $z\in D_{\kappa\lambda}$, there is an exit radius $\rho_{z}\leq \mathcal{R}_{1,2}$  such that
\begin{equation}
\label{cz non pa : exitradius}
    \Theta_{D}\left(z,\rho_{z}\right)\geq\kappa\lambda\quad\text{and}\quad\Theta_{D}\left(z,\rho\right)\leq\kappa\lambda\quad\text{ if }\rho_{z}\leq\rho\leq 5^{\frac{2}{s}}\times 2^{j_{0}+3}\mathcal{R}_{1,2}.
\end{equation}
We first observe that if $Q_{\rho}(z_{1})\cap Q_{r}(z_{2})\neq\emptyset$ with $\frac{\rho}{2}\leq r\leq2\rho$, then we have $Q_{r}(z_{2})\subset Q_{5^{\frac{1}{s}}\rho}(z_{1})$.
Thus we apply Vitali's covering lemma to the collection $\{Q_{2^{j_{0}}\rho_{z}}(z)\}_{z\in D_{\kappa\lambda}}$, in order to 
find a family of mutually disjoint countable cylinders
\begin{equation}
\label{cz non pa : disjointcydi}
     \left\{Q_{2^{j_{0}}\rho_{z_{i}}}(z_{i})\right\}_{i\geq0}\text{ such that } D_{\kappa\lambda}\subset\bigcup_{z_{0}\in D_{\kappa\lambda}}Q_{\rho_{z_{0}}}(z_{0})\subset\bigcup\limits_{i=0}^{\infty}Q_{5^{\frac{1}{s}}\times 2^{j_{0}}\rho_{z_{i}}}(z_{i}).
\end{equation}
In addition, by the proof of Vitali's covering lemma, we get that for any $z\in D_{\kappa\lambda}$, there is $i$ such that 
\begin{equation}
\label{cz non pa : lem.cov.step1.vidist}
    \frac{2^{j_{0}}\rho_{z_{i}}}{2}\leq 2^{j_{0}}\rho_{z}\leq 2^{j_{0}+1}\rho_{z_{i}}\quad\text{and}\quad Q_{2^{j_{0}}\rho_{z}}(z)\subset Q_{5^{\frac{1}{s}}\times \rho_{{i}}}(z_{i}),
\end{equation}
where we denote 
\begin{equation}
\label{cz non pa : rhoidefn}
    \rho_{i}=2^{j_{0}}\rho_{z_{i}}\quad\text{for each }i.
\end{equation}
From \eqref{cz non pa : exitradius}, we have
\begin{equation}
\label{cz non pa : kalam}
\begin{aligned}
    \kappa\lambda&\leq \left(\dashint_{\mathcal{Q}_{\rho_{z_{i}}}(z_{i})}|D^{\tau}d_{s}u|^{p}\dmutt\right)^{\frac{1}{p}}+\tau^{\frac{1}{\gamma}}\left(\sup_{t\in \Lambda_{\rho_{z_{i}}}(t_{i})}\dashint_{B_{\rho_{z_{i}}}(x_{i})}\frac{|u-(u)_{Q_{\rho_{z_{i}}}(z_{i})}|^{2}}{\rho_{z_{i}}^{2s+2\tau}}\dx\right)^{\frac{1}{2}}\\
    &\quad+\frac{1}{\delta}\left(\dashint_{\mathcal{Q}_{\rho_{z_{i}}}(z_{i})}|D^{\tau}d_{0}f|^{\tilde{q}}\dmutt\right)^{\frac{1}{\tilde{q}}}+\frac{1}{\delta}\left(\dashint_{\mathcal{Q}_{\rho_{z_{i}}}(z_{i})}\left((\rho_{z_{i}})^{s-\tau}|G|\right)^{\gamma}\dmutt\right)^{\frac{1}{\gamma}}.
\end{aligned}
\end{equation}
We first note from \eqref{cz non pa : tailestimate} with $\beta=s$, $\tilde{s}=s$ and $k=j_{0}$  that
{\small\begin{equation}
\label{cz non pa : findj0u}
\begin{aligned}
\tau^{\frac{1}{\gamma}}\Tail_{p,2s}\left(\frac{u-(u)_{B_{2\rho}(x_{i})}(t)}{(2\rho)^{s+\tau}};Q_{2\rho}(z_{i})\right)&\leq c_{p}\sum_{j=2}^{j_{0}}2^{i\left(-s+\tau+\frac{2s}{p}\right)}\left(\dashint_{\mathcal{Q}_{2^{j}\rho}(z_{i})}|D^{\tau}d_{s}u|^{p}\dmutt\right)^{\frac{1}{p}}\\
&\quad+\tilde{c}\tau^{\frac{1}{\gamma}}\sum_{j=j_{0}+1}^{l}2^{j(-s+\tau)}\left(\sup_{t\in \Lambda_{2^{j}\rho}(t_{i})}\dashint_{B_{2^{j
}\rho}(x_{i})}\frac{|u-(u)_{B_{2^{j}\rho}(x_{i})}(t)|^{2}}{(2^{j}\rho)^{2{s}+2\tau}}\dx\right)^{\frac{1}{2}}\\
    &\quad+\tilde{c}\tau^{\frac{1}{\gamma}}2^{-2s l}\left(\frac{2}{\rho}\right)^{s+\tau}\left(\sup_{t\in \Lambda_{2}}\dashint_{B_{2}}\frac{|u-(u)_{B_{2}}(t)|^{2}}{2^{2s+2\tau}}\dx\right)^{\frac{1}{2}}\\
    &\quad+\frac{\tilde{c}}{(2-r_{1})^{n+2s}}\left(\frac{2}{\rho}\right)^{-s+\tau}\Tail_{\infty,2s}\left(\frac{u-(u)_{B_{2}}(t)}{2^{\tilde{s}+\tau}};Q_{2}\right),
\end{aligned}
\end{equation}}\\
\noindent
where $\rho=\rho_{z_{i}}$ and $l$ is the positive integer such that  $2^{j_{0}+1}\mathcal{R}_{1,2}\leq 2^{l}\rho_{z_{i}}<2^{j_{0}+2}\mathcal{R}_{1,2}$.
We point out that for the last tail term in \eqref{cz non pa : findj0u}, we have taken supremum in the time variable for the expression in \eqref{cz non pa : taiilestimt2}.
Similarly, we observe 
{\small\begin{equation}
\label{cz non pa : findj0f}
\begin{aligned}
&\tau^{\frac{1}{\gamma}}\Tail_{\tilde{q},s}\left(\frac{f-(f)_{B_{2\rho}(x_{i})}(t)}{(2\rho)^{\tau}};Q_{2\rho}(z_{i})\right)\\
&\leq c_{\tilde{q}}\sum_{j=2}^{j_{0}}2^{i\left(-s+\tau+\frac{2s}{\tilde{q}}\right)}\left(\dashint_{\mathcal{Q}_{2^{j}\rho}(z_{i})}|D^{\tau}d_{0}f|^{\tilde{q}}\dmutt\right)^{\frac{1}{\tilde{q}}}+c_{\tilde{q}}\sum_{j=j_{0}+1}^{l}2^{i\left(-s+\tau+\frac{2s}{\tilde{q}}\right)}\left(\dashint_{\mathcal{Q}_{2^{j}\rho}(z_{i})}|D^{\tau}d_{0}f|^{\tilde{q}}\dmutt\right)^{\frac{1}{\tilde{q}}}\\
    &\quad+c_{\tilde{q}}2^{-s l}\left(\frac{2}{\rho}\right)^{\tau+\frac{2s}{\tilde{q}}}\left(\dashint_{\mathcal{Q}_{2}}|D^{\tau}d_{0}f|^{q} \dmutt\right)^{\frac{1}{q}}+\frac{\tilde{c}}{(2-r_{1})^{n+2s}}\left(\frac{2}{\rho}\right)^{-s+\tau+\frac{2s}{q}}\Tail_{q,s}\left(\frac{f-(f)_{B_{2}}(t)}{2^{\tau}};Q_{2}\right),
\end{aligned}
\end{equation}}\\
\noindent
where we have used H\"older's inequality for the third and fourth terms in the right-hand side of \eqref{cz non pa : findj0f}. We combine \eqref{cz non pa : timel2osc} with $r=\rho_{z_{i}}$, \eqref{cz non pa : findj0u} and \eqref{cz non pa : findj0f} together with \eqref{cz non pa : tildeqprop}, \eqref{cz non pa : rhoidefn} and Remark \ref{cz non pa : cp}, in order to get 
{\small\begin{equation}
\label{cz non pa : oneretildeqnec}
\begin{aligned}
    &\tau^{\frac{1}{\gamma}}\sup_{t\in \Lambda_{\rho_{z_{i}}}(t_{i})}\left(\dashint_{B_{\rho_{z_{i}}}(x_{i})}\frac{|u-(u)_{Q_{\rho_{z_{i}}}(z_{i})}|^{2}}{\rho_{z_{i}}^{2s+2\tau}}\dx\right)^{\frac{1}{2}}\\
    &\leq c\left(\dashint_{\mathcal{Q}_{\rho_{i}}(z_{i})}|D^{\tau}d_{s}u|^{p}\dmutt\right)^{\frac{1}{p}}+c\left(\dashint_{\mathcal{Q}_{\rho_{i}}(z_{i})}|D^{\tau}d_{0}f|^{\tilde{q}}\dmutt\right)^{\frac{1}{\tilde{q}}}+c\left(\dashint_{\mathcal{Q}_{\rho_{{i}}}(z_{i})}\left((\rho_{{i}})^{s-\tau}|G|\right)^{\gamma}\dmutt\right)^{\frac{1}{\gamma}}\\
    &\quad+ c_{0}\tilde{c}\left[\sum_{j=j_{0}+1}^{l}2^{i\left(-s+\tau\right)}\Theta_{D}\left(z_{j},2^{j}\rho_{z_{i}}\right)+\tau^{\frac{1}{\gamma}}\left(\frac{2}{2^{j_{0}}\mathcal{R}_{1,2}}\right)^{s+\tau}\left(\sup_{t\in \Lambda_{2}}\dashint_{B_{2}}\frac{|u-(u)_{B_{2}}(t)|^{2}}{2^{2s+2\tau}}\dx\right)^{\frac{1}{2}}\right]\\
    &\quad+ c_{0}c_{q}\left[\sum_{j=j_{0}+1}^{l}2^{i\left(-s+\tau+\frac{2s}{\tilde{q}}\right)}\Theta_{D}\left(z_{j},2^{j}\rho_{z_{i}}\right)+\left(\frac{2}{2^{j_{0}}\mathcal{R}_{1,2}}\right)^{\tau+\frac{2s}{\tilde{q}}}\left(\dashint_{\mathcal{Q}_{2}}|D^{\tau}d_{0}f|^{q}\dmutt\right)^{\frac{1}{q}}\right]\\
    &\quad+ \frac{c_{0}\tilde{c}}{(r_{2}-r_{1})^{5n}}\left(\Tail_{\infty,2s}\left(\frac{u-(u)_{B_{2}}(t)}{2^{s+\tau}};Q_{2}\right)+\Tail_{q,s}\left(\frac{f-(f)_{B_{2}}(t)}{2^{\tau}};Q_{2}\right)\right),
\end{aligned}
\end{equation}}\\
\noindent
where $c=c(n,s,L,q,\tau)$. We here highlight that \eqref{cz non pa : tildeqprop} is necessary to handle the sixth term in the right-hand side of \eqref{cz non pa : oneretildeqnec}, as $\sum\limits_{j=j_{0}+1}^{\infty}2^{i\left(-s+\tau+\frac{2s}{\tilde{q}}\right)}<\frac{2^{j_{0}\left(-s+\tau+\frac{2s}{\tilde{q}}\right)}}{1-2^{-s+\tau+\frac{2s}{\tilde{q}}}}$.
We further estimate the right-hand side of \eqref{cz non pa : oneretildeqnec} using \eqref{cz non pa : tildeqprop}, \eqref{cz non pa : j0cond}, \eqref{cz non pa : r1r2defn}, \eqref{cz non pa : condM1} and \eqref{cz non pa : exitradius} as 
{\small\begin{equation*}
\begin{aligned}
    &\tau^{\frac{1}{\gamma}}\sup_{t\in \Lambda_{\rho_{z_{i}}}(t_{i})}\left(\dashint_{B_{\rho_{z_{i}}}(x_{i})}\frac{|u-(u)_{Q_{\rho_{z_{i}}}(z_{i})}|^{2}}{\rho_{z_{i}}^{2s+2\tau}}\dx\right)^{\frac{1}{2}}\\
    &\leq c\left(\dashint_{\mathcal{Q}_{\rho_{i}}(z_{i})}|D^{\tau}d_{s}u|^{p}\dmutt\right)^{\frac{1}{p}}+c\left(\dashint_{\mathcal{Q}_{\rho_{i}}(z_{i})}|D^{\tau}d_{0}f|^{\tilde{q}}\dmutt\right)^{\frac{1}{\tilde{q}}}+c\left(\dashint_{\mathcal{Q}_{\rho_{{i}}}(z_{i})}\left((\rho_{{i}})^{s-\tau}|G|\right)^{\gamma}\dmutt\right)^{\frac{1}{\gamma}}+\frac{\kappa\lambda}{4}.
\end{aligned}
\end{equation*}}\\
\noindent
Plugging the above estimate into \eqref{cz non pa : kalam} along with \eqref{cz non pa : rhoidefn}, we find that
\begin{equation*}
    \kappa\lambda
    \leq c\left(\dashint_{\mathcal{Q}_{\rho_{i}}(z_{i})}|D^{\tau}d_{s}u|^{p}\dmutt\right)^{\frac{1}{p}}+\frac{c}{\delta}\left(\dashint_{\mathcal{Q}_{\rho_{i}}(z_{i})}|D^{\tau}d_{0}f|^{\tilde{q}}\dmutt\right)^{\frac{1}{\tilde{q}}}+\frac{c}{\delta}\left(\dashint_{\mathcal{Q}_{\rho_{{i}}}(z_{i})}\left((\rho_{{i}})^{s-\tau}|G|\right)^{\gamma}\dmutt\right)^{\frac{1}{\gamma}}
\end{equation*}
for some constant $c=c(n,s,L,q,\tau)$.
Therefore we deduce that one of the following must hold:
\begin{equation}
\label{cz non pa : diaalter3}
\begin{aligned}
    &\frac{\kappa\lambda}{3}\leq c\left(\dashint_{\mathcal{Q}_{\rho_{i}}(z_{i})}|D^{\tau}d_{s}u|^{p}\dmutt\right)^{\frac{1}{p}},\quad \frac{\kappa\lambda}{3}\leq \frac{c}{\delta}\left(\dashint_{\mathcal{Q}_{\rho_{i}}(z_{i})}|D^{\tau}d_{0}f|^{\tilde{q}}\dmutt\right)^{\frac{1}{\tilde{q}}},\\
    &\frac{\kappa\lambda}{3}\leq \frac{c}{\delta}\left(\dashint_{\mathcal{Q}_{\rho_{{i}}}(z_{i})}\left((\rho_{{i}})^{s-\tau}|G|\right)^{\gamma}\dmutt\right)^{\frac{1}{\gamma}}.
\end{aligned}
\end{equation}
If the first inequality or the second inequality in \eqref{cz non pa : diaalter3} holds, then we get 
\begin{equation}
\label{cz non pa : measurealter12}
    \mu_{\tau,t}\left(\mathcal{Q}_{\rho_{{i}}}(z_{i})\right)\leq\frac{c}{(\kappa\lambda)^{p}}\int_{\mathcal{Q}_{\rho_{{i}}}(z_{i})}|D^{\tau}d_{s}u|^{p}\dmutt\quad\text{or}\quad \mu_{\tau,t}\left(\mathcal{Q}_{\rho_{{i}}}(z_{i})\right)\leq\frac{c}{(\kappa\delta\lambda)^{\tilde{q}}}\int_{\mathcal{Q}_{\rho_{{i}}}(z_{i})}|D^{\tau}d_{0}f|^{\tilde{q}}\dmutt.
\end{equation}
On the other hand, if the third inequality in \eqref{cz non pa : diaalter3} holds, we observe that
\begin{equation*}
\begin{aligned}
    \mu_{\tau,t}\left(\mathcal{Q}_{\rho_{{i}}}(z_{i})\right)\leq \frac{c}{(\kappa\delta\lambda)^{\gamma}}\int_{\mathcal{Q}_{\rho_{{i}}}(z_{i})}\left((\rho_{{i}})^{s-\tau}|G|\right)^{\gamma}\dmutt= \frac{c}{(\kappa\delta\lambda)^{\gamma}}\int_{\mathcal{Q}_{\rho_{{i}}}(z_{i})}\left((\rho_{{i}})^{s-\tau}|G|\right)^{\gamma}G_{0}G_{0}^{-1}\dmutt.
\end{aligned}
\end{equation*}
We note from \eqref{cz non pa : constb} that
{\small\begin{equation*}
\begin{aligned}
    \rho_{i}^{\gamma(s-\tau)}G_{0}^{-1}\leq \rho_{i}^{\gamma(s-\tau)}\left(\int_{{Q}_{\rho_{{i}}}(z_{i})}|g|^{\gamma}\right)^{\frac{\gamma-b_{\tau}}{\gamma}}&\leq c\rho_{i}^{\gamma(s-\tau)}\left(\dashint_{\mathcal{Q}_{\rho_{{i}}}(z_{i})}\rho_{i}^{n+2s}|G|^{\gamma}\dmutt\right)^{\frac{\gamma-b_{\tau}}{\gamma}}\\
    &\leq c\left(\dashint_{\mathcal{Q}_{\rho_{{i}}}(z_{i})}\left((\rho_{{i}})^{s-\tau}|G|\right)^{\gamma}\dmutt\right)^{\frac{\gamma-b_{\tau}}{\gamma}}.
\end{aligned}
\end{equation*}}\\
\noindent
Using the above two estimates along with the third inequality in \eqref{cz non pa : diaalter3}, we have
\begin{equation}
\label{cz non pa : measurealter3}
    \mu_{\tau,t}\left(\mathcal{Q}_{\rho_{{i}}}(z_{i})\right)\leq \frac{c}{(\delta\kappa\lambda)^{b_{\tau}}}\int_{\mathcal{Q}_{\rho_{{i}}}(z_{i})}|G|^{\gamma}G_{0}\dmutt.
\end{equation}
We combine \eqref{cz non pa : measurealter12} and \eqref{cz non pa : measurealter3} to see that
{\small\begin{equation*}
    \mu_{\tau,t}\left(\mathcal{Q}_{\rho_{{i}}}(z_{i})\right)\leq\frac{c}{(\kappa\lambda)^{p}}\int_{\mathcal{Q}_{\rho_{{i}}}(z_{i})}|D^{\tau}d_{s}u|^{p}\dmutt+\frac{c}{(\kappa\delta\lambda)^{\tilde{q}}}\int_{\mathcal{Q}_{\rho_{{i}}}(z_{i})}|D^{\tau}d_{0}f|^{\tilde{q}}\dmutt+\frac{c}{(\delta\kappa\lambda)^{b_{\tau}}}\int_{\mathcal{Q}_{\rho_{{i}}}(z_{i})}|G|^{\gamma}G_{0}\dmutt.
\end{equation*}}\\
\noindent
A suitable choice of the constants $\tilde{a}_{u}=\tilde{a}_{u}(n,s,L,q,\tau)\in\left(0,\frac{1}{8}\right]$, $\tilde{a}_{f}=\tilde{a}_{f}(n,s,L,q,\tau)\in(0,1]$ and $\tilde{a}_{g}=\tilde{a}_{g}(n,s,L,q,\tau)\in(0,1]$ yields 
{\small\begin{equation}
\label{cz non pa : diab}
\begin{aligned}
\mu_{\tau,t}\left(\mathcal{Q}_{\rho_{i}}(z_{i})\right)&\leq \frac{c}{(\kappa\lambda)^{p}}\int_{\mathcal{Q}_{\rho_{i}}(z_{i})\cap\{|D^{\tau}d_{s}u|>\tilde{a}_{u}\kappa\lambda\}}|D^{\tau}d_{s}u|^{p}\dmutt+\frac{c}{(\kappa\delta\lambda)^{\tilde{q}}}\int_{\mathcal{Q}_{\rho_{i}}(z_{i})\cap\{|D^{\tau}d_{0}f|>\tilde{a}_{f}\kappa\delta\lambda\}}|D^{\tau}d_{0}f|^{\tilde{q}}\dmutt\\
&\quad+\frac{c}{(\kappa\delta\lambda)^{b_{\tau}}}\int_{\mathcal{Q}_{\rho_{i}}(z_{i})\cap\{|G|^{\gamma}>(\tilde{a}_{g}\kappa\delta\lambda)^{b_{\tau}}G_{0}^{-1}\}}|G|^{\gamma}G_{0}\dmutt.
\end{aligned}
\end{equation}}
\noindent
\begin{rmk}
\label{cz non pa : imprmk1}
    We here remark on the second term appearing on the right-hand side of \eqref{cz non pa : defnTheta}. We first note that this term is used to handle parabolic tail terms. 
    Since $u\in C\left(-2^{2s},2^{2s};L^{2}(B_{2})\right)$, there may exist some points $z_{0}\in Q_{r_{1}}$ such that $\Theta_{D}(z_{0},\rho_{z_{0}})\geq \kappa\lambda$ and
    \begin{equation*}
        \left(\dashint_{\mathcal{Q}_{\rho_{z_{0}}}(z_{0})}|D^{\tau}d_{s}u|^{p}\dmutt\right)^{\frac{1}{p}}\leq\frac{\kappa\lambda}{2}\quad\text{with}\quad\tau^{\frac{1}{\gamma}}\left(\sup_{t\in \Lambda_{\rho_{z_{0}}}(t_{0})}\dashint_{B_{\rho_{z_{0}}}(x_{0})}\frac{|u-(u)_{Q_{\rho_{z_{0}}}(z_{0})}|^{2}}{\rho_{z_{0}}^{2s+2\tau}}\dx\right)^{\frac{1}{2}}\geq \frac{\kappa\lambda}{2},
    \end{equation*}
    where $\rho_{z_{0}}$ is the exit-radius of the point $z_{0}$.
    However, from energy estimates and rigorous tail estimates, we still have a sufficiently good bound on the measure of such cylinders as in \eqref{cz non pa : diab}, which is an essential ingredient to obtain $L^{q}$-regularity of $D^{\tau}d_{s}u$.
\end{rmk}

\noindent
\textbf{Step 2. Coverings for off-diagonal parts.} We first note that for any $(x,y,t)\in \mathcal{Q}_{r_{1}}$ and $r\in\left(0,5^{\frac{2}{s}}\times 2^{j_{0}+3}{\mathcal{R}_{1,2}}\right]$, 
we have 
\begin{equation*}
%\label{cz non pa : cuber1r2const}
    \mathcal{Q}_{r}(x,y,t)\subset\mathcal{Q}_{r_{2}},
\end{equation*}
where {the parameter} $\mathcal{R}_{1,2}$ is defined in \eqref{cz non pa : r1r2defn}. 
Let us define 
\begin{equation}
\label{cz non pa : peneftnlofu}
\begin{aligned}
    E_{p,\tau}\left(u\,;\,Q_{r}(z_{0})\right)&=\left(\dashint_{\mathcal{Q}_{r}(z_{0})}|D^{\tau}d_{s}u|^{p}\dmutt\right)^{\frac{1}{p}}+\tau^{\frac1{\gamma}}\left(\sup_{t\in \Lambda_{r}(t_{0})}\dashint_{B_{r}(x_{0})}\frac{|u-(u)_{Q_{r}(z_{0})}|^{2}}{r^{2s+2\tau}}\dx\right)^{\frac{1}{2}}\\
    &\quad+\frac{1}{\delta}\left(\dashint_{\mathcal{Q}_r(z_0)}|D^\tau d_0f|^{\tilde{q}}\dmutt\right)^{\frac1{\tilde{q}}},
\end{aligned}
\end{equation}
for any $Q_{r}(z_{0})\Subset Q_{2}$. Then we first note that for any $\rho<r$ ,
\begin{equation}
\label{cz non pa : timesize}
\sup_{t\in \Lambda_{\rho}}\left(\dashint_{B_{\rho}}\frac{|u-(u)_{Q_{\rho}}|^{2}}{\rho^{2s+2\tau}}\dx\right)^{\frac{1}{2}}\leq 2\left(\frac{r}{\rho}\right)^{\frac{n}{2}+s+\tau}\sup_{t\in \Lambda_{r}}\left(\dashint_{B_{r}}\frac{|u-(u)_{Q_{r}}|^{2}}{r^{2s+2\tau}}\dx\right)^{\frac{1}{2}}.
\end{equation}
We now intend to employ exit-time arguments, to this end, we first introduce a set function which is defined by
\begin{equation}
\label{cz non pa : cubecoeff}
\begin{aligned}
    \mathfrak{A}\left(\mathcal{B}_{r}\right)=\begin{cases}
    \left(\frac{r}{\mathrm{dist}\left({B^{1}_{r}},{B^{2}_{r}}\right)}\right)^{s+\tau}&\text{if }\mathrm{dist}\left(B_{r}^{1},B_{r}^{2}\right)\geq r,\\
        1&\text{if }\mathrm{dist}\left(B_{r}^{1},B_{r}^{2}\right)< r,
    \end{cases}
\end{aligned}
\end{equation}
where $\mathcal{B}_{r}=B_{r}^{1}\times B_{r}^{2}$. We next define a functional 
{\small\begin{equation}\label{cz non pa : func-C}
\begin{aligned}
    \Theta_{OD}\left(u;\mathcal{Q}_{r}(x_{1},x_{2},t_{0})\right)&=\left(\dashint_{\mathcal{Q}_{r}(x_{1},x_{2},t_{0})}|D^{\tau}d_{s}u|^{p}\dmutt\right)^{\frac{1}{p}}+ {\mathfrak{A}\left(\mathcal{B}_{r}(x_{1},x_{2})\right) } \sum_{d=1}^{2}E_{p,\tau}\left(u\,;\,Q_{r}(x_{d},t_{0})\right).
\end{aligned}
\end{equation}}
We then observe from \eqref{cz non pa : inclusion measure} and \eqref{cz non pa : thetales} that for any $r\in \left[5^{-\frac{2}{s}}{\mathcal{R}_{1,2}},5^{\frac{2}{s}}\times 2^{j_{0}+3}\mathcal{R}_{1,2}\right]$,
\begin{equation*}
    \Theta_{OD}\left(u;\mathcal{Q}_{r}(x_{1},x_{2},t_{0})\right)\leq \lambda
\end{equation*}
holds by taking 
\begin{equation}
\label{cz non pa : rangeofkappa1}
\kappa=\frac{\tau^{\frac{1}{\gamma}}}{2^{4n j_0}5^{\frac{20n}{s}}(C_{n}+1)},
\end{equation}
where the constants $C_{n}$ and $j_0$ are determined in \eqref{cz non pa : inclusion measure} and \eqref{cz non pa : j0cond}, respectively.
Let us define 
\begin{equation*}
    OD_{\lambda}=\left\{(x_{1},x_{2},t_{0})\in \mathcal{Q}_{r_{1}}\,:\, \sup_{0<\rho\leq 5^{-\frac{2}{s}}{\mathcal{R}_{1,2}}}\Theta_{OD}\left(u;\mathcal{Q}_{\rho}(x_{1},x_{2},t_{0})\right)\geq\lambda\right\}.
\end{equation*}
For each $(x_{1},x_{2},t_{0})\in OD_{\lambda}$, there is an exit-time radius $\overline{r}\leq 5^{-\frac{2}{s}}{\mathcal{R}_{1,2}}$ such that 
\begin{equation}
\label{cz non pa : czdereof}
    \Theta_{OD}\left(u;\mathcal{Q}_{\rho}(x_{1},x_{2},t_{0})\right)\leq \lambda\quad\text{if }\rho\geq \overline{r}\quad\text{and}\quad  \Theta_{OD}\left(u;\mathcal{Q}_{\overline{r}}(x_{1},x_{2},t_{0})\right)\geq\lambda.
\end{equation}
Using Vitali's covering lemma, we find a collection $\widetilde{\mathcal{A}}=\left\{\mathcal{Q}_{2^{j_0}\overline{r}_j}(x_{1,j},x_{2,j},t_{0,j})\right\}_{j\geq0}$ whose elements are mutually disjoint and satisfy
\begin{equation*}
    OD_{\lambda}\subset \bigcup_{j\geq0}\mathcal{Q}_{5^{\frac{1}{s}}2^{j_0}\overline{r}_{j}}(x_{1,j},x_{2,j},t_{0,j}),
\end{equation*}
where we denote by $\overline{r}_j$ the exit-time radius of the point $(x_{1,j},x_{2,j},t_{0,j})$.
Therefore, we have 
\begin{equation}
\label{upperlevelsetoffb}
\left\{(x,y,t)\in \mathcal{Q}_{r_{1}}\,:\, |D^{\tau}d_{s}u(x,y,t)|\geq\lambda \right\}\subset \bigcup_{j\geq0} \mathcal{Q}_{5^{\frac{1}{s}}\widetilde{r}_{j}}(x_{1,j},x_{2,j},t_{0,j}),
\end{equation}
where we denote 
\begin{align}\label{defn.tilder}
    \widetilde{r}_j\equiv 2^{j_0}\overline{r}_j.
\end{align}
\begin{rmk}
\label{cz non pa : imprmk2}

We give some remarks on the functional defined in \eqref{cz non pa : func-C}.  We first note that the second term in the right-hand side of \eqref{cz non pa : func-C} is needed to obtain a bound on the $L^{p_{\#}}$-norm of $D^{\tau}d_{s}u$ (see \eqref{cz non pa : reverseestimate}, below). We now explain how to obtain a good upper bound on the measure of $\mathcal{Q}_{\widetilde{r}_{j}}(x_{1,j},x_{2,j},t_{0,j})\in \widetilde{\mathcal{A}}$ which is an essential ingredient to get $L^{q}$-regularity of $D^{\tau}d_{s}u$. Indeed, by \eqref{cz non pa : meascom}, \eqref{cz non pa : j0cond} and \eqref{defn.tilder}, we observe 
\begin{align*}
    \mu_{\tau,t}\left(\mathcal{Q}_{\widetilde{r}_{j}}(x_{1,j},x_{2,j},t_{0,j})\right)\leq c\mu_{\tau,t}\left(\mathcal{Q}_{\overline{r}_{j}}(x_{1,j},x_{2,j},t_{0,j})\right)
\end{align*}
for some constant $c=c(n,s,\Lambda,q,\tau)$, which implies that  
it suffices to investigate a good upper bound on the measure of $\mathcal{Q}_{\overline{r}_{j}}(x_{1,j},x_{2,j},t_{0,j})$.
Suppose that the selected cylinder $\mathcal{Q}_{\widetilde{r}_{j}}(x_{1,j},x_{2,j},t_{0,j})$ is close to the diagonal. In the next step, we will see that such a cylinder is indeed contained in a diagonal cylinder which we did choose in step 1. Therefore, the measure of $\mathcal{Q}_{\widetilde{r}_{j}}(x_{1,j},x_{2,j},t_{0,j})$ has a good upper bound as in \eqref{cz non pa : diab}.  

On the other hand, if the selected cylinder $\mathcal{Q}_{\widetilde{r}_{j}}(x_{1,j},x_{2,j},t_{0,j})\in {\widetilde{\mathcal{A}}}$ is not close to the diagonal, then 
from \eqref{cz non pa : czdereof}, we  observe that there holds
{\small\begin{equation}
\label{cz non pa : meaoffest}
\begin{aligned}
    \frac{3\lambda}{4}\leq&\left(\frac{1}{\mu_{\tau,t}\left(\mathcal{Q}_{\overline{r}_{j}}(x_{1,j},x_{2,j},t_{0,j})\right)}\int_{\mathcal{Q}_{\overline{r}_{j}}(x_{1,j},x_{2,j},t_{0,j})\cap \left\{|D^{\tau}d_{s}u|>\frac{\lambda}{16}\right\}}|D^{\tau}d_{s}u|^{p}\dmutt\right)^{\frac{1}{p}}\\
    &\quad+\mathfrak{A}\left(\mathcal{B}_{\overline{r}_{j}}(x_{1,j},x_{2,j})\right)^{s+\tau}\left[\sum_{d=1}^{2}E_{p,\tau}\left(u\,;\,Q_{\overline{r}_{j}}(x_{d,j},t_{0,j})\right)\right].
\end{aligned}
\end{equation}}\\
\noindent
Our approach to obtaining a good upper bound on the measure of such cylinders depends on the size of $E_{p,\tau}\left(u\,;\,Q_{\overline{r}_{j}}(x_{d,j},t_{0,j})\right)$. Indeed, if $E_{p,\tau}\left(u\,;\,Q_{\overline{r}_{j}}(x_{d,j},t_{0,j})\right)<\frac{\lambda}{16}$, then the second term on the right-hand side of \eqref{cz non pa : meaoffest} can be absorbed to the left-hand side.  Thus we have a good upper bound on the measure of $\mathcal{Q}_{\widetilde{r}_{j}}(x_{1,j},x_{2,j},t_{0,j})\in \widetilde{\mathcal{A}}$ (see \eqref{cz non pa : measqad} in step 6 below). We next suppose $E_{p,\tau}\left(u\,;\,Q_{\overline{r}_{j}}(x_{d,j},t_{0,j})\right)\geq\frac{\lambda}{16}$. Then we first note that if 
\begin{equation}\label{simple.cal}
    \lambda<a+b+c\Longrightarrow \mbox{$\left(\frac{\lambda}{3}\right)^{p}<a^{p}$, $\left(\frac{\lambda}{3}\right)^{\widetilde{q}}<b^{\widetilde{q}}$ or $\left(\frac{\lambda}{3}\right)^{\gamma}<c^{\gamma}$},
\end{equation} where $a,b$ and $c$ are nonnegative constants. Applying this simple observation to \eqref{cz non pa : meaoffest}, we get \eqref{cz non pa : altres}. Thus it remains to obtain a suitable upper bound on the second term in the right-hand side of \eqref{cz non pa : altres}. To this end, we find a suitable diagonal cylinder which was chosen in step 1 and contains $\mathcal{Q}_{5^{\frac1s}\widetilde{r}_{j}}(x_{d,j},t_{0,j})$, and then we use some combinatorial arguments by taking advantage of the factor $\mathfrak{A}\left(\mathcal{B}_{\overline{r}_{j}}(x_{1,j},x_{2,j})\right)$ (see step 7 for more details). As a result, we obtain a good upper bound on the measure of $\mathcal{Q}_{\widetilde{r}_{j}}(x_{1,j},x_{2,j},t_{0,j})\in \widetilde{\mathcal{A}}$.
% Thus, if selected cylinders $\mathcal{K}^{j}\times I^{j}\in \widetilde{\mathcal{A}}$ are away from the diagonal, there are two approaches. If $E_{p,\tau}\left(u\,;\,K_{j}^{d}\times I^{j}\right)<\frac{\lambda}{16}$, then we have 
% \begin{equation}
% \label{cz non  pa : sizeofe}
%     \left(\dashint_{\mathcal{K}^{j}\times I^{j}}|D^{\tau}d_{s}u|^{p}\dmutt\right)^{\frac{1}{p}}\geq \frac{\lambda}{c}
% \end{equation}
% for some constant $c$ which is not uniformly determined. From this information, we do not know an uniform upper bound of the measure on the cylinder $\mathcal{K}^{j}\times I^{j}$. However, in the steps 5-7, in light of some combinatorial arguments, we obtain an uniform upper bound of the measure on the cylinder $\mathcal{K}^{j}$. Indeed, we have two approaches depending  , then we obtain \eqref{cz non  pa : sizeofe} with $c=2$ (see \eqref{cz non pa : measqad} in the step 6). On the contrary, if $\sum_{d=1}^{2}E_{p,\tau}\left(u\,;\,K_{j}^{d}\times I^{j}\right)\geq\frac{\lambda}{16}$,
\end{rmk}

\noindent
\textbf{Step 3. First elimination of off-diagonal cylinders.}
We now prove that if $ \mathcal{Q}_{\widetilde{r}_{j}}(x_{1,j},x_{2,j},t_{0,j})\in\widetilde{\mathcal{A}}$ satisfies
\begin{equation}
\label{cz non pa : lem.cov.step3.dist}
    \mathrm{dist}\left(B_{5^{\frac{1}{s}}\widetilde{r}_{j}}(x_{1,j}),B_{5^{\frac{1}{s}}\widetilde{r}_{j}}(x_{2,j})\right)<5^{\frac{1}{s}}\widetilde{r}_{j},
\end{equation}
then 
\begin{equation}
\label{cz non pa : distrel}
    \mathcal{Q}_{5^{\frac{1}{s}}\widetilde{r}_{j}}(x_{1,j},x_{2,j},t_{0,j})\subset \bigcup\limits_{i}\mathcal{Q}_{5^{\frac{1}{s}}
\rho_{{i}}}\left(z_{i}\right).
\end{equation}
By \eqref{cz non pa : czdereof}, one of the followings must hold:
{\small\begin{equation}
\label{cz non pa : alter}
\begin{aligned}
    &\left(\dashint_{\mathcal{Q}_{\overline{r}_{j}}(x_{1,j},x_{2,j},t_{0,j})}|D^{\tau}d_{s}u|^{p}\dmutt\right)^{\frac{1}{p}}>\frac{\lambda}{3},\quad E_{p,\tau}\left(u\,;\,{Q}_{\overline{r}_{j}}(x_{1,i},t_{0,i})\right)>\frac{\lambda}{3},\\
    &E_{p,\tau}\left(u\,;\,{Q}_{\overline{r}_{j}}(x_{2,i},t_{0,i})\right)>\frac{\lambda}{3}.
\end{aligned}
\end{equation}}\\
\noindent
Suppose that the first inequality in \eqref{cz non pa : alter} holds. We now observe 
\begin{equation*}
   \mathcal{B}_{5^{\frac{1}{s}}\widetilde{r}_{j}}(x_{1,j},x_{2,j}) \subset \mathcal{B}_{5^{\frac{2}{s}}\widetilde{r}_{j}}(x_{1,j}).
\end{equation*}
Therefore, using \eqref{cz non pa : inclusion measure}, \eqref{cz non pa : lambdacond} and \eqref{cz non pa : rangeofkappa1}, we obtain
{\small\begin{equation*}
\begin{aligned}
    \left(\dashint_{\mathcal{Q}_{5^{\frac{2}{s}}\widetilde{r}_{j}}(x_{1,j},t_{0,j})}|D^{\tau}d_{s}u|^{p}\dmutt\right)^{\frac{1}{p}}\geq \left(\frac{\mu_{\tau,t}\left(\mathcal{Q}_{\overline{r}_{j}}(x_{1,j},x_{2,j},t_{0,j})\right)}{\mu_{\tau,t}\left({\mathcal{Q}_{5^{\frac{2}{s}}\widetilde{r}_{j}}(x_{1,j},t_{0,j})}\right)}\dashint_{\mathcal{Q}_{\overline{r}_{j}}(x_{1,j},x_{2,j},t_{0,j})}|D^{\tau}d_{s}u|^{p}\dmutt\right)^{\frac{1}{p}}\geq \kappa\lambda,
\end{aligned}
\end{equation*}}\\
\noindent
which implies that $\Theta_{D}\left((x_{1,j},t_{0,j}),5^{\frac{2}{s}}\widetilde{r}_{j}\right)>\kappa\lambda$. By the fact that $5^{\frac2s}\widetilde{r}_j\leq \mathcal{R}_{1,2}$ and \eqref{cz non pa : exitradius}, \eqref{cz non pa : disjointcydi} yields 
\[\mathcal{Q}_{5^{\frac{2}{s}}\widetilde{r}_{j}}(x_{1,j},t_{0,j})\subset  \bigcup\limits_{i}\mathcal{Q}_{5^{\frac{1}{s}}
\rho_{{i}}}\left(z_{i}\right).\]
We next assume that the second inequality in \eqref{cz non pa : alter} is true. By \eqref{cz non pa : rangeofkappa1}, we have $\Theta_{D}\left((x_{1,j},t_{0,j}),5^{\frac{2}{s}}\widetilde{r}_{j}\right)>\kappa\lambda$. Since $\mathcal{Q}_{5^{\frac{1}{s}}\widetilde{r}_{j}}(x_{1,j},x_{2,j},t_{0,j})\subset \mathcal{Q}_{5^{\frac{2}{s}}\widetilde{r}_{j}}(x_{1,j},t_{0,j})$ which follows by \eqref{cz non pa : lem.cov.step3.dist}, we have  
\[\mathcal{Q}_{5^{\frac{1}{s}}\widetilde{r}_{j}}(x_{1,j},x_{2,j},t_{0,j})\subset\mathcal{Q}_{5^{\frac{2}{s}}\widetilde{r}_{j}}(x_{1,j},t_{0,j})\subset  \bigcup\limits_{i\geq0}\mathcal{Q}_{5^{\frac{1}{s}}
\rho_{{i}}}\left(z_{i}\right).\]
Similarly, we get \eqref{cz non pa : distrel} if the third inequality in \eqref{cz non pa : alter} holds. 
Thus, we now focus on the following subfamily of $\widetilde{\mathcal{A}}$:
\begin{equation}
\label{cz non pa : measure of tildea}
    \mathcal{A}=
\left\{ \mathcal{Q}_{\overline{r}_{j}}(x_{1,j},x_{2,j},t_{0,j})\ \middle\vert \begin{array}{l}
\mathcal{Q}_{\widetilde{r}_{j}}(x_{1,j},x_{2,j},t_{0,j})\in\widetilde{\mathcal{A}}\text{ and } \\ \mathcal{Q}_{5^{\frac{1}{s}}\widetilde{r}_{j}}(x_{1,j},x_{2,j},t_{0,j})\not\subset \bigcup\limits_{i\geq0}\mathcal{Q}_{5^{\frac{2}{s}}
\rho_{{i}}}\left(z_{i}\right)\text{ for each }i
  \end{array}\right\}
\end{equation}
Indeed, we take cylinders $\mathcal{Q}_{5^{\frac{2}{s}}
\rho_{{i}}}\left(z_{i}\right)$ instead of the cylinder $\mathcal{Q}_{5^{\frac{1}{s}}
\rho_{{i}}}\left(z_{i}\right)$, in order to eliminate other types of nearly diagonal cylinders (see \eqref{cz non pa : lem.cov.step5.dist} below).

%%%%%%%%%%%%%%Reverse Holder inequality for off-diagonal cubes

\noindent
\textbf{Step 4. Off-diagonal estimates.} We now obtain a bound on $L^{p_{\#}}$-norm of $D^{\tau}d_{s}u$ and a reverse H\"older's inequality on cylinders which are far from the diagonal.
\begin{lem}
\label{cz non pa : off rever lem}
Let $\mathcal{Q}=\mathcal{Q}_{r}(x_{1},x_{2},t_{0})\subset \mathcal{Q}_{2}$ be such that
\begin{equation}
\label{cz non pa : lem.cov.step4.rhsdist}
\mathrm{dist}(B_{r}(x_{1}),B_{r}(x_{2}))\geq r.
\end{equation}
Then we have
\begin{equation*}
\begin{aligned}
    \left(\dashint_{ \mathcal{Q}}|D^{\tau}d_{s}u|^{p_{\#}}\dmutt\right)^{\frac{1}{p_{\#}}}&\leq c\,\Theta_{OD}\left(u;\mathcal{Q}\right)
\end{aligned}
\end{equation*}
for some constant $c=c(n,s,p,\tau)$.
\end{lem}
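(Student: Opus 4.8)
The plan is to exploit the fact that on the off-diagonal cylinder $\mathcal{Q}=\mathcal{Q}_r(x_1,x_2,t_0)$ the weight $|x-y|$ is essentially frozen, so the lemma reduces to a Sobolev--Poincar\'e estimate on the two diagonal cylinders together with elementary bookkeeping; no use of the equation is needed. Writing $d:=\mathrm{dist}(B_r(x_1),B_r(x_2))\ge r$ (the hypothesis \eqref{cz non pa : lem.cov.step4.rhsdist}), one has $d\le|x-y|\le d+4r\le 5d$ for $(x,y)\in\mathcal{B}_r(x_1,x_2)$, hence $|D^{\tau}d_{s}u(x,y,t)|\approx_{n,s}|u(x,t)-u(y,t)|/d^{s+\tau}$ on $\mathcal{Q}$; moreover, since $n-2\tau>0$, a direct computation gives $\mu_{\tau,t}(\mathcal{Q})\approx_n r^{2n+2s}/d^{\,n-2\tau}$, so the $\mu_{\tau,t}$-average over $\mathcal{Q}$ is comparable, with constant depending only on $n$, to the Lebesgue average over $\mathcal{Q}$. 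Thus it suffices to bound $d^{-(s+\tau)}\big(\dashint_{\mathcal{Q}}|u(x,t)-u(y,t)|^{p_{\#}}\,\dx\,\dy\,\dt\big)^{1/p_{\#}}$ with Lebesgue averages.

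Next I would split, with $m_d:=(u)_{Q_r(x_d,t_0)}$ the space-time means, $u(x,t)-u(y,t)=[u(x,t)-m_1]-[u(y,t)-m_2]+[m_1-m_2]$ and apply Minkowski's inequality in $L^{p_{\#}}(\mathcal{Q})$, which turns the first two terms into $\big(\dashint_{Q_r(x_d,t_0)}|u-m_d|^{p_{\#}}\big)^{1/p_{\#}}$ for $d=1,2$. For the constant term, $|m_1-m_2|=\big|\dashint_{\Lambda_r(t_0)}\dashint_{B_r(x_1)}\dashint_{B_r(x_2)}(u(x,t)-u(y,t))\,\dx\,\dy\,\dt\big|$, so after dividing by $d^{s+\tau}$, using $|x-y|\approx d$ together with Jensen's inequality and the measure comparison above, one bounds $d^{-(s+\tau)}|m_1-m_2|$ by $c\big(\dashint_{\mathcal{Q}_r(x_1,x_2,t_0)}|D^{\tau}d_{s}u|^{p}\,\dmutt\big)^{1/p}$, i.e. the first term of $\Theta_{OD}(u;\mathcal{Q})$ in \eqref{cz non pa : func-C}.

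For each oscillation term $\big(\dashint_{Q_r(x_d,t_0)}|u-m_d|^{p_{\#}}\big)^{1/p_{\#}}$ I would further split $|u-m_d|^{p_{\#}}\lesssim|u-(u)_{B_r(x_d)}(t)|^{p_{\#}}+|(u)_{B_r(x_d)}(t)-m_d|^{p_{\#}}$. To the first piece apply Lemma \ref{cz non pa : embed lem} with exponent $p$ and centre $x_d$ (note $Q_r(x_d,t_0)\subset Q_2$, so $u(\cdot,t)\in W^{s,p}(B_r(x_d))$ for a.e.\ $t$ follows from $\int_{\mathcal{Q}_2}|D^{\tau}d_{s}u|^p\,\dmutt<\infty$ and $u\in C(\Lambda_2;L^2(B_2))$); using $|x-y|\le 2r$ on $\mathcal{B}_r(x_d)$ and $\tau(p-2)\ge0$ one converts the resulting $L^p$-Gagliardo seminorm into $\tfrac{c}{\tau}\,r^{p(s+\tau)+2s}\dashint_{\mathcal{Q}_r(x_d,t_0)}|D^{\tau}d_{s}u|^p\,\dmutt$ via \eqref{cz non pa : size of b mut}, while the $\sup_t L^2$-factor is controlled by the sup-term of $E_{p,\tau}(u;Q_r(x_d,t_0))$ (using that $(u)_{B_r(x_d)}(t)$ minimizes the $L^2$-deviation, so it is dominated by the same quantity with $m_d$ subtracted). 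The key is that the powers of $r$ produced by Lemma \ref{cz non pa : embed lem} collapse to exactly $r^{s+\tau}$, and the two resulting factors carry exponents $\tfrac{n}{n+2s}$ and $\tfrac{2s}{n+2s}$ summing to $1$, so Young's inequality turns their product into $r^{s+\tau}E_{p,\tau}(u;Q_r(x_d,t_0))$ up to $c(n,s,p,\tau)$. The second piece is handled directly: $|(u)_{B_r(x_d)}(t)-m_d|\le\big(\sup_{\Lambda_r(t_0)}\dashint_{B_r(x_d)}|u-m_d|^2\,\dx\big)^{1/2}=r^{s+\tau}\tau^{-1/\gamma}$ times the sup-term of $E_{p,\tau}(u;Q_r(x_d,t_0))$. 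Dividing by $d^{s+\tau}$ converts $r^{s+\tau}/d^{s+\tau}$ into precisely $\mathfrak{A}(\mathcal{B}_r(x_1,x_2))$ from \eqref{cz non pa : cubecoeff} (since $d\ge r$), so both oscillation terms are dominated by $c\,\mathfrak{A}(\mathcal{B}_r(x_1,x_2))\sum_{d=1}^{2}E_{p,\tau}(u;Q_r(x_d,t_0))$; summing the three contributions yields $c\,\Theta_{OD}(u;\mathcal{Q})$.

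The step I expect to be the main obstacle is the scaling bookkeeping in the application of Lemma \ref{cz non pa : embed lem}: one must simultaneously track the homogeneity of $\mu_{\tau,t}$, the mismatch between the kernels $|x-y|^{-(n+p(s+\tau)-2\tau)}$ and $|x-y|^{-(n+sp)}$, and the normalizations $|B_r|$ and $|\Lambda_r(t_0)|$, and verify that all $r$-powers combine to the single factor $r^{s+\tau}$ that matches the definition of $\mathfrak{A}$ --- any residual power of $r$ would spoil the estimate. The geometric reductions in the first paragraph and the $m_1-m_2$ estimate are, by contrast, routine, as is checking the applicability of the Sobolev--Poincar\'e inequality.
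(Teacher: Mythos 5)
Your proposal is correct and follows essentially the same route as the paper: a geometric reduction via $|x-y|\approx\mathrm{dist}(B_r(x_1),B_r(x_2))$ and the measure comparison, then a telescoping split whose three pieces are handled by Lemma~\ref{cz non pa : embed lem} with Young's inequality, the $\sup_t L^2$-oscillation term of $E_{p,\tau}$ (using the minimizing property of the spatial mean), and Jensen's inequality for the mean difference. The only cosmetic difference is that you subtract the space-time means $m_d=(u)_{Q_r(x_d,t_0)}$ first and then peel off the time-dependent spatial means, whereas the paper does it in the opposite order; the resulting estimates and constants are the same.
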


%%%%%%%%%%%%proof of lemma

\begin{proof}
We observe from \eqref{cz non pa : lem.cov.step4.rhsdist} that
\begin{equation}
\label{cz non pa : xydist}
{\dist\left(B_{r}(x_{1}),B_{r}(x_{2})\right)}\leq |x-y|\leq 5\dist\left(B_{r}(x_{1}),B_{r}(x_{2})\right)
\end{equation}
whenever $x\in B_{r}(x_{1})$ and $y\in B_{r}(x_{2})$. The above inequality and Jensen's inequality yield that 
\begin{equation}
\label{cz non pa : measure of qintau}
    \frac{{r}^{2n}}{c(n)\dist\left(B_{r}(x_{1}),B_{r}(x_{2})\right)^{n-2\tau}}\leq\mu_{\tau}\left(\mathcal{B}_{r}(x_{1},x_{2})\right)\leq {\frac{c(n){r}^{2n}}{\dist\left(B_{r}(x_{1}),B_{r}(x_{2})\right)^{n-2\tau}}}
\end{equation}
and 
{\small\begin{equation*}
\begin{aligned}
&\dashint_{\mathcal{Q}_{{r}}(x_{1},x_{2},t_{0})}|D^{\tau}d_{s}u|^{p_{\#}}\dmutt\\
&\leq \frac{c}{\dist\left(B_{r}(x_{1}),B_{r}(x_{2})\right)^{p_{\#}\left(s+\tau\right)}}\dashint_{Q_{{r}}(x_{1},t_{0})}\dashint_{B_{r}(x_{1})}|u(x,t)-u(y,t)|^{p_{\#}}\dz\dt\\
&\leq \frac{c{r}^{p_{\#}(s+\tau)}}{\dist\left(B_{r}(x_{1}),B_{r}(x_{2})\right)^{p_{\#}\left(s+\tau\right)}}\sum_{d=1}^{2}\underbrace{\frac{1}{{r}^{p_{\#}(s+\tau)}}\dashint_{Q_{{r}}(x_{d},t_{0})}|u(x,t)-(u)_{B_{{r}}(x_{d})}(t)|^{p_{\#}}\dz}_{=I_{d}}\\
&\quad+\frac{c{r}^{p_{\#}(s+\tau)}}{\dist\left(B_{r}(x_{1}),B_{r}(x_{2})\right)^{p_{\#}\left(s+\tau\right)}}\underbrace{\dashint_{\Lambda_{{r}}(t_{0})}\frac{|(u)_{B_{r}(x_{1})}(t)-(u)_{B_{r}(x_{2})}(t)|^{p_{\#}}}{{r}^{p_{\#}(s+\tau)}}\dt}_{= J}
\end{aligned}
\end{equation*}}\\
\noindent
for some constant $c=c(n,s,p)$. We now further estimate $I_{1}$, $I_{2}$ and $J$ as below.

\noindent
\textbf{Estimates of $I_{1}$ and $I_{2}$.}
Using \eqref{cz non pa : lem.cov.step4.rhsdist} and Lemma \ref{cz non pa : embed lem}, we estimate $I_{d}$ as
{\small\begin{equation*}
\begin{aligned}
    I_{d}&\leq  c\left(\frac{1}{\tau}\dashint_{\mathcal{Q}_{{r}}(x_{d},t_{0})}|D^{\tau}d_{s}u|^{p}\dmutt\right)  \left(\sup_{t\in \Lambda_{{r}}(t_{0})}\dashint_{B_{{r}}(x_{d})}\frac{\left|u-(u)_{Q_{{r}}(x_{d},t_{0})}\right|^{2}}{{r}^{2s+2\tau}}\dx\right)^{\frac{sp}{n}},
\end{aligned}
\end{equation*}}\\
\noindent
where we have used the fact that
\begin{equation*}
    \sup_{t\in \Lambda_{{r}}(t_{0})}\dashint_{B_{{r}}(x_{d})}\frac{\left|u-(u)_{B_{{r}}(x_{d})}(t)\right|^{2}}{{r}^{2s+2\tau}}\dx\leq \sup_{t\in \Lambda_{{r}}(t_{0})}\dashint_{B_{{r}}(x_{d})}\frac{\left|u-(u)_{Q_{{r}}(x_{d},t_{0})}\right|^{2}}{{r}^{2s+2\tau}}\dx.
\end{equation*}
Applying Young's inequality to the above inequality, we have
\begin{equation*}
\frac{{r}^{p_{\#}(s+\tau)}}{\dist\left(B_{r}(x_{1}),B_{r}(x_{2})\right)^{p_{\#}\left(s+\tau\right)}}\sum_{d=1}^{2}I_{d}\leq \frac{c{r}^{p_{\#}(s+\tau)}}{\dist\left(B_{r}(x_{1}),B_{r}(x_{2})\right)^{p_{\#}\left(s+\tau\right)}}\left[\sum_{d=1}^{2}E_{p,\tau}\left(u\,;\,Q_{r}(x_{d},t_{0})\right)\right]^{p_{\#}}
\end{equation*}
for some constant $c=c(\tau)$.

\textbf{Estimate of $J$.}
We first note that
\begin{equation*}
\begin{aligned}
J\leq c\sum_{d=1}^{2}\sup_{t\in \Lambda_{{r}}(t_{0})}\left(\dashint_{B_{{r}}(x_{d})}\frac{|u-(u)_{Q_{{r}}(x_{d},t_{0})}|^{2}}{{r}^{2s+2\tau}}\dx\right)^{\frac{p_{\#}}{2}}+\frac{|(u)_{Q_{{r}}(x_{1},t_{0})}-(u)_{Q_{{r}}(x_{2},t_{0})}|^{p_{\#}}}{{r}^{p_{\#}(s+\tau)}},
\end{aligned}
\end{equation*}
where we denote the last term by $\hat{J}$.
In light of Jensen's inequality, \eqref{cz non pa : xydist} and \eqref{cz non pa : measure of qintau}, we estimate $\hat{J}$ as
{\small\begin{equation*}
\begin{aligned}
\hat{J}\leq \left(\dashint_{Q_{r}(x_{1},t_{0})}\dashint_{B_{r}(x_{2})}|u(x,t)-u(y,t)|^{p}\dy\dz\right)^{\frac{p_{\#}}{p}}\leq \frac{c\dist\left(B_{r}(x_{1}),B_{r}(x_{2})\right)^{p_{\#}(s+\tau)}}{{r}^{p_{\#}(s+\tau)}}\left(\dashint_{\mathcal{Q}}|D^{\tau}d_{s}u|^{p}\dmutt\right)^{\frac{p_{\#}}{p}}.
\end{aligned}
\end{equation*}}\\
\noindent
We finally combine all the estimates $I_{1}$, $I_{2}$ and $J$ to get the desired result \eqref{cz non pa : reverseestimate}.
\end{proof}
\begin{rmk}
Let $ \mathcal{Q}_{\overline{r}_{j}}(x_{1,j},x_{2,j},t_{0,j})\in\mathcal{A}$. By \eqref{cz non pa : lem.cov.step3.dist} and \eqref{cz non pa : measure of tildea}, we deduce $\mathcal{Q}_{5^{\frac{1}{s}}\widetilde{r}_{j}}(x_{1,j},x_{2,j},t_{0,j})$ satisfies \eqref{cz non pa : lem.cov.step4.rhsdist}. 
In light of \eqref{cz non pa : czdereof} and Lemma \ref{cz non pa : off rever lem}, there is a constant $c_{od}=c_{od}(n,s,p,\tau)$ such that
\begin{equation}
\label{cz non pa : reverseestimate}
\begin{aligned}
    \left(\dashint_{ \mathcal{Q}_{5^{\frac{1}{s}}\widetilde{r}_{j}}(x_{1,j},x_{2,j},t_{0,j})}|D^{\tau}d_{s}u|^{p_{\#}}\dmutt\right)^{\frac{1}{p_{\#}}}
    &\leq c_{od}\lambda.
\end{aligned}
\end{equation}
\end{rmk}

%%%%%%%%%%%%%%%%%%p_{\#}norm on off diagonal cylinders

% \begin{rmk}
% Let us first assume that $\mathcal{Q}\in \mathcal{A}$.
% We claim that there is a constant $c_{od}=c_{od}(n,s,p)$ such that
% \begin{equation}
% \label{cz non pa : reverseestimate}
%     \left(\dashint_{\mathcal{Q}}|D^{\tau}d_{s}u|^{p_{\#}}\dmutt\right)^{\frac{1}{p_{\#}}}\leq c_{od}\lambda.
% \end{equation}
% We first note from \eqref{cz non pa : cubecoeff} and \eqref{cz non pa : distrel} that 
% \begin{equation}
% \label{cz non pa : distpred}
%     \left(\frac{\mathfrak{l}(\mathcal{K})}{\mathrm{dist}\left(K^{1},K^{2}\right)}\right)^{s+\tau}\leq \mathfrak{A}(\mathcal{K})\leq\mathfrak{A}\left(\widetilde{\mathcal{K}}\right)
% \end{equation}
% and
% \begin{equation}
% \label{cz non pa : distk1k2a}
%     \dist\left(K^{1},K^{2}\right)\geq \mathfrak{l}\left(\mathcal{K}\right).
% \end{equation}
% Similar arguments as in \cite[Remark 6]{BKc} along with \eqref{cz non pa : czdere}, \eqref{cz non pa : reverseestimate}, \eqref{cz non pa : distpred} and \eqref{cz non pa : distk1k2a} yield \eqref{cz non pa : reverseestimate}.
% \end{rmk}

\noindent
\textbf{Step 5. Decomposition of the family $\mathcal{A}$.}
We now decompose the family $\mathcal{A}$ into subfamilies $AD_{\lambda}=\bigcap\limits_{d=1}^{2}AD_{\lambda}^{d}$ and $ND_{\lambda}=\bigcup\limits_{d=1}^{2}ND_{\lambda}^{d}$ for
\begin{equation}
\label{cz non pa : almost diagonal cubes2}
    AD_{\lambda}^{d}=\left\{\mathcal{Q}=B^{1}\times B^{2}\times I\in \mathcal{A}\,: \quad E_{p,\tau}\left(u\,;\,{B}^{d}\times I\right)\leq\frac{\lambda}{16}\right\}
\end{equation}
and
\begin{equation*}
%\label{cz non pa : non diagonal cubes2}
    ND_{\lambda}^{d}=\left\{\mathcal{Q}=B^{1}\times B^{2}\times I\in \mathcal{A}\,: \quad E_{p,\tau}\left(u\,;\,{B}^{d}\times I\right)>\frac{\lambda}{16}\right\}.
\end{equation*}
Recalling \eqref{cz non pa : rangeofkappa1}, \eqref{cz non pa : peneftnlofu} and \eqref{cz non pa : exitradius}, we observe that for any $\mathcal{Q}_{\overline{r}_{j}}(x_{1,j},x_{2,j},t_{0,j})\in ND_{\lambda}^{d}$, there is an exit-radius $\rho_{(x_{d,j},t_{0,j})}\geq \overline{r}_{j}$ such that \eqref{cz non pa : exitradius} holds with $z_{0}=(x_{d,j},t_{0,j})$.
Thus, there is a cylinder ${Q}_{\rho_{i}}(z_{i})$ which is selected in \eqref{cz non pa : disjointcydi} such that
\begin{equation}
\label{cz non pa : exitoffim}
    {Q}_{2^{j_{0}}\overline{r}_{j}}(x_{d,j},t_{0,j})={Q}_{\widetilde{r}_{j}}(x_{d,j},t_{0,j})\subset {Q}_{2^{j_{0}}\rho_{(x_{d},t_{0})}}(x_{d,j},t_{0,j})\subset{Q}_{5^{\frac{1}{s}}\rho_{i}}(z_{i})\quad\text{and}\quad \frac{\rho_{i}}{2}\leq 2^{j_{0}}\rho_{(x_{d},t_{0})}\leq2\rho_{i}.
\end{equation} 
We have used \eqref{cz non pa : lem.cov.step1.vidist} to obtain the second observation in \eqref{cz non pa : exitoffim}. Therefore,  the set
{\small\begin{equation}
\label{cz non pa : lem.cov.step4.aijld}
\mathcal{A}^{d}_{i,j,l}\coloneqq
\left\{ \mathcal{Q}_{\overline{r}_{j}}(x_{1,j},x_{2,j},t_{0,j})\in ND_{\lambda}^{d}\ : \begin{array}{l}
{Q}_{\widetilde{r}_{j}}({x}_{d,j},t_{0,j})\subset {Q}_{5^{\frac{1}{s}}\rho_{i}}(z_{i})\text{, where $i$ is the smallest integer}\\ \text{satisfying }\eqref{cz non pa : exitoffim}\text{ with }\rho_{i}\leq 2^{l} \overline{r}_{j}<2\rho_{i}
  \end{array}\right\}
\end{equation}}\\
\noindent
is either a singleton or an empty set for any $l\geq0$. Then we will verify 
\begin{equation}
\label{cz non pa : lem.cov.step5.dist}
    \rho_{i}<\mathrm{dist}(B_{\overline{r}_{j}}(x_{1,j}),B_{\overline{r}_{j}}(x_{2,j})).
\end{equation}
Suppose not, then we have 
\begin{equation*}
    |x_{d',j}-x_{i}|\leq |x_{d',j}-x_{d,j}|+|x_{d,j}-x_{i}|\leq 5\rho_{i}+5^{\frac{1}{s}}\rho_{i}<2\times 5^{\frac{1}{s}}\rho_{i},
\end{equation*}
where $d'\in\{1,2\}\setminus\{d\}$. Consequently, we have
\begin{equation*}
    \mathcal{Q}_{\overline{r}_{j}}(x_{1,j},x_{2,j},t_{0,j})\subset \mathcal{Q}_{5^{\frac{2}{s}}\rho_{i}}(z_{i}),
\end{equation*}
which contradicts the definition of $\mathcal{A}$ defined in \eqref{cz non pa : measure of tildea}, and the claim follows.

We next define for any $k\geq0$,
\begin{equation}
\label{cz non pa : lem.cov.step4.aijlkd}
    \mathcal{A}^{d}_{i,j,l,k}=\left\{\mathcal{Q}_{\overline{r}_{j}}(x_{1,j},x_{2,j},t_{0,j})\in \mathcal{A}^{d}_{i,j,l}\, :\, 2^{k}\rho_{i}\leq\mathrm{dist}(B_{\overline{r}_{j}}(x_{1,j}),B_{\overline{r}_{j}}(x_{2,j}))<2^{k+1}\rho_{i}\right\}
\end{equation}
to see that
\begin{equation}
\label{cz non pa : lem.cov.step5.unindl}
    ND^{d}_{\lambda}=\bigcup_{i}\bigcup_{j}\bigcup_{l\geq0,k\geq0}\mathcal{A}^{d}_{i,j,l,k}.
\end{equation} 
We first observe that if $\mathcal{Q}_{\overline{r}_{j}}(x_{1,j},x_{2,j},t_{0,j})\in ND_{\lambda}^d$, then we have 
\begin{equation}\label{cz non pa : impe}
\begin{aligned}
&\mathfrak{A}\left(\mathcal{B}_{\overline{r}_{j}}(x_{1,j},x_{2,j})\right)^{s+\tau}E_p(u;\mathcal{Q}_{\overline{r}_{j}}(x_{d,j},t_{0,j})\\
&\leq c{\mathfrak{A}\left(\mathcal{B}_{\overline{r}_{j}}(x_{1,j},x_{2,j})\right) }\left(\dashint_{\mathcal{Q}_{\widetilde{r}_{j}}(x_{d,j},t_{0,j})}|D^{\tau}d_{s}u|^{p}\dmutt\right)^{\frac{1}{p}}\\
&\quad+\frac{c\mathfrak{A}\left(\mathcal{B}_{\overline{r}_{j}}(x_{1,j},x_{2,j})\right)}{\delta}\left(\dashint_{\mathcal{Q}_{\widetilde{r}_{j}}(x_{d,j},t_{0,j})}|D^{\tau}d_{0}f|^{\tilde{q}}\dmutt\right)^{\frac{1}{\tilde{q}}}\\
&\quad+\frac{c\mathfrak{A}\left(\mathcal{B}_{\overline{r}_{j}}(x_{1,j},x_{2,j})\right)}{\delta}\left(\dashint_{\mathcal{Q}_{\widetilde{r}_{j}}(x_{d,j},t_{0,j})}(\widetilde{r}_j^{s-\tau}|G|)^{\gamma}\dmutt\right)^{\frac{1}{\gamma}}+\frac{\lambda}{100}
\end{aligned}
\end{equation}
for some constant $c=c(n,s,L,q,\tau)$, where the constant $\widetilde{r}_j$ is determined in \eqref{defn.tilder}.  By following the same lines as in the proof of \eqref{cz non pa : oneretildeqnec}, \eqref{cz non pa : j0cond} along with \eqref{cz non pa : czdereof} and the fact that 
\begin{align*}
\mathfrak{A}\left(\mathcal{B}_{\overline{r}_{j}}(x_{1,j},x_{2,j})\right)\leq \mathfrak{A}\left(\mathcal{B}_{c\overline{r}_{j}}(x_{1,j},x_{2,j})\right)
\end{align*} 
for any $c\geq1$, 
we get 
\begin{align*}
&\mathfrak{A}\left(\mathcal{B}_{\overline{r}_{j}}(x_{1,j},x_{2,j}\right)\tau^{\frac1{\gamma}}\left(\sup_{t\in \Lambda_{r}(t_{0})}\dashint_{B_{r}(x_{0})}\frac{|u-(u)_{Q_{r}(z_{0})}|^{2}}{r^{2s+2\tau}}\dx\right)^{\frac{1}{2}} \\
&\leq c{\mathfrak{A}\left(\mathcal{B}_{\overline{r}_{j}}(x_{1,j},x_{2,j})\right) }\left(\dashint_{\mathcal{Q}_{2^{j_0}\overline{r}_{j}}(x_{d,j},t_{0,j})}|D^{\tau}d_{s}u|^{p}\dmutt\right)^{\frac{1}{p}}\\
&\quad+\frac{c\mathfrak{A}\left(\mathcal{B}_{\overline{r}_{j}}(x_{1,j},x_{2,j})\right)}{\delta}\left(\dashint_{\mathcal{Q}_{2^{j_0}\overline{r}_{j}}(x_{d,j},t_{0,j})}|D^{\tau}d_{0}f|^{\tilde{q}}\dmutt\right)^{\frac{1}{\tilde{q}}}\\
&\quad+\frac{c\mathfrak{A}\left(\mathcal{B}_{\overline{r}_{j}}(x_{1,j},x_{2,j})\right)}{\delta}\left(\dashint_{\mathcal{Q}_{2\overline{r}_{j}}(x_{d,j},t_{0,j})}(r_j^{s-\tau}|G|)^{\gamma}\dmutt\right)^{\frac{1}{\gamma}}+\frac{\lambda}{100}
\end{align*}
for some constant $c=c(n,s,L,q,\tau)$. Thus, using \eqref{cz non pa : peneftnlofu}, \eqref{cz non pa : j0cond} and \eqref{defn.tilder}, we get the desired estimate.

We end this step with the following lemma which is an essential ingredient for the next step. 
\begin{lem}
\label{cz non pa : dist of ndl}
Let us fix $i,l,k\geq0$ and $d\in\{1,2\}$. Then there is a constant $c=c(n,s,L,q,\tau)$ such that
\begin{equation}
\label{cz non pa : rmk.mainest}
    \sum_{\mathcal{Q}_{\overline{r}_{j}}(x_{1,j},x_{2,j},t_{0,j})\in \bigcup\limits_{\tilde{j}\geq0}\mathcal{A}^{1}_{i,\tilde{j},l,k}}\int_{\mathcal{Q}_{\widetilde{r}_{j}}(x_{1,j},t_{0,j})}|D^{\tau}d_{s}u|^{p}\dmutt\leq c2^{n(l+k)}\int_{\mathcal{Q}_{5^{\frac{1}{s}}\rho_{i}}(z_{i})}|D^{\tau}d_{s}u|^{p}\dmutt,
\end{equation}
\begin{equation}
\label{cz non pa : rmk.mainest1}
    \sum_{\mathcal{Q}_{\overline{r}_{j}}(x_{1,j},x_{2,j},t_{0,j})\in \bigcup\limits_{\tilde{j}\geq0}\mathcal{A}^{1}_{i,\tilde{j},l,k}}\int_{\mathcal{Q}_{\widetilde{r}_{j}}(x_{1,j},t_{0,j})}|D^{\tau}d_{0}f|^{\tilde{q}}\dmutt\leq c2^{n(l+k)}\int_{\mathcal{Q}_{5^{\frac{1}{s}}\rho_{i}}(z_{i})}|D^{\tau}d_{0}f|^{\tilde{q}}\dmutt
\end{equation}
and
\begin{equation}
\label{cz non pa : rmk.mainest2}
    \sum_{\mathcal{Q}_{\overline{r}_{j}}(x_{1,j},x_{2,j},t_{0,j})\in \bigcup\limits_{\tilde{j}\geq0}\mathcal{A}^{1}_{i,\tilde{j},l,k}}\int_{\mathcal{Q}_{\widetilde{r}_{j}}(x_{1,j},t_{0,j})}(a\overline{r}_j^{s-\tau}|G|)^{\gamma}\dmutt\leq c2^{n(l+k)}\int_{\mathcal{Q}_{5^{\frac{1}{s}}\rho_{i}}(z_{i})}(\rho_i^{s-\tau}|G|)^{\gamma}\dmutt
\end{equation}
\end{lem}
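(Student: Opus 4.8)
The plan is to derive all three inequalities \eqref{cz non pa : rmk.mainest}, \eqref{cz non pa : rmk.mainest1} and \eqref{cz non pa : rmk.mainest2} from a single bounded‑overlap property of the diagonal cylinders $\mathcal{Q}_{\widetilde r_j}(x_{1,j},t_{0,j})$ attached to the collection $\bigcup_{\tilde j\ge0}\mathcal{A}^{1}_{i,\tilde j,l,k}$ (the general $d$ being identical after relabeling $x_{1,j}\leftrightarrow x_{2,j}$). First I would unpack the defining conditions \eqref{cz non pa : lem.cov.step4.aijld}--\eqref{cz non pa : lem.cov.step4.aijlkd}. Let $\mathcal{F}$ be the set of indices $j$ with $\mathcal{Q}_{\overline r_j}(x_{1,j},x_{2,j},t_{0,j})\in\bigcup_{\tilde j\ge0}\mathcal{A}^{1}_{i,\tilde j,l,k}$. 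For every $j\in\mathcal{F}$ one has $\rho_i\le 2^l\overline r_j<2\rho_i$, hence, with $\widetilde r_j=2^{j_0}\overline r_j$ as in \eqref{defn.tilder}, the comparability $2^{j_0-l}\rho_i\le\widetilde r_j<2^{j_0+1-l}\rho_i$; moreover $\mathcal{Q}_{\widetilde r_j}(x_{1,j},t_{0,j})\subset\mathcal{Q}_{5^{\frac1s}\rho_i}(z_i)$ by \eqref{cz non pa : exitoffim}, and $|x_{1,j}-x_{2,j}|\le\dist\!\big(B_{\overline r_j}(x_{1,j}),B_{\overline r_j}(x_{2,j})\big)+2\overline r_j<2^{k+1}\rho_i+4\rho_i\le 2^{k+3}\rho_i$. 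Finally, since $\widetilde{\mathcal{A}}$ is produced by Vitali's covering lemma, the off‑diagonal cylinders $\mathcal{Q}_{\widetilde r_j}(x_{1,j},x_{2,j},t_{0,j})=\mathcal{Q}_{2^{j_0}\overline r_j}(x_{1,j},x_{2,j},t_{0,j})$, $j\in\mathcal{F}$, are pairwise disjoint.

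The heart of the argument --- and the step I expect to be the only delicate one --- is the overlap estimate $\sum_{j\in\mathcal{F}}\chi_{\mathcal{Q}_{\widetilde r_j}(x_{1,j},t_{0,j})}\le c\,2^{n(l+k)}$ pointwise, with $c=c(n,s,L,q,\tau)$ (the dependence on $j_0$ being admissible since $j_0=j_0(n,s,L,q,\tau)$ by \eqref{cz non pa : j0cond} and \eqref{cz non pa : tildeq}). Fix $X=(x,y,t)$ and set $\mathcal{F}_X=\{j\in\mathcal{F}:X\in\mathcal{Q}_{\widetilde r_j}(x_{1,j},t_{0,j})\}$. For $j\ne j'$ in $\mathcal{F}_X$ the first spatial balls $B_{\widetilde r_j}(x_{1,j})$, $B_{\widetilde r_{j'}}(x_{1,j'})$ both contain $x$ and the time intervals $\Lambda_{\widetilde r_j}(t_{0,j})$, $\Lambda_{\widetilde r_{j'}}(t_{0,j'})$ both contain $t$; since $\mathcal{Q}_{\widetilde r_j}(x_{1,j},x_{2,j},t_{0,j})$ and $\mathcal{Q}_{\widetilde r_{j'}}(x_{1,j'},x_{2,j'},t_{0,j'})$ are disjoint, the second spatial balls must be disjoint, i.e. $|x_{2,j}-x_{2,j'}|\ge\widetilde r_j+\widetilde r_{j'}\ge 2^{j_0+1-l}\rho_i$. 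On the other hand, fixing $j_*\in\mathcal{F}_X$, for every $j\in\mathcal{F}_X$ we get $|x_{2,j}-x_{1,j_*}|\le|x_{2,j}-x_{1,j}|+|x_{1,j}-x_{1,j_*}|\le 2^{k+3}\rho_i+(\widetilde r_j+\widetilde r_{j_*})\le c\,2^{j_0+k}\rho_i$. Thus the points $\{x_{2,j}\}_{j\in\mathcal{F}_X}$ lie in a ball of radius $c\,2^{j_0+k}\rho_i$ and are $2^{j_0+1-l}\rho_i$-separated, so a standard packing (volume‑counting) argument yields $\#\mathcal{F}_X\le c\big(2^{j_0+k}/2^{j_0-l}\big)^n=c\,2^{n(k+l)}$.

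With the overlap bound in hand the conclusion is immediate. Since $|D^{\tau}d_su|^{p}\ge0$ and every $\mathcal{Q}_{\widetilde r_j}(x_{1,j},t_{0,j})\subset\mathcal{Q}_{5^{\frac1s}\rho_i}(z_i)$, summing over $j\in\mathcal{F}$ and integrating against $\mu_{\tau,t}$ gives $\sum_{j}\int_{\mathcal{Q}_{\widetilde r_j}(x_{1,j},t_{0,j})}|D^{\tau}d_su|^{p}\dmutt\le c\,2^{n(l+k)}\int_{\mathcal{Q}_{5^{\frac1s}\rho_i}(z_i)}|D^{\tau}d_su|^{p}\dmutt$, which is \eqref{cz non pa : rmk.mainest}; replacing the integrand $|D^{\tau}d_su|^{p}$ by $|D^{\tau}d_0f|^{\tilde{q}}$ gives \eqref{cz non pa : rmk.mainest1} verbatim. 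For \eqref{cz non pa : rmk.mainest2}, since $s-\tau>0$ by \eqref{cz non pa :taufirstcond} and $\overline r_j<2^{1-l}\rho_i\le2\rho_i$, I would first pull the radius factor out of the integral via $(a\,\overline r_j^{\,s-\tau})^{\gamma}\le c(s,\tau,a)\,\rho_i^{(s-\tau)\gamma}$ and then apply the same overlap inequality with integrand $|G|^{\gamma}$, recombining the factor $\rho_i^{(s-\tau)\gamma}$ into $(\rho_i^{s-\tau}|G|)^{\gamma}$. Everything apart from the separation/packing step is routine bookkeeping with the definitions of $\mathcal{A}^{1}_{i,j,l}$, $\mathcal{A}^{1}_{i,j,l,k}$ and the elementary geometric inclusions recorded above.
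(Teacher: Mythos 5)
Your proof is correct and follows the same essential strategy as the paper's: establish the overlap bound $\#\mathcal{F}_X\le c\,2^{n(l+k)}$ by exploiting the disjointness of the second spatial balls $B_{\widetilde r_j}(x_{2,j})$ (forced because the first balls and the time intervals all meet at the common point while the product cylinders $\mathcal{Q}_{\widetilde r_j}(x_{1,j},x_{2,j},t_{0,j})$ are pairwise disjoint) together with a packing count, and then aggregate over the collection by nonnegativity and the inclusion $\mathcal{Q}_{\widetilde r_j}(x_{1,j},t_{0,j})\subset\mathcal{Q}_{5^{\frac{1}{s}}\rho_i}(z_i)$. The paper realises the aggregation step via an explicit atom decomposition $\{\mathcal{F}_m\}$ of $\mathcal{Q}_{5^{\frac{1}{s}}\rho_i}(z_i)$ and a double-sum exchange, whereas your pointwise bound $\sum_j\chi_{\mathcal{Q}_{\widetilde r_j}(x_{1,j},t_{0,j})}\le c\,2^{n(l+k)}$ followed by direct integration is a cleaner but mathematically equivalent rendering of the same argument.
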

\begin{proof}
We first observe that if 
\begin{equation}
\label{cz non pa : rmk.nonempty}
    \bigcap_{\substack{j\in J\\\mathcal{Q}_{\overline{r}_{j}}(x_{1,j},x_{2,j},t_{0,j})\in \bigcup\limits_{\tilde{j}\geq0}\mathcal{A}^{1}_{i,\tilde{j},l,k}}}\mathcal{Q}_{\widetilde{r}_{j}}(x_{1,j},t_{0,j})\neq\emptyset
\end{equation}
holds for some index set $J$, then $|J|\leq c2^{n(l+k)}$ for some constant $c$ depending only on $n$ and $a$, where $|J|$ denotes the number of elements in the set $J$. Next, suppose that $(x_{1},t_{0})\in \bigcap_{j\in J}{Q}_{\widetilde{r}_{j}}(x_{1,j},t_{0,j})$. Then, by the definition of the set $\mathcal{A}^{1}_{i,j,l,k}$ given in \eqref{cz non pa : lem.cov.step4.aijlkd}, we get 
\begin{equation}
\label{cz non pa : rmk.combi}
    \mathrm{dist}(x_{1},B_{\widetilde{r}_{j}}(x_{2,j}))<2^{k+1}\rho_{i}+2\widetilde{r}_{j}<c2^{k}\rho_{i}
\end{equation}
for some constant $c=c(n,s,L,q,\tau)$, where we have used the fact that $\overline{r}_{j}\leq 2^{1-l}\rho_{i}$ by \eqref{cz non pa : lem.cov.step4.aijld}, \eqref{defn.tilder} and \eqref{cz non pa : j0cond}. Since the set $\left\{\mathcal{Q}_{\widetilde{r}_{j}}(x_{1,j},x_{2,j},t_{0,j})\right\}_{j\in J}$ is a mutually disjoint, we note from \eqref{cz non pa : rmk.nonempty} that $\left\{B_{\widetilde{r}_{j}}(x_{2,j})\right\}_{j\in J}$ is also a mutually disjoint set. This along with \eqref{cz non pa : rmk.combi} implies
\begin{equation*}
    |J|\leq \frac{\left|B_{c2^{k}\rho_{i}}\right|}{\left|B_{\widetilde{r}_{j}}\right|}\leq c2^{n(k+l)}
\end{equation*}
for some constant $c=c(n,s,L,q,\tau)$, where we have used \eqref{defn.tilder}, \eqref{cz non pa : j0cond} and the relation between $\overline{r}_{j}$ and $\rho_{i}$ given in \eqref{cz non pa : lem.cov.step4.aijld}. This proves \eqref{cz non pa : rmk.nonempty}.
 We are now ready to prove \eqref{cz non pa : rmk.mainest} using an inductive argument. We first note that the number of elements in $\bigcup\limits_{j}\mathcal{A}^{1}_{i,j,l,k}$ is finite, as each cylinder in $\bigcup\limits_{j}\mathcal{A}^{1}_{i,j,l,k}$ is of the form $\mathcal{Q}=\mathcal{Q}_{r}$, where $2^{-l}\rho_{i}<r\leq 2^{-l
+1}\rho_{i}$, is mutually disjoint and is contained in $\mathcal{Q}_{r_{2}}$. Let us denote $j_{i}$ as the number of elements in $\bigcup\limits_{j}\mathcal{A}^{1}_{i,j,l,k}$. For a clear notation, we assume $\mathcal{A}^{1}_{i,j,l,k}\neq\emptyset$ if $j\leq j_{i}$ and $\mathcal{A}^{1}_{i,j,l,k}=\emptyset$ if $j\geq j_{i}+1$. By \eqref{cz non pa : exitoffim}, we note $\mathcal{Q}_{\widetilde{r}_{j}}(x_{1,j},t_{0,j})\subset Q_{5^{\frac{1}{s}}\rho_{i}}(z_{i})$. We first define 
\begin{equation*}
    \mathcal{D}_{1}=\left\{\mathcal{Q}_{5^{\frac{1}{s}}\rho_{i}}(z_{i})\cap\mathcal{Q}_{\widetilde{r}_{j}}(x_{1,j},t_{0,1}),\,\mathcal{Q}_{5^{\frac{1}{s}}\rho_{i}}(z_{i})\setminus \mathcal{Q}_{\widetilde{r}_{j}}(x_{1,1},t_{0,1}),\right\}.
\end{equation*}
Suppose $\mathcal{D}_{k}$ is determined, where $k\geq1$, then we define
\begin{equation*}
    \mathcal{D}_{k+1}=\bigcup_{\mathcal{F}\in\mathcal{D}_{k}}\left\{\mathcal{F}\cap\mathcal{Q}_{\widetilde{r}_{k+1}}(x_{1,k+1},t_{0,k+1}),\, \mathcal{F}\setminus \mathcal{Q}_{\widetilde{r}_{k+1}}(x_{1,k+1},t_{0,k+1})\right\}.
\end{equation*}
In this way, we obtain a collection $\mathcal{D}_{j_{i}}$. Indeed, for any choice of two elements $\mathcal{F}$ and $\mathcal{F}'$ in $\mathcal{D}_{j_{i}}$, we see that either $\mathcal{F}=\mathcal{F}'$ or $\mathcal{F}\cap\mathcal{F}'=\emptyset$. Thus we can write 
\begin{equation}
\label{cz non pa : lem.cov.step5.fm}
    \mathcal{Q}_{5^{\frac{1}{s}}\rho_{i}}(z_{i})=\bigcup_{\mathcal{F}_{m}\in\mathcal{D}_{j_{i}}}{\mathcal{F}_{m}},
\end{equation}
where $\mathcal{F}_{m}$'s are mutually disjoint, and for any $m$ and $j\in[1,j_{i}]$, either 
\begin{equation}
\label{cz non pa : lem.cov.step5.fmuni}
    \mathcal{F}_{m}\subset \mathcal{Q}_{\widetilde{r}_{j}}(x_{1,j},t_{0,j})\quad\text{or}\quad \mathcal{F}_{m}\cap \mathcal{Q}_{\widetilde{r}_{j}}(x_{1,j},t_{0,j})=\emptyset.
\end{equation}
Moreover, there are mutually disjoint elements $\mathcal{F}_{m}\in \mathcal{D}_{j_{i}}$ such that
\begin{equation}
\label{cz non pa : lem.cov.step5.fmuni2}
    \mathcal{Q}_{\widetilde{r}_{j}}(x_{1,j},t_{0,j})=\bigcup_{\mathcal{F}_{m}\subset \mathcal{Q}_{\widetilde{r}_{j}}(x_{1,j},t_{0,j}) }\mathcal{F}_{m}.
\end{equation}
We note that for each $\mathcal{F}_{m}$, the number of elements in $J\coloneqq\{j\in [1,j_{i}]\,:\,\mathcal{F}_{m}\subset \mathcal{Q}_{\widetilde{r}_{j}}(x_{1,j},t_{0,j})\}$  is at most $c2^{n(k+l)}$, due to \eqref{cz non pa : rmk.nonempty}.
As a result, we have 
{\small\begin{equation*}
\begin{aligned}
    \sum_{\mathcal{Q}_{\overline{r}_{j}}(x_{1,j},x_{2,j},t_{0,j})\in \bigcup\limits_{\tilde{j}}\mathcal{A}^{1}_{i,\tilde{j},l,k}}\int_{\mathcal{Q}_{\widetilde{r}_{j}}(x_{1,j},t_{0,j})}|D^{\tau}d_{s}u|^{p}\dmutt&=\sum_{j=1}^{j_{i}}\sum_{\substack{m\\ \mathcal{F}_{m}\subset \mathcal{Q}_{\widetilde{r}_{j}}(x_{1,j},t_{0,j})}}\int_{\mathcal{F}_{m}}|D^{\tau}d_{s}u|^{p}\dmutt\\
    &=\sum_{m}\sum_{\substack{j=1\\\mathcal{F}_{m}\subset \mathcal{Q}_{\widetilde{r}_{j}}(x_{1,j},t_{0,j})}}^{j_{i}}\int_{\mathcal{F}_{m}}|D^{\tau}d_{s}u|^{p}\dmutt\\
    &\leq c2^{n(l+k)}\sum_{m}\int_{\mathcal{F}_{m}}|D^{\tau}d_{s}u|^{p}\dmutt\\
    &\leq c2^{n(l+k)}\int_{\mathcal{Q}_{5^{\frac{1}{s}}\rho_{i}}(z_{i})}|D^{\tau}d_{s}u|^{p}\dmutt
\end{aligned}
\end{equation*}}\\
\noindent
for some constant $c=c(n,s,L,q,\tau)$, which implies the desired result \eqref{cz non pa : rmk.mainest}. By following the same lines as in the proof of \eqref{cz non pa : rmk.mainest} with $(D^\tau d_s u,p)$ replaced by $(D^\tau d_0 f,\tilde{q})$ and $((\widetilde{r}_j^{s-\tau}|G|)^\gamma ,1)$, respectively and by using the fact that $\widetilde{r}_j\leq 2\rho_i$ which follows from \eqref{cz non pa : exitoffim}, we prove \eqref{cz non pa : rmk.mainest1} and \eqref{cz non pa : rmk.mainest2}.
\end{proof}

%%%%%%%%%%%%%%%%%%%%%%%%%%%%%%%%%%%%%%%%%%%%%

\noindent
\textbf{Step 6. Measure estimate of $\mathcal{Q}\in AD_{\lambda}$.} We claim that for every $\mathcal{Q}\in AD_{\lambda}$, there holds
\begin{equation}
\label{cz non pa : measqad}
\mu_{\tau,t}(\mathcal{Q})\leq \frac{2^{q}}{\lambda^{p}}\int_{\mathcal{Q}\cap\left\{|D^{\tau}d_{s}u|>\frac{\lambda}{16}\right\}}|D^{\tau}d_{s}u|^{p}\dmutt.
\end{equation}
Indeed, on account of \eqref{cz non pa : czdereof}, we have 
\begin{equation}
\label{cz non pa : nd app}
\begin{aligned}
    \lambda^{p}
    &\leq 2^{p}\left(\dashint_{\mathcal{Q}}|D^{\tau}d_{s}u|^{p}\dmutt\right)+2^{p}\left(\mathfrak{A}(\mathcal{K})\right)^{p}\left[\sum_{d=1}^{2}E_{p,\tau}\left(u\,;\,{B}^{d}\times I\right)\right]^{p}.
\end{aligned}
\end{equation}
Using \eqref{cz non pa : nd app}, \eqref{cz non pa : almost diagonal cubes2} and the fact that
\begin{equation*}
    \left(\dashint_{\mathcal{Q}}|D^{\tau}d_{s}u|^{p}\dmutt\right)\leq \frac{\lambda^{p}}{8^{p}}+\frac{1}{\mu_{\tau,t}(\mathcal{Q})}\int_{\mathcal{Q}\cap\left\{|D^{\tau}d_{s}u|>\frac{\lambda}{4}\right\}}|D^{\tau}d_{s}u|^{p}\dmutt,
\end{equation*}
we deduce that
\begin{equation*}
    \frac{\lambda^{p}}{2^{p}}\leq \frac{1}{\mu_{\tau,t}(\mathcal{Q})}\int_{\mathcal{Q}\cap\left\{|D^{\tau}d_{s}u|>\frac{\lambda}{16}\right\}}|D^{\tau}d_{s}u|^{p}\dmutt.
\end{equation*}
This proves the claim of \eqref{cz non pa : measqad} as $p\leq q$.

\noindent
\textbf{Step 7. Measure estimate of $\mathcal{Q}\in ND_{\lambda}$.}
Our next aim is to estimate $\mu_{\tau,t}(\mathcal{Q})$ for $\mathcal{Q}\in ND_{\lambda}$.

%%%%%%%%%%%%%%%% new lemma
\noindent
With the aid of \eqref{cz non pa : measqad}, \eqref{cz non pa : lem.cov.step5.unindl} and Lemma \ref{cz non pa : dist of ndl}, we prove the following result.
\begin{lem}
There exists a constant $c=c(n,s,L,q,\tau)$ such that
\begin{equation}
\label{cz non pa : sumofmeasqnd}
\begin{aligned}
    \sum_{\mathcal{Q}\in ND_{\lambda}}\mu_{\tau,t}(\mathcal{Q})&\leq\frac{c}{\lambda^{p}}\int_{\mathcal{Q}_{r_{2}}\cap\{|D^{\tau}d_{s}u|>\frac{\lambda}{16}\}}|D^{\tau}d_{s}u|^{p}\dmutt+c\sum_{i}\mu_{\tau,t}\left(\mathcal{Q}_{\rho_{i}}(z_{i})\right).
\end{aligned}
\end{equation} 
\end{lem}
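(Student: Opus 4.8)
The plan is to expand $\sum_{\mathcal{Q}\in ND_{\lambda}}\mu_{\tau,t}(\mathcal{Q})$ through the partition $ND_{\lambda}=\bigcup_{d=1}^{2}ND_{\lambda}^{d}$ together with \eqref{cz non pa : lem.cov.step5.unindl}, writing $ND_{\lambda}^{d}=\bigcup_{i}\bigcup_{l,k\geq0}\bigcup_{\tilde{j}}\mathcal{A}^{d}_{i,\tilde{j},l,k}$, and to estimate each selected off-diagonal cylinder $\mathcal{Q}_{\overline{r}_{j}}(x_{1,j},x_{2,j},t_{0,j})\in\mathcal{A}^{d}_{i,j,l,k}$ separately before resumming. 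First I would record, for one such cylinder, the alternative obtained from the exit property \eqref{cz non pa : czdereof}, the pointwise estimate \eqref{cz non pa : meaoffest}, the bound \eqref{cz non pa : impe}, and the elementary observation \eqref{simple.cal}: either
\begin{equation*}
\mu_{\tau,t}\left(\mathcal{Q}_{\overline{r}_{j}}\right)\leq\frac{c}{\lambda^{p}}\int_{\mathcal{Q}_{\overline{r}_{j}}\cap\{|D^{\tau}d_{s}u|>\frac{\lambda}{16}\}}|D^{\tau}d_{s}u|^{p}\dmutt,
\end{equation*}
which is of the form \eqref{cz non pa : measqad}, or one of
\begin{equation*}
\frac{\lambda}{c}\leq\mathfrak{A}\left(\mathcal{B}_{\overline{r}_{j}}(x_{1,j},x_{2,j})\right)\left(\dashint_{\mathcal{Q}_{\widetilde{r}_{j}}(x_{d,j},t_{0,j})}|D^{\tau}d_{s}u|^{p}\dmutt\right)^{\frac{1}{p}},
\end{equation*}
or the two analogous inequalities with $|D^{\tau}d_{s}u|$ and the exponent $p$ replaced by $\delta^{-1}|D^{\tau}d_{0}f|$, $\widetilde{q}$ and by $\delta^{-1}\widetilde{r}_{j}^{s-\tau}|G|$, $\gamma$, respectively.

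For the second family of alternatives I would convert a bound on an average over the diagonal cylinder $\mathcal{Q}_{\widetilde{r}_{j}}(x_{d,j},t_{0,j})$ into a bound on $\mu_{\tau,t}(\mathcal{Q}_{\overline{r}_{j}})$. Combining \eqref{cz non pa : size of b mut} for $\mathcal{Q}_{\widetilde{r}_{j}}(x_{d,j},t_{0,j})$ with the two-sided estimate \eqref{cz non pa : measure of qintau} for $\mathcal{Q}_{\overline{r}_{j}}$ (available since $\mathcal{Q}_{\overline{r}_{j}}$ is off-diagonal, cf.\ \eqref{cz non pa : measure of tildea} and Step 3, so that $\dist(B_{\overline{r}_{j}}(x_{1,j}),B_{\overline{r}_{j}}(x_{2,j}))\geq\overline{r}_{j}$), and with $\widetilde{r}_{j}=2^{j_{0}}\overline{r}_{j}$, gives
\begin{equation*}
\mu_{\tau,t}\left(\mathcal{Q}_{\overline{r}_{j}}\right)\approx\tau\,\mathfrak{A}\left(\mathcal{B}_{\overline{r}_{j}}(x_{1,j},x_{2,j})\right)^{\frac{n-2\tau}{s+\tau}}\mu_{\tau,t}\left(\mathcal{Q}_{\widetilde{r}_{j}}(x_{d,j},t_{0,j})\right),
\end{equation*}
so the $D^{\tau}d_{s}u$ alternative yields
\begin{equation*}
\mu_{\tau,t}\left(\mathcal{Q}_{\overline{r}_{j}}\right)\leq\frac{c\,\tau\,\mathfrak{A}\left(\mathcal{B}_{\overline{r}_{j}}(x_{1,j},x_{2,j})\right)^{p+\frac{n-2\tau}{s+\tau}}}{\lambda^{p}}\int_{\mathcal{Q}_{\widetilde{r}_{j}}(x_{d,j},t_{0,j})}|D^{\tau}d_{s}u|^{p}\dmutt,
\end{equation*}
and similarly with $(p,\lambda)$ replaced by $(\widetilde{q},\delta\lambda)$ or $(\gamma,\delta\lambda)$ for the other two. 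On $\mathcal{A}^{d}_{i,j,l,k}$ one has $\overline{r}_{j}\approx2^{-l}\rho_{i}$ and $\dist(B_{\overline{r}_{j}}(x_{1,j}),B_{\overline{r}_{j}}(x_{2,j}))\approx2^{k}\rho_{i}$, hence $\mathfrak{A}(\mathcal{B}_{\overline{r}_{j}})\approx2^{-(l+k)(s+\tau)}$; I would then sum over $\tilde{j}$ with $i,l,k,d$ fixed by means of Lemma \ref{cz non pa : dist of ndl} (the estimates \eqref{cz non pa : rmk.mainest}, \eqref{cz non pa : rmk.mainest1}, \eqref{cz non pa : rmk.mainest2}), at the cost of a factor $c2^{n(l+k)}$, and finally sum the geometric series over $l,k\geq0$.

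The series converges because the net exponent of $2^{-(l+k)}$ equals $p(s+\tau)+n-2\tau-n=p(s+\tau)-2\tau$ for the $D^{\tau}d_{s}u$ part, and $\widetilde{q}(s+\tau)-2\tau$, $\gamma(s+\tau)-2\tau$ for the other two; all three are positive, since $p\geq2$, $\widetilde{q}>2$ and $\tau<s$ (so $2-\gamma=\frac{4s}{n+4s}$ and $\frac{4\tau}{n+4s}<1<\gamma$). This gives, for each fixed $i$,
{\small\begin{equation*}
\sum_{l,k\geq0}\sum_{\mathcal{Q}_{\overline{r}_{j}}\in\bigcup_{\tilde{j}}\mathcal{A}^{d}_{i,\tilde{j},l,k}}\mu_{\tau,t}\left(\mathcal{Q}_{\overline{r}_{j}}\right)\leq\frac{c\tau}{\lambda^{p}}\int_{\mathcal{Q}_{5^{1/s}\rho_{i}}(z_{i})}|D^{\tau}d_{s}u|^{p}\dmutt+\frac{c}{(\delta\lambda)^{\widetilde{q}}}\int_{\mathcal{Q}_{5^{1/s}\rho_{i}}(z_{i})}|D^{\tau}d_{0}f|^{\widetilde{q}}\dmutt+\frac{c}{(\delta\lambda)^{\gamma}}\int_{\mathcal{Q}_{5^{1/s}\rho_{i}}(z_{i})}\left(\rho_{i}^{s-\tau}|G|\right)^{\gamma}\dmutt.
\end{equation*}}
Now $5^{1/s}\rho_{i}=5^{1/s}2^{j_{0}}\rho_{z_{i}}$ lies in the range on which \eqref{cz non pa : exitradius} applies, so $\Theta_{D}(z_{i},5^{1/s}\rho_{i})\leq\kappa\lambda$; together with \eqref{cz non pa : doubling} and $\rho_{i}^{s-\tau}\leq(5^{1/s}\rho_{i})^{s-\tau}$ this bounds the three integrals above by $c(\kappa\lambda)^{p}$, $c(\delta\kappa\lambda)^{\widetilde{q}}$, $c(\delta\kappa\lambda)^{\gamma}$ times $\mu_{\tau,t}(\mathcal{Q}_{\rho_{i}}(z_{i}))$ respectively, so that, all the $\delta$ and $\tau$ factors cancelling and $\kappa\leq1$, the whole right-hand side is $\leq c\,\mu_{\tau,t}(\mathcal{Q}_{\rho_{i}}(z_{i}))$.

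Summing over $i$, adding the first-alternative contribution (whose cylinders $\mathcal{Q}_{\overline{r}_{j}}$ are mutually disjoint subsets of $\mathcal{Q}_{r_{2}}$, so that their integrals add up to $\int_{\mathcal{Q}_{r_{2}}\cap\{|D^{\tau}d_{s}u|>\lambda/16\}}|D^{\tau}d_{s}u|^{p}\dmutt$), and finally summing over $d=1,2$, yields \eqref{cz non pa : sumofmeasqnd}. I expect the principal difficulty to lie in the second paragraph: correctly identifying the power of $\mathfrak{A}(\mathcal{B}_{\overline{r}_{j}})$ produced by the passage from an average over $\mathcal{Q}_{\widetilde{r}_{j}}(x_{d,j},t_{0,j})$ to $\mu_{\tau,t}(\mathcal{Q}_{\overline{r}_{j}})$, and checking that, after the combinatorial price $2^{n(l+k)}$ of Lemma \ref{cz non pa : dist of ndl} is paid, a summable geometric series remains — which is precisely where the hypotheses $p\geq2$ and $\tau<s$ enter.
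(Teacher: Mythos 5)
Your proposal is correct and takes essentially the same approach as the paper's own proof: the alternative generated by the exit property \eqref{cz non pa : czdereof}, \eqref{cz non pa : meaoffest}, \eqref{cz non pa : impe} and \eqref{simple.cal}; the conversion from diagonal averages to the off-diagonal measure $\mu_{\tau,t}(\mathcal{Q}_{\overline{r}_{j}})$ via \eqref{cz non pa : size of b mut} and \eqref{cz non pa : measure of qintau} (which the paper packages as \eqref{cz non pa : qqh}); the combinatorial counting of Lemma \ref{cz non pa : dist of ndl} at the cost of $2^{n(l+k)}$; the geometric series in $l,k$; and the final passage through \eqref{cz non pa : exitradius} at scale $5^{1/s}\rho_{i}$. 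Your explicit verification that the net exponents $p(s+\tau)-2\tau$, $\tilde{q}(s+\tau)-2\tau$ and $\gamma(s+\tau)-2\tau$ are all positive is exactly the point the paper also relies on, and your derivation of $\gamma(s+\tau)-2\tau>0$ from $\gamma>1>4\tau/(n+4s)$ is the right way to see it.
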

\begin{proof}
Let $\mathcal{Q}\equiv\mathcal{Q}_{r}(x_{1},x_{2},t_{0})\in ND_{\lambda}$. 
We note from \eqref{cz non pa : meaoffest}, \eqref{cz non pa : almost diagonal cubes2}, \eqref{defn.tilder} and \eqref{cz non pa : impe} that there is a constant $c=c(n,s,L,q,\tau)$ such that
\begin{align*}
    \frac{\lambda}{4}\leq \left(\dashint_{\mathcal{Q}}|D^{\tau}d_{s}u|^{p}\dmutt\right)^{\frac1p}+c\sum_{d=1}^2\left[\left(E^d_u(\mathcal{Q})\right)^{\frac{1}{p}}+\left(E^d_f(\mathcal{Q})\right)^{\frac{1}{\tilde{q}}}+\left(E^d_g(\mathcal{Q})\right)^{\frac{1}{\gamma}}\right],
\end{align*}
if $\mathcal{Q}\in ND^{1}_{\lambda}\cap ND^{2}_{\lambda}$ and 
\begin{align*}
    \frac{\lambda}4\leq \left(\dashint_{\mathcal{Q}}|D^{\tau}d_{s}u|^{p}\dmutt\right)^{\frac1p}+c\left[\left(E^d_u(\mathcal{Q})\right)^{\frac{1}{p}}+\left(E^d_f(\mathcal{Q})\right)^{\frac{1}{\tilde{q}}}+\left(E^d_g(\mathcal{Q})\right)^{\frac{1}{\gamma}}\right],
\end{align*}
if $\mathcal{Q}\in ND^{d}_{\lambda}\cap AD^{d'}_{\lambda}$,
where we denote $d'\in \{1,2\}\setminus \{d\}$.
where we denote 
\begin{equation}\label{cz non pa : Kdt1}
\begin{aligned}
    \mathbb{E}^{d}_{u}\left(\mathcal{Q}\right)=\mathfrak{A}\left(\mathcal{B}_{r}(x_{1},x_{2})\right)^{p\left(s+\tau\right)}\left(\dashint_{\mathcal{Q}_{2^{j_0}r}(x_{d},t_{0})}|D^{\tau}d_{s}u|^{p}\dmutt\right),
\end{aligned}
\end{equation}
\begin{equation}
\label{cz non pa : Kdt12}
\begin{aligned}
    \mathbb{E}^{d}_{f}\left(\mathcal{Q}\right)= \mathfrak{A}\left(\mathcal{B}_{r}(x_{1},x_{2})\right)^{
    \tilde{q}\left(s+\tau\right)} \left(\dashint_{2^{j_0}\mathcal{Q}_{r}(x_{d},t_{0})}|D^{\tau}d_{0}f|^{\tilde{q}}\dmutt\right),
\end{aligned}
\end{equation}
and
\begin{equation}
\label{cz non pa : Kdt123}
\begin{aligned}
    \mathbb{E}^{d}_{g}\left(\mathcal{Q}\right)= \mathfrak{A}\left(\mathcal{B}_{r}(x_{1},x_{2})\right)^{
    \gamma\left(s+\tau\right)} \left(\dashint_{\mathcal{Q}_{2^{j_0}r}(x_{d},t_{0})}((2^{j_0}r)^{s-\tau}|G|)^{\gamma}\dmutt\right)
\end{aligned}
\end{equation}
We note that the set function $\mathfrak{A}(\cdot)$ is defined in \eqref{cz non pa : cubecoeff}. Using \eqref{simple.cal} and then multiplying $\mu_{\tau,t}\left(\mathcal{Q}\right)$ along with a few simple calculations, we obtain 
{\begin{equation}
\label{cz non pa : altres}
\begin{aligned}
    \mu_{\tau,t}\left(\mathcal{Q}\right)&\leq \frac{c}{\lambda^{p}}\int_{{\mathcal{Q}}\cap \left\{|D^{\tau}d_{s}u|>\frac{\lambda}{16}\right\}}|D^{\tau}d_{s}u|^{p}\dmutt\\
    &\quad+  \frac{\mu_{\tau,t}(\mathcal{Q})}{\mu_{\tau,t}\left(\mathcal{Q}_{r}(x_{d},t_{0})\right)}\left[\frac{c}{\lambda^{p}}\sum_{d=1}^{2}\mathbb{E}^{d}_{u}\left(\mathcal{Q}\right)+\frac{c}{(\delta\lambda)^{\tilde{q}}}\sum_{d=1}^{2}\mathbb{E}^{d}_{f}\left(\mathcal{Q}\right)+\frac{c}{(\delta\lambda)^{b_\tau}}\sum_{d=1}^{2}\mathbb{E}^{d}_{g}\left(\mathcal{Q}\right)\right]
\end{aligned}
\end{equation}}
if $\mathcal{Q}\in ND^{1}_{\lambda}\cap ND^{2}_{\lambda}$ and
{\begin{equation}
\label{cz non pa : altres1}
\begin{aligned}
    \mu_{\tau,t}\left(\mathcal{Q}\right)&\leq \frac{c}{\lambda^{p}}\int_{\mathcal{Q}\cap \left\{|D^{\tau}d_{s}u|>\frac{\lambda}{16}\right\}}|D^{\tau}d_{s}u|^{p}\dmutt\\
    &\quad+  \frac{\mu_{\tau,t}(\mathcal{Q})}{\mu_{\tau,t}\left(\mathcal{Q}_{r}(x_{d},t_{0})\right)}\left[\frac{c}{\lambda^{p}}\mathbb{E}^{d}_{u}\left(\mathcal{Q}\right)+\frac{c}{(\delta\lambda)^{\tilde{q}}}\mathbb{E}^{d}_{f}\left(\mathcal{Q}\right)+\frac{c}{(\delta\lambda)^{b_\tau}}\mathbb{E}^{d}_{g}\left(\mathcal{Q}\right)\right]
\end{aligned}
\end{equation}}
if $\mathcal{Q}\in ND^{d}_{\lambda}\cap AD^{d'}_{\lambda}$. We next observe from \eqref{cz non pa : inclusion measure} and \eqref{cz non pa : measure of qintau} that 
\begin{equation}
\label{cz non pa : qqh}
   \frac{\mu_{\tau,t}(\mathcal{Q})}{\mu_{\tau,t}\left(\mathcal{Q}_{r}(x_{d},t_{0})\right)}\leq c\tau \left(\frac{r}{\dist(B_{r}(x_{1}),B_{r}(x_{2}))}\right)^{n-2\tau}
\end{equation}
for some constant $c=c(n)$.
On account of \eqref{cz non pa : lem.cov.step4.aijld}, \eqref{cz non pa : lem.cov.step4.aijlkd}, \eqref{cz non pa : lem.cov.step5.unindl}, \eqref{cz non pa : qqh} and \eqref{cz non pa : rmk.mainest}, we find that
{\small\begin{equation}
\label{cz non pa : higherdifft0}
\begin{aligned}
\sum_{\mathcal{Q}\in ND_{\lambda}^{d}}\frac{\mu_{\tau,t}(\mathcal{Q})}{\mu_{\tau,t}\left(\mathcal{Q}_{r}(x_{d},t_{0})\right)}\mathbb{E}^{d}_{u}(\mathcal{Q})&\leq c\sum_{i,j,k,l}\sum_{\mathcal{Q}\in\mathcal{A}^{d}_{i,j,l,k}} \left(\frac{r}{\dist(B_{r}(x_{1}),B_{r}(x_{2}))}\right)^{n-2\tau+ps+p\tau}\left(\int_{2^{j_0}P^{d}\mathcal{Q}}|D^{\tau}d_{s}u|^{p}\dmutt\right)\\
&\leq c\sum_{i,l,k\geq0} \left(2^{-(l+k)}\right)^{ps} \left(\int_{\mathcal{Q}_{5^{\frac{1}{s}}\rho_{i}}(z_{i})}|D^{\tau}d_{s}u|^{p}\dmutt\right)\\
&\leq c\sum_{i\geq0}\left(\int_{\mathcal{Q}_{5^{\frac{1}{s}}\rho_{i}}(z_{i})}|D^{\tau}d_{s}u|^{p}\dmutt\right),
\end{aligned}
\end{equation}}\\
\noindent
where we denote $P^{d}\mathcal{Q}=\mathcal{Q}_{r}(x_{d},t_{0})$ for a given cylinder $\mathcal{Q}=\mathcal{Q}_{r}(x_{1},x_{2},t_{0})$. For the last inequality, we have used the fact that
\begin{equation*}
    \sum_{i,j\geq0} \left(2^{-(i+j)}\right)^{a}\leq \frac{2^{a}}{a\mathrm{ln}2}\quad(a>0).
\end{equation*}
Therefore, using \eqref{cz non pa : exitradius}, we obtain 
\begin{equation*}
    \sum_{\mathcal{Q}\in ND_{\lambda}^{d}}\frac{\mu_{\tau,t}(\mathcal{Q})}{\mu_{\tau,t}\left(\mathcal{Q}_{r}(x_{d},t_{0})\right)}\mathbb{E}^{d}_u(\mathcal{Q})\leq c\lambda^{p}\sum_{i}\mu_{\tau,t}\left(\mathcal{Q}_{\rho_{i}}(z_{i})\right)
\end{equation*}
As in \eqref{cz non pa : higherdifft0} along with \eqref{cz non pa : rmk.mainest1}, \eqref{cz non pa : rmk.mainest2} and \eqref{cz non pa : exitradius}, we have
\begin{align*}
    \sum_{\mathcal{Q}\in ND_{\lambda}^{d}}\frac{\mu_{\tau,t}(\mathcal{Q})}{\mu_{\tau,t}\left(\mathcal{Q}_{r}(x_{d},t_{0})\right)}\mathbb{E}^{d}_{f}(\mathcal{Q})&\leq c\sum_{i,l,k\geq0}\left(2^{-(l+k)}\right)^{\tilde{q}s}\int_{\mathcal{Q}_{5^{\frac1{s}}\rho_i}(x_{d},t_{0})}|D^{\tau}d_{0}f|^{\tilde{q}}\dmutt\\
    &\leq c(\delta\lambda)^{\tilde{q}}\sum_{i}\mu_{\tau,t}\left(\mathcal{Q}_{\rho_{i}}(z_{i})\right)
\end{align*}
and
\begin{align*}
    \sum_{\mathcal{Q}\in ND_{\lambda}^{d}}\frac{\mu_{\tau,t}(\mathcal{Q})}{\mu_{\tau,t}\left(\mathcal{Q}_{r}(x_{d},t_{0})\right)}\mathbb{E}^{d}_{f}(\mathcal{Q})&\leq c\sum_{i,l,k\geq0}\left(2^{-(l+k)}\right)^{\gamma s}\int_{\mathcal{Q}_{5^{\frac1{s}}\rho_i}(x_{d},t_{0})}\left((\rho_i)^{s-\tau}|G|\right)^{\gamma}\dmutt\\
    &\leq c(\delta\lambda)^{\gamma}\sum_{i}\mu_{\tau,t}\left(\mathcal{Q}_{\rho_{i}}(z_{i})\right)
\end{align*}
for some constant $c=c(n,s,L,q,\tau)$.
Combining the above two inequalities, we get
\begin{equation}
\label{cz non pa : higherdifft}
\begin{aligned}
   \sum_{\mathcal{Q}\in ND_{\lambda}^{d}}\frac{\mu_{\tau,t}(\mathcal{Q})}{\mu_{\tau,t}\left(\mathcal{Q}_{r}(x_{d},t_{0})\right)}\left[\frac{1}{(\delta\lambda)^{\tilde{q}}}\mathbb{E}^{d}_{f}(\mathcal{Q}) +\frac{1}{(\delta\lambda)^{\gamma}}\mathbb{E}^{d}_{g}(\mathcal{Q})\right]\leq c\sum_{i}\mu_{\tau,t}\left(\mathcal{Q}_{\rho_{i}}(z_{i})\right),
\end{aligned}
\end{equation}
where we denote $\mathcal{Q}=B^{1}\times B^{2}\times \Lambda$.
Plugging \eqref{cz non pa : higherdifft0} and \eqref{cz non pa : higherdifft} into \eqref{cz non pa : altres} and \eqref{cz non pa : altres1}, we have
\begin{equation*}
%\label{cz non pa : Kft}
\begin{aligned}
    \sum_{\mathcal{Q}\in ND_{\lambda}}\mu_{\tau,t}\left(\mathcal{Q}\right)&\leq \frac{c}{\lambda^{p}}\int_{{\mathcal{Q}_{r_{2}}}\cap \left\{|D^{\tau}d_{s}u|>\frac{\lambda}{16}\right\}}|D^{\tau}d_{s}u|^{p}\dmutt+c\sum_{i}\mu_{\tau,t}\left(\mathcal{Q}_{\rho_{i}}(z_{i})\right)
\end{aligned}
\end{equation*}
\end{proof}
\textbf{Step 8. Completion of the proof.}
Considering \eqref{cz non pa : condM1} and \eqref{cz non pa : rangeofkappa1}, the constant $\frac{M}{\kappa}$ depends only on $n,s,L,q$ and $\tau$.
Therefore, if $\lambda\geq\lambda_{0}$ which is determined in \eqref{cz non pa : lambdacond}, then we find two families of countable disjoint cylinders 
\begin{equation*}
    \left\{\mathcal{Q}_{\rho_{i}}(z_{i})\right\}_{i}\quad\text{and}\quad\left\{\mathcal{Q}_{\widetilde{r}_{j}}(x_{1,j},x_{2,j},t_{0,j}\right\}_{\mathcal{Q}_{\overline{r}_{j}}(x_{1,j},x_{2,j},t_{0,j})\in\mathcal{A}}
\end{equation*} 
so that 
\begin{align*}
    \{(x,y,t)\in\mathcal{Q}_{r_1}\,:\,|D^\tau d_su(x,y,t)|\geq\lambda\}\subset \bigcup_{i}\mathcal{Q}_{5^{\frac2s}\rho_{i}}(z_{i})\cup \bigcup_{\substack{j \\ \mathcal{Q}_{\overline{r}_{j}}(x_{1,j},x_{2,j},t_{0,j})\in\mathcal{A}} }\mathcal{Q}_{5^{\frac1s}\widetilde{r}_{j}}(x_{1,j},x_{2,j},t_{0,j}),
\end{align*} which follows from the steps 2-3 along with \eqref{cz non pa : lem.cov.step1.vidist},  \eqref{upperlevelsetoffb} and \eqref{cz non pa : measure of tildea}. In addition, using \eqref{cz non pa : reverseestimate}, \eqref{cz non pa : measqad}, \eqref{cz non pa : sumofmeasqnd} and \eqref{cz non pa : diab} along with the choice of the constants $a_{u}=\tilde{a}_{u}\kappa$, $a_{f}=\tilde{a}_{f}\kappa$ and $a_{g}=\tilde{a}_{g}\kappa$ given in \eqref{cz non pa : diab}, we get \eqref{cz non pa : pstra norm} and \eqref{cz non pa : diagonal estimate}. This completes the proof.
\end{proof}

%%%%%%%%%%%%%%%%%%%%%remark
\noindent
Before ending this section, we give some estimates which are useful in the context of the comparison Lemma \ref{cz non pa : nsccomp}.
\begin{rmk}
\label{cz non pa : diagonalpartlambda}
Let $\mathcal{Q}_{\rho_{i}}(z_{i})$ be the cylinder chosen in Lemma \ref{cz non pa : coveringL}. Then we want to show that
{\small\begin{equation}
\label{cz non pa : bcompassm1}
    \frac{1}{\tau}\dashint_{Q_{4\times 5^{\frac{2}{s}}\rho_{i}}(z_{i})}|D^{\tau}d_{s}u|^{2}\dmutt+\Tail_{\infty,2s}\left(\frac{u-(u)_{B_{4\times 5^{\frac{2}{s}}\rho_{i}}(x_{i})}(t)}{(4\times 5^{\frac{2}{s}}\rho_{i})^{s+\tau}}\,;\,Q_{4\times 5^{\frac{2}{s}}\rho_{i}}(z_{i})\right)^{2}\leq (c\lambda)^{2}
\end{equation}}\\
\noindent
and
{\small\begin{equation}
\label{cz non pa : bcompassm2}
\begin{aligned}
  \left(\frac{1}{\tau}\dashint_{Q_{4\times 5^{\frac{2}{s}}\rho_{i}}(z_{i})}\left((4\times 5^{\frac{2}{s}}\rho_{i})^{s-\tau}|G|\right)^{\gamma}\dmutt\right)^{\frac{2}{\gamma}}&+\frac{1}{\tau}\dashint_{Q_{4\times 5^{\frac{2}{s}}\rho_{i}}(z_{i})}|D^{\tau}d_{0}f|^{2}\dmutt\\
  &\quad+\Tail_{2,s}\left(\frac{f-(f)_{B_{4\times 5^{\frac{2}{s}}\rho_{i}}(x_{i})}(t)}{(4\times 5^{\frac{2}{s}}\rho_{i})^{\tau}}\,;\,Q_{4\times 5^{\frac{2}{s}}\rho_{i}}(z_{i})\right)^{2}\leq (c\lambda\delta)^{2}
\end{aligned}
\end{equation}}
\\
\noindent
for some constant $c=c(n,s,L,q,\tau)$. We first note from \eqref{cz non pa : r1r2defn}, \eqref{cz non pa : exitradius} and \eqref{cz non pa : rhoidefn} that there is a natural number $l$ such that 
\begin{equation*}
    5^{\frac{2}{s}}\times 2^{j_{0}+2}\mathcal{R}_{1,2}<2^{l}\times \overline{\rho}\leq5^{\frac{2}{s}}\times 2^{j_{0}+3}\mathcal{R}_{1,2},
\end{equation*}
where we denote $\overline{\rho}=4\times 5^{\frac{2}{s}}\rho_{i}$ for a clear notation.
After a few modifications of the proof for \eqref{cz non pa : tailestimate} with $p=\infty$, we deduce that
{\small\begin{equation*}
\begin{aligned}
    &\Tail_{\infty,2s}\left(\frac{u-(u)_{B_{\overline{\rho}}(x_{i})}(t)}{\overline{\rho}^{s+\tau}};Q_{\overline{\rho}}(z_{i})\right)\\
    &\leq c\left[\sum_{j=1}^{l}2^{j(-s+\tau)}\sup_{t\in \Lambda_{2^{j}\overline{\rho}}(t_{i})}\dashint_{B_{2^{j}\overline{\rho}}(x_{i})}\frac{|u-(u)_{B_{2^{j}\overline{\rho}}(x_{i})}(t)|}{(2^{j}\overline{\rho})^{s+\tau}}\dx+\frac{2^{-2sl+s+\tau}}{\overline{\rho}^{s+\tau}}\sup_{t\in \Lambda_{2}}\dashint_{B_{2}}\frac{|u-(u)_{B_{2}}(t)|}{2^{s+\tau}}\dx\right]\\
    &\quad+\frac{c}{(2-r_{1})^{n+2s}}\left(\frac{2}{\overline{\rho}}\right)^{-s+\tau}\Tail_{\infty,2s}\left(\frac{u-(u)_{B_{2}}(t)}{2^{s+\tau}};Q_{2}\right),
\end{aligned}
\end{equation*}}
where $c=c(n,s)$. Then using \eqref{cz non pa : lambda0}, \eqref{cz non pa : exitradius} and the fact that 
\begin{equation*}
2^{-2s l}\left(\frac{2}{\overline{\rho}}\right)^{s+\tau}\leq \frac{c}{\mathcal{R}_{1,2}^{\frac{5n}{s}}}  
  \quad\text{and}\quad \sum_{i=1}^{\infty}2^{i\left(-s+\tau\right)}\leq c(n,s,\tau),
\end{equation*}
we estimate the above term as
\begin{equation}
\label{cz non pa :bcompu1}
    \Tail_{\infty,2s}\left(\frac{u-(u)_{B_{\overline{\rho}}(x_{i})}(t)}{\overline{\rho}^{s+\tau}};Q_{\overline{\rho}}(z_{i})\right)\leq c\lambda,
\end{equation}
where $c=c(n,s,L,q,\tau)$. In addition, H\"older's inequality and \eqref{cz non pa : exitradius} imply
\begin{equation}
\label{cz non pa :bcompu2}
     \left(\frac{1}{\tau}\dashint_{Q_{\overline{\rho}}(z_{i})}|D^{\tau}d_{s}u|^{2}\dmutt\right)^{\frac{1}{2}}\leq c\left(\dashint_{Q_{\overline{\rho}}(z_{i})}|D^{\tau}d_{s}u|^{p}\dmutt\right)^{\frac{1}{p}}\leq c\lambda.
\end{equation}
We combine the above two estimates to obtain \eqref{cz non pa : bcompassm1}. Similarly, with the aid of \eqref{cz non pa : tailho} and \eqref{cz non pa : tailestimate} along with \eqref{cz non pa : tildeqprop}, we get that
{\small\begin{equation*}
\begin{aligned}
    &\Tail_{2,s}\left(\frac{f-(f)_{B_{\overline{\rho}}(x_{i})}(t)}{(\overline{\rho})^{\tau}}\,;\,Q_{\overline{\rho}}(z_{i})\right)\\
    &\leq \Tail_{\tilde{q},s}\left(\frac{f-(f)_{B_{\overline{\rho}}(x_{i})}(t)}{(\overline{\rho})^{\tau}}\,;\,Q_{\overline{\rho}}(z_{i})\right)\\
    &\leq c_{q}\sum_{j=1}^{l}2^{i\left(-s+\tau+\frac{2s}{\tilde{q}}\right)}\left(\frac{1}{\tau}\dashint_{\mathcal{Q}_{2^{j}\overline{\rho}}(z_{i})}|D^{\tau}d_{0}f|^{\tilde{q}}\dmutt\right)^{\frac{1}{\tilde{q}}}+c_{q}2^{-s l}\left(\frac{2}{\overline{\rho}}\right)^{\tau+s}\left(\frac{1}{\tau}\dashint_{\mathcal{Q}_{2}}|D^{\tau}d_{0}f|^{q}\dmutt\right)^{\frac{1}{q}}\\
    &\quad+\frac{\tilde{c}}{(2-r_{1})^{n+2s}}\left(\frac{2}{\overline{\rho}}\right)^{-s+\tau+\frac{2s}{\tilde{q}}}\Tail_{q,s}\left(\frac{f-(f)_{B_{2}}(t)}{2^{\tau}};Q_{2}\right).
\end{aligned}
\end{equation*}}
Hence, as in \eqref{cz non pa :bcompu1} and \eqref{cz non pa :bcompu2},  there is a constant $c=c(n,s,L,q,\tau)$ such that
\begin{equation*}
    \left(\frac{1}{\tau}\dashint_{Q_{\overline{\rho}}(z_{i})}|D^{\tau}d_{0}f|^{2}\dmutt\right)^{\frac{1}{2}}+\Tail_{2,s}\left(\frac{f-(f)_{B_{\overline{\rho}}(x_{i})}(t)}{(\overline{\rho})^{\tau}}\,;\,Q_{\overline{\rho}}(z_{i})\right)\leq c\delta\lambda.
\end{equation*}
Additionally, \eqref{cz non pa : exitradius} and \eqref{cz non pa : rangeofkappa1} yield that
\begin{equation*}
    \left(\frac{1}{\tau}\dashint_{Q_{4\times 5^{\frac{2}{s}}\rho_{i}}(z_{i})}\left((4\times 5^{\frac{2}{s}}\rho_{i}\rho_{i})^{s-\tau}|G|\right)^{\gamma}\dmutt\right)^{\frac{2}{\gamma}}\leq c\delta\lambda.
\end{equation*}
We combine the above two inequality to show that \eqref{cz non pa : bcompassm2} holds.
\end{rmk}

%%%%%%%%%%%%%%%%%%%%%%%%%%%%%%%%new section

\section{\texorpdfstring{$L^{q}$}{Lq}-estimate of \texorpdfstring{$d_{s}u$}{dtaudsu}}
\label{cz non pa :lqsection}
In this section, we prove our main theorem. Since we have established comparison estimates and constructed coverings of upper level sets, we are able to obtain $L^{q}$-estimate of $d_{s}u$ with the estimate \eqref{cz non pa : main est} via a bootstrap argument as in \cite[Theorem 1.2]{BKc}.
Let us define
\begin{equation}
\label{cz non pa : ph}
p_{h}=2\left(1+\frac{s}{n}\right)^{h},\quad\mbox{ for }h=0,1,2,\ldots.
\end{equation}
Then there is a positive integer $h_{q}$ such that
\begin{equation}
\label{cz non pa : hq}
p_{h_{q}-1}< q \leq  p_{h_{q}}.
\end{equation}
We now prove the following lemma which is an essential ingredient to use a boot strap argument.
\begin{lem}
\label{cz non pa : ind prof}
Let $h\in\{0,1,\ldots,h_{q}-1\}$. Suppose that $u$ is a weak solution to \eqref{cz non pa : localized pb} with 
\begin{equation*}
    f\in L^{q}\left(\Lambda_{2}; L^{1}_{s}(\mathbb{R}^{n})\right)\quad\text{and}\quad\int_{\mathcal{Q}_{2}}|D^{\tau}d_{s}u|^{p_{h}}+|D^{\tau}d_{0}f|^{q}+|G|^{\frac{q\gamma}{b_{\tau}}}\dmutt<\infty.
\end{equation*}
Then there is a sufficiently small $\delta=\delta(n,s,L,q,\tau)\in(0,1]$ independent of $h$ such that if $A$ is $(\delta,2)$-vanishing in $Q_{2}$, then we have that
{\small\begin{equation}
\label{cz non pa : ind est}
\begin{aligned}
 \left(\dashint_{\mathcal{Q}_{1}}|D^{\tau}d_{s}u|^{\tilde{p}}\dmutt\right)^{\frac{1}{\tilde{p}}}
 &\leq c\left(\left(\dashint_{\mathcal{Q}_{2}}|D^{\tau}d_{s}u|^{p_{h}}\dmutt\right)^{\frac{1}{p_{h}}}+\Tail_{\infty,2s}\left(\frac{u-(u)_{B_{2}}(t)}{2^{s+\tau}}\,;\,Q_{2}\right)\right)\\
 &\quad+c\left(\sup_{t\in \Lambda_{2}}\dashint_{B_{2}}\frac{|u-(u)_{Q_{2}}|^{2}}{2^{2s+2\tau}}\dx\right)^{\frac{1}{2}}+c\left(\dashint_{\mathcal{Q}_{2}}\left(2^{{s-\tau}}|G|\right)^{\frac{\tilde{p}\gamma}{b_{\tau}}}\dmutt\right)^{\frac{b_{\tau}}{\tilde{p}\gamma}}\\
    &\quad+c\left(\left(\dashint_{\mathcal{Q}_{2}}|D^{\tau}d_{0}f|^{q}\dmutt\right)^{\frac{1}{q}}+\Tail_{q,s}\left(\frac{f-(f)_{B_{2}}(t)}{2^{\tau}}\,;\,Q_{2}\right)\right)
\end{aligned}
\end{equation}}\\
\noindent
for some constant $c=c(n,s,L,q,\tau)$, where the constant $b_{\tau}$ is defined in \eqref{cz non pa : constb} and
\begin{equation}
\label{cz non pa : tildep}
\tilde{p}=\begin{cases}
    p_{h+1}&\text{if }h<h_{q}-1,\\
    q&\text{if }h=h_{q}-1.
\end{cases}
\end{equation}
\end{lem}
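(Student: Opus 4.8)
The strategy is the standard Calder\'on--Zygmund iteration built on the covering of upper-level sets from Proposition \ref{cz non pa : coveringL} together with the comparison estimate of Lemma \ref{cz non pa : nsccomp}; the present lemma is precisely the ``one-step gain'' in the bootstrap, passing from exponent $p_h$ to $\tilde{p}$. First I would fix $1\le r_1<r_2\le 2$ and apply Proposition \ref{cz non pa : coveringL} with $p=p_h$ to the weak solution $u$ on $\mathcal{Q}_{r_1}$, producing the two families of disjoint cylinders $\{\mathcal{Q}_{\rho_i}(z_i)\}$ and $\{\mathcal{Q}_{\widetilde r_j}(x_{1,j},x_{2,j},t_{0,j})\}$ covering the level set $U_\lambda$ for $\lambda\ge\lambda_0$, where $\lambda_0$ is as in \eqref{cz non pa : lambda0} and depends (through $(r_2-r_1)^{-5n/s}$) on the radii. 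The point of keeping $r_1,r_2$ free is that the final iteration lemma, Lemma \ref{cz non pa : technicallemma}, with its exponent $\frac{5n}{s}$, will absorb exactly this dependence.

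Next I would estimate, for the cylinders $\mathcal{Q}_{\rho_i}(z_i)$ in the diagonal family, the excess density of $|D^\tau d_s u|^{p_h}$ on a slightly enlarged cylinder. Here the key input is Remark \ref{cz non pa : diagonalpartlambda}: the quantities \eqref{cz non pa : bcompassm1} and \eqref{cz non pa : bcompassm2} show that on $Q_{4\cdot 5^{2/s}\rho_i}(z_i)$ the rescaled solution $u/(c\lambda)$ satisfies the smallness hypotheses \eqref{cz non pa : compu}, \eqref{cz non pa : comp.9} of the comparison lemma (after the scaling of Lemma \ref{cz non pa : scalinglem}), provided $\delta$ is chosen small. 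Applying Lemma \ref{cz non pa : nsccomp} gives a solution $v$ of the homogeneous frozen-coefficient equation with $\|D^\tau d_s v\|_{L^\infty(\mathcal{Q}_{5\rho_i}(z_i))}\le c\lambda$ and $\frac1\tau\dashint_{\mathcal{Q}_{5\rho_i}(z_i)}|D^\tau d_s(u-v)|^2\,d\mu_{\tau,t}\le (\epsilon\lambda)^2$. Splitting $|D^\tau d_s u|\le |D^\tau d_s v| + |D^\tau d_s(u-v)|$, using that $|D^\tau d_s v|$ is bounded and that $u-v$ is small in $L^2$, one gets a good-$\lambda$ inequality: the measure of the part of $\mathcal{Q}_{\rho_i}(z_i)$ where $|D^\tau d_s u|>N\lambda$ (for $N$ large) is at most $c\epsilon^2$ times $\mu_{\tau,t}(\mathcal{Q}_{\rho_i}(z_i))$, which via \eqref{cz non pa : diagonal estimate} is controlled by the three source-term integrals on super-level sets. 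For the off-diagonal family one uses instead \eqref{cz non pa : pstra norm}, which already gives an $L^{p_{h\#}}$ bound of $D^\tau d_s u$ by $c_{od}\lambda$ on each such cylinder — no comparison argument is needed there; this is where the Sobolev gain $p_h\to p_{h\#}=p_{h+1}$ enters. Then I would multiply by $\lambda^{\tilde p-1}$, integrate over $\lambda\in(\lambda_0,\infty)$, use Fubini (this is where \eqref{cz non pa : tildeqprop2} $\tilde q<q$ is needed so that the exponents of the $f$- and $g$-terms stay subcritical), and sum over $i$ and $j$, to arrive at
\begin{equation*}
\int_{\mathcal{Q}_{r_1}}|D^\tau d_s u|^{\tilde p}\,d\mu_{\tau,t}\le \tfrac12\int_{\mathcal{Q}_{r_2}}|D^\tau d_s u|^{\tilde p}\,d\mu_{\tau,t}+\frac{c}{(r_2-r_1)^{\frac{5n}{s}\tilde p}}\,\mathcal{F},
\end{equation*}
where $\mathcal{F}$ collects $\tilde p$-th powers of the right-hand side of \eqref{cz non pa : ind est} (the $L^{p_h}$ norm of $D^\tau d_s u$, the two tails, the $\sup$-oscillation term, the $L^q$ norm of $D^\tau d_0 f$, and the $L^{\tilde p\gamma/b_\tau}$ norm of $G$); here one must check the finiteness of $\int_{\mathcal{Q}_2}|D^\tau d_s u|^{\tilde p}$ — for $h<h_q-1$ this follows since $\tilde p=p_{h+1}$ and the covering/Sobolev estimate bootstraps from the assumed $L^{p_h}$-integrability, while for $h=h_q-1$ one runs the same argument with a truncation $\min\{|D^\tau d_s u|,m\}$ and lets $m\to\infty$ by monotone convergence.

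Finally I would apply Lemma \ref{cz non pa : technicallemma} to the function $\phi(\rho)=\int_{\mathcal{Q}_\rho}|D^\tau d_s u|^{\tilde p}\,d\mu_{\tau,t}$ on $[1,2]$ (it is nonnegative and bounded by the previous finiteness step) to absorb the $\tfrac12\phi(r_2)$ term and conclude $\phi(1)\le c\,\mathcal{F}$, which is exactly \eqref{cz non pa : ind est} after taking $\tilde p$-th roots and using \eqref{cz non pa : size of b mut} to convert the $d\mu_{\tau,t}$-averages back to the stated normalized form. The main obstacle, and the step requiring the most care, is the bookkeeping in the off-diagonal part: one must verify that every off-diagonal cylinder surviving the elimination of Steps 3 and 5 of Proposition \ref{cz non pa : coveringL} genuinely contributes a controllable amount — that the $L^{p_\#}$ bound \eqref{cz non pa : pstra norm} combined with \eqref{cz non pa : diagonal estimate} and the combinatorial Lemma \ref{cz non pa : dist of ndl} produces a sum over $i,j$ that is dominated by the same three source integrals as the diagonal part, with constants uniform in $h$ (this uniformity in $h$, which is why $\delta$ can be chosen independent of $h$, rests on the fact that all constants in Proposition \ref{cz non pa : coveringL} and Lemma \ref{cz non pa : nsccomp} depend only on $n,s,L,q,\tau$ and not on the particular exponent $p_h\in[2,q]$, together with Remark \ref{cz non pa : cp}). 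A secondary technical point is matching the $G$-exponent: the covering gives control by $\int |G|^\gamma G_0\,d\mu_{\tau,t}$ with $G_0=(\int_{Q_2}|g|^\gamma)^{(b_\tau-\gamma)/\gamma}$, and one must check via H\"older that, after integrating in $\lambda$, this reorganizes into the $L^{\tilde p\gamma/b_\tau}$-norm of $G$ appearing in \eqref{cz non pa : ind est}.
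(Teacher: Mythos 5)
Your outline correctly identifies the overall architecture: cover the superlevel sets with Proposition~\ref{cz non pa : coveringL} at exponent $p=p_h$, use Remark~\ref{cz non pa : diagonalpartlambda} to verify the smallness hypotheses of Lemma~\ref{cz non pa : nsccomp} on the diagonal cylinders, exploit the weak $(1,1)$ bound together with the $L^\infty$-control of the comparison function $v$ on the diagonal part, exploit the $L^{p_{h\#}}$-bound \eqref{cz non pa : pstra norm} on the off-diagonal part, integrate in $\lambda$ via Fubini (noting $\tilde q<q$), and close with the iteration Lemma~\ref{cz non pa : technicallemma}. The remark about the uniformity in $h$ being inherited from the fact that all covering/comparison constants depend only on $(n,s,L,q,\tau)$ and not on $p_h\in[2,q]$ is exactly right and is an important point. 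However, there is one genuine gap.

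Your treatment of the a priori finiteness of $\int_{\mathcal{Q}_2}|D^\tau d_s u|^{\tilde p}\,d\mu_{\tau,t}$ is circular for $h<h_q-1$. You propose to dispense with truncation in that range on the grounds that ``the covering/Sobolev estimate bootstraps from the assumed $L^{p_h}$-integrability,'' but the content of the lemma is precisely that $D^\tau d_s u\in L^{p_{h+1}}$; one cannot invoke this before it is proved. The off-diagonal bound \eqref{cz non pa : pstra norm} only controls $|D^\tau d_s u|$ on the off-diagonal cylinders of the covering, not on all of $\mathcal{Q}_2$, and the iteration Lemma~\ref{cz non pa : technicallemma} requires the function $\phi$ to be finite on $[1,2]$ as a hypothesis, which is not yet known. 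The paper resolves this by working throughout with the truncation $|D^\tau d_s u|_N=\min\{|D^\tau d_s u|,N\}$, defining $\phi_N(r)=\bigl(\dashint_{\mathcal{Q}_r}|D^\tau d_s u|_N^{\tilde p}\,d\mu_{\tau,t}\bigr)^{1/\tilde p}$ for \emph{every} $h$ (not only $h=h_q-1$), proving the reverse-H\"older-type inequality for $\phi_N$ with constants independent of $N$, applying the iteration lemma to $\phi_N$, and only then passing $N\to\infty$ by monotone convergence. You should adopt the same uniform truncation; otherwise the bootstrap does not get off the ground for any $h<h_q-1$.

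A minor point of difference (not a gap): you state the decay inequality for the unnormalized integrals $\int_{\mathcal{Q}_{r_i}}|D^\tau d_s u|^{\tilde p}$, while the paper works with the averaged and $\tilde p$-th-rooted quantity $\phi_N$; both are compatible with Lemma~\ref{cz non pa : technicallemma} and lead to the same conclusion after using \eqref{cz non pa : size of b mut}.
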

\begin{proof}
Let us first fix $1\leq r_{1}<r_{2}\leq 2$ and $\epsilon>0$. Then we select $\lambda_{0}$ as given in Lemma \ref{cz non pa : coveringL} with $p=p_{h}$ and select $\delta=\delta(n,s,L,\epsilon)$ determined in Lemma \ref{cz non pa : nsccomp}. For any $N\geq \lambda_{0}$, we define a function $\phi:[1,2]\to\mathbb{R}$ by
\begin{equation}
\label{cz non pa : phifunt}
\phi_{N}(r)=\left(\dashint_{\mathcal{Q}_{r}}|D^{\tau}d_{s}u|_{N}^{\tilde{p}}\dmutt\right)^{\frac{1}{\tilde{p}}},
\end{equation}
where
$|D^{\tau}d_{s}u|_{N}=\min\left\{|D^{\tau}d_{s}u|,N\right\}$. We now claim that for $N\geq\lambda_{0}$, the following holds
\begin{equation}
\label{cz non pa : inddesried}
    \phi_{N}(r_{1})\leq \frac{\phi_{N}(r_{2})}{2}+c\lambda_{0}+c\left(\dashint_{\mathcal{Q}_{2}}\left(2^{{s-\tau}}|G|\right)^{\frac{\tilde{p}\gamma}{b_{\tau}}}\dmutt\right)^{\frac{b_{\tau}}{\tilde{p}\gamma}}
\end{equation}
for some constant $c=c(n,s,L,q,\tau)$. Using Fubini's theorem, we observe that
\begin{equation*}
\begin{aligned}
    \int_{\mathcal{Q}_{r_{1}}}|D^{\tau}d_{s}u|_{N}^{\tilde{p}}\dmutt&=\int_{0}^{\infty}\tilde{p}\lambda^{\tilde{p}-1}\mu_{\tau,t}\left(\left\{(x,y,t)\in\mathcal{Q}_{r_{1}}\,:\,|D^{\tau}d_{s}u|_{N}(x,y,t)>\lambda\right\}\right)\,d\lambda\\
    &=\int_{0}^{\mathcal{M}\lambda_{0}}\tilde{p}\lambda^{\tilde{p}-1}\mu_{\tau,t}\left(\left\{(x,y,t)\in\mathcal{Q}_{r_{1}}\,:\,|D^{\tau}d_{s}u|_{N}(x,y,t)>\lambda\right\}\right)\,d\lambda\\
    &\quad+\int_{\mathcal{M}\lambda_{0}}^{N}\tilde{p}\lambda^{\tilde{p}-1}\mu_{\tau,t}\left(\left\{(x,y,t)\in\mathcal{Q}_{r_{1}}\,:\,|D^{\tau}d_{s}u|_{N}(x,y,t)>\lambda\right\}\right)\,d\lambda\eqqcolon I_{1}+I_{2},
\end{aligned}
\end{equation*}
where $\mathcal{M}>1$ is a constant which will be determined later and $N>\mathcal{M}\lambda_{0}$. We now estimate $I_{1}$ and $I_{2}$.

\noindent
\textbf{Estimate of $I_{1}$.} 
A simple calculation yields that
\begin{equation*}
    I_{1}\leq \mu_{\tau,t}\left(\mathcal{Q}_{r_{1}}\right)(\mathcal{M}\lambda_{0})^{\tilde{p}}.
\end{equation*}
\textbf{Estimate of $I_{2}$.}
By a change of variable and \eqref{cz non pa : levelset} of Lemma \ref{cz non pa : coveringL} with $p=p_{h}$, we get that
{\small\begin{equation*}
\begin{aligned}
    I_{2}&=\int_{\lambda_{0}}^{N\mathcal{M}^{-1}}\tilde{p}\mathcal{M}(\mathcal{M}\lambda)^{\tilde{p}-1}\mu_{\tau,t}\left(\left\{(x,y,t)\in\mathcal{Q}_{r_{1}}\,:\,|D^{\tau}d_{s}u|_{N}(x,y,t)>\mathcal{M}\lambda\right\}\right)\,d\lambda\\
    &\leq \sum_{i\geq0} \int_{\lambda_{0}}^{N\mathcal{M}^{-1}}\tilde{p}\mathcal{M}(\mathcal{M}\lambda)^{\tilde{p}-1}\mu_{\tau,t}\left(\left\{(x,y,t)\in\mathcal{Q}_{5^{\frac{2}{s}}\rho_{i}}(z_{i})\,:\,|D^{\tau}d_{s}u|_{N}(x,y,t)>\mathcal{M}\lambda\right\}\right)\,d\lambda \\
    &\quad+\sum_{j\geq0} \int_{\lambda_{0}}^{N\mathcal{M}^{-1}}\tilde{p}\mathcal{M}(\mathcal{M}\lambda)^{\tilde{p}-1}\mu_{\tau,t}\left(\left\{(x,y,t)\in\mathcal{Q}_{5^{\frac{1}{s}}\overline{r}_{j}}(x_{1,j},x_{2,j},t_{0,j})\,:\,|D^{\tau}d_{s}u|_{N}(x,y,t)>\mathcal{M}\lambda\right\}\right)\,d\lambda\\
    &\eqqcolon I_{2,1}+I_{2,2},
\end{aligned}
\end{equation*}}\\
\noindent
where we have used the fact that 
\[\left\{(x,y,t)\in\mathcal{Q}_{r_{1}}\,:\,|D^{\tau}d_{s}u|_{N}(x,y,t)>\mathcal{M}\lambda\right\}\subset \left\{(x,y,t)\in\mathcal{Q}_{r_{1}}\,:\,|D^{\tau}d_{s}u|(x,y,t)>\lambda\right\}.\]
By assuming 
\begin{equation}
\label{cz non pa : mathcalMcond1}
    \mathcal{M}>c_{c}c_{d},
\end{equation}
where the constants $c_{c}=c_{c}(n,s,L,\tau)$ and $c_{d}=c_{d}(n,s,L,q,\tau)$ are given in Lemma \ref{cz non pa : nsccomp} with $\rho_{i}$ replaced by $5^{\frac{2}{s}}\rho_{i}$ and Remark \ref{cz non pa : diagonalpartlambda}, respectively, we now estimate $I_{2,1}$ as
\begin{equation}
\label{cz non pa : esti21b}
\begin{aligned}
    I_{2,1}&\leq \sum_{i} \int_{\lambda_{0}}^{N\mathcal{M}^{-1}}\tilde{p}\mathcal{M}(\mathcal{M}\lambda)^{\tilde{p}-1}\mu_{\tau,t}\left(\left\{(x,y,t)\in\mathcal{Q}_{5^{\frac{2}{s}}\rho_{i}}(z_{i})\,:\,|D^{\tau}d_{s}(u-v)|_{N}(x,y,t)>\mathcal{M}\lambda\right\}\right)\,d\lambda\\
    &\leq \sum_{i} \int_{\lambda_{0}}^{N\mathcal{M}^{-1}}\tilde{p}\mathcal{M}(\mathcal{M}\lambda)^{\tilde{p}-3}\int_{\mathcal{Q}_{5^{\frac{2}{s}}\rho_{i}}(z_{i})}|D^{\tau}d_{s}(u-v)|^{2}\dmutt\,d\lambda\\
    &\leq c\sum_{i} \int_{\lambda_{0}}^{N\mathcal{M}^{-1}}\mathcal{M}^{\tilde{p}-2}\lambda^{\tilde{p}-1}\epsilon^{2}\mu_{\tau,t}\left(\mathcal{Q}_{\rho_{i}}(z_{i})\right)\,d\lambda
\end{aligned}
\end{equation}
for some constant $c=c(n,s,L,q,\tau)$. In the above estimates, we have used weak 1-1 estimates, \eqref{cz non pa : nsccompres} and \eqref{cz non pa : doubling}. On the other hand, using weak 1-1 estimates, \eqref{cz non pa : pstra norm}, \eqref{cz non pa : paraconj} and \eqref{cz non pa : ph}, we have that
\begin{equation*}
\begin{aligned}
    I_{2,2}&\leq \tilde{p}\lambda^{\tilde{p}}\sum_{j} \int_{\lambda_{0}}^{N\mathcal{M}^{-1}}\mathcal{M}(\mathcal{M}\lambda)^{-(p_{h})_{\#}+\tilde{p}-1}\int_{\mathcal{Q}_{5^{\frac{1}{s}}\overline{r}_{j}}(x_{1,j},x_{2,j},t_{0,j})}|D^{\tau}d_{s}u|^{(p_{h})_{\#}}\dmutt\,d\lambda\\
    &\leq \frac{\tilde{p}(c_{od,h})^{(p_{h})_{\#}}\lambda^{\tilde{p}-1}}{\mathcal{M}^{\frac{2s}{n}}}\sum_{j} \int_{\lambda_{0}}^{N\mathcal{M}^{-1}}\mu_{\tau,t}\left(\mathcal{Q}_{\overline{r}_{j}}(x_{1,j},x_{2,j},t_{0,j})\right)\,d\lambda,
\end{aligned}
\end{equation*}
where the constant $c_{od,h}=c_{od,h}(n,s,p_{h},\tau)$ is determined in \eqref{cz non pa : pstra norm}. Note that the constant $c=\tilde{p}\times\max\limits_{h=0,1,\ldots,h_{q}-1}\left(c_{od,h}\right)^{q_{\#}}$ depends only on $n,s,L,q$ and $\tau$ and is bigger than the constant $\tilde{p}(c_{od,h})^{(p_{h})_{\#}}$, as $h_{q}$ depends only on $n,s,q$ and $\tau$ (see \eqref{cz non pa : hq}). Hence, we have that
\begin{equation}
\label{cz non pa : esti22b}
\begin{aligned}
    I_{2,2}\leq  \frac{c\lambda^{\tilde{p}-1}}{\mathcal{M}^{\frac{2s}{n}}}\sum_{j} \int_{\lambda_{0}}^{N\mathcal{M}^{-1}}\mu_{\tau,t}\left(\mathcal{Q}_{\overline{r}_{j}}(x_{1,j},x_{2,j},t_{0,j})\right)\,d\lambda
\end{aligned}
\end{equation}
for some constant $c=c(n,s,L,q,\tau)$.
Combine \eqref{cz non pa : diagonal estimate} with $p=p_{h}$, \eqref{cz non pa : esti21b} and \eqref{cz non pa : esti22b}  to see that
{\small\begin{equation*}
\begin{aligned}
    I_{2}&\leq c\int_{\lambda_{0}}^{N\mathcal{M}^{-1}}\lambda^{\tilde{p}-1}\left(\mathcal{M}^{\tilde{p}-3}\epsilon^{2}+\frac{1}{\mathcal{M}^{\frac{2s}{n}}}\right)\frac{c}{\lambda^{p_{h}}}\int_{\mathcal{Q}_{r_{2}}\cap\{|D^{\tau}d_{s}u|>a_{u}\lambda\}}|D^{\tau}d_{s}u|^{p_{h}}\dmutt\,d\lambda\\
    &\quad+c\int_{\lambda_{0}}^{N\mathcal{M}^{-1}}\lambda^{\tilde{p}-1}\left(\mathcal{M}^{\tilde{p}-3}\epsilon^{2}+\frac{1}{\mathcal{M}^{\frac{2s}{n}}}\right)\frac{c}{(\delta\lambda)^{\tilde{q}}}\int_{\mathcal{Q}_{r_{2}}\cap\{|D^{\tau}d_{0}f|>a_{f}\delta\lambda\}}|D^{\tau}d_{0}f|^{\tilde{q}}\dmutt\,d\lambda\\
    &\quad+c\int_{\lambda_{0}}^{N\mathcal{M}^{-1}}\lambda^{\tilde{p}-1}\left(\mathcal{M}^{\tilde{p}-3}\epsilon^{2}+\frac{1}{\mathcal{M}^{\frac{2s}{n}}}\right)\frac{c}{(\delta\lambda)^{b_{\tau}}}\int_{\mathcal{Q}_{r_{2}}\cap\{|G|^{\gamma}>(a_{g}\delta\lambda)^{b_{\tau}}G_{0}^{-1}\}}|G|^{\gamma}G_{0}\dmutt\,d\lambda\eqqcolon J_{1}+J_{2}+J_{3}
\end{aligned}
\end{equation*}}\\
\noindent
where the constants $a_{u}$, $a_{f}$ and $a_{g}$ are determined in \eqref{cz non pa : diagonal estimate},  and the constants $G_{0}$ and $\tilde{q}$ are defined in \eqref{cz non pa : constb} and \eqref{cz non pa : tildeq}, respectively. 
Using Fubini's theorem and taking $\mathcal{M}=\mathcal{M}(n,s,L,q,\tau)>1$ which satisfies \eqref{cz non pa : mathcalMcond1} and then choosing $\epsilon=\epsilon(n,s,L,q,\tau)\in(0,1)$, we have that
\begin{equation*}
    J_{1}\leq \frac{1}{10^{5nq}}\int_{\mathcal{Q}_{r_{2}}}|D^{\tau}d_{s}u|_{N}^{\tilde{p}}\dmutt.
\end{equation*}
On the other hand, if $\tilde{p}\leq\tilde{q}$, then we estimate
{\small\begin{equation}
\label{cz non pa : tildeqfirstcond}
\begin{aligned}
 J_{2}&\leq \frac{c}{\lambda_{0}^{q-\tilde{p}}}\int_{\lambda_{0}}^{N\mathcal{M}^{-1}}\lambda^{q-1}\left(\mathcal{M}^{\tilde{p}-3}\epsilon^{2}+\frac{1}{\mathcal{M}^{\frac{2s}{n}}}\right)\frac{c}{(\delta\lambda)^{\tilde{q}}}\int_{\mathcal{Q}_{r_{2}}\cap\{|D^{\tau}d_{0}f|>a\lambda\}}|D^{\tau}d_{0}f|^{\tilde{q}}\dmutt\,d\lambda\\
 &\leq \frac{c}{\lambda_{0}^{q-\tilde{p}}}\int_{\mathcal{Q}_{r_{2}}}|D^{\tau}d_{0}f|^{q}\dmutt\\
 &\leq c\left(\int_{\mathcal{Q}_{r_{2}}}|D^{\tau}d_{0}f|^{q}\dmutt\right)^{\frac{\tilde{p}}{q}},
\end{aligned}
\end{equation}}\\
\noindent
where we have used $1\leq\left(\frac{\lambda}{\lambda_{0}}\right)^{q-\tilde{p}}$, Fubini's theorem (thanks to the relation $\tilde{q}<q$, from \eqref{cz non pa : tildeqprop2}) and the fact that $\lambda_{0}^{\tilde{p}-q}\leq c\left(\int_{\mathcal{Q}_{r_{2}}}|D^{\tau}d_{0}f|^{q}\dmutt\right)^{\frac{\tilde{p}-q}{q}}$.
If $\tilde{p}>\tilde{q}$, Fubini's theorem and H\"older's inequality yield that
\begin{equation*}
    J_{2}\leq c\int_{\mathcal{Q}_{r_{2}}}|D^{\tau}d_{0}f|^{\tilde{p}}\dmutt\leq c\left(\int_{\mathcal{Q}_{r_{2}}}|D^{\tau}d_{0}f|^{q}\dmutt\right)^{\frac{\tilde{p}}{q}}.
\end{equation*}
Similarly,  we get that
\begin{equation*}
    J_{3}\leq c\int_{\mathcal{Q}_{r_{2}}}\left(|G|^{\gamma}G_{0}\right)^{\frac{\tilde{p}}{b_{\tau}}}\leq c\left(\dashint_{\mathcal{Q}_{2}}\left(2^{{s-\tau}}|G|\right)^{\frac{\tilde{p}\gamma}{b_{\tau}}}\dmutt\right)^{\frac{b_{\tau}}{\gamma}}.
\end{equation*}
Consequently, combining all the estimates $I_{1}$ and $I_{2}$ and recalling \eqref{cz non pa : size of b mut} and \eqref{cz non pa : phifunt}, we obtain the desired result \eqref{cz non pa : inddesried}. We recall the definitions of $\lambda_{0}$ given in \eqref{cz non pa : lambda0} and $G_{0}$ given in \eqref{cz non pa : constb}. Then using Lemma \ref{cz non pa : technicallemma} and passing to the limit $N\to 0$, we get \eqref{cz non pa : ind est}.
\end{proof}
With the aid of Lemma \ref{cz non pa : ind prof}, we now prove our main theorem.

%%%%%%%%%%%%%%%proof of main theorem

\noindent
\textbf{Proof of Theorem \ref{cz non pa : main thm}.} Let us fix $Q_{r}(z_{0})\Subset \Omega_{T}$ with $r\in(0,R]$. We now take
\begin{equation}
\label{cz non pa : tausigma}
\tau=\frac{\sigma q}{q-2}<\min\left\{s-\frac{2s}{q},1-s\right\}. 
\end{equation}
Let us choose $\delta=\delta(n,s,L,q,\sigma)$ determined in Lemma \ref{cz non pa : ind prof}. We claim that
\begin{equation}
\label{cz non pa : ind est1}
\begin{aligned}
 \left(\dashint_{\mathcal{Q}_{\frac{r}{2}}(z_{0})}|D^{\tau}d_{s}u|^{\tilde{p}}\dmutt\right)^{\frac{1}{\tilde{p}}}
 &\leq c\mathcal{R}(u,f,g,p_{0},\tilde{p},Q_{r}(z_{0}))
\end{aligned}
\end{equation}
for some constant $c=c(\mathsf{data})$, where $\tilde{p}$ is given in \eqref{cz non pa : tildep} with $h=0$ and we denote 
{\small\begin{equation*}
\begin{aligned}
    \mathcal{R}(u,f,g,p_{0},\tilde{p},Q_{r}(z_{0}))=&\left(\dashint_{\mathcal{Q}_{r}(z_{0})}|D^{\tau}d_{s}u|^{p_{0}}\dmutt\right)^{\frac{1}{p_{0}}}+\Tail_{\infty,2s}\left(\frac{u-(u)_{B_{r}(x_{0})}(t)}{r^{s+\tau}}\,;\,Q_{r}(z_{0})\right)\\
 &\quad+\left(\sup_{t\in \Lambda_{r}(t_{0})}\dashint_{B_{r}(x_{0})}\frac{|u-(u)_{Q_{r}(z_{0})}|^{2}}{r^{2s+2\tau}}\dx\right)^{\frac{1}{2}}+\left(\dashint_{\mathcal{Q}_{r}(z_{0})}\left(r^{\frac{s-\tau}{\tilde{p}}}|G|\right)^{\frac{\tilde{p}\gamma}{b_{\tau}}}\dmutt\right)^{\frac{b_{\tau}}{\tilde{p}\gamma}}\\
    &\quad+\left(\dashint_{\mathcal{Q}_{r}(z_{0})}|D^{\tau}d_{0}f|^{q}\dmutt\right)^{\frac{1}{q}}+\Tail_{q,s}\left(\frac{f-(f)_{B_{r}(x_{0})}(t)}{r^{\tau}}\,;\,Q_{r}(z_{0})\right).
\end{aligned}
\end{equation*}}\\
\noindent
To this end, we define for any $x,y\in \mathbb{R}^{n}$, $t\in\Lambda_{2}$ and $\xi\in\mathbb{R}$,
{\small\begin{equation*}
\begin{aligned}
    &\tilde{u}(x,t)=\frac{u\left(r_{s}x+x_{1},r_{s}^{2s}t+t_{1}\right)}{r_{s}^{s+\tau}},\quad\tilde{f}(x,t)=\frac{f\left(r_{s}x+x_{1},r_{s}^{2s}t+t_{1}\right)}{r_{s}^{\tau}},\\
    &\tilde{g}(x,t)=r_{s}^{s-\tau}g\left(r_{s}x+x_{1},r_{s}^{2s}t+t_{1}\right),\quad
    \tilde{A}(x,y,t)=A\left(r_{s}x+x_{1},r_{s}y+x_{1},r_{s}^{2s}t+t_{1}\right),\quad \tilde{\Phi}(\xi)=\frac{\Phi(r_{s}^{\tau}\xi)}{r_{s}^{\tau}}
\end{aligned}
\end{equation*}}\\
\noindent
where $z_{1}\in \overline{Q}_{\frac{r}{2}}(z_{0})$ and $r_{s}=\left(\frac{s(\sqrt{2}-1)}{4}\right)^{\frac{2}{s}}r$, in order to see that $\tilde{u}$ is a weak solution to \eqref{cz non pa : localized pb} with $f=\tilde{f}$ $g=\tilde{g}$, $A=\tilde{A}$ and $\Phi=\tilde{\Phi}$. Moreover, we observe that
\begin{equation}
\label{cz non pa : cylinderinc}
Q_{r_{s}}(z_{1})\subset Q_{\frac{r}{\sqrt{2}}}(z_{0}).
\end{equation}
We now apply Lemma \ref{cz non pa : ind prof} with $u=\tilde{u}$, $f=\tilde{f}$, $g=\tilde{g}$ and $h=0$, and use change of the variables to get that
\begin{equation}
\label{cz non pa : ind est2}
\begin{aligned}
 \left(\dashint_{\mathcal{Q}_{\frac{r_{s}}{2}}(z_{1})}|D^{\tau}d_{s}u|^{\tilde{p}}\dmutt\right)^{\frac{1}{\tilde{p}}}
 &\leq c\mathcal{R}(u,f,g,p_{0},\tilde{p},Q_{r_{s}}(z_{1}))
\end{aligned}
\end{equation}
for some constant $c=c(\mathsf{data})$. After a few algebraic calculations along with Lemma \ref{cz non pa : taillem}, \eqref{cz non pa : size of b mut} and  \eqref{cz non pa : cylinderinc}, the expression in 
 \eqref{cz non pa : ind est2} is estimated as 
\begin{equation}
\label{cz non pa : ind est22}
\begin{aligned}
 \left(\dashint_{\mathcal{Q}_{\frac{r_{s}}{2}}(z_{1})}|D^{\tau}d_{s}u|^{\tilde{p}}\dmutt\right)^{\frac{1}{\tilde{p}}}
 &\leq c\mathcal{R}\left(u,f,g,p_{0},\tilde{p},Q_{\frac{r}{\sqrt{2}}}(z_{0}))\right)
\end{aligned}
\end{equation}
On the other hand, for any $\mathcal{Q}_{\frac{r_{s}}{8\sqrt{n}}}\equiv\mathcal{B}_{\frac{r_{s}}{8\sqrt{n}}}\left(x_{1},x_{2}\right)\times\Lambda_{\frac{r_{s}}{8\sqrt{n}}}(t_{2})$ with $(x_{1},t_{2}),(x_{2},t_{2})\in \overline{Q}_{\frac{r}{2}}(z_{0})$  satisfying \eqref{cz non pa : lem.cov.step4.rhsdist}, we use Lemma \ref{cz non pa : off rever lem} to obtain that
{\small\begin{equation*}
\begin{aligned}
    \left(\dashint_{\mathcal{Q}_{\frac{r_{s}}{8\sqrt{n}}}}|D^{\tau}d_{s}u|^{\tilde{p}}\dmutt\right)^{\frac{1}{\tilde{p}}}&\leq c\left(\dashint_{\mathcal{Q}_{\frac{r_{s}}{8\sqrt{n}}}}|D^{\tau}d_{s}u|^{p_{0}}\dmutt\right)^{\frac{1}{p_{0}}}+c\left[\sum_{d=1}^{2}E_{p,\tau}\left(u\,;\,{Q}_{\frac{r_{s}}{8\sqrt{n}}}(x_{d},t_{2})\right)\right]
\end{aligned}
\end{equation*}}
\\
\noindent
for some constant $c=c(\mathsf{data})$. As in \eqref{cz non pa : ind est22} along with Lemma \ref{cz non pa : taillem}, \eqref{cz non pa : inclusion measure} and \eqref{cz non pa : cylinderinc}, we deduce that
{\small\begin{equation}
\label{cz non pa : ind est3}
\begin{aligned}
 \left(\dashint_{\mathcal{Q}_{\frac{r_{s}}{8\sqrt{n}}}}|D^{\tau}d_{s}u|^{\tilde{p}}\dmutt\right)^{\frac{1}{\tilde{p}}}
 &\leq c\left(\dashint_{Q_{\frac{r}{\sqrt{2}}}(z_{0})}|D^{\tau}d_{s}u|^{p_{0}}\dmutt\right)^{\frac{1}{p_{0}}}\\
 &\quad+c\left(\sup_{t\in \Lambda_{\frac{r}{\sqrt{2}}}(t_{0})}\dashint_{B_{\frac{r}{\sqrt{2}}}(x_{0})}\frac{\big|u-(u)_{Q_{\frac{r}{\sqrt{2}}}(z_{0})}\big|^{2}}{(\frac{r}{\sqrt{2}})^{2s+2\tau}}\dx\right)^{\frac{1}{2}}.
\end{aligned}
\end{equation}}
\\
\noindent
Since $\overline{\mathcal{Q}}_{\frac{r}{2}}(z_{0})$ is a compact set, there are finite mutually disjoint open sets $\mathcal{Q}_{r_{s}}(z_{1,i})$ and $\mathcal{Q}_{r_{s}}(x_{1,j},x_{2,j},t_{2,j})$ for some points $z_{1,i}, (x_{1,j},t_{2,j}), (x_{2,j},t_{2,j})\in \overline{\mathcal{Q}}_{\frac{r}{2}}(z_{0})$ such that
\begin{equation*}
    \overline{\mathcal{Q}}_{\frac{r}{2}}(z_{0})\subset \left(\bigcup_{i}\mathcal{Q}_{r_{s}}(z_{1,i})\right)\bigcup\left(\bigcup_{j}\mathcal{Q}_{r_{s}}(x_{1,j},x_{2,j},t_{2,j})\right)\subset Q_{\frac{r}{\sqrt{2}}}(z_{0}),
\end{equation*}
and $\mathcal{Q}_{r_{s}}(x_{1,j},x_{2,j},t_{2,j})$ satisfies \eqref{cz non pa : lem.cov.step4.rhsdist}. Combine \eqref{cz non pa : ind est1} and \eqref{cz non pa : ind est3} to see that
\begin{equation}
\label{cz non pa : ind est11}
\begin{aligned}
 \left(\dashint_{\mathcal{Q}_{\frac{r}{2}}(z_{0})}|D^{\tau}d_{s}u|^{\tilde{p}}\dmutt\right)^{\frac{1}{\tilde{p}}}
 &\leq c\mathcal{R}\left(u,f,g,p_{0},\tilde{p},Q_{\frac{r}{\sqrt{2}}}(z_{0}))\right).
\end{aligned}
\end{equation}
Consequently, after a few simple calculations with Lemma \ref{cz non pa : taillem}, we estimate the right-hand side of \eqref{cz non pa : ind est11} to get that \eqref{cz non pa : ind est1} holds. In addition, using the standard covering argument along with \eqref{cz non pa : ind est3} and \eqref{cz non pa : ind est11}, we prove that $D^{\tau}d_{s}u\in L^{\tilde{p}}_{\mathrm{loc}}\left(\dmutt;\Omega\times\Omega\times(0,T)\right)$. If $h=0$, then by recalling \eqref{cz non pa : size of b mut}, \eqref{cz non pa : equbwGg}, \eqref{cz non pa : constb}, \eqref{cz non pa : hq}, \eqref{cz non pa : tildep} and \eqref{cz non pa : tausigma}, we obtain $d_{s}u\in L^{q}_{\mathrm{loc}}\left(\dmutt;\Omega\times\Omega\times(0,T)\right)$ and the desired estimate \eqref{cz non pa : main est}. Let us assume that $h>0$. We have shown that $D^{\tau}d_{s}u\in L^{p_{1}}_{\mathrm{loc}}\left(\dmutt;\Omega\times\Omega\times(0,T)\right)$ and \eqref{cz non pa : ind est3} and \eqref{cz non pa : ind est11} with $\tilde{p}=p_{1}$. Thus, by following the same line as in the proof for \eqref{cz non pa : ind est3} and \eqref{cz non pa : ind est11} with $p_{0}$ replaced by $p_{1}$, we have that
{\small\begin{equation}
\label{cz non pa : ind est32}
\begin{aligned}
 \left(\dashint_{\mathcal{Q}_{r_{s}}}|D^{\tau}d_{s}u|^{\tilde{p}_{1}}\dmutt\right)^{\frac{1}{\tilde{p}_{1}}}
 &\leq c\left(\dashint_{Q_{\frac{r}{\sqrt{2}}}(z_{0})}|D^{\tau}d_{s}u|^{p_{1}}\dmutt\right)^{\frac{1}{p_{1}}}\\
 &\quad+c\left(\sup_{t\in \Lambda_{\frac{r}{\sqrt{2}}}(t_{0})}\dashint_{B_{\frac{r}{\sqrt{2}}}(x_{0})}\frac{\big|u-(u)_{Q_{\frac{r}{\sqrt{2}}}(z_{0})}\big|^{2}}{(\frac{r}{\sqrt{2}})^{2s+2\tau}}\dx\right)^{\frac{1}{2}},
\end{aligned}
\end{equation}}
\\
for any $\mathcal{Q}_{\frac{r_{s}}{8\sqrt{n}}} =\mathcal{B}_{\frac{r_{s}}{8\sqrt{n}}}\left(x_{1},x_{2}\right)\times\Lambda_{\frac{r_{s}}{8\sqrt{n}}}(t_{2})$ with $(x_{1},t_{2}),(x_{2},t_{2})\in \overline{Q}_{\frac{r}{2}}(z_{0})$  satisfying \eqref{cz non pa : lem.cov.step4.rhsdist} and
{\small\begin{equation}
\label{cz non pa : ind est112}
\begin{aligned}
 \left(\dashint_{\mathcal{Q}_{\frac{r}{2}}(z_{0})}|D^{\tau}d_{s}u|^{\tilde{p}_{1}}\dmutt\right)^{\frac{1}{\tilde{p}_{1}}}
 &\leq c\mathcal{R}\left(u,f,g,p_{1},\tilde{p}_{1},Q_{\frac{r}{\sqrt{2}}}(z_{0}))\right)
\end{aligned}
\end{equation}}\\
\noindent
where $\tilde{p}_{1}$ is the constant defined in \eqref{cz non pa : tildep} with $h=1$. As a result, plugging \eqref{cz non pa : ind est11} to the first term in $\mathcal{R}\left(u,f,g,p_{1},\tilde{p}_{1},Q_{\frac{r}{\sqrt{2}}}(z_{0}))\right)$ and then after a few simple calculations with Lemma \ref{cz non pa : taillem}, we obtain \eqref{cz non pa : ind est1} with $p_{0}=p_{1}$ and $\tilde{p}=\tilde{p}_{1}$. In addition, using the standard covering argument along with \eqref{cz non pa : ind est32} and \eqref{cz non pa : ind est112}, we prove that $D^{\tau}d_{s}u\in L^{\tilde{p}_{1}}_{\mathrm{loc}}\left(\dmutt;\Omega\times\Omega_{T}\right)$. By iterating this procedure $l_{q}-1$ times, we obtain $D^{\tau}d_{s}u\in L^{q}_{\mathrm{loc}}\left(\dmutt;\Omega\times\Omega_{T}\right)$ with the estimate \eqref{cz non pa : ind est1} with $\tilde{p}=q$. By recalling \eqref{cz non pa : size of b mut}, \eqref{cz non pa : equbwGg}, \eqref{cz non pa : constb}, \eqref{cz non pa : hq}, \eqref{cz non pa : tildep} and \eqref{cz non pa : tausigma}, we conclude that $d_{s}u\in L^{q}_{\mathrm{loc}}\left(\dmutt;\Omega\times\Omega_{T}\right)$ with the desired estimate \eqref{cz non pa : main est}. 
\qed\\

We finish this section by proving Theorem \ref{cz non pa : main thm wrt g}.

\noindent
\textbf{Proof of Theorem \ref{cz non pa : main thm wrt g}.}
We note that in the proof of Lemma \ref{cz non pa : ind prof} and Theorem \ref{cz non pa : main thm}, we only use the condition \eqref{cz non pa : tautail} to apply Fubini's Theorem on the term $D^{\tau}d_{0}f$ (see \eqref{cz non pa : tildeqfirstcond}). Taking into account Remark \ref{cz non pa : taurmk}, if $f=0$, then the constant $\tau$ can be chosen in $(0,\min\{s,1-s\})$. Consequently, we allow for choosing $\sigma\in\left(0,\left(1-\frac{2}{q}\right)\min\{s,1-s\}\right)$ considering \eqref{cz non pa : tausigma} and we are able to prove the Theorem \ref{cz non pa : main thm wrt g} by following the same lines as in the proof of Theorem \ref{cz non pa : main thm} with $f=0$.
\qed

\appendix

\section{self-improving property of nonlocal parabolic equations}
\label{cz non pa : appen a}
\noindent
In this appendix, we prove a self-improving property of a weak solution $u$ to \eqref{cz non pa : eq1} with $f=g=0$. Throughout this section, we take 
\begin{equation}
\label{cz non pa : appentau}
\tau_{0}=\min\left\{\frac{s}{2},\frac{1-s}{2}\right\}.
\end{equation}
Before proving Lemma \ref{cz non pa : self impro}, we are going to prove a reverse H\"older's inequality on diagonal parts and obtain another covering lemma.

%%%%%%%%%%%%%%%%%%%%%%proof of Sobolev-Poincar'e inequality

With the aid of the gluing lemma, we first obtain the following inequality.
\begin{lem}
\label{cz non pa : sobolevpoincare}
Let $u$ be a weak solution to \eqref{cz non pa : eq1} and let $Q_{\rho}(z_{0})\Subset \Omega_{T}$.
Then  for any $\varsigma\in(0,1]$, we have
{\small\begin{equation}
\label{cz non pa : paraSPI}
\begin{aligned}
    &\dashint_{Q_{\rho}(z_{0})}\frac{|u-(u)_{Q_{\rho}(z_{0})}|^{2}}{\rho^{2s+2\tau}}\dz\\
    &\leq \varsigma\sup_{t\in \Lambda_{\rho}(t_{0})}\dashint_{B_{\rho}(x_{0})}\frac{|u-(u)_{B_{\rho}(x_{0})}(t)|^{2}}{\rho^{2s+2\tau_{0}}}\dx+\frac{c}{\varsigma^{\beta}}\left(\frac{1}{\tau_{0}}\dashint_{\mathcal{Q}_{\rho}(z_{0})}|D^{\tau_{0}}d_{s}u|^{\gamma}\,d\mu_{\tau_{0},t}\right)^{\frac{2}{\gamma}}\\
    &\quad+c\frac{1}{\tau_{0}}\dashint_{\mathcal{Q}_{\rho}(z_{0})}|D^{\tau_{0}}d_{0}f|^{2}\,d\mu_{\tau_{0},t}+ c\Tail_{1,2s}\left(\frac{u-(u)_{B_{\rho}(x_{0})}(t)}{\rho^{s+\tau_{0}}};Q_{\rho}(z_{0})\right)^{2}\\
    &\quad+c\Tail_{1,s}\left(\frac{f-(f)_{B_{\rho}(x_{0})}(t)}{\rho^{\tau_{0}}};Q_{\rho}(z_{0})\right)^{2}+c\left(\frac{1}{\tau_{0}}\dashint_{\mathcal{Q}_{2\rho}(z_{0})}\left((2\rho)^{s-\tau}|G|\right)^{\gamma}\,d\mu_{\tau_{0},t}\right)^{\frac{2}{\gamma}}
\end{aligned}
\end{equation}}
for some constants $c=c(n,s,L)$ and $\beta=\beta(n,s)$, where the constant $\gamma$ is defined in \eqref{cz non pa : gamma}.
\end{lem}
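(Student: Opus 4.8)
The plan is to derive the Sobolev--Poincar\'e-type inequality \eqref{cz non pa : paraSPI} by first splitting the left-hand side into a ``spatial'' fluctuation and a ``temporal'' fluctuation, and then controlling each separately. Concretely, with $\psi\in C_c^\infty(B_{3\rho/4}(x_0))$ the cutoff from the gluing Lemma \ref{cz non pa : GL} and $(u)_{B_\rho}^\psi(t)$ the associated weighted spatial average, I would write
\[
|u-(u)_{Q_\rho(z_0)}|^2\leq c\,|u(x,t)-(u)_{B_\rho(x_0)}(t)|^2+c\,|(u)_{B_\rho(x_0)}(t)-(u)_{Q_\rho(z_0)}|^2,
\]
so that after dividing by $\rho^{2s+2\tau}$ and averaging over $Q_\rho(z_0)$ the first term is handled by a fractional Sobolev--Poincar\'e inequality in the space variable (for a.e.\ fixed $t$) which, combined with H\"older's inequality on the measure $\mu_{\tau_0,t}$, produces the $L^\gamma$-integral of $D^{\tau_0}d_su$ together with the $\tau_0^{-1}$ weight. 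Here the parameter $\varsigma$ enters through a Young-type splitting: I would interpolate between the sup-norm fluctuation at scale $\tau_0$ (which I pay with weight $\varsigma$) and the $L^\gamma$ fractional-gradient integral (paid with weight $\varsigma^{-\beta}$), which is exactly the structure of a Gagliardo--Nirenberg/Sobolev--Poincar\'e interpolation inequality with the parabolic exponent $\gamma=\frac{2(n+2s)}{n+4s}$ from \eqref{cz non pa : gamma}.

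The temporal term $|(u)_{B_\rho(x_0)}(t)-(u)_{Q_\rho(z_0)}|$ is where the gluing Lemma \ref{cz non pa : GL} does the work: first replace the genuine average $(u)_{B_\rho}(t)$ by the weighted average $(u)^\psi_{B_\rho}(t)$ (the error is again a spatial Poincar\'e term), then apply Lemma \ref{cz non pa : GL} to bound $\sup_{t_1,t_2}|(u)^\psi_{B_\rho}(t_1)-(u)^\psi_{B_\rho}(t_2)|$ by the five integral quantities appearing there: a ``truncated'' fractional-gradient integral of $u$ of order $2s-1$, a tail of $u$, the analogous two terms for $f$, and the $L^1$-integral of $g$. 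Each of these must then be converted into the normalized quantities on the right-hand side of \eqref{cz non pa : paraSPI}. The $u$-integral of order $2s-1$ and the $f$-integral of order $s-1$ are dominated, via $|x-y|\le 2\rho$ and H\"older's inequality, by the $L^\gamma$ (resp.\ $L^2$) integrals of $D^{\tau_0}d_su$ and $D^{\tau_0}d_0f$ against $\mu_{\tau_0,t}$, using the relation \eqref{cz non pa : equbwGg} for the $g$-term and the definition $\tau=\tau_0$ in this appendix (note \eqref{cz non pa : appentau}); the tail terms become the $\Tail_{1,2s}$ and $\Tail_{1,s}$ quantities directly.

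The main obstacle I anticipate is bookkeeping of the scaling exponents: one must make sure that every occurrence of $\rho$ to some power, and every factor of $\tau_0$ coming from \eqref{cz non pa : size of b mut} and \eqref{cz non pa : inclusion measure}, lines up so that the final inequality is genuinely scale-invariant (the choice $\tau_0=\min\{s/2,(1-s)/2\}$ in \eqref{cz non pa : appentau} is precisely what makes $2s-1$-order and $s-1$-order difference quotients absorbable by $s+\tau_0$- and $\tau_0$-order ones, since it forces $2s-1 < s+\tau_0$ and $s-1<\tau_0$ after using $|x-y|\le 2\rho$). A secondary subtlety is the interplay between $\varsigma$ and the sup-norm term: the sup-norm on the right is taken at the \emph{smaller} exponent $2s+2\tau_0$ rather than $2s+2\tau$, so one needs $\tau_0\le\tau$ (which holds for the admissible range of $\tau$) and a trivial monotonicity estimate like \eqref{cz non pa : timesize} to pass between the two normalizations. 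Once these scalings are fixed, the proof is a direct assembly of Lemma \ref{cz non pa : GL}, a spatial fractional Sobolev--Poincar\'e inequality, H\"older's inequality on $\mu_{\tau_0,t}$, and Young's inequality to introduce $\varsigma$.
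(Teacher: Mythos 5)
Your proposal follows essentially the same route as the paper: the same split of $|u-(u)_{Q_\rho}|^2$ into a spatial-oscillation piece and a temporal-oscillation piece, the same use of the parabolic embedding (Lemma \ref{cz non pa : embed lem}) together with Young's inequality to introduce the parameter $\varsigma$, the same passage to the weighted average $(u)^\psi_{B_\rho}(t)$ and invocation of the gluing Lemma \ref{cz non pa : GL}, and the same H\"older-type conversion of the resulting five terms into the $\mu_{\tau_0,t}$-normalized quantities and the $\Tail_{1,2s}$, $\Tail_{1,s}$ tails. One small inaccuracy worth flagging: your stated rationale for the choice of $\tau_0$ --- that it forces $2s-1<s+\tau_0$ and $s-1<\tau_0$ --- is a vacuous condition (both hold for any $\tau_0>0$ since $s<1$); the binding constraint that $\tau_0\le\frac{1-s}{2}$ actually buys is $s+\tau_0<1$, which is what makes the local spatial integrals of order $n+s+\tau_0-1<n$ convergent and produces the factor $\frac{1}{1-(s+\tau_0)}$ appearing in the paper's estimate of $I_{2,2,1}$.
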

\begin{proof}
We may assume that $z_{0}=0$. Note that
\begin{flalign*}
\dashint_{Q_{\rho}}|u-(u)_{Q_{\rho}}|^{2}\dz&\leq c\dashint_{Q_{\rho}}|u-(u)_{B_{\rho}}(t)|^{2}\dz+c\dashint_{Q_{\rho}}|(u)_{B_{\rho}}(t)-(u)_{Q_{\rho}}|^{2}\dz\eqqcolon I_{1}+I_{2}.
\end{flalign*}
In light of Lemma \ref{cz non pa : embed lem} with $h$, $p$ and $s$, replaced by $u$, $\gamma$ and $\tilde{s}\coloneqq s+\tau_{0}\left(1-\frac{2}{\gamma}\right)$, respectively, we first estimate $I_{1}$ as
{\small\begin{equation*}
\begin{aligned}
    I_{1}\leq c\left(\dashint_{Q_{\rho}}|u-(u)_{B_{\rho}}(t)|^{\tilde{\gamma}}\dz\right)^{\frac{2}{\tilde{\gamma}}} \leq  c\left(\frac{\rho^{\gamma(s+\tau_{0})}}{\tau_{0}}\dashint_{\mathcal{Q}_{\rho}}|D^{\tau_{0}}d_{s}u|^{\gamma}\,d\mu_{\tau_{0},t}\right)^{\frac{2}{\tilde{\gamma}}}\times\left(\sup_{t\in \Lambda_{\rho}}\dashint_{B_{\rho}}|u-(u)_{B_{\rho}(t)}|^{2}\dx\right)^{\frac{2\tilde{s}\gamma}{n\tilde{\gamma}}},
\end{aligned}
\end{equation*}}\\
\noindent
where $c=c(n,s)$  and $\tilde{\gamma}\coloneqq\gamma\left(1+\frac{2\tilde{s}}{n}\right)$. We next apply Young's inequality along with the fact that $\frac{\gamma}{\tilde{\gamma}}+\frac{2\tilde{s}\gamma}{n\tilde{\gamma}}=1$, in order to get 
\begin{equation*}
\begin{aligned}
    I_{1}\leq \vartheta\left(\frac{\rho^{\gamma(s+\tau_{0})}}{\tau_{0}}\dashint_{\mathcal{Q}_{\rho}}|D^{\tau_{0}}d_{s}u|^{\gamma}\,d\mu_{\tau_{0},t}\right)^{\frac{2}{\gamma}}+\frac{c}{\vartheta^{\beta}}\sup_{t\in \Lambda_{\rho}}\dashint_{B_{\rho}}|u-(u)_{B_{\rho}(t)}|^{2}\dx
\end{aligned}
\end{equation*}
for any $\vartheta\in(0,1]$ and for some constant $\beta=\beta(n,s)$. We now estimate $I_{2}$ as
\begin{equation*}
\begin{aligned}
I_{2}&\leq c\dashiint_{\Lambda_{\rho}\times \Lambda_{\rho}}|(u)_{B_{\rho}}(t)-(u)_{B_{\rho}}(t')|^{2}\dt\dt'\\
&\leq  c\dashiint_{\Lambda_{\rho}\times \Lambda_{\rho}}\left|(u)_{B_{\rho}}(t)-(u)_{B_{\rho}}^{\psi}(t)\right|^{2}\dt\dt'+c\dashiint_{\Lambda_{\rho}\times \Lambda_{\rho}}\left|(u)_{B_{\rho}}^{\psi}(t)-(u)^{\psi}_{B_{\rho}}(t')\right|^{2}\dt\dt'\\
&\quad+c\dashiint_{\Lambda_{\rho}\times \Lambda_{\rho}}\left|(u)_{B_{\rho}}^{\psi}(t')-(u)_{B_{\rho}}(t')\right|^{2}\dt\dt'\\
&\leq c\dashint_{\Lambda_{\rho}}\left|(u)_{B_{\rho}}(t)-(u)_{B_{\rho}}^{\psi}(t)\right|^{2}\dt+c\sup_{t_{1},t_{2}\in  \Lambda_{\rho}}\left|(u)_{B_{\rho}}^{\psi}(t_{1})-(u)^{\psi}_{B_{\rho}}(t_{2})\right|^{2}\eqqcolon I_{2,1}+I_{2,2},
\end{aligned}
\end{equation*}
where $\psi$ is the given function in Lemma \ref{cz non pa : GL} and 
\begin{equation*}
    (u)_{B_{\rho}}^{\psi}(t)\equiv\frac{1}{\|\psi\|_{L^{1}}}\int_{B_{\rho}}u(x,t)\psi(x)\dx.
\end{equation*}
Using the fact that $\|\psi\|_{L^{1}}\approx_{n} |B_{\rho}|$ and H\"older's inequality, we have 
\begin{equation*}
\begin{aligned}
    I_{2,1}&\leq c\dashint_{\Lambda_{\rho}}\left|\frac{1}{\|\psi\|_{L^{1}}}\int_{B_{\rho}}\left(u-(u)_{B_{\rho}}(t)\right)\dx\right|^{2}\dt\leq cI_{1}
\end{aligned}
\end{equation*}
for some constant $c=c(n,s)$. Using Lemma \ref{cz non pa : GL}, we get that
\begin{equation*}
\begin{aligned}
    I_{2,2}^{\frac{1}{2}}&\leq c\rho^{2s-1}\dashint_{Q_{\rho}}\int_{B_{\rho}}\frac{|u(x,t)-u(y,t)|}{|x-y|^{n+2s-1}}\dy\dz+c\rho^{2s}\dashint_{Q_{\rho}}\int_{\mathbb{R}^{n}\setminus B_{\rho}}\frac{|u(x,t)-u(y,t)|}{|y|^{n+2s}}\dy\dz\\
    &\quad+c\rho^{2s-1}\dashint_{Q_{\rho}}\int_{B_{\rho}}\frac{|f(x,t)-f(y,t)|}{|x-y|^{n+s-1}}\dy\dz+c\rho^{2s}\dashint_{Q_{\rho}}\int_{\mathbb{R}^{n}\setminus B_{\rho}}\frac{|f(x,t)-f(y,t)|}{|y|^{n+s}}\dy\dz\\
    &\quad+c\dashint_{Q_{\rho}}\rho^{2s}|g|^{\gamma}\dz\eqqcolon\sum_{i=1}^{5}I_{2,2,i}
\end{aligned}
\end{equation*}
for some constant $c=c(n,s,L)$. 
We now estimate $I_{2,2,1}$ and $I_{2,2,2}$.

\noindent
\textbf{Estimate of $I_{2,2,1}$.} Using H\"older's inequality and \eqref{cz non pa : size of b mut}, we have
{\small\begin{equation*}
\begin{aligned}
    I_{2,2,1}&\leq c\rho^{2s-1}\left(\dashint_{Q_{\rho}}\int_{B_{\rho}}\frac{|D^{\tau_{0}}d_{s}u|^{\gamma}}{|x-y|^{n-2\tau}}\dx\dy\dt\right)^{\frac{1}{\gamma}} \left(\dashint_{Q_{\rho}}\int_{B_{\rho}}\frac{\dx\dy\dt}{|x-y|^{n+\gamma'\left(s-1+\tau_{0}\left(\frac{2}{\gamma}-1\right)\right)}}\right)^{\frac{1}{\gamma'}}\\
    &\leq \frac{c\rho^{s+\tau_{0}}}{1-(s+\tau_{0})}\left(\frac{1}{\tau_{0}}\dashint_{\mathcal{Q}_{\rho}}|D^{\tau_{0}}d_{s}u|^{\gamma}\,d\mu_{\tau_{0},t}\right)^{\frac{1}{\gamma}}
\end{aligned}
\end{equation*}}\\
\noindent
\textbf{Estimate of $I_{2,2,2}$.} A simple algebraic computation yields that
\begin{equation*}
\begin{aligned}
    I_{2,2,2}&\leq c\rho^{2s}\dashint_{Q_{\rho}}\int_{\mathbb{R}^{n}\setminus B_{\rho}}\frac{|u(x,t)-(u)_{B_{\rho}}(t)|}{|y|^{n+2s}}\dy\dz+c\rho^{2s}\dashint_{Q_{\rho}}\int_{\mathbb{R}^{n}\setminus B_{\rho}}\frac{|(u)_{B_{\rho}}(t)-u(y,t)|}{|y|^{n+2s}}\dy\dz\\
    &\leq cI_{1}^{\frac{1}{2}}+ c\Tail_{1,s}\left(u-(u)_{B_{\rho}(x_{0})}(t);Q_{\rho}(z_{0})\right).
\end{aligned}
\end{equation*}
Similarly, we estimate $I_{2,2,3}$ and $I_{2,2,4}$ as
\begin{equation*}
    I_{2,2,3}+I_{2,2,4}\leq c\frac{\rho^{s+\tau_{0}}}{1-s}\left(\frac{1}{\tau_{0}}\dashint_{\mathcal{Q}_{\rho}(z_{0})}|D^{\tau_{0}}d_{0}f|^{2}\,d\mu_{\tau_{0},t}\right)^{\frac{1}{2}}+c\rho^{s}\Tail_{1,\frac{s}{2}}\left(f-(f)_{B_{\rho}(x_{0})}(t);Q_{\rho}(z_{0})\right).
\end{equation*}
Using H\"older's inequality, \eqref{cz non pa : size of b mut} and \eqref{cz non pa : equbwGg}, we estimate $I_{2,2,5}$ as
\begin{equation*}
    I_{2,2,5}\leq c\left(\frac{1}{\tau_{0}}\dashint_{\mathcal{Q}_{2\rho}(z_{0})}\left((2\rho)^{s-\tau_{0}}|G|\right)^{\gamma}\,d\mu_{\tau_{0},t}\right)^{\frac{2}{\gamma}}.
\end{equation*}
Take $\vartheta=\frac{\varsigma}{c}$ for some constant $c=c(n,s,L)$ and combine all the above estimates to get \eqref{cz non pa : paraSPI}.
\end{proof}
\begin{rmk}
By estimating $I_{1}$ in the proof of Lemma \ref{cz non pa : sobolevpoincare} as
\begin{equation*}
    I_{1}\leq \frac{c}{\tau_{0}}\dashint_{\mathcal{Q}_{\rho}(z_{0})}|D^{\tau_{0}}d_{s}u|^{2}\,d\mu_{\tau_{0},t},
\end{equation*} we find that for any $\tau_{0}\in (0,\min\{s,1-s\})$, there holds
{\small\begin{equation}
\label{cz non pa : paraSPI2}
\begin{aligned}
    \dashint_{Q_{\rho}(z_{0})}\frac{|u-(u)_{Q_{\rho}(z_{0})}|^{2}}{\rho^{2s+2\tau_{0}}}\dz &\leq \frac{c}{\tau_{0}}\dashint_{\mathcal{Q}_{\rho}(z_{0})}|D^{\tau_{0}}d_{s}u|^{2}\,d\mu_{\tau_{0},t}+c\Tail_{1,2s}\left(\frac{u-(u)_{B_{\rho}(x_{0})}(t)}{\rho^{s+\tau_{0}}};Q_{\rho}(z_{0})\right)^{2}
    \\
    &\quad+c\frac{1}{\tau_{0}}\dashint_{\mathcal{Q}_{\rho}(z_{0})}|D^{\tau_{0}}d_{0}f|^{2}\,d\mu_{\tau_{0},t} +c\Tail_{1,s}\left(\frac{f-(f)_{B_{\rho}(x_{0})}(t)}{\rho^{\tau_{0}}};Q_{\rho}(z_{0})\right)^{2}\\
    &\quad+c\left(\frac{1}{\tau_{0}}\dashint_{\mathcal{Q}_{2\rho}(z_{0})}\left((2\rho)^{s-\tau_{0}}|G|\right)^{\gamma}\,d\mu_{\tau_{0},t}\right)^{\frac{2}{\gamma}}
\end{aligned}
\end{equation}}
for some constant $c=c(n,s,L)$. 
\end{rmk}
In light of Lemma \ref{cz non pa : sobolevpoincare}, we now prove the following reverse H\"older's inequality.
\begin{lem}
\label{cz non pa : RHIs}
Let $u$ be a weak solution to \eqref{cz non pa : eq1} with $f=g=0$ and let $Q_{2\rho}(z_{0})\Subset\Omega_{T}$. Then we have
{\small\begin{equation*}
\begin{aligned}
E_{2,\tau_{0}}\left(u\,;\,Q_{\rho}(z_{0})\right)^{2}&\leq   c_{0}\left(\frac{1}{\tau_{0}}\dashint_{\mathcal{Q}_{2\rho}(z_{0})}|D^{\tau_{0}}d_{s}u|^{\gamma}\,d\mu_{\tau_{0},t}\right)^{\frac{2}{\gamma}}+c_{0}\Tail_{\gamma,2s}\left(\frac{|u-(u)_{B_{2\rho}(x_{0})}(t)|}{(2\rho)^{s+\tau_{0}}};Q_{2\rho}(z_{0})\right)^{2},
\end{aligned}
\end{equation*}}\\
\noindent
where $c_{0}=c_{0}(n,s,L)$ and $E_{2,\tau_{0}}(\cdot)$ is defined in \eqref{cz non pa : peneftnlofu}.
\end{lem}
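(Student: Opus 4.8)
The goal is the reverse H\"older-type estimate for $E_{2,\tau_{0}}\left(u\,;\,Q_{\rho}(z_{0})\right)$ when $f=g=0$, so that the off-diagonal/covering machinery of Section 5 (and Lemma \ref{cz non pa : self impro}) has a diagonal self-improving input. Recall from \eqref{cz non pa : peneftnlofu} (specialized to $f=g=0$ and $\tau=\tau_{0}$) that
\[
E_{2,\tau_{0}}\left(u\,;\,Q_{\rho}(z_{0})\right)=\left(\dashint_{\mathcal{Q}_{\rho}(z_{0})}|D^{\tau_{0}}d_{s}u|^{2}\dmu_{\tau_{0},t}\right)^{\frac{1}{2}}+\tau_{0}^{\frac1{\gamma}}\left(\sup_{t\in \Lambda_{\rho}(t_{0})}\dashint_{B_{\rho}(x_{0})}\frac{|u-(u)_{Q_{\rho}(z_{0})}|^{2}}{\rho^{2s+2\tau_{0}}}\dx\right)^{\frac{1}{2}},
\]
so it suffices to bound each of the two summands by the right-hand side. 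I would proceed in three steps.

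\textbf{Step 1: the $L^{2}$-average of $|D^{\tau_{0}}d_{s}u|$ via the Caccioppoli inequality.} Apply the energy estimate \eqref{cz non pa : CE} with $f=g=0$, $k=(u)_{Q_{2\rho}(z_{0})}$, inner radius $\rho$ and outer radius $r=2\rho$ (legitimate since $\rho<2\rho\le 2\rho$ and $Q_{2\rho}(z_{0})\Subset\Omega_{T}$). This gives
\[
\dashint_{\Lambda_{\rho}(t_{0})}\dashint_{B_{\rho}(x_{0})}\int_{B_{\rho}(x_{0})}\frac{|u(x,t)-u(y,t)|^{2}}{|x-y|^{n+2s}}\dx\dy\dt\le \frac{c}{\rho^{2s}}\dashiint_{Q_{2\rho}(z_{0})}|u-(u)_{Q_{2\rho}(z_{0})}|^{2}\dz+c\,\Tail_{\gamma,2s}\Big(\tfrac{u-(u)_{B_{2\rho}(x_{0})}(t)}{(2\rho)^{s}};Q_{2\rho}(z_{0})\Big)^{2}.
\]
Multiplying both sides by $\rho^{-2\tau_{0}}$, using $|x-y|\approx_{n}\rho$ to convert $\dx\dy$ into $\dmu_{\tau_{0},t}$ up to the factor $\rho^{n-2\tau_{0}}$ together with \eqref{cz non pa : size of b mut}, and comparing $\Tail_{\gamma,2s}$ terms (the exterior-data centering at $B_{2\rho}$ vs.\ $B_{\rho}$ is handled by a standard triangle-inequality/change-of-center argument as in Lemma \ref{cz non pa : taillem}), we obtain
\[
\left(\dashint_{\mathcal{Q}_{\rho}(z_{0})}|D^{\tau_{0}}d_{s}u|^{2}\dmu_{\tau_{0},t}\right)^{\frac{1}{2}}\le \frac{c}{\tau_{0}^{1/2}}\left(\dashint_{Q_{2\rho}(z_{0})}\frac{|u-(u)_{Q_{2\rho}(z_{0})}|^{2}}{\rho^{2s+2\tau_{0}}}\dz\right)^{\frac{1}{2}}+c\,\Tail_{\gamma,2s}\Big(\tfrac{u-(u)_{B_{2\rho}(x_{0})}(t)}{(2\rho)^{s+\tau_{0}}};Q_{2\rho}(z_{0})\Big).
\]

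\textbf{Step 2: feed in the Sobolev-Poincar\'e inequality.} Apply \eqref{cz non pa : paraSPI2} (which holds for any $\tau_{0}\in(0,\min\{s,1-s\})$, in particular for our choice \eqref{cz non pa : appentau}), with $f=g=0$, on the cylinder $Q_{2\rho}(z_{0})$, to estimate $\dashint_{Q_{2\rho}(z_{0})}\rho^{-2s-2\tau_{0}}|u-(u)_{Q_{2\rho}(z_{0})}|^{2}\dz$ by $\tau_{0}^{-1}\dashint_{\mathcal{Q}_{2\rho}(z_{0})}|D^{\tau_{0}}d_{s}u|^{2}\dmu_{\tau_{0},t}$ plus a tail term. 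But this still has the full $L^{2}$-average of $|D^{\tau_{0}}d_{s}u|$ on the right, so I instead use the \emph{refined} Sobolev-Poincar\'e inequality \eqref{cz non pa : paraSPI} (the one with the $L^{\gamma}$-gradient and the absorbable sup term): choosing the free parameter $\varsigma$ small and absorbing the resulting $\varsigma\sup_{t}\dashint_{B}|u-(u)_{B}(t)|^{2}\rho^{-2s-2\tau_{0}}$ term into the left-hand side of the analogous estimate for the $\sup$-part of $E_{2,\tau_{0}}$ — i.e.\ I first derive, again from \eqref{cz non pa : CE} (its second bracketed term) applied with $\rho,2\rho$, that
\[
\sup_{t\in\Lambda_{\rho}(t_{0})}\dashint_{B_{\rho}(x_{0})}\frac{|u-(u)_{Q_{\rho}(z_{0})}|^{2}}{\rho^{2s+2\tau_{0}}}\dx\le c\dashint_{Q_{2\rho}(z_{0})}\frac{|u-(u)_{Q_{2\rho}(z_{0})}|^{2}}{\rho^{2s+2\tau_{0}}}\dz+c\,\Tail_{\gamma,2s}(\cdots)^{2},
\]
then plug in \eqref{cz non pa : paraSPI} with $\varsigma$ chosen so that the sup-term on its right is absorbed. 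After these absorptions the only gradient quantity remaining on the right is $\big(\tau_{0}^{-1}\dashint_{\mathcal{Q}_{2\rho}(z_{0})}|D^{\tau_{0}}d_{s}u|^{\gamma}\dmu_{\tau_{0},t}\big)^{2/\gamma}$, plus $\Tail_{\gamma,2s}$ and $\Tail_{1,2s}\le\Tail_{\gamma,2s}$ terms (using \eqref{cz non pa : tailho}). Combining with Step 1 and recentering all tails at $B_{2\rho}$ yields the claimed bound on $E_{2,\tau_{0}}(u;Q_{\rho}(z_{0}))^{2}$ with a constant $c_{0}=c_{0}(n,s,L)$.

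\textbf{Step 3: bookkeeping of the tail centering and powers of $\tau_{0}$.} The only delicate points are: (i) converting every $\Tail_{p,2s}$ appearing with center $B_{\rho}$ into one with center $B_{2\rho}$ (and absorbing $\Tail_{1,\cdot}\le\Tail_{\gamma,\cdot}$ via \eqref{cz non pa : tailho}); (ii) tracking that the factor $\tau_{0}^{1/\gamma}$ in front of the sup-term in $E_{2,\tau_{0}}$ exactly matches the $\tau_{0}^{-1/\gamma}$ produced when \eqref{cz non pa : size of b mut} is used to pass from $\dx\dy$ to $\dmu_{\tau_{0},t}$, so that both summands of $E_{2,\tau_{0}}$ are controlled by the same right-hand side with a $\tau_{0}$-independent multiple of $\big(\tau_{0}^{-1}\dashint_{\mathcal{Q}_{2\rho}}|D^{\tau_{0}}d_{s}u|^{\gamma}\dmu_{\tau_{0},t}\big)^{2/\gamma}$; and (iii) the absorption in Step 2 requires the sup-term constant to be finite, which holds since $u\in C(\Lambda;L^{2}(B))$. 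I expect the main obstacle to be exactly this absorption step — one must be careful that the $\varsigma$-absorbable term in \eqref{cz non pa : paraSPI} is measured with the exponent $\tau_{0}$ of \eqref{cz non pa : appentau} (hence $2s+2\tau_{0}$, matching $E_{2,\tau_{0}}$) and not some other exponent, and that the chain "Caccioppoli $\to$ refined Sobolev-Poincar\'e $\to$ absorb sup" does not circularly reintroduce $\|D^{\tau_{0}}d_{s}u\|_{L^{2}}$; using the $L^{\gamma}$-version \eqref{cz non pa : paraSPI} rather than \eqref{cz non pa : paraSPI2} for the final substitution is what breaks the circularity and produces the gain from $L^{2}$ to $L^{\gamma}$ on the right.
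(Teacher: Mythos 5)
Your Steps 1 and 2 correctly identify the two ingredients the paper uses (the Caccioppoli estimate \eqref{cz non pa : CE} and the refined Sobolev--Poincar\'e inequality \eqref{cz non pa : paraSPI}), but the absorption in Step 2 has a genuine gap. With the outer radius fixed at $r=2\rho$, the $\varsigma$-multiplied term produced by \eqref{cz non pa : paraSPI} applied on $Q_{2\rho}(z_{0})$ is $\varsigma\sup_{t\in\Lambda_{2\rho}}\dashint_{B_{2\rho}}|u-(u)_{B_{2\rho}}(t)|^{2}(2\rho)^{-2s-2\tau_{0}}\dx$, which lives on the \emph{larger} cylinder $\Lambda_{2\rho}\times B_{2\rho}$, while the quantity you want to absorb it into is $\sup_{t\in\Lambda_{\rho}}\dashint_{B_{\rho}}\cdots$, on the \emph{smaller} one. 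No choice of $\varsigma$ small can move a quantity on the larger cylinder into the left-hand side, and bounding the larger sup by the right-hand side of the lemma is precisely what one is trying to prove in the first place. Your Step 3 flags a concern about exponent bookkeeping, but the actual obstruction is this scale mismatch.

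The paper breaks this circularity by letting the outer radius vary: for every $\rho<r\le 2\rho$ one applies \eqref{cz non pa : CE} on $(\rho,r)$ and \eqref{cz non pa : paraSPI} at radius $r$, pays the price $(r-\rho)^{-(n+2)}$ etc.\ in front, and arrives at a self-improving bound of the form
\begin{equation*}
E_{2,\tau_{0}}\left(u\,;\,Q_{\rho}\right)^{2}\le \tfrac{1}{4}\,E_{2,\tau_{0}}\left(u\,;\,Q_{r}\right)^{2}+\frac{c}{(r-\rho)^{5n/s}}\left[\left(\frac{1}{\tau_{0}}\dashint_{\mathcal{Q}_{2\rho}}|D^{\tau_{0}}d_{s}u|^{\gamma}\,d\mu_{\tau_{0},t}\right)^{\frac{2}{\gamma}}+\Tail_{\gamma,2s}(\cdots)^{2}\right],
\end{equation*}
where the quantity in brackets does not depend on $r$. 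The conclusion then follows from the iteration Lemma~\ref{cz non pa : technicallemma}, which is exactly the ingredient your argument is missing. If you reorganize Step 2 around variable radii and invoke Lemma~\ref{cz non pa : technicallemma} at the end, the rest of your bookkeeping (recentering of tails, the role of $\tau_{0}^{1/\gamma}$, using \eqref{cz non pa : paraSPI} rather than \eqref{cz non pa : paraSPI2} to gain $L^{\gamma}$ on the right) is in order.
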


\begin{proof}
From \eqref{cz non pa : size of b mut} and \eqref{cz non pa : CE} with $k=(u)_{Q_{r}(z_{0})}$, $f=0$ and $g=0$, we deduce that
{\small\begin{equation*}
\begin{aligned}
    E_{2,\tau_{0}}(u\,;\,Q_{\rho})&\leq \frac{cr^{n+2}}{(r-\rho)^{n+2}}\dashiint_{Q_{r}(z_{0})}\frac{|u-(u)_{Q_{r}(z_{0})}|^{2}}{r^{2s+2\tau_{0}}}\dz+\frac{cr^{2n+4s}}{(r-\rho)^{2n+4s}}\Tail_{\gamma,2s}\left(\frac{u-(u)_{B_{r}(x_{0})}(t)}{r^{s+\tau_{0}}};Q_{r}(z_{0})\right)^{2},
\end{aligned}
\end{equation*}}
\\
\noindent
where $c=c(n,s,L)$. Using the estimate \eqref{cz non pa : paraSPI} with $\rho=r$, $f=0$ and $g=0$ into the first term in the right-hand side of the above inequality and  H\"older's inequality, we get
{\small\begin{equation*}
\begin{aligned}
    E_{2,\tau_{0}}(u\,;\,Q_{\rho})&\leq \frac{1}{4}E_{2,\tau_{0}}(u\,;\,Q_{r})+\frac{cr^{n+2}}{(r-\rho)^{n+2}}\left(\frac{1}{\tau_{0}}\dashint_{\mathcal{Q}_{2\rho}(z_{0})}|D^{\tau_{0}}d_{s}u|^{\gamma}\,d\mu_{\tau_{0},t}\right)^{\frac{2}{\gamma}}\\
    &\quad+c\left(\frac{r}{r-\rho}\right)^{2(n+2s)}\Tail_{\gamma,2s}\left(\frac{u-(u)_{B_{2\rho}(x_{0})}(t)}{(2\rho)^{s+\tau_{0}}};Q_{2\rho}(z_{0})\right)^{2}.
\end{aligned}
\end{equation*}}
\\
\noindent
by taking $\varsigma\in(0,1)$ sufficiently small depending only on $n,s$ and $L$. Additionally, for the tail term, we have used Lemma \ref{cz non pa : taillem}. Finally, employing  Lemma \ref{cz non pa : technicallemma}, we obtain the desired estimate \eqref{cz non pa : reversehig}.    
\end{proof}

We next prove the following covering lemma.
\begin{lem}
\label{cz non pa : coveringLs}
Let $1\leq r_{1}<r_{2}\leq 2$ and $u$ be a weak solution to \eqref{cz non pa : localized pb} with $f=g=0$. Then, there are two families of countable disjoint cylinders $\left\{\mathcal{Q}_{\rho_{{i}}}(z_{i})\right\}_{i\geq0}$ and $\left\{\mathcal{Q}_{\widetilde{r}_{j}}(x_{1,j},x_{2,j},t_{0,j})\right\}_{j\geq0}$, such that
\begin{equation}
\label{cz non pa : levelsets}
U_{\lambda}=\left\{(x,y,t)\in \mathcal{Q}_{r_{1}}\,:\, |D^{\tau_{0}}d_{s}u(x,y,t)|\geq\lambda \right\}\subset\left(\bigcup\limits_{i}\mathcal{Q}_{5^{\frac{2}{s}}\rho_{{i}}}\left(z_{i}\right)\right)\bigcup \left(\bigcup\limits_{j}\mathcal{Q}_{5^{\frac{1}{s}}\widetilde{r}_{{j}}}\left(x_{1,j},x_{2,j},t_{0,j}\right)\right)
\end{equation}
whenever $\lambda\geq\lambda_{0}$, where
{\small\begin{equation}
\label{cz non pa : lambda0s}
\begin{aligned}
    \lambda_{0}&\coloneqq \frac{c}{(r_{2}-r_{1})^{\frac{5n}{s}}}\left[\left(\frac{1}{\tau_{0}}\dashint_{\mathcal{Q}_{2}}|D^{\tau_{0}}d_{s}u|^{2}\,d\mu_{\tau_{0},t}\right)^{\frac{1}{2}}+\left(\sup_{t\in \Lambda_{2}}\dashint_{B_{2}}\frac{|u-(u)_{Q_{2}}|^{2}}{2^{2s+2\tau_{0}}}\dx\right)^{\frac{1}{2}}\right]\\
    &\quad+\frac{c}{(r_{2}-r_{1})^{\frac{5n}{s}}}\Tail_{\infty,2s}\left(\frac{u-(u)_{B_{2}}(t)}{2^{s+\tau_{0}}};Q_{2}\right)
\end{aligned}
\end{equation}}\\
\noindent
for some constant $c=c(n,s,L)$. In particular, we have 
\begin{equation}
\label{cz non pa : diagonal estimates}
\begin{aligned}
&\sum_{i\geq0}\mu_{\tau_{0},t}\left(\mathcal{Q}_{\rho_{{i}}}(z_{i})\right)\leq\frac{c}{\lambda^{\gamma}}\int_{\mathcal{Q}_{r_{2}}\cap\{|D^{\tau_{0}}d_{s}u|>b_{u}\lambda\}}|D^{\tau_{0}}d_{s}u|^{\gamma}\,d\mu_{\tau_{0},t},
\end{aligned}
\end{equation}

\begin{equation}
\label{cz non pa : offdiagonal estimates}
\begin{aligned}
&\sum_{j\geq0}\mu_{\tau_{0},t}\left(\mathcal{Q}_{\overline{r}_{{j}}}\left(x_{1,j},x_{2,j},t_{0,j}\right)\right)\leq\frac{c}{\lambda^{2}}\int_{\mathcal{Q}_{r_{2}}\cap\{|D^{\tau_{0}}d_{s}u|>\frac{\lambda}{16}\}}|D^{\tau_{0}}d_{s}u|^{2}\,d\mu_{\tau_{0},t}++\sum_{i\geq0}\mu_{\tau_{0},t}\left(\mathcal{Q}_{\rho_{{i}}}(z_{i})\right)
\end{aligned}
\end{equation}
for some constant $b_{u}=b_{u}(n,s,L)\in(0,1]$, where the constant $\gamma$ is defined in \eqref{cz non pa : gamma}, and we also have
\begin{equation}
\label{cz non pa : pstra norms}
\left(\dashint_{\mathcal{Q}_{5^{\frac1s}\widetilde{r}_{{j}}}\left(x_{1,j},x_{2,j},t_{0,j}\right)}|D^{\tau_{0}}d_{s}u|^{2_{\#}}\dmut\right)^{\frac{1}{{2_{\#}}}}\leq c\lambda\quad\text{for any }j,
\end{equation}
where the constant $2_{\#}$ is given in \eqref{cz non pa : paraconj} with $p=2$.
%for some constant $b_{u}=b_{u}(n,s,L)\in(0,1]$.
\end{lem}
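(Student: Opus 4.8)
The plan is to repeat the covering scheme from the proof of Proposition \ref{cz non pa : coveringL}, adapting it to the homogeneous setting by setting $f=g=0$ throughout, by working with the differentiability exponent $\tau_0$ of \eqref{cz non pa : appentau}, and by replacing the $L^p$-averages of $D^\tau d_su$ in the diagonal functional by $L^{\gamma}$-averages (with $\gamma$ from \eqref{cz non pa : gamma}), while keeping $L^2$-averages in the off-diagonal functional. Concretely, I would introduce the diagonal functional
\[
\Theta_D(z_0,r)=\left(\dashint_{\mathcal{Q}_r(z_0)}|D^{\tau_0}d_su|^{\gamma}\,d\mu_{\tau_0,t}\right)^{\frac1\gamma}+\tau_0^{\frac1\gamma}\left(\sup_{t\in\Lambda_r(t_0)}\dashint_{B_r(x_0)}\frac{|u-(u)_{Q_r(z_0)}|^2}{r^{2s+2\tau_0}}\dx\right)^{\frac12},
\]
and, with $\mathfrak{A}$ as in \eqref{cz non pa : cubecoeff} but with exponent $s+\tau_0$ and with $E_{2,\tau_0}$ the functional of \eqref{cz non pa : peneftnlofu} with $f=0$, the off-diagonal functional
\[
\Theta_{OD}(u;\mathcal{Q}_r(x_1,x_2,t_0))=\left(\dashint_{\mathcal{Q}_r(x_1,x_2,t_0)}|D^{\tau_0}d_su|^2\,d\mu_{\tau_0,t}\right)^{\frac12}+\mathfrak{A}(\mathcal{B}_r(x_1,x_2))\sum_{d=1}^2 E_{2,\tau_0}(u;Q_r(x_d,t_0)).
\]

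First I would fix $1\le r_1<r_2\le2$, choose an integer $j_0\ge5$ as in \eqref{cz non pa : j0cond} (with $\tilde q$ and $c_q$ replaced by the analogous exponents and constants for the present functionals and $c_0$ the constant of Lemma \ref{cz non pa : RHIs}), and verify that $\Theta_D(z_0,r)\le\kappa\lambda$ for every $z_0\in Q_{r_1}$ and every radius $r\in[\mathcal{R}_{1,2},5^{\frac{2}{s}}\times 2^{j_0+3}\mathcal{R}_{1,2}]$ once $\lambda\ge\lambda_0$, with $\lambda_0$ given by \eqref{cz non pa : lambda0s}; this is where the rigorous tail estimate \eqref{cz non pa : tailestimate} (with $\beta=\tilde s=s$, $p=\gamma$) together with \eqref{cz non pa : paraSPI2} is used, to absorb the parabolic tails into $\kappa\lambda$ by choosing the constant in $\lambda_0$ large. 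An exit-time argument followed by Vitali's covering lemma then produces a countable disjoint family $\{\mathcal{Q}_{\rho_i}(z_i)\}$ with the exit-time set $D_{\kappa\lambda}\subset\bigcup_i\mathcal{Q}_{5^{\frac1s}\rho_i}(z_i)$, and the alternatives coming from the exit radius — the $L^{\gamma}$-average of $D^{\tau_0}d_su$ large, or the $L^2$-oscillation-by-sup term large — are resolved as in Step 1 there: for the oscillation alternative one uses Lemma \ref{cz non pa : RHIs} together with \eqref{cz non pa : tailestimate} and the exit-radius property to dominate that term by the $L^{\gamma}$-average of $D^{\tau_0}d_su$ on a comparable cylinder plus a small multiple of $\kappa\lambda$. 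Because the diagonal functional already carries the exponent $\gamma$ matching the right-hand side of Lemma \ref{cz non pa : RHIs}, no Hölder mismatch arises, which is the main simplification compared with the inhomogeneous case; after restricting the resulting integrals to the super-level set $\{|D^{\tau_0}d_su|>b_u\lambda\}$ one obtains \eqref{cz non pa : diagonal estimates}.

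Next I would build the off-diagonal covering: set $OD_\lambda=\{(x_1,x_2,t_0)\in\mathcal{Q}_{r_1}:\sup_{0<\rho\le5^{-\frac{2}{s}}\mathcal{R}_{1,2}}\Theta_{OD}(u;\mathcal{Q}_\rho(x_1,x_2,t_0))\ge\lambda\}$, run the exit-time argument and Vitali to get $\{\mathcal{Q}_{\overline{r}_j}(x_{1,j},x_{2,j},t_{0,j})\}$ with $\widetilde{r}_j=2^{j_0}\overline{r}_j$, which gives \eqref{cz non pa : levelsets}. The first-elimination step (off-diagonal cylinders with $\mathrm{dist}<5^{\frac1s}\widetilde{r}_j$ are contained in diagonal cylinders, leaving only a subfamily $\mathcal{A}$ of genuinely off-diagonal cylinders) is verbatim as in Step 3 there. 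On the surviving cylinders I would prove the analogue of Lemma \ref{cz non pa : off rever lem} with $p$ replaced by $2$, so that $2_\#$ appears, by the same splitting into two single-ball integrals and one average-difference term, estimating the former by Lemma \ref{cz non pa : embed lem} and the latter by Jensen's inequality and the two-sided bound \eqref{cz non pa : xydist}; this yields \eqref{cz non pa : pstra norms}. Then I would decompose $\mathcal{A}=AD_\lambda\cup ND_\lambda$ as in \eqref{cz non pa : almost diagonal cubes2}, prove for $\mathcal{Q}\in AD_\lambda$ the bound $\mu_{\tau_0,t}(\mathcal{Q})\le c\lambda^{-2}\int_{\mathcal{Q}\cap\{|D^{\tau_0}d_su|>\lambda/16\}}|D^{\tau_0}d_su|^2\,d\mu_{\tau_0,t}$ exactly as in Step 6, and for $\mathcal{Q}\in ND_\lambda$ run the combinatorial argument of Lemma \ref{cz non pa : dist of ndl} (now needed only for $D^{\tau_0}d_su$ since $f=g=0$) together with \eqref{cz non pa : qqh} and the exit-radius estimates to obtain \eqref{cz non pa : offdiagonal estimates}. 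Summing over $\lambda\ge\lambda_0$ and recording the pointwise exit-radius bounds then yields all the stated inequalities.

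The hard part, just as in the proof of Proposition \ref{cz non pa : coveringL}, is controlling the $L^2$-oscillation-by-sup term carried by $\Theta_D$ and by $E_{2,\tau_0}$: these terms are not generated by any exit-time mechanism, so one must combine Lemma \ref{cz non pa : RHIs} with the precise tail estimate \eqref{cz non pa : tailestimate} and the quantitative choice of $j_0$ in \eqref{cz non pa : j0cond} to keep the residual tail contributions strictly below $\kappa\lambda$. The secondary difficulty is the dyadic bookkeeping in the $ND_\lambda$ estimate — bounding, at each scale, the number of off-diagonal cylinders that can be routed to a fixed diagonal cylinder — which is dispatched by the partition argument of Lemma \ref{cz non pa : dist of ndl}. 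Beyond these two points the argument is a direct transcription of Section 5, with the exponent $p$ replaced by $\gamma$ and $\tau$ by $\tau_0$ on the diagonal part, by $2$ and $\tau_0$ on the off-diagonal part, and with all $f$- and $G$-terms deleted.
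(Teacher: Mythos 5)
Your proposal follows the overall architecture of Section 5 faithfully, but the one change you flag as ``the main simplification'' — replacing the $L^2$-average by an $L^\gamma$-average in the diagonal functional $\Theta_D$ — is precisely where the argument breaks. The paper's appendix keeps
\[
\Theta_D(z_0,r)=\left(\dashint_{\mathcal{Q}_r(z_0)}|D^{\tau_0}d_su|^2\,d\mu_{\tau_0,t}\right)^{1/2}+\left(\tau_0\sup_{t\in\Lambda_r(t_0)}\dashint_{B_r(x_0)}\frac{|u-(u)_{Q_r(z_0)}|^2}{r^{2s+2\tau_0}}\dx\right)^{1/2}
\]
with the $L^2$-average, and only at the very end converts the exit-radius inequality into an $L^\gamma$-bound via a one-shot application of Lemma \ref{cz non pa : RHIs}. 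This is not an aesthetic choice: in Step 5 (and already in Step 3) the argument needs that whenever a constituent ball of an off-diagonal exit-time cylinder satisfies $E_{2,\tau_0}(u;Q_{\overline{r}_j}(x_{d,j},t_{0,j}))>\lambda/16$, the center $(x_{d,j},t_{0,j})$ lies in the diagonal exit-time set $D_{\kappa\lambda}$, so that $Q_{\widetilde{r}_j}(x_{d,j},t_{0,j})$ can be routed into one of the selected diagonal cylinders $Q_{5^{1/s}\rho_i}(z_i)$ and the combinatorial Lemma \ref{cz non pa : dist of ndl} can be applied. This hinges on the pointwise inequality $E_{2,\tau_0}\leq c\,\Theta_D$ at the same scale, which holds because both carry the same $L^2$-average (the $\tau_0$-prefactors only shift constants, $\tau_0$ being fixed by \eqref{cz non pa : appentau}).

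With your $\Theta_D$ built on the $L^\gamma$-average of $D^{\tau_0}d_su$, you have $\bigl(\dashint|D^{\tau_0}d_su|^\gamma\bigr)^{1/\gamma}\leq\bigl(\dashint|D^{\tau_0}d_su|^2\bigr)^{1/2}$ by H\"older, so $E_{2,\tau_0}>\lambda/16$ no longer implies $\Theta_D>\kappa\lambda$ at that cylinder: the non-diagonal condition can be triggered by a large $L^2$-average without the $L^\gamma$-average or the oscillation term being large. Consequently the $ND_\lambda^d$ cylinders need not come with an exit radius for your $\Theta_D$, the inclusion \eqref{cz non pa : exitoffim} is not available, and \eqref{cz non pa : offdiagonal estimates} does not follow from Lemma \ref{cz non pa : dist of ndl}. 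To rescue your version you would have to first apply Lemma \ref{cz non pa : RHIs} at $(x_{d,j},t_{0,j})$, then iterate the tail estimate \eqref{cz non pa : tailestimate} and absorb its diagonal contributions into $\sup_\rho\Theta_D$ — an additional layer of argument you do not give and which is exactly what the paper avoids by matching the exponent in $\Theta_D$ with the exponent appearing in $E_{2,\tau_0}$ and in \eqref{cz non pa : almost diagonal cubes2}. The rest of your outline (tail absorption via $j_0$, the exit-time argument, the reverse H\"older lemma on off-diagonal cylinders yielding \eqref{cz non pa : pstra norms}, the dyadic routing) matches the paper.
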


\begin{proof}
We first define a functional
\begin{equation*}
\begin{aligned}
    \Theta_{D}\left(z_{0},r\right)=\left(\dashint_{\mathcal{Q}_{r}(z_{0})}|D^{\tau_{0}}d_{s}u|^{2}\,d\mu_{\tau_{0},t}\right)^{\frac{1}{2}}+\left(\tau_{0}\sup_{t\in \Lambda_{r}(t_{0})}\dashint_{B_{r}(x_{0})}\frac{|u-(u)_{Q_{r}(z_{0})}|^{2}}{r^{2s+2\tau_{0}}}\dx\right)^{\frac{1}{2}}
\end{aligned}
\end{equation*}
for any $z_{0}\in Q_{r_{1}}$ and $r>0$ with $Q_{r}(z_{0})\subset Q_{2}$. Let us take 
{\small\begin{equation*}
\begin{aligned}
    \lambda_{0}&=\frac{M\tau_{0}^{\frac{1}{2}}\kappa^{-1}}{(r_{2}-r_{1})^{\frac{5n}{s}}}\left(\left(\frac{1}{\tau_{0}}\dashint_{\mathcal{Q}_{2}}|D^{\tau_{0}}d_{s}u|^{2}\,d\mu_{\tau_{0},t}\right)^{\frac{1}{2}}+\Tail_{\infty,2s}\left(\frac{u-(u)_{B_{2}}(t)}{2^{s+\tau_{0}}};Q_{2}\right)\right)\\
    &\quad+\frac{M\tau_{0}^{\frac{1}{2}}\kappa^{-1}}{(r_{2}-r_{1})^{\frac{5n}{s}}}\left(\sup_{t\in \Lambda_{2}}\dashint_{B_{2}}\frac{|u-(u)_{Q_{2}}|^{2}}{2^{2s+2\tau_{0}}}\dx\right)^{\frac{1}{2}},
\end{aligned}
\end{equation*}}\\
\noindent
where $M\geq1$ and $\kappa\in(0,1]$ are free parameters which will be determined later. We next take a positive integer $j_{0}=j_{0}(n,s,\Lambda)$ such that
\begin{equation}
\label{cz non pa : j0conds}
\frac{16(c_{0}+\tilde{c}+2c_{2})^{2}}{1-2^{-s+\tau_{0}}}\leq 2^{j_{0}\left(s-\tau_{0}\right)},
\end{equation}
where $c_{0}$ is the constant determined in Lemma \ref{cz non pa : RHIs}, and $\tilde{c}$ and $c_{2}$ are the constants determined in \eqref{cz non pa : tailestimate}.  
We then note that for any $z_{0}\in Q_{r_{1}}$,
\begin{equation*}
    Q_{5^{\frac{2}{s}}\times 2^{j_{0}+2}\mathcal{R}_{1,2}}(z_{0})\subset Q_{r_{2}},
\end{equation*}
where $\mathcal{R}_{1,2}$ is defined in \eqref{cz non pa : r1r2defn}.
Let us now define for $\lambda\geq\lambda_{0}$,
\begin{equation*}
    D_{\kappa\lambda}=\left\{z_{0}\in Q_{r_{1}}\,:\,\sup_{0<\rho\leq \mathcal{R}_{1,2}}\Theta_{D}\left(z_{0},\rho\right)>\kappa\lambda\right\}.
\end{equation*}
We now take the constant $M$ as in \eqref{cz non pa : condM1} with $q=2$. For any $z_{0}\in Q_{r_{1}}$ and $r\in \left[\mathcal{R}_{1,2},5^{\frac{2}{s}}\times 2^{j_{0}+3}\mathcal{R}_{1,2}\right]$, we first note that
\begin{equation*}
    \Theta_{D}(z_{0},r)\leq \kappa\lambda,
\end{equation*}
by following the same line as in the proof for \eqref{cz non pa : thetales} with $p=2$.
Therefore, there is an exit radius $\rho_{z}$ for each $z\in D_{\kappa\lambda}$ such that
\begin{equation}
\label{cz non pa : exitradiuss}
    \Theta_{D}\left(z,\rho_{z}\right)\geq\kappa\lambda\quad\text{and}\quad\Theta_{D}\left(z,\rho\right)\leq\kappa\lambda\quad\text{ if }\rho_{z}\leq\rho\leq 5^{\frac{2}{s}}\times 2^{j_{0}+3}\mathcal{R}_{1,2}.
\end{equation}
We now apply Vitali's covering lemma to find a family of mutually disjoint countable cylinders
\begin{equation}
\label{cz non pa : disjointcydis}
     \left\{Q_{2^{j_{0}}\rho_{z_{i}}}(z_{i})\right\}_{i\geq0}\text{ such that } D_{\kappa\lambda}\subset\bigcup\limits_{i=0}^{\infty}Q_{5^{\frac{1}{s}}\times 2^{j_{0}}\rho_{z_{i}}}(z_{i}).
\end{equation}
We now denote 
\begin{equation}
\label{cz non pa : rhoidefns}
    \rho_{i}=2^{j_{0}}\rho_{z_{i}}\quad\text{for each }i.
\end{equation}
From \eqref{cz non pa : exitradiuss}, we have
\begin{equation}
\label{cz non pa : kalams}
\begin{aligned}
    \frac{\kappa\lambda}{\tau_{0}^{\frac{1}{2}}}\leq \left(\frac{1}{\tau_{0}}\dashint_{\mathcal{Q}_{\rho_{z_{i}}}(z_{i})}|D^{\tau_{0}}d_{s}u|^{2}\,d\mu_{\tau_{0},t}\right)^{\frac{1}{2}}+\left(\sup_{t\in \Lambda_{\rho_{z_{i}}}(t_{i})}\dashint_{B_{\rho_{z_{i}}}(x_{i})}\frac{|u-(u)_{Q_{\rho_{z_{i}}}(z_{i})}|^{2}}{\rho_{z_{i}}^{2s+2\tau_{0}}}\dx\right)^{\frac{1}{2}}.
\end{aligned}
\end{equation}
On account of Lemma \ref{cz non pa : RHIs}, Lemma \ref{cz non pa : tailestimate} and \eqref{cz non pa : rhoidefns}, we estimate the right-hand side of \eqref{cz non pa : kalams} as 
{\small
\begin{align*}
    \frac{\kappa\lambda}{\tau_{0}^{\frac{1}{2}}}
    &\leq c\left(\frac{1}{\tau_{0}}\dashint_{\mathcal{Q}_{\rho_{i}}(z_{i})}|D^{\tau_{0}}d_{s}u|^{\gamma}\,d\mu_{\tau_{0},t}\right)^{\frac{1}{\gamma}}+ c_{0}\tilde{c}\frac{1}{\frac{1}{\tau_{0}}^{\frac{1}{2}}}\sum_{j=j_{0}+1}^{l}2^{i\left(-s+\tau_{0}\right)}\Theta_{D}\left(z_{j},2^{j}\rho_{z_{i}}\right)\\
    &\quad
    +c_{0}\tilde{c}\left(\frac{2}{2^{j_{0}}\mathcal{R}_{1,2}}\right)^{s+\tau_{0}}\left(\sup_{t\in \Lambda_{2}}\dashint_{B_{2}}\frac{|u-(u)_{B_{2}}(t)|^{2}}{2^{2s+2\tau_{0}}}\dx\right)^{\frac{1}{2}}+ \frac{c_{0}\tilde{c}}{(r_{2}-r_{1})^{5n}}\Tail_{\gamma,2s}\left(\frac{u-(u)_{B_{2}}(t)}{2^{s+\tau_{0}}};Q_{2}\right)\nonumber
\end{align*}
}\\
\noindent
where $c=c(n,s,L)$ and $l$ is the positive integer such that $
    2^{j_{0}+1}\mathcal{R}_{1,2}\leq 2^{l}\rho_{z_{i}}<2^{j_{0}+2}\mathcal{R}_{1,2}.$
For the detailed calculations of the above inequality, we refer to \eqref{cz non pa : findj0u} and \eqref{cz non pa : oneretildeqnec} with $f=0$ and $g=0$.
As a result, using \eqref{cz non pa : j0conds} and \eqref{cz non pa : exitradiuss}, we find that
\begin{equation*}
    \frac{\kappa\lambda}{{\tau_{0}}^{\frac{1}{2}}}
    \leq c\left(\frac{1}{\tau_{0}}\dashint_{\mathcal{Q}_{\rho_{i}}(z_{i})}|D^{\tau_{0}}d_{s}u|^{\gamma}\,d\mu_{\tau_{0},t}\right)^{\frac{1}{\gamma}}
\end{equation*}
for some constant $c=c(n,s,L)$. A suitable choice of the constant $\tilde{b}_{u}=\tilde{b}_{u}(n,s,L)\in\left(0,\frac{1}{8}\right]$ yields
\begin{equation}
\label{cz non pa : diabs}
\mu_{\tau_{0},t}\left(\mathcal{Q}_{\rho_{i}}(z_{i})\right)\leq \frac{c}{(\kappa\lambda)^{\gamma}}\int_{\mathcal{Q}_{\rho_{i}}(z_{i})\cap\{|D^{\tau_{0}}d_{s}u|>\tilde{b}_{u}\kappa\lambda\}}|D^{\tau_{0}}d_{s}u|^{\gamma}\,d\mu_{\tau_{0},t},
\end{equation}
as the constant $\tau_{0}$ depends only on $s$ (see \eqref{cz non pa : appentau}). 
With \eqref{cz non pa : exitradiuss} and \eqref{cz non pa : disjointcydis}, we follow the same lines as in the proof of step 2 through step 8 given in Lemma \ref{cz non pa : coveringL} with $p=2$, $f=0$ and $g=0$. As a result, by taking $\kappa$ as in \eqref{cz non pa : rangeofkappa1} with $\tau^{\frac{1}{\gamma}}$ replaced by $\tau_{0}^{\frac{1}{2}}$, we find that there is a collection of countable disjoint cylinders 
$\{\mathcal{Q}\}_{\mathcal{Q}\in \mathcal{A}}$ such that
\begin{equation*}
%\label{cz non pa : coveringsb}
    \left\{(x,y,t)\in \mathcal{Q}_{r_{1}}\,:\, |D^{\tau_{0}}d_{s}u(x,y,t)|\geq\lambda \right\}\subset\left(\bigcup\limits_{i\geq0}\mathcal{Q}_{5^{\frac{2}{s}}\rho_{{i}}}(z_{i})\right)\bigcup \left(\bigcup\limits_{j\geq0}\mathcal{Q}_{5^{\frac{1}{s}}\widetilde{r}_{j}}(x_{1,j},x_{2,j},t_{0,j})\right),
\end{equation*}
\begin{equation*}
%\label{cz non pa : offdiagonal estimatesb}
\begin{aligned}
&\sum_{j}\mu_{\tau_{0},t}\left(\mathcal{Q}_{\overline{r}_{j}}(x_{1,j},x_{2,j},t_{0,j})\right)\leq\frac{c}{\lambda^{2}}\int_{\mathcal{Q}_{r_{2}}\cap\left\{|D^{\tau_{0}}d_{s}u|>\frac{\lambda}{16}\right\}}|D^{\tau_{0}}d_{s}u|^{2}\,d\mu_{\tau_{0},t}+c\sum_{i}\mu_{\tau_{0},t}\left(\mathcal{Q}_{\rho_{i}}(z_{i})\right)
\end{aligned}
\end{equation*}
and
\begin{equation*}
%\label{cz non pa : pstra normsb}
\left(\dashint_{\mathcal{Q}_{5^{\frac1s}\widetilde{r}_{j}}(x_{1,j},x_{2,j},t_{0,j})}|D^{\tau_{0}}d_{s}u|^{2_{\#}}\dmut\right)^{\frac{1}{{2_{\#}}}}\leq c\lambda\quad\text{for any }j,
\end{equation*}
where $c=c(n,s,L)$ (see \eqref{cz non pa : sumofmeasqnd}and \eqref{cz non pa : reverseestimate} for the second and the third inequalities, respectively). 
Let $b_{u}=\tilde{b}_{u}\kappa$. Then the above three observations and \eqref{cz non pa : diabs} yield the desired results as the constant $\tau_{0}$ depends only on $s$. This completes the proof.
\end{proof}

We are now in the position to prove the following self-improving property for a weak solution to the corresponding homogeneous problem of \eqref{cz non pa : eq1}.
\begin{thm}
\label{cz non pa : selfprof}
Let $u$ be a weak solution to \eqref{cz non pa : eq1} with $f=0$ and $g=0$. Then there are constants $\epsilon=\epsilon(n,s,L)\in(0,1)$ and $c=c(n,s,L)$ such that
{\small\begin{equation}
\label{cz non pa : selfest}
\begin{aligned}
    \left(\frac{1}{\tau_{0}}\dashint_{\mathcal{Q}_{r}(z_{0})}|D^{\tau_{0}}d_{s}\tilde{u}|^{2+\epsilon}\,d\mu_{\tau_{0},t}\right)^{\frac{1}{2+\epsilon}}&\leq c\left(\frac{1}{\tau_{0}}\dashint_{\mathcal{Q}_{2r}(z_{0})}|D^{\tau_{0}}d_{s}{u}|^{2}d\mu_{\tau_{0},t}\right)^{\frac{1}{2}}+c\sup_{t\in \Lambda_{2r}}\left(\dashint_{B_{2r}}\frac{|{u}-({u})_{Q_{2r}(z_{0})}|^{2}}{(2r)^{2s+2\tau}}\right)^{\frac{1}{2}}\\
    &+c\Tail_{\infty,2s}\left(\frac{{u}-({u})_{B_{2r}(x_{0})}(t)}{(2r)^{s+\tau}};Q_{2r}(z_{0})\right)
\end{aligned}
\end{equation}}
whenever $Q_{2r}(z_{0})\Subset\Omega_{T}$.
\end{thm}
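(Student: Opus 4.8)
The plan is to establish \eqref{cz non pa : selfest} as a Gehring-type self-improving estimate, following the scheme of \cite[Lemma 3.1]{BKc} but adapted to the parabolic measure $\mu_{\tau_0,t}$ and the additional $L^2$-oscillation-by-sup-norm term. First I would reduce to a normalized situation: by the scaling Lemma \ref{cz non pa : scalinglem} and a translation it suffices to prove a statement on a fixed pair of cylinders, say $\mathcal{Q}_{r_1}\subset\mathcal{Q}_{r_2}$ with $1\le r_1<r_2\le 2$, and then recover \eqref{cz non pa : selfest} on $Q_r(z_0)\subset Q_{2r}(z_0)$ by undoing the scaling. Throughout, $\tau_0$ is the fixed exponent \eqref{cz non pa : appentau}, which depends only on $s$, so all constants track only $n,s,L$.

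The core is to combine the covering Lemma \ref{cz non pa : coveringLs} (applied with $f=g=0$) with a standard distribution-function/Fubini argument. Concretely, I would set
\[
\phi(r)=\left(\frac{1}{\tau_0}\dashint_{\mathcal{Q}_r}|D^{\tau_0}d_su|^{2+\epsilon}\dmut\right)^{\frac{1}{2+\epsilon}},
\]
write $\int_{\mathcal{Q}_{r_1}}|D^{\tau_0}d_su|^{2+\epsilon}\dmut$ as $\int_0^\infty (2+\epsilon)\lambda^{1+\epsilon}\mu_{\tau_0,t}(U_\lambda)\,d\lambda$ split at $\lambda_0$, and on $\{\lambda\ge\lambda_0\}$ insert the decomposition \eqref{cz non pa : levelsets}. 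The diagonal cylinders are controlled by the super-level integral of $|D^{\tau_0}d_su|^\gamma$ via \eqref{cz non pa : diagonal estimates}, which is the crucial place where a \emph{lower} exponent $\gamma<2$ appears (this is what ultimately produces the exponent gain $2\to 2+\epsilon$); the off-diagonal cylinders are handled by \eqref{cz non pa : offdiagonal estimates} together with the reverse-type bound \eqref{cz non pa : pstra norms} on $\|D^{\tau_0}d_su\|_{L^{2_\#}}$, raising the power of $\lambda$ so that the resulting integral is absorbable. After a Fubini interchange one arrives at an inequality of the form
\[
\phi(r_1)^{2+\epsilon}\le \tfrac12\,\phi(r_2)^{2+\epsilon}+\frac{c\,\lambda_0^{2+\epsilon}}{(r_2-r_1)^{\frac{5n}{s}}}+(\text{super-level tails that tend to }0),
\]
valid once $\epsilon$ is chosen small (depending on $n,s,L$) to make the absorption work. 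Then the iteration Lemma \ref{cz non pa : technicallemma} removes the $\phi(r_2)$ term and leaves $\phi(1)\le c\lambda_0$; recalling the explicit form of $\lambda_0$ in \eqref{cz non pa : lambda0s} gives exactly the three terms on the right-hand side of \eqref{cz non pa : selfest} (the $L^2$-norm of $D^{\tau_0}d_su$, the sup-norm $L^2$-oscillation, and the $\Tail_{\infty,2s}$ term), after which scaling back yields the stated estimate on general $Q_{2r}(z_0)$.

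I expect the main obstacle to be the treatment of the $L^2$-oscillation-by-sup-norm term $\bigl(\sup_t\dashint_{B_r}|u-(u)_{Q_r}|^2/r^{2s+2\tau_0}\bigr)^{1/2}$ that sits inside the functionals $\Theta_D$ and $E_{2,\tau_0}$ in Lemma \ref{cz non pa : coveringLs}: unlike in the local parabolic setting it does not disappear, and it must be re-estimated, on the exit-time cylinders produced by the covering, by the diagonal $L^\gamma$-integral of $D^{\tau_0}d_su$ plus tails — this is precisely the role of the reverse inequality Lemma \ref{cz non pa : RHIs} and the Sobolev–Poincar\'e estimate \eqref{cz non pa : paraSPI}, and one has to verify that, after summation over the covering, these tail contributions are genuinely lower-order (they come with a factor $(2/\mathcal R_{1,2})^{-s+\tau_0}<1$ and can be dominated by $\lambda_0$, as in \eqref{cz non pa : oneretildeqnec}). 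A secondary technical point is checking that the choice of $\epsilon$ needed for the absorption step is compatible with the exponent $2_\#=2(1+2s/n)$ appearing in \eqref{cz non pa : pstra norms}, i.e. that $2+\epsilon<2_\#$, which is automatic for $\epsilon$ small; and one must also confirm that all super-level tail integrals arising from $\Tail_{\infty,2s}$ and the oscillation terms are finite under the a priori assumption $D^{\tau_0}d_su\in L^2(\mathcal Q_2;\dmut)$, so that the distribution-function computation is legitimate. Once these points are in place, the argument is the same Gehring/Vitali machinery already used to prove Lemma \ref{cz non pa : self impro}, just carried out one level earlier.
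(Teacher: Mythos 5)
Your strategy matches the paper's in outline — covering Lemma \ref{cz non pa : coveringLs}, distribution function, Fubini, absorption, iteration Lemma \ref{cz non pa : technicallemma}, scaling — but there is a gap in the absorption step stemming from your choice of layer-cake formula. You propose writing
\[
\int_{\mathcal{Q}_{r_1}}|D^{\tau_0}d_su|^{2+\epsilon}\,d\mu_{\tau_0,t}
=(2+\epsilon)\int_0^\infty\lambda^{1+\epsilon}\,\mu_{\tau_0,t}(U_\lambda)\,d\lambda .
\]
If you feed the diagonal bound \eqref{cz non pa : diagonal estimates} into this and apply Fubini, the diagonal contribution becomes
\[
\frac{c\,(2+\epsilon)}{(2+\epsilon-\gamma)\,b_u^{\,2+\epsilon-\gamma}}\int_{\mathcal{Q}_{r_2}}|D^{\tau_0}d_su|^{2+\epsilon}\,d\mu_{\tau_0,t},
\]
and the prefactor tends to $\frac{2c}{(2-\gamma)\,b_u^{\,2-\gamma}}$ as $\epsilon\to0$, a fixed positive constant depending only on $n,s,L$. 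There is no free parameter left that makes this coefficient small: $\mathcal{M}$ (which controls the off-diagonal cylinders through the $2_\#$-gain in \eqref{cz non pa : pstra norms}) cannot help here because for the diagonal cylinders you only have the $L^2$-exit-time bound, and even inserting the Chebyshev factor $(\kappa/\mathcal{M})^2$ merely cancels the Jacobian $\mathcal{M}^{2+\epsilon}$ from a shift $\lambda\mapsto\mathcal{M}\lambda$ and leaves a $\kappa$-power that cancels against $b_u\sim\kappa$. So the absorption does not close.

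The paper avoids this by working with the weighted measure $d\nu=|D^{\tau_0}d_s\tilde u|^2\,d\mu_{\tau_0,t}$ and the identity
\[
\int_{\mathcal{Q}_{r_1}}|D^{\tau_0}d_s\tilde u|_N^\epsilon\,d\nu
=\epsilon\int_0^\infty\lambda^{\epsilon-1}\,\nu\bigl(\{|D^{\tau_0}d_s\tilde u|_N>\lambda\}\bigr)\,d\lambda ,
\]
which dominates $\int|D^{\tau_0}d_s\tilde u|_N^{2+\epsilon}\,d\mu_{\tau_0,t}$. The prefactor is now $\epsilon$, not $2+\epsilon$; combined with the pointwise bound $\nu(\mathcal{Q}_{5^{2/s}\rho_i})\lesssim\lambda^2\mu_{\tau_0,t}(\mathcal{Q}_{\rho_i})$ coming from the exit-time condition \eqref{cz non pa : exitradiuss}, the diagonal term $J_1$ picks up the crucial overall factor $c\,\epsilon\,\mathcal{M}^\epsilon$, which \emph{does} tend to $0$ as $\epsilon\to0$. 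That is precisely what lets one choose $\epsilon=\epsilon(n,s,L)$ small (after fixing $\mathcal{M}$ large for $J_2$) and push through the absorption. Your claim that the gain $2\to 2+\epsilon$ is ``ultimately produced'' by $\gamma<2$ also needs revision: $\gamma<2$ merely makes the Fubini integral $\int\lambda^{1+\epsilon-\gamma}d\lambda$ convergent; the actual smallness needed for absorption comes from the weighted-measure prefactor $\epsilon$. Replacing your raw layer-cake with the $\nu$-version fixes the argument and aligns it exactly with the proof in Appendix \ref{cz non pa : appen a}.
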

\begin{proof}
Let us fix $Q_{r}(z_{0})\Subset\Omega_{T}$. We now define for any $x,y\in\mathbb{R}^{n}$, $t\in \Lambda_{2}$ and $\xi\in\mathbb{R}$,
{\small\begin{equation*}
    \tilde{u}(x,t)=\frac{u\left({r}x+x_{0},{r}^{2s}t+t_{0}\right)}{{r}^{s+\tau}},\,\tilde{A}(x,y,t)=A\left({r}x+x_{0},{r}y+x_{0},{r}^{2s}t+t_{0}\right)\text{ and }\tilde{\Phi}(\xi)=\frac{\tilde{\Phi}\left({r}^{\tau}\xi\right)}{{r}^{\tau}}
\end{equation*}}\\
\noindent
to see that $\tilde{u}$ is a weak solution to \eqref{cz non pa : localized pb} with $f=g=0$, $A=\tilde{A}$ and $\Phi=\tilde{\Phi}$.
Let us take $\epsilon\in\left(0,\frac{2_{\#}-2}{2}\right)$ which will be determined later. For each $N>0$, we now define $\phi_{N}:[1,2]\to\mathbb{R}$ by
\begin{equation*}
    \phi_{N}(\rho)=\left(\dashint_{Q_{\rho}}|D^{\tau_{0}}d_{s}\tilde{u}|_{N}^{2+\epsilon}d\mu_{\tau_{0},t}\right)^{\frac{1}{2+\epsilon}}.
\end{equation*}    
For $\lambda_{0}$ as defined in \eqref{cz non pa : lambda0s} with $u=\tilde{u}$, we claim that if $N>\lambda_{0}$, then there is a constant $c=c(n,s,L)$ such that for any $1\leq r_{1}<r_{2}\leq2$,
\begin{equation}
\label{cz non pa : goalbt}
    \phi_{N}(r_{1})\leq \frac{1}{2}\phi_{N}(r_{2})+c\lambda_{0}.
\end{equation}
%where $\Theta_{D}$ is defined in \eqref{cz non pa : defnTheta}. 
By Fubini's theorem, we observe that
\begin{equation*}
\begin{aligned}
    \int_{\mathcal{Q}_{r_{1}}}|D^{\tau_{0}}d_{s}\tilde{u}|_{N}^{2+\epsilon}d\mu_{\tau_{0},t}
    %&=\epsilon\int_{0}^{\infty}\lambda^{\epsilon-1}\nu\left(\left\{\mathcal{Q}_{r_{1}}\,:\,|D^{\tau}d_{s}\tilde{u}|_{N}>\lambda\right\}\right)\,d\lambda\\
    &=\epsilon\int_{0}^{\mathcal{M}\lambda_{0}}\lambda^{\epsilon-1}\nu\left(\left\{\mathcal{Q}_{r_{1}}\,:\,|D^{\tau_{0}}d_{s}\tilde{u}|_{N}>\lambda\right\}\right)\,d\lambda\\
    &\quad+\epsilon\int_{\mathcal{M}\lambda_{0}}^{N}\lambda^{\epsilon-1}\nu\left(\left\{\mathcal{Q}_{r_{1}}\,:\,|D^{\tau_{0}}d_{s}\tilde{u}|_{N}>\lambda\right\}\right)\,d\lambda\eqqcolon I+J,
\end{aligned}
\end{equation*}
where $d\,\nu=|D^{\tau_{0}}d_{s}\tilde{u}|^{2}\,d\mu_{\tau_{0},t}$ and $N>\mathcal{M}\lambda_{0}$ with $\mathcal{M}>1$ to be determined later. We first estimate $I$ as
\begin{equation*}
\begin{aligned}
    I\leq c\mathcal{M}^{\epsilon}\lambda_{0}^{\epsilon}\int_{\mathcal{Q}_{2}}|D^{\tau_{0}}d_{s}\tilde{u}|^{2}d\mu_{\tau_{0},t}\leq c\mathcal{M}^{\epsilon}\lambda_{0}^{2+\epsilon}\mu_{\tau_{0},t}\left(\mathcal{Q}_{2}\right),
\end{aligned}
\end{equation*}
where $c=c(n,s,L)$. We next estimate $J$ as 
{\small\begin{equation*}
\begin{aligned}
    J&= \epsilon\int_{\lambda_{0}}^{N\mathcal{M}^{-1}}\mathcal{M}^{\epsilon}\lambda^{\epsilon-1}\nu\left(\left\{(x,y,t)\in\mathcal{Q}_{r_{1}}\,:\,|D^{\tau_{0}}d_{s}\tilde{u}|_{N}>\mathcal{M}\lambda\right\}\right)\,d\lambda \\
    %&\leq \epsilon\int_{\lambda_{0}}^{N\mathcal{M}^{-1}}\mathcal{M}^{\epsilon}\lambda^{\epsilon-1}\nu\left(\left\{\left(\bigcup\limits_{i}\mathcal{Q}_{5\rho_{i}}\left(z_{i}\right)\right)\bigcup \left(\bigcup\limits_{\mathcal{Q}\in\mathcal{A}}\mathcal{Q}\right)\,:\,|D^{\tau}d_{s}\tilde{u}|_{N}>\mathcal{M}\lambda\right\}\right)\,d\lambda\\
&\leq \sum_{i\geq0}\epsilon\int_{\lambda_{0}}^{N\mathcal{M}^{-1}}\mathcal{M}^{\epsilon}\lambda^{\epsilon-1}\nu\left(\mathcal{Q}_{5^{\frac{2}{s}}
\rho_{{i}}}\left(z_{i}\right)\right)\,d\lambda\\
&\quad+\sum_{j\geq0}\epsilon\int_{\lambda_{0}}^{N\mathcal{M}^{-1}}\mathcal{M}^{\epsilon}\lambda^{\epsilon-1}\nu\left(\left\{\mathcal{Q}_{5^{\frac{1}{s}}\overline{r}_{j}}(x_{1,j},x_{2,j},t_{0,j})\,:\,|D^{\tau_{0}}d_{s}\tilde{u}|_{N}>\mathcal{M}\lambda\right\}\right)\,d\lambda\eqqcolon J_{1}+J_{2},
\end{aligned}
\end{equation*}}\\
\noindent
where we have used the change of variables and \eqref{cz non pa : levelsets}. In light of the definition of the measure $\nu$, \eqref{cz non pa : exitradiuss}, \eqref{cz non pa : doubling} and \eqref{cz non pa : diagonal estimates}, we estimate $J_{1}$ as
\begin{equation*}
\begin{aligned}
    J_{1}&\leq \sum_{i}\epsilon\int_{\lambda_{0}}^{N\mathcal{M}^{-1}}\mathcal{M}^{\epsilon}\lambda^{\epsilon-1}\int_{\mathcal{Q}_{5^{\frac{2}{s}}
\rho_{{i}}}\left(z_{i}\right)}|D^{\tau_{0}}d_{s}\tilde{u}|^{2}d\mu_{\tau_{0},t}\,d\lambda\\
&\leq c\sum_{i}\epsilon\int_{\lambda_{0}}^{N\mathcal{M}^{-1}}\mathcal{M}^{\epsilon}\lambda^{\epsilon-1}\lambda^{2}\mu_{\tau_{0},t}\left(\mathcal{Q}_{
\rho_{{i}}}\left(z_{i}\right)\right)\,d\lambda\\
&\leq \frac{c\epsilon \mathcal{M}^{\epsilon}}{\lambda^{\gamma-(1+\epsilon)}}\int_{\lambda_{0}}^{N\mathcal{M}^{-1}}\int_{\mathcal{Q}_{r_{2}}\cap\{|D^{\tau_{0}}d_{s}\tilde{u}|\geq b_{u}\lambda\}}|D^{\tau_{0}}d_{s}\tilde{u}|^{\gamma}d\mu_{\tau_{0},t}\,d\lambda,
\end{aligned}
\end{equation*}
where $c=c(n,s,L)$ and the constant $b_{u}$ is determined in Lemma \ref{cz non pa : coveringLs}. Using Fubini's theorem, we get
\begin{equation*}
    J_{1}\leq c\epsilon \mathcal{M}^{\epsilon}\int_{\mathcal{Q}_{r_{2}}}|D^{\tau_{0}}d_{s}\tilde{u}|_{N\mathcal{M}^{-1}}^{2+\epsilon}d\mu_{\tau_{0},t},
\end{equation*}
where $c=c(n,s,L)$, as $\frac{1}{2-\gamma}>0$ depends only on $n$ and $s$.
To estimate $J_{2}$, we first note from the weak 1-1 estimate and \eqref{cz non pa : pstra norms} that
{\small\begin{equation*}
\begin{aligned}
    \int_{\lambda_{0}}^{N\mathcal{M}^{-1}}\mathcal{M}^{\epsilon}\lambda^{\epsilon-1}\nu\left(\left\{\mathcal{Q}\,:\,|D^{\tau_{0}}d_{s}\tilde{u}|_{N}>\mathcal{M}\lambda\right\}\right)\,d\lambda&\leq c \int_{\lambda_{0}}^{N\mathcal{M}^{-1}}\mathcal{M}^{\epsilon}\lambda^{\epsilon-1}\int_{\mathcal{Q}}\frac{|D^{\tau}d_{s}\tilde{u}|_{N}^{2_{\#}-2}}{(\mathcal{M}\lambda)^{2_{\#}-2}}|D^{\tau_{0}}d_{s}u|^{2}d\mu_{\tau_{0},t}\,d\lambda\\
    &\leq c\int_{\lambda_{0}}^{N\mathcal{M}^{-1}}\mathcal{M}^{\epsilon+2-2_{\#}}\lambda^{\epsilon+1}\mu_{\tau_{0},t}\left(\mathcal{Q}\right)\,d\lambda,
\end{aligned}
\end{equation*}}\\
\noindent
where we denote $\mathcal{Q}=\mathcal{Q}_{5^{\frac{1}{s}}\overline{r}_{j}}(x_{1,j},x_{2,j},t_{0,j})$. We first note 
{\small\begin{equation*}
\begin{aligned}
    \frac{1}{\lambda^{\gamma}}\int_{\lambda_{0}}^{N\mathcal{M}^{-1}}\int_{\mathcal{Q}_{r_{2}}\cap\{|D^{\tau_{0}}d_{s}\tilde{u}|\geq b_{u}\lambda\}}|D^{\tau_{0}}d_{s}\tilde{u}|^{\gamma}d\mu_{\tau_{0},t}\,d\lambda&\leq \frac{\lambda_{0}^{2-\gamma}}{\lambda^{2}}\int_{\lambda_{0}}^{N\mathcal{M}^{-1}}\int_{\mathcal{Q}_{r_{2}}\cap\{|D^{\tau_{0}}d_{s}\tilde{u}|\geq b_{u}\lambda\}}|D^{\tau_{0}}d_{s}\tilde{u}|^{\gamma}d\mu_{\tau_{0},t}\,d\lambda\\
    &\leq\frac{c}{\lambda^{2}}\int_{\lambda_{0}}^{N\mathcal{M}^{-1}}\int_{\mathcal{Q}_{r_{2}}\cap\{|D^{\tau_{0}}d_{s}\tilde{u}|\geq b_{u}\lambda\}}|D^{\tau_{0}}d_{s}\tilde{u}|^{2}d\mu_{\tau_{0},t}\,d\lambda
\end{aligned}
\end{equation*}}\\
\noindent
for some constant $c=c(n,s,L)$ as $\gamma<2$ and  the constant $b_{u}$ depends only on $n,s$ and $L$.
Considering the above two inequalities, \eqref{cz non pa : offdiagonal estimates} and the estimate $J_{1}$, we estimate $J_{2}$ as
{\small\begin{equation*}
\begin{aligned}
    J_{2}&\leq c\epsilon\int_{\lambda_{0}}^{N\mathcal{M}^{-1}}\mathcal{M}^{\epsilon+2-2_{\#}}\lambda^{\epsilon-1}\int_{\mathcal{Q}_{r_{2}}\cap\{|D^{\tau_{0}}d_{s}\tilde{u}|\geq b_{u}\lambda\}}|D^{\tau_{0}}d_{s}\tilde{u}|^{2}d\mu_{\tau_{0},t}\,d\lambda\\
    &\leq c\mathcal{M}^{\epsilon+2-2_{\#}}\int_{\mathcal{Q}_{r_{2}}}|D^{\tau_{0}}d_{s}\tilde{u}|_{N\mathcal{M}^{-1}}^{2+\epsilon}d\mu_{\tau_{0},t}\\
    &\leq\frac{1}{2^{10n}}\int_{\mathcal{Q}_{r_{2}}}|D^{\tau_{0}}d_{s}\tilde{u}|_{N\mathcal{M}^{-1}}^{2+\epsilon}d\mu_{\tau_{0},t}
\end{aligned}
\end{equation*}}\\
\noindent
by taking $\mathcal{M}=\mathcal{M}(n,s,L)>1$ sufficiently large so that $c\mathcal{M}^{\frac{2-2_{\#}}{2}}\leq \frac{1}{2^{10n}}$ (thanks to $\mathcal{M}^{\epsilon+2-2_{\#}}\leq \mathcal{M}^{\frac{2-2_{\#}}{2}}$ as $\mathcal{M}\geq1$ and $\epsilon<\frac{2_{\#}-2}{2}$). We next select $\epsilon=\epsilon(n,s,L)<1$ such that
\begin{equation*}
    J_{1}\leq \frac{1}{2^{10n}}\int_{\mathcal{Q}_{r_{2}}}|D^{\tau_{0}}d_{s}\tilde{u}|_{N\mathcal{M}^{-1}}^{2+\epsilon}d\mu_{\tau_{0},t}.
\end{equation*}
Combining all the estimates of $I$ and $J$, we observe that
\begin{equation*}
    \int_{\mathcal{Q}_{r_{1}}}|D^{\tau_{0}}d_{s}u|_{N\mathcal{M}^{-1}}^{2+\epsilon}d\mu_{\tau_{0},t}\leq\int_{\mathcal{Q}_{r_{1}}}|D^{\tau_{0}}d_{s}u|_{N}^{2+\epsilon}d\mu_{\tau_{0},t}\leq \frac{1}{2^{9n}} \int_{\mathcal{Q}_{r_{2}}}|D^{\tau_{0}}d_{s}u|_{N\mathcal{M}^{-1}}^{2+\epsilon}d\mu_{\tau_{0},t}+ c\lambda_{0}^{2+\epsilon}\mu_{\tau_{0},t}\left(\mathcal{Q}_{2}\right)
\end{equation*}
holds if $N>\mathcal{M}\lambda_{0}$. After a few algebraic computations along with \eqref{cz non pa : size of b mut}, we have \eqref{cz non pa : goalbt}. Applying Lemma \ref{cz non pa : technicallemma} to \eqref{cz non pa : goalbt}, we obtain
\begin{equation*}
\begin{aligned}
    \left(\frac{1}{\tau_{0}}\dashint_{\mathcal{Q}_{1}}|D^{\tau_{0}}d_{s}\tilde{u}|_{N}^{2+\epsilon}\,d\mu_{\tau_{0},t}\right)^{\frac{1}{2+\epsilon}}&\leq c\left(\frac{1}{\tau_{0}}\dashint_{\mathcal{Q}_{2}}|D^{\tau_{0}}d_{s}\tilde{u}|^{2}d\mu_{\tau_{0},t}\right)^{\frac{1}{2}}+c\Tail_{\infty,2s}\left(\frac{\tilde{u}-(\tilde{u})_{B_{2}}(t)}{2^{s+\tau}};Q_{2}\right)\\
    &\quad+c\sup_{t\in \Lambda_{2}}\left(\dashint_{B_{2}}\frac{|\tilde{u}-(\tilde{u})_{Q_{2}}|^{2}}{2^{2s+2\tau}}\right)^{\frac{1}{2}}
\end{aligned}
\end{equation*}
for some constant $c=c(n,s,L)$. By passing to the limit $N\to\infty$ 
and using the change of variables, we get the desired estimate \eqref{cz non pa : selfest}.
\end{proof}

\section{Existence and uniqueness}
In this section, we present the existence result for the corresponding boundary value problem of \eqref{cz non pa : eq1} and the standard energy estimate. Before stating the  result, we first note from \cite[Proposition 1.2 in Chapter 3]{Smono} with $V=W^{s,2}(\Omega)$ and $H=L^{2}(\Omega)$ that if $h\in  L^{2}(0,T;W^{s,2}(\Omega))$ and $h_{t}\in \left(L^{2}(0,T;W^{s,2}(\Omega)\right)^{*}$, then $h\in C([0,T];L^{2}(\Omega))$ with the estimate
\begin{equation}
\label{cz non pa : imbeddingt}
    \sup_{t\in[0,T]}\|h(\cdot,t)\|_{L^{2}(\Omega)}\leq c \|h\|_{L^{2}(0,T;W^{s,2}(\Omega))}+ c\|h_{t}\|_{\left(L^{2}(0,T;W^{s,2}(\Omega)\right)^{*}}
\end{equation}
for some constant $c=c(n,s)$.
\begin{lem}\label{cz non pa : exst-unq}
Let $\Omega'$ be an open and bounded set such that $\Omega\Subset\Omega'$. Suppose that 
\begin{equation}
\label{cz non pa : dataconditionofIVP}
    \begin{aligned}
    &f\in L^{2}\left(0,T;L^{1}_{s}(\mathbb{R}^{n})\right)\quad\text{with}\quad d_{0}f\in L^{2}\left(\Omega'\times\Omega'\times(0,T)\,;\,\frac{\dx\dy\dt}{|x-y|^{n}}\right), \\
    &g\in L^{\frac{2(n+2s)}{n+4s}}(\Omega_{T}),\\
    &h\in L^{2}(0,T;W^{s,2}(\Omega'))\cap L^{\infty}(0,T;L^{1}_{2s}(\mathbb{R}^{n}))\quad\text{and}\quad h_{t}\in \left(L^{2}(0,T;W^{s,2}(\Omega'))\right)^{*}.
\end{aligned}
\end{equation}
Then there is a unique weak solution $u\in L^{2}(0,T;W^{s,2}(\Omega))\cap L^{\infty}\left(0,T;L^{1}_{2s}(\mathbb{R}^{n})\right)\cap C([0,T];L^{2}(\Omega))$  to 
\begin{equation}
\label{cz non pa : IVP}
\left\{
\begin{alignedat}{3}
u_{t}+\mathcal{L}^{\Phi}_{A}u&= (-\Delta)^{\frac{s}{2}}f+g&&\qquad \mbox{in  $\Omega\times(0,T)$}, \\
u&=h&&\qquad  \mbox{in $\mathbb{R}^{n}\setminus\Omega\times[0,T]$}, \\
u(\cdot,0)&=h(\cdot,0) &&\qquad \mbox{in  $\Omega$}
\end{alignedat} \right.
\end{equation}
with the estimate
\begin{equation}
\label{cz non pa : estiivp}
\begin{aligned}
&\sup_{t\in(0,T)}\int_{\Omega}|u(x,t)|^{2}\dx+\int_{0}^{T}\int_{\Omega}\int_{\Omega}|d_{s}u|^{2}\frac{\dx\dy\dt}{|x-y|^{n}}\\
&\leq c\int_{0}^{T}\int_{\Omega'}\int_{\Omega'}|d_{0}f|^{2}\frac{\dx\dy\dt}{|x-y|^{n}}+c\Tail_{2,s}(f-(f)_{\Omega'}(t);\Omega'_{T})^{2}+c\left(\int_{0}^{T}\int_{\Omega}|g|^{\frac{2(n+2s)}{n+4s}}\dz\right)^{\frac{n+4s}{n+2s}}\\
    &\quad+c\int_{0}^{T}\int_{\Omega'}\int_{\Omega'}|d_{s}h|^{2}\frac{\dx\dy}{|x-y|^{n}}+c\Tail_{2,2s}(h-(h)_{\Omega'}(t);\Omega'_{T})^{2}+c\|h_{t}\|^{2}_{\left(L^{2}(0,T;W^{s,2}(\Omega')\right)^{*}}
\end{aligned}
\end{equation}
for some constant $c=c(n,s,L,T,\Omega,\Omega')$.
\end{lem}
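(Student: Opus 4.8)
\textbf{Proof proposal for Lemma \ref{cz non pa : exst-unq}.}
The plan is to reduce \eqref{cz non pa : IVP} to a problem with zero exterior/initial data by setting $u=h+U$, where $U$ must solve a Cauchy--Dirichlet problem with homogeneous boundary data and modified right-hand side. Writing $\Psi(\xi)=\Phi(\xi+\tfrac{h(x,t)-h(y,t)}{|x-y|^s})-\Phi(\tfrac{h(x,t)-h(y,t)}{|x-y|^s})$ (so that $\mathcal L^{\Phi}_A u=\mathcal L^{\Phi}_A h+\mathcal L^{\Psi}_A U$ in the weak formulation), the function $U$ solves $U_t+\mathcal L^{\Psi}_A U=(-\Delta)^{s/2}f+g-h_t-\mathcal L^{\Phi}_A h$ in $\Omega_T$ with $U=0$ on $(\mathbb R^n\setminus\Omega)\times[0,T]$ and $U(\cdot,0)=0$, where the new structural function $\Psi$ still satisfies \eqref{cz non pa : Phicond} with the same $L$. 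The right-hand side $F:=(-\Delta)^{s/2}f+g-h_t-\mathcal L^{\Phi}_A h$ is an element of $L^2(0,T;W^{s,2}_0(\Omega))^*$: the term $h_t$ is so by hypothesis; $(-\Delta)^{s/2}f$ pairs with $\phi$ via $\int\int (f(x)-f(y))(\phi(x)-\phi(y))|x-y|^{-n-s}$ which is controlled by the $L^2$-norm of $d_0f$ on $\Omega'\times\Omega'$ plus a tail of $f$ once we use that $\phi$ is supported in $\Omega\Subset\Omega'$; $g$ is handled by the parabolic Sobolev embedding of Lemma \ref{cz non pa : embed lem} (recall $\gamma=\frac{2(n+2s)}{n+4s}$ is exactly the conjugate of $2_{\#}$); and $\mathcal L^{\Phi}_A h$ pairs with $\phi$ with a bound by $\|d_sh\|_{L^2(\Omega'\times\Omega')}$ plus a tail $\Tail_{2,2s}$ of $h$, using \eqref{cz non pa : ker a defn} and the Lipschitz bound in \eqref{cz non pa : Phicond}.

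Next I would solve the reduced problem for $U$ by a Galerkin scheme. Fix an orthonormal basis $\{w_k\}$ of $L^2(\Omega)$ consisting of eigenfunctions of $\mathcal L_{1}$ (the fractional Laplacian on $\Omega$ with zero exterior data), which lie in $W^{s,2}_0(\Omega)$; seek $U_m(\cdot,t)=\sum_{k=1}^m c_k^m(t)w_k$ solving the finite-dimensional ODE system obtained by testing the weak formulation against $w_1,\dots,w_m$. Existence of $U_m$ on $[0,T]$ follows from the Carathéodory/Peano theory together with the a priori bound: testing with $U_m$ itself, using the monotonicity $(\Psi(\xi)-\Psi(\xi'))(\xi-\xi')\ge L^{-1}|\xi-\xi'|^2$, the bounds on $A$, Young's inequality, and Lemma \ref{cz non pa : embed lem} to absorb the $g$-contribution, we obtain
\begin{equation*}
\sup_{t\in(0,T)}\int_\Omega |U_m(x,t)|^2\dx+\int_0^T\int_\Omega\int_\Omega \frac{|d_sU_m|^2}{|x-y|^n}\dx\dy\dt\le c\,\|F\|^2_{L^2(0,T;W^{s,2}_0(\Omega))^*},
\end{equation*}
uniformly in $m$. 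This yields, along a subsequence, weak convergence $U_m\rightharpoonup U$ in $L^2(0,T;W^{s,2}_0(\Omega))$ and weak-$*$ convergence in $L^\infty(0,T;L^2(\Omega))$; a standard compactness argument (Aubin--Lions, since $\partial_tU_m$ is bounded in $L^2(0,T;W^{s,2}_0(\Omega))^*$ by comparison in the equation) gives strong $L^2(\Omega_T)$ convergence, which, combined with the Lipschitz and monotonicity structure of $\Psi$ and a Minty-type argument, suffices to pass to the limit in the nonlinear term and identify $U$ as a weak solution. The bound \eqref{cz non pa : imbeddingt} then gives $U\in C([0,T];L^2(\Omega))$ and the initial condition $U(\cdot,0)=0$ in the $L^2$-sense. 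Undoing the substitution produces $u=h+U$ with the required regularity, and the energy estimate \eqref{cz non pa : estiivp} is obtained by collecting the estimate on $U$ with the bound on $\|F\|$ in terms of the data quantities listed above (the tail terms and $L^1_{2s}$-bound on $h$ enter precisely here).

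For uniqueness, suppose $u_1,u_2$ are two weak solutions with the same data; then $w=u_1-u_2$ vanishes on $(\mathbb R^n\setminus\Omega)\times[0,T]$ and at $t=0$, and solves $w_t+\mathcal L^{\Phi}_A u_1-\mathcal L^{\Phi}_A u_2=0$. Testing with $w$ (after the usual Steklov-averaging/approximation in time needed to justify $\int_0^{\tilde T}\langle w_t,w\rangle=\tfrac12\|w(\tilde T)\|^2_{L^2(\Omega)}$) and using the monotonicity inequality in \eqref{cz non pa : Phicond} together with the symmetry and positivity of $A$ gives
\begin{equation*}
\frac12\int_\Omega |w(x,\tilde T)|^2\dx+\frac{1}{L}\int_0^{\tilde T}\int_{\mathbb R^n}\int_{\mathbb R^n}\frac{|w(x,t)-w(y,t)|^2}{|x-y|^{n+2s}}\dx\dy\dt\le 0
\end{equation*}
for every $\tilde T\in(0,T]$, hence $w\equiv0$. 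The main obstacle I expect is the rigorous justification of using $w$ (and, in the existence step, $U_m$ and $\varphi=u-w$ as in Step 1 of Lemma \ref{cz non pa : comp}) as a test function: the weak formulation only admits test functions with a time-derivative in $L^2$, so one must first mollify in time (Steklov averages) or use the $C([0,T];L^2)$-regularity coming from \eqref{cz non pa : imbeddingt}, verify that the nonlocal terms pass to the limit under this regularization—which is where the symmetry of $A$, the sign condition, and the global $L^1_{2s}$ control are used to keep all integrals absolutely convergent—and only then deduce the pointwise-in-time energy identity. Everything else is a routine monotone-operator argument in the spirit of \cite{Smono}.
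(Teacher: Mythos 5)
Your approach is correct in outline but genuinely different from the paper's. The paper does not construct the solution from scratch: it observes that $(-\Delta)^{s/2}f$ lies in $\left(L^{2}(0,T;W^{s,2}(\Omega'))\right)^{*}$ by \cite[Lemma 2.7]{BKc} and then \emph{cites} \cite[Theorem A.3]{BLS} together with \cite[Lemma A.1]{BKKh} to obtain existence and uniqueness of $u$; only the energy estimate \eqref{cz non pa : estiivp} is derived directly, by testing the equation for $\varphi=u-h$ against $\varphi$ itself. You instead run a full Galerkin construction for the shifted unknown $U=u-h$. Both routes are sound. What your version buys is self-containment (no appeal to the quoted appendix results); what it costs is having to re-justify the standard chain Galerkin $\to$ uniform bounds $\to$ Aubin--Lions $\to$ Minty, which the cited theorems already package. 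Two small technical points you should tighten if you carry this out. First, your modified structural function $\Psi$ is not a function of $\xi$ alone—it depends measurably on $(x,y,t)$ through $d_{s}h(x,y,t)$. It still satisfies \eqref{cz non pa : Phicond} uniformly with the same $L$ and $\Psi(\cdot)(0)=0$, so the energy and monotonicity arguments go through unchanged, but this silently enlarges the class of operators beyond the paper's $\mathcal L^{\Phi}_{A}$, so it should be stated explicitly. Second, the correct test-function space for the reduced problem is the nonlocal space $X_{0}^{s,2}(\Omega,\Omega')$ (functions in $W^{s,2}(\Omega')$ vanishing a.e. outside $\Omega$), not the intrinsic $W^{s,2}_{0}(\Omega)$: the nonlocal bilinear form integrates over pairs $(x,y)$ with $x\in\Omega$ and $y\in\Omega'\setminus\Omega$, so the Galerkin basis and the dual identification $F\in\left(L^{2}(0,T;X_{0}^{s,2}(\Omega,\Omega'))\right)^{*}$ must be phrased in that space. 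Modulo these notational corrections, your argument is a valid alternative to the paper's proof.
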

\begin{proof}
From \cite[Lemma 2.7]{BKc}, we observe that
\begin{equation*}
    T_{f(\cdot,t)}:\phi\mapsto \int_{\mathbb{R}^{n}}\int_{\mathbb{R}^{n}}(f(x,t)-f(y,t))(\phi(x)-\phi(y))\frac{\dx\dy}{|x-y|^{n+s}},\quad \phi\in X_{0}^{s,2}(\Omega,\Omega') 
\end{equation*}
is an element of the dual space of $X_{0}^{s,2}(\Omega,\Omega')$. This implies that 
\begin{equation*}
    (-\Delta)^{\frac{s}{2}}f\in\left(L^{2}(0,T;W^{s,2}(\Omega')\right)^{*}.
\end{equation*}
Therefore, combining \cite[Thoerem A.3]{BLS} and \cite[Lemma A.1]{BKKh}, we find a unique weak solution $u$ to \eqref{cz non pa : IVP}. 

We are now in the position to prove \eqref{cz non pa : estiivp}. Since $u-h\in L^{2}\left(0,T;X_{0}^{s,2}(\Omega,\Omega')\right)$, using the standard approximation argument, we have 
\begin{equation}
\label{cz non pa : firesten}
\begin{aligned}
    &\sup_{t\in(0,T)}\int_{\Omega}|(u-h)(x,t)|^{2}\dx+\int_{0}^{T}\int_{\mathbb{R}^{n}}\int_{\mathbb{R}^{n}}|d_{s}(u-h)|^{2}\frac{\dx\dy\dt}{|x-y|^{n}}\\
    &\leq c\left|\int_{0}^{T}\langle h_{t}, u-h\rangle_{X_{0}^{s,2}(\Omega,\Omega') ,\left(X_{0}^{s,2}(\Omega,\Omega')\right)^{*} }\dt\right|+c\int_{0}^{T}\int_{\Omega}|g(u-h)|\dz\\
    &\quad+c\int_{0}^{T}\int_{\mathbb{R}^{n}}\int_{\mathbb{R}^{n}}|d_{0}f||d_{s}(u-h)|\frac{\dx\dy\dt}{|x-y|^{n}}+c\int_{0}^{T}\int_{\mathbb{R}^{n}}\int_{\mathbb{R}^{n}}|d_{s}h||d_{s}(u-h)|\frac{\dx\dy\dt}{|x-y|^{n}}\eqqcolon I_{1}+I_{2}+I_{3}+I_{4}.
\end{aligned}
\end{equation}
From \eqref{cz non pa : dataconditionofIVP} and Young's inequality, we first estimate $I_{1}$ as
\begin{equation*}
    I_{1}\leq c \|h_{t}\|^{2}_{\left(L^{2}(0,T;W^{s,2}(\Omega')\right)^{*}}+\frac{1}{8}\int_{0}^{T}\int_{\Omega'}\int_{\Omega'}|d_{s}(u-h)|^{2}\frac{\dx\dy\dt}{|x-y|^{n}}.
\end{equation*}
We next estimate $I_{2}$ with the help of   \eqref{cz non pa : dataconditionofIVP} as below
\begin{equation*}
    I_{2}\leq c\left(\int_{0}^{T}\int_{\Omega}|g|^{\frac{2(n+2s)}{n+4s}}\dz\right)^{\frac{n+4s}{n+2s}}+\frac{1}{8}\sup_{t\in(0,T)}\int_{\Omega}|(u-h)(x,t)|^{2}\dx+\frac{1}{8}\int_{0}^{T}\int_{\Omega'}\int_{\Omega'}|d_{s}(u-h)|^{2}\frac{\dx\dy\dt}{|x-y|^{n}},
\end{equation*}
where we have used Lemma \ref{cz non pa : embed lem} and Young's inequality.
For the estimate of $I_{3}+I_{4}$ , we follow the same line as in the estimate of $J_{1}+J_{2}$ in \cite[Lemma 2.7]{BKc} to see that
{\small\begin{equation*}
\begin{aligned}
    I_{3}+I_{4}&\leq \frac{1}{8}\int_{0}^{T}\int_{\Omega'}\int_{\Omega'}|d_{s}(u-h)|^{2}\frac{\dx\dy\dt}{|x-y|^{n}}+c\int_{0}^{T}\int_{\Omega'}\int_{\Omega'}|d_{0}f|^{2}\frac{\dx\dy\dt}{|x-y|^{n}}+c\Tail_{2,s}(f-(f)_{\Omega'}(t);\Omega'_{T})^{2}\\
    &\quad+c\int_{0}^{T}\int_{\Omega'}\int_{\Omega'}|d_{s}h|^{2}\frac{\dx\dy}{|x-y|^{n}}+c\Tail_{2,2s}(h-(h)_{\Omega'}(t);\Omega'_{T})^{2}
\end{aligned}
\end{equation*}}\\
\noindent
for some constant $c=c(n,s,L,\Omega,\Omega',T)$. We now plug the estimates of $I_{1},I_{2}$ and $I_{3}$ into \eqref{cz non pa : firesten} to see that
{\small\begin{equation*}
\begin{aligned}
    &\sup_{t\in(0,T)}\int_{\Omega}|(u-h)(x,t)|^{2}\dx+\int_{0}^{T}\int_{\Omega}\int_{\Omega}|d_{s}(u-h)|^{2}\frac{\dx\dy\dt}{|x-y|^{n}}\\
    &\leq c\int_{0}^{T}\int_{\Omega'}\int_{\Omega'}|d_{0}f|^{2}\frac{\dx\dy\dt}{|x-y|^{n}}+c\Tail_{2,s}(f-(f)_{\Omega'}(t);\Omega'_{T})^{2}+c\left(\int_{0}^{T}\int_{\Omega}|g|^{\frac{2(n+2s)}{n+4s}}\dz\right)^{\frac{n+4s}{n+2s}}\\
    &\quad+c\int_{0}^{T}\int_{\Omega'}\int_{\Omega'}|d_{s}h|^{2}\frac{\dx\dy}{|x-y|^{n}}+c\Tail_{2,2s}(h-(h)_{\Omega'}(t);\Omega'_{T})^{2}+c\|h_{t}\|^{2}_{\left(L^{2}(0,T;W^{s,2}(\Omega')\right)^{*}}.
\end{aligned}
\end{equation*}}\\
\noindent
After a few simple calculations along with \eqref{cz non pa : imbeddingt}, we obtain the desired estimate \eqref{cz non pa : estiivp}.
\end{proof}
\medskip

\end{document}